\theoremstyle{remark}
\newtheorem{example}{Example}[section]
\newtheorem{remark}{Remark}[section]
\theoremstyle{plain}
\newtheorem{theorem}{Theorem}[section]
\newtheorem{proposition}[theorem]{Proposition}
\newtheorem{definition}{Definition}[section]
\newtheorem{Theorem}{Theorem}
\newtheorem{lemma}[theorem]{Lemma}
\newtheorem*{keylemma}{Key Lemma}
\newtheorem{corollary}[theorem]{Corollary}
\newtheorem{conjecture}{Conjecture}
\newtheorem*{conjecture*}{ Conjecture}
\def\C{{\mathbb C}}
\def\R{{\mathbb R}}
\def\Z{{\mathbb Z}}
\def\Q{{\mathbb Q}}
\def\D{\mathbb{D}}
\def\p{\mathbb{P}}
\def\N{{\mathbb N}}
\def\cO{\mathcal{O}}
\def\cG{\mathcal{G}}
\def\cE{\mathcal{E}}
\def\cM{\mathcal{M}}
\def\cN{\mathcal{N}}
\def\cT{\mathcal{T}}
\def\cH{\mathcal{H}}
\def\fU{\mathfrak{U}}
\def\fV{\mathfrak{V}}
\def\fX{\mathfrak{X}}
\def\fY{\mathfrak{Y}}
\def\fL{\mathfrak{L}}
\def\fA{\mathfrak{A}}
\def\fB{\mathfrak{B}}
\def\fm{\mathfrak{m}}
\def\ff{\mathfrak{f}}
\def\om{\omega}
\def\la{\lambda}
\def\e{\varepsilon}
\DeclareMathOperator{\alg}{alg}
\DeclareMathOperator{\gal}{Gal}
\DeclareMathOperator{\sep}{sep}
\DeclareMathOperator{\id}{id}
\DeclareMathOperator{\Id}{Id}
\DeclareMathOperator{\Ind}{Ind}
\DeclareMathOperator{\an}{an}
\DeclareMathOperator{\vol}{Vol}
\DeclareMathOperator{\proj}{Proj}
\DeclareMathOperator{\con}{con}
\DeclareMathOperator{\zar}{zar}
\DeclareMathOperator{\Sk}{Sk}
\DeclareMathOperator{\hotimes}{\hat{\otimes}}
\DeclareMathOperator{\htop}{h_{top}}
\DeclareMathOperator{\NS}{NS}
\DeclareMathOperator{\red}{red}
\DeclareMathOperator{\self}{self}
\DeclareMathOperator{\prim}{\bar{P}}
\DeclareMathOperator{\res}{res}
\DeclareMathOperator{\ext}{ext}
\DeclareMathOperator{\spec} {Spec}
\def\and{{\quad\text{and}\quad}}
\begin{document}

\title[Entropy of rational maps]
{Topological entropy of a rational map over a complete metrized field}
\author{Charles Favre}
\address{CMLS, \'Ecole polytechnique, CNRS, Universit\'e Paris-Saclay, 91128 Palaiseau Cedex, France}
\email{charles.favre@polytechnique.edu}
\author{Tuyen Trung Truong}
\address{Department of Mathematics, University of Oslo, P.O. Box 1053, Blindern, 0316 Oslo, Norway}
\email{tuyentt@math.uio.no}

\author{Junyi Xie}
\address{BICMR, Peking University, Haidian District, Beijing 100871, China}
\email{xiejunyi@bicmr.pku.edu.cn}

\thanks{This project started when the first author was supported by the ERC-starting grant project "Nonarcomp" no.307856.
The first author also acknowledge the support of the National Science Foundation under Grant No. 1440140, while he was in residence at the Mathematical Sciences Research Institute in Berkeley, California. Second author is partially supported by Young Research Talents grant \#300814 by Research Council of Norway.
This project started when the third author was partially supported by project ``Fatou'' ANR-17-CE40-0002-01.}

\date{\today}

\begin{abstract}
We prove that the topological entropy of any dominant rational self-map of a projective variety defined over a complete  non-Archimedean field is
bounded from above by the maximum of its dynamical degrees, thereby extending a theorem of Gromov and Dinh-Sibony from the complex to the non-Archimedean setting. 
We proceed by proving that any regular self-map which admits a regular extension to a projective model defined over the valuation ring has necessarily zero entropy.
To this end we introduce the $\epsilon$-reduction of a Berkovich analytic space, a notion of independent interest.
\end{abstract}

\maketitle

\tableofcontents

\section*{Introduction}

The most basic dynamical invariant associated to a continuous self-map of a compact space is arguably its topological entropy,  first defined by Adler-Konheim-McAndrew in~\cite{AKMA}. In order to better understand the dynamics of maps of algebraic origin, we propose in this paper to 
estimate the topological entropy of rational maps of projective varieties defined over a complete metrized field. 

Let us first consider a dominant rational self-map $f \colon X \dashrightarrow X$ of a complex projective variety\footnote{that is a projective reduced and irreducible scheme over $\C$.} $X$ of dimension $d\ge1$.
The complex analytic variety $X^{\an}$ is then compact and metrizable, and even though $f$ may have indeterminacy points,
one may define its topological entropy ``\`a la Bowen''~\cite{bowen} by counting the exponential growth of the number of $(n,\e)$-separated well-defined orbits as $n\to \infty$.
Gromov~\cite{gromov} proved that $\htop(f) = \log (\deg(f))$ for any regular endomorphism of the projective space\footnote{in fact Gromov only proved the upper bound; the lower bound was proved earlier by Misiurewicz-Przytycki and a  far reaching generalization was then obtained by Yomdin, see the discussion on the next page.}; and Dinh-Sibony~\cite{DS05,DS08} partially extended this result
to any meromorphic self-map of a compact K\"ahler manifold (so in particular to any rational map of a complex projective manifold). Indeed, they proved:
\begin{equation}\label{eq:DShtop}
\htop(f) \le \max_{0\le i \le d} \{\log  \lambda_i(f)\}
\end{equation} 
where  $\lambda_i(f) \ge 1$ are the dynamical degrees of $f$ (we discuss below their definitions in the context of projective varieties). 

Now let $(k,|\cdot|)$ be a complete metrized non-Archimedean field, and 
let $f \colon X \dashrightarrow X$ be a dominant rational self-map of a projective variety $X$ defined over $k$. 
Defining the topological entropy of $f$ raises some difficulties in this context, since the set $X(k)$ of $k$-points in $X$ can be endowed 
with a metric topology coming from the norm on $k$, but this space 
is not locally compact if $d\ge 1$ except if $k$ is a local field. To get around this problem, one considers the Berkovich analytification $X^{\an}$ of $X$, see~\cite{berkovich}. 
This space has good topological properties and is in particular compact. The map $f$ induces an analytic (hence continuous) map 
on a dense subset of $X^{\an}$. 
Note that most of the time (e.g., when the residue field of $k$ is uncountable) $X^{\an}$ is not metrizable, and 
the topological entropy of a continuous self-map of $X^{\an}$ is defined as the growth of the complexity of the sequence of open covers $\fU \vee \cdots \vee f^{-n} \fU$ where $\fU$ is a fixed open cover of $X$. 
This definition is convenient when $f$ is regular, but it is not easy to adapt it when $f$ is merely rational since it is not immediately related to orbits of points. 
We explain in details in \S\ref{sec:uniform} 
how to interpret Adler-Konheim-McAndrew's original definition in a Bowen perspective using the canonical uniform structure on $X^{\an}$ (see also~\cite{hood}).
We show that the topological entropy is computed by the growth of the cardinality of maximal $(n, \cE)$-separating sets
where $\cE$ is any entourage of the diagonal in $X^{\an} \times X^{\an}$. Using this interpretation, it is then not difficult 
to define the topological entropy of any rational map, even though it only defines a partially continuous map on $X^{\an}$, see \S\ref{sec:definition-topo-rat}.

To state our main theorem, we now need to discuss the notion of dynamical degrees first introduced by Russakovskii-Shiffman~\cite{rushifman}.
For any ample line bundle $L \to X$ and for any integer $j\in \{0, \ldots, d\}$, set
 $\deg_{L,j}(f) = f^* c_1(L)^j \cdot c_1(L)^{d-j}$. When $k=\C$ or more generally if the characteristic of $k$ is zero, then it is a theorem of Dinh and Sibony~\cite{DS05} that the sequence $\{C\deg_{L,j}(f^n)\}_{n\in \N}$ is sub-multiplicative for some $C>0$, so that one can define the $j$-th dynamical degree of $f$ by setting $\lambda_j(f) := \lim_n \deg_{L,j}(f^n)^{1/n}$. 
 Dynamical degrees do not depend on the choice of $L$, and are invariant under birational conjugacies. 
 When the characteristic of $k$ is positive, the sub-multiplicativity of the sequence of degrees (hence the existence of the dynamical degrees) 
was first obtained by the second author~\cite{TTT}, and an alternative approach has been recently developped  by N.-B. Dang~\cite{bac}.

Our first result states that the upper bound~\eqref{eq:DShtop} is true over any field.  
\begin{Theorem}\label{thm:main}
Let $(k,|\cdot|)$ be any complete metrized non-Archimedean field, and 
let $f \colon X \dashrightarrow X$ be any dominant rational self-map of a projective variety $X$ defined over $k$. 

Then the  topological entropy of the map induced by $f$ on the Berkovich analytification of $X$ 
is bounded from above by $\max_{0\le i \le \dim(X)} \{ \log \lambda_i(f) \}$.
\end{Theorem}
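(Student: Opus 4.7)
The strategy mirrors Gromov and Dinh-Sibony in spirit---bound the topological entropy by the asymptotic growth of a suitable ``size'' of the graphs of iterates $\Gamma_n = \{(x,f(x),\dots,f^n(x))\} \subset X^{n+1}$---but replaces their K\"ahler-geometric volume estimates with model-theoretic estimates on the Berkovich analytification. Throughout, entropy is computed via the characterization in \S\ref{sec:uniform}: $\htop(f) = \sup_\cE \limsup_n \tfrac{1}{n}\log N_n(\cE,f)$, where $N_n(\cE,f)$ is the maximal cardinality of an $(n,\cE)$-separating set and $\cE$ ranges over entourages of the diagonal in $X^{\an}\times X^{\an}$.

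\emph{Step 1 (Key Lemma).} The heart of the proof, as announced in the abstract, is to show that any regular self-map $g\colon Y\to Y$ of a projective $k$-variety which extends regularly to a projective model $\fY$ over the valuation ring $k^\circ$ satisfies $\htop(g)=0$. Here the new tool is the $\epsilon$-reduction: for any entourage $\cE$ one constructs a \emph{finite} quotient $Y_\cE$ of $Y^{\an}$---morally arising from a refinement of $\fY$ whose special fiber separates $\cE$-classes---on which the regular integral extension of $g$ induces an honest self-map of a finite set. The number of $(n,\cE)$-separating orbits is then bounded by $\card(Y_\cE)$ uniformly in $n$, whence $\htop(g)=0$.

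\emph{Step 2 (General case).} Fix $\cE$ and $n$. Resolve the indeterminacies of $f^n$ on a projective birational model $\pi_n\colon X_n\to X$ so that $g_n := f^n\circ\pi_n\colon X_n\to X$ is regular, and choose an integral model $\fX_n$ to which $g_n$ extends. The Key Lemma annihilates the ``regular integral part'' of the entropy contribution; what remains is to bound the discrepancy---orbits $(n,\cE)$-separating for $f$ on $X^{\an}$ but not captured on any such integral model---by a linear combination of $\deg_{L,i}(f^n)$ with coefficients depending only on $\cE$, through an intersection-theoretic count on $\fX_n$. Applying $\tfrac{1}{n}\log$ and $\limsup_n$ and using $\deg_{L,i}(f^n)^{1/n}\to\lambda_i(f)$ (sub-multiplicativity, valid in all characteristics by \cite{TTT, bac}) yields the required upper bound $\max_i\log\lambda_i(f)$.

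\emph{Main obstacle.} The principal difficulty lies in the construction of the $\epsilon$-reduction and the verification of its functoriality under regular maps with regular integral extensions. Since $X^{\an}$ is generally non-metrizable, one cannot fall back on a metric proxy: the reduction must be defined intrinsically from the canonical uniform structure and its interaction with the category of integral models, and one must show that every entourage $\cE$ is refined by one coming from a projective model. Once this foundational object is in hand the Key Lemma follows essentially formally, but translating an arbitrary entourage into Zariski-closed data on a special fiber---and, in Step 2, into the intersection numbers controlling the discrepancy from a regular integral extension---is where the main technical work will reside.
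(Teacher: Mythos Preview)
Your proposal conflates the roles of Theorem~\ref{thm:main} and Theorem~\ref{thm:entropy good reduction}. The zero-entropy result for maps with good reduction (your ``Key Lemma'') is \emph{not} an ingredient in the proof of Theorem~\ref{thm:main}; it is a separate theorem with an independent (and much harder) proof using the $\epsilon$-reduction. The actual proof of Theorem~\ref{thm:main} never invokes good reduction or the $\epsilon$-reduction at all.

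More seriously, your Step~2 does not work as written. Resolving $f^n$ yields a regular map $g_n\colon X_n\to X$ between \emph{different} varieties, not a self-map, so there is no entropy of $g_n$ to speak of; and even if you forced a self-map, the model $\fX_n$ and hence your finite quotient $Y_\cE$ would depend on $n$, destroying any uniform bound. There is no mechanism here by which the dynamical degrees enter.

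The paper's argument is much more direct. Given an entourage $\cE$, one finds a single model $\fX$ whose special-fiber components separate $\cE$-classes (Proposition~\ref{prop:special covers}). Then for every $n$, the closure $\mathfrak{G}^{\langle n\rangle}$ of the $n$-th graph $\Gamma_n$ inside $\fX^{\langle n\rangle}$ has the property that two points in a maximal $(n,\cE)$-separating set must land in distinct irreducible components of $\mathfrak{G}^{\langle n\rangle}_s$ (Proposition~\ref{prop:bound-lelong}). The number of such components is bounded by the intersection number $c_1(\fL_n)^d\cdot\mathfrak{G}^{\langle n\rangle}_s$ for an ample $\fL$; flatness transfers this to $c_1(L_n)^d$ on the generic fiber $\Gamma_n$; and a Gromov--Dinh--Sibony expansion combined with Siu's inequalities bounds the latter by $C(\max_i\lambda_i(f)+\epsilon)^n$. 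No zero-entropy lemma is needed.
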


This result was obtained by the first author and Rivera-Letelier in the case $X = \mathbb{P}^1_k$, see~\cite[Th\'eor\`eme~C]{FR10}. 

Let us briefly indicate how our proof works in arbitrary dimension. For each $n$, consider 
 the Zariski closure $\Gamma_n \subset X^n$ of the set of orbits of length $n$, that is of the set of points $(x, \cdots, f^{n-1}(x))$ where
$x$ is not indeterminate under $f, \cdots, f^{n-1}$.  Observe that $\Gamma_n$ is an algebraic variety of dimension $d:= \dim(X)$ for all $n$. 
Fix an ample line bundle $L \to X$, and 
let $L_n$ be the restriction to $\Gamma_n$ of the line bundle $\sum_{i=0}^{n-1}\pi^*_i L$ where $\pi_i$ is the projection $X^n \to X$ onto the $i$-th factor. 

In the complex setting, we may choose a smooth positive metrization on $L$ whose curvature defines a K\"ahler form $\om$ on $X$.
By Wirtinger's theorem the volume of $\Gamma_n$ for the K\"ahler form $\om_n:=\sum_{i=0}^{n-1}\pi^*_i \om$ can be computed in cohomological 
terms, and we get $\vol_{\om_n} (\Gamma_n) = \deg_{L_n}(\Gamma_n) := c_1(L_n)^{\wedge d}$. 
By a clever calculation due to Gromov, we obtain
$\limsup_n \frac1n \log \deg_L(\Gamma_n) \le\max_{0\le i \le d} \{ \log \lambda_i(f) \}$. 
On the other hand, the minimality of complex subvarieties (or a theorem by Lelong) implies the volume $\vol_{\om_n} (\Gamma_n)$ to be no less
than  the number of $(n,\e)$-separated orbits up to a uniform constant which implies~\eqref{eq:DShtop}. 

Definitions of a K\"ahler form have been proposed in~\cite[\S 2.3]{NAMA} and~\cite[\S 3]{yuyue} in the non-Archimedean case
using the notion of models over the ring of integers $k^\circ = \{ |z| \le 1\} \subset k$. The precise definition of a K\"ahler form
is not relevant to our discussion, but models play a crucial role in our approach. A model of $X$ over $k^\circ$ will be by convention a flat projective scheme $\fX \to \spec k^\circ$
whose generic fiber is isomorphic to $X$. In our argument, the choice of a relatively ample line bundle $\fL \to \fX$ replaces 
the choice of a K\"ahler form in the complex setting. 

Given any entourage $\cE$ of the diagonal, we are first able to construct a suitable model $\fX$ of $X$
such that for any integer $n\ge0$ the following holds. 
The number of $(n,\cE)$-separated points is bounded from above by the number $Q_n$ of irreducible components
of the model $\mathfrak{G}^{\langle n \rangle}$ of $\Gamma_n$ obtained by taking the closure of $\Gamma_n$ inside $\fX^n$. 
Next we prove that
\[Q_n \le c_1(\fL_n)^{\wedge d} \cdot \mathfrak{G}^{\langle n \rangle}_s\]
where $\fL \to \fX$
is a relatively ample line bundle, 
$\fL_n = \sum_{i=0}^{n-1}\pi^*_i \fL$, and $\pi_i$ is the projection $\fX^n \to \fX$ onto the $i$-th factor. 
Using the invariance of intersection numbers under flat morphism, we get  $Q_n\le c_1(L_n)^{\wedge d}$. 
We then adapt Gromov's calculation using ideas from~\cite{bac} to relate $c_1(L_n)^{\wedge d}$ to the dynamical degrees, 
and our proof is complete.

\smallskip
\begin{center}
$\diamondsuit$
\end{center}
\smallskip

In the complex case, it follows from a deep theorem of Yomdin~\cite{yomdin,gromov-bourbaki} that Gromov's upper bound is attained whenever $f$ is regular.
The case of rational maps has triggered a series of works using pluripotential techniques to build invariant measures of maximal entropy. Using these tools, it is now known for many classes of rational maps that equality holds in~\eqref{eq:DShtop}, see e.g.~\cite{bedford-smillieII,Guedj2005,dujardin,DS10,DDG,dethelin-vigny,vigny,diller-liu-roeder}. In general however the inequality
can be strict (take e.g., $f[x:y:z]= [xz+z^2:y^2:z^2]$ which has topological entropy $0$ in $\mathbb{P}^2$). It is expected that for any dominant rational map 
$f\colon X \dashrightarrow X$, and for any $\epsilon>0$ 
there exists a birational map $h\colon Y \dashrightarrow X$ such that the topological entropy of $g := h^{-1}\circ f \circ h$ is no less than 
$-\epsilon+ \max_{0\le i \le d} \{ \log \lambda_i(f) \}$ (see~\cite{dethelin} for a recent construction that sheds some interesting perspective on this problem). 

In the non-Archimedean setting, the situation is very different and has been explored only in dimension $1$. 
The first author and Rivera-Letelier~\cite{FR22} have studied in depth the case of rational 
maps of the projective line. They have proved that any rational map $f\colon \p^1_k\to \p^1_k$ whose equilibrium measure has positive Lyapunov exponents satisfies $\htop(f)=\log(\deg(f))$, see~\cite{FR22}. 
Benedetto et al~\cite{BCK+17} have also produced examples of rational maps with coefficients in a $p$-adic 
field having positive entropy, for which one has strict inequality in~\eqref{eq:DShtop}. 

From now on, we shall focus our attention to one of the most striking difference between complex and non-Archimedean dynamics, that is the existence
of regular algebraic self-maps having dynamical degrees $>1$ and vanishing topological entropy. We shall also stick our attention to \emph{regular} maps
$f \colon X \to X$, in other words we assume that $f$ is an endomorphism of a projective variety $X$ of dimension $d$. Define
the residue field of $k$ by setting $\tilde k = k^\circ/k^{\circ\circ}$ with $k^{\circ\circ}=\{|z|<1\}$.

When $X= \p^1_k$ and $\deg(f)>1$, then $\htop(f)=0$ if and only if $f$ is M\"obius conjugated to a rational
map that descends to a rational map on $\tilde k$ the same degree as $f$, by~\cite[Th\'eor\`eme~D]{FR10}. 
In more geometric terms, $f$ is conjugated to a rational map $g\colon \p^1_k\to \p^1_k$ that extends to a morphism
of $k^\circ$-schemes $\mathfrak{g} \colon \p^1_{k^\circ}\to \p^1_{k^\circ}$. 
When the latter property is satisfied, then we say that $g$ has good reduction. Similar notions were introduced in higher dimensions, notably in~\cite{hutz,PST,STW}. 
In this paper, we shall say that a regular self-map $f\colon X \to X$ has good reduction if there exists a (flat projective) model $\fX \to \spec k^\circ$
such that the extension of $f$ to $\fX$ remains regular.

We can now state our second result.

\begin{Theorem}\label{thm:entropy good reduction}
Let $f : X \to X$ be any regular surjective map on a projective
variety defined over a complete non-Archimedean  field $k$.
Suppose one can find a model $\mathfrak{X}$ such that $f$ extends to a regular self-map of $\mathfrak{X}$. Then
the topological entropy of the map induced by $f$ on the Berkovich analytification $X^{\an}$ of $X$ is equal to $0$.
\end{Theorem}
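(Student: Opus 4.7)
The plan is to use the reformulation of topological entropy from Section~\ref{sec:uniform}: since $f$ is regular, it induces a continuous self-map of the compact Hausdorff space $X^{\an}$, and one has $\htop(f) = \sup_\cE \limsup_n \tfrac{1}{n} \log N(n, \cE)$, where the supremum ranges over entourages $\cE$ of the diagonal in $X^{\an} \times X^{\an}$ and $N(n, \cE)$ is the maximal cardinality of an $(n, \cE)$-separated subset. The goal is thus to show that $N(n, \cE)$ stays bounded as $n \to \infty$ for every $\cE$. By the basis theorem for the canonical uniform structure, I may restrict to entourages of the form $\cE_\fY = \{(x,y) : \red_\fY(x) \text{ and } \red_\fY(y) \text{ lie in a common irreducible component of } \fY_s\}$, indexed by models $\fY$ of $X$ over $k^\circ$.

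The core observation is that for the good-reduction model $\fX$, the reduction map $\red_\fX : X^{\an} \to \fX_s$ is $f$-equivariant: $\red_\fX \circ f = f_s \circ \red_\fX$, where $f_s$ denotes the induced regular self-map of the special fiber. Since $f_s$ is a morphism of Noetherian schemes, it sends each irreducible component of $\fX_s$ into a single irreducible component, so the entourage $\cE_\fX$ is $(f \times f)$-invariant. Consequently $N(n, \cE_\fX) \le \# \textup{Irr}(\fX_s)$ for all $n$, which already yields $\htop(f, \cE_\fX) = 0$.

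The main obstacle is to promote this uniform bound to arbitrary entourages $\cE_\fY$ with $\fY$ strictly refining $\fX$. My strategy would be to find a further refinement $\widetilde{\fX}$ of $\fY$ on which $f$ still extends regularly: the argument above then gives $N(n, \cE_\fY) \le N(n, \cE_{\widetilde{\fX}}) \le \# \textup{Irr}(\widetilde{\fX}_s)$, a bound independent of $n$. Such a model $\widetilde{\fX}$ is to be constructed through a sequence of $f$-compatible blowups of $\fX$, for instance along schematic closures of $f_s$-orbits, or along iterated preimages $f_s^{-N}(Z)$ of closed subschemes $Z \subset \fX_s$, each of which preserves regularity of $f$ by construction. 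The delicate point is cofinality: one must show that such $f$-compatible blowups generate arbitrarily fine entourages, since a naive blowup along a non-$f_s$-invariant center destroys regularity of $f$. Establishing this cofinality is presumably the principal role played by the $\epsilon$-reduction construction announced in the abstract, and is the step where the bulk of the technical work would go; once in place, the theorem follows by combining it with the equivariance argument above.
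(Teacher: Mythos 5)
Your opening observation is correct and matches the role of diagram~\eqref{eq:reduction} in the paper: for the good-reduction model $\fX$, the entourage $\cE_\fX = \bigcup_Z \red_\fX^{-1}(Z)\times\red_\fX^{-1}(Z)$ (union over components $Z$ of $\fX_s$) is forward-invariant under $f\times f$ because a scheme morphism sends irreducibles into irreducibles, and hence $N(n,\cE_\fX)$ is bounded independently of $n$. You also correctly identify the remaining difficulty: one must control all finer entourages. But the step you relegate to ``the bulk of the technical work'' --- cofinality of $f$-compatible models $\widetilde{\fX}$ among all models --- is a genuine gap, and the parenthetical ``each of which preserves regularity of $f$ by construction'' is false. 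Blowing up a model $\fX$ along a center $W\subset\fX_s$ admits a lift of $\ff$ only if the pulled-back ideal sheaf $\ff^{-1}I_W$ becomes invertible on the blowup, which for a non-\'etale $\ff$ essentially never happens unless $W$ is totally invariant scheme-theoretically. For instance, take $f(z)=z^2$ on $\p^1_{k^\circ}$ and $W$ the origin of the special fiber: $\ff^*(z)=z^2$, so $I_W=(z,\pi)$ pulls back to $(z^2,\pi)$, which is not Cartier on the blowup; the lift fails on the chart containing the strict transform of $\{z=0\}$. In fact, for a good-reduction self-map of $\p^1$ of degree $\ge 2$, the set of periodic type-II points need not be dense (e.g.\ $z\mapsto z^2$ has Julia set $\{\zeta_{0,1}\}$), so finite $f$-invariant vertex sets cannot be cofinal and no inductive blowup process along $f_s$-orbits or iterated preimages can terminate at a model to which $f$ still extends.

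The paper resolves this by \emph{not} refining the model at all. The model $\fX$ is fixed once and for all; what is refined is the target of the reduction map. The $\epsilon$-reduction $\fX[\e]$ is not a model of $X$ over $k^\circ$ but a compact totally disconnected space built by patching $\prim(\cO(V_i)_\e)$ --- the closure of the set of primary ideals in the ring $\cO(V_i)^\circ/\{\rho<\e\}$ --- over an affine cover $\{V_i\}$ of $\fX_s$. Good reduction guarantees (via the commuting square~\eqref{eq:com-red-f}) that $\ff$ descends to a continuous self-map $f_\e$ of $\fX[\e]$ for every $\e\in(0,1]$; the needed cofinality of covers comes from letting $\e\to0$ (Lemma~\ref{lem:subcoverAra} and Proposition~\ref{prop:fine-cover}), not from passing to finer models. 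One then proves $\htop(f_\e)=0$ by absolute Noetherian approximation reducing to Noetherian Priestley spaces, where all Radon measures are atomic and the variational principle applies. The conceptual move you are missing is precisely to leave the category of $k^\circ$-schemes: the space $\fX[\e]$ lives over the non-Noetherian, non-reduced ring $k_\e=k^\circ/\fm_\e$, and it is this formal-analytic slack in the parameter $\e$, rather than further birational modifications of $\fX$, that yields arbitrarily fine $f$-equivariant covers.
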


It is natural to ask whether conversely any regular self-map of a projective variety having vanishing topological entropy has good reduction. 
In dimension $1$, Favre-Rivera Letelier's result mentioned above settles this question by the affirmative when $\deg(f)>1$. In higher dimensions, we may build
counter-examples as follows. Let $f_1\colon \p^1_k\to \p^1_k$ be an endomorphism of degree $\geq 2$ having good reduction and let $f_2\colon \p^1_k\to \p^1_k$ be an automorphism having a repelling fixed point. Then $f:=f_1\times f_2\colon (\p^1)^2\to (\p^1)^2$ has vanishing topological entropy and does not have good reduction. 

Recall that a morphism $f\colon X \to X$ is said to be polarized if there exists an ample line bundle $L\to X$ and an integer $q\ge 2$ such that 
$f^*L$ is linearly equivalent to $L^{\otimes q}$. 

We propose the following conjecture. 
\begin{conjecture}\label{conj1}
Let $k$ be any complete non-Archimedean field, and $X$ be a projective $k$-variety. 
Suppose $f\colon X \to X$ is a polarized morphism and $\htop(f)=0$. Then $f$ has
good reduction.
\end{conjecture}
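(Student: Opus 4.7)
The strategy I would pursue combines the canonical equilibrium measure of a polarized endomorphism with the $\epsilon$-reduction machinery developed in this paper. Since $f^*L \sim L^{\otimes q}$ with $q\ge 2$, Tate's averaging argument produces a canonical semipositive continuous metrization on $L^{\an}$, and hence a canonical probability measure $\mu_f$ on $X^{\an}$ \`a la Chambert-Loir. The measure $\mu_f$ is $f$-invariant and satisfies $f^*\mu_f = q^d\mu_f$; by the variational principle one has $h_{\mu_f}(f)\le \htop(f)=0$. The plan is to show that (i) vanishing metric entropy forces $\mu_f$ to be supported on finitely many divisorial points of $X^{\an}$, and (ii) such a finite $f$-invariant set of divisorial points can be promoted to a projective model on which $f$ is regular.

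The central step is (i): proving that $h_{\mu_f}(f)=0$ forces $\mu_f$ to be purely atomic and concentrated at finitely many divisorial (quasi-monomial) points of $X^{\an}$. In dimension one this is essentially Favre-Rivera Letelier's characterization, where good reduction is equivalent to $\mu_f$ being a Dirac mass at the Gauss point of a suitable model. In higher dimensions one would need a non-Archimedean analogue of the Briend-Duval theorem, asserting that any non-divisorial component of $\supp(\mu_f)$ contributes positive measure-theoretic entropy via the expansion of $f^*$ by the factor $q$ on $L$.

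Granting such a rigidity result, step (ii) would go as follows. Each $x_j \in \supp(\mu_f)=\{x_1,\dots,x_N\}$ is the Shilov point of some irreducible component of the special fiber of a projective model; one may take a common refinement $\fX$ whose Shilov boundary is exactly $\{x_j\}$. The polarization identifies $f^*\fL$ with $\fL^{\otimes q}$ on a further refinement, and should enforce that $f$ extends regularly to $\fX$ without indeterminacy. The $\epsilon$-reduction machinery of this paper is tailor-made for this: for each $\epsilon$ the action of $f$ on the $\epsilon$-reduction of $X^{\an}$ factors through a scheme-theoretic map, and polarization plus zero entropy should force these reductions to stabilize as $\epsilon\to 0^+$ to a genuine reduction of $X$.

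The principal obstacle is the rigidity step (i). Unlike in the complex setting, where Briend-Duval guarantee $h_{\mu_f}(f)=d\log q>0$ for every polarized $f$, in the non-Archimedean context this metric entropy really can vanish, as the good-reduction case shows; hence one cannot argue by a uniform positivity statement. One needs a sharp classification of those $f$-invariant measures on $X^{\an}$ with $h_{\mu_f}(f)=0$, presumably establishing that for polarized maps they must be concentrated on a finite set of divisorial points. Such a classification would likely demand new inputs in non-Archimedean pluripotential theory—in particular a serviceable theory of Lyapunov exponents for polarized endomorphisms on Berkovich spaces and a Brin-Katok type entropy formula adapted to the uniform structure used in this paper—and is the main reason the statement remains conjectural.
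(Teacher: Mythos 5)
The statement you are addressing is labeled a conjecture in the paper and is not proven there; the authors explicitly offer it as an open problem, so there is no internal argument to compare against. What you have written is a research plan, and you are candid that its central step is unresolved, so to your credit there is no overclaiming here. But the gap you flag is not a technical lacuna that one can hope to patch with existing tools: step~(i) is essentially a restatement of the conjecture in measure-theoretic clothing, and arguably a \emph{harder} statement. The variational principle only gives $h_{\mu_f}(f)\le\htop(f)=0$, and from there you need a converse Briend--Duval: that $h_{\mu_f}(f)=0$ forces $\mu_f$ to be finitely supported on divisorial points. But the whole content of the Briend--Duval theorem over $\C$ is the unconditional lower bound $h_{\mu_f}(f)=d\log q>0$, whose proof leans on Lelong numbers, regularization of currents, and the Archimedean geometry of $\C^d$. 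None of these has a usable analogue on $X^{\an}$ yet, and precisely because good reduction produces zero-entropy polarized maps, no such unconditional bound can hold non-Archimedeanly --- so you would need a genuinely new dichotomy argument, and you give no indication of where it would come from. Invoking ``Lyapunov exponents for polarized endomorphisms on Berkovich spaces'' and ``a Brin--Katok type entropy formula adapted to the uniform structure'' names the tools one would want but does not supply them; the cited result of Favre and Rivera-Letelier in dimension one already required a full theory of Lyapunov exponents and expansion on the Berkovich line, and building the higher-dimensional version is a project of at least the same order as the conjecture itself.

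Step~(ii) has a softer but real issue as well. Even granting that $\supp(\mu_f)=\{x_1,\dots,x_N\}$ is a finite totally invariant set of divisorial points, it does not follow formally that there is a projective model $\fX$ over $k^\circ$ whose Shilov boundary is exactly this set \emph{and} on which $\ff$ is everywhere defined. A finite collection of divisorial valuations need not be the set of Shilov points of a single model, and even when it is, the extension of $f$ to such a model may still have indeterminacy along the special fiber; the polarization $f^*\fL\simeq\fL^{\otimes q}$ controls the pullback of a line bundle but does not by itself kill indeterminacy. Your suggestion that the $\epsilon$-reduction machinery would ``stabilize as $\epsilon\to 0^+$'' to a genuine reduction is an appealing slogan, but the paper's $\epsilon$-reduction is built \emph{assuming} a regular model $\ff\colon\fX\to\fX$ already exists; running it in reverse to produce such a model is an entirely separate problem. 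So both halves of the plan currently rest on unproven rigidity statements, and the proposal should be read as an outline of what a proof might need rather than as evidence that the conjecture is within reach.
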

Note that in dimension 1, for any selfmap $\deg(f)>1$, one can find some $q\ge2$ and a line bundle of positive degree $L$ such that $f^*L\simeq L^{\otimes q}$, hence the above conjecture is known to be true by \cite[Th\'eor\`eme~C]{FR10}. Also, in the above conjecture one can be even more ambitious (or optimistic), and relax the polarization assumption by only imposing that all dynamical degrees of $f$  are distinct.

\smallskip
\begin{center}
$\diamondsuit$
\end{center}
\smallskip

Let us explain how we prove Theorem~\ref{thm:entropy good reduction}. Recall that any flat projective model
$\pi\colon \fX \to \spec k^\circ$ has the following structure. The preimage of the generic point of $\spec k^\circ$ is isomorphic to $X$; and the preimage
of the special point (i.e., over the maximal ideal $k^{\circ\circ}$) is a projective scheme $\fX_s$ defined over the residue field $\tilde k$. 
In this situation, we have a canonical reduction map $\red\colon X^{\an} \to \fX_s$ which is anti-continuous when $\fX_s$ is endowed with the Zariski topology. 
Since $f$ has good reduction, it induces a regular scheme morphism $\ff_s \colon  \fX_s\to \fX_s$, and the following diagram is commutative:
\begin{equation}\label{eq:reduction}
\xymatrix{
 X^{\an}  \ar[d]_{\red} \ar[r]^f & X^{\an}  \ar[d]^{\red}\\
 \fX_s  \ar[r]^{\ff_s} & \fX_s}
\end{equation}
If we want to exploit this diagram, then it is natural  to first prove that the topological entropy of $\ff_s$ is $0$. 
We actually prove a very general result for any quasi-compact Noetherian schemes, endowed with either
its Zariski topology $\tau^{\zar}$ or with its constructible topology $\tau^{\con}$. We refer to \S\ref{sec:two-topos} for a discussion on the constructible
topology. Suffice it to say that it always Hausdorff, it refines the Zariski topology, and 
a quasi-compact scheme is compact for $\tau^{\con}$. 
\begin{Theorem}\label{thm:entropy zar et con}
Suppose $X$ is a quasi-compact Noetherian scheme, and let $f\colon X\dashrightarrow X$ be any dominant rational self-map.
Then we have
\[ \htop(f,\tau^{\con})=\htop(f,\tau^{\zar})=0~.\]
\end{Theorem}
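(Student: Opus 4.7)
The plan is to reduce the Zariski statement to the constructible one, and then prove $\htop(f,\tau^{\con})=0$ by a Noetherian induction on $\dim X$. Since $\tau^{\con}$ refines $\tau^{\zar}$, every Zariski open cover is also a constructible cover, so $\htop(f,\tau^{\zar})\le\htop(f,\tau^{\con})$ and it suffices to treat the constructible case.

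Because $\tau^{\con}$ makes $X$ a compact Hausdorff Stone space with a clopen basis of constructible sets, any finite open cover admits a refinement by a finite partition $\cP$ into constructible subsets, and $N(\cP^{(n)})$ with $\cP^{(n)}=\cP\vee f^{-1}\cP\vee\cdots\vee f^{-(n-1)}\cP$ equals the number of non-empty cells of $\cP^{(n)}$. The goal is to prove $N(\cP^{(n)})$ grows polynomially in $n$. Entropy is invariant under passing to a power of $f$ (up to a factor), and $X$ has only finitely many irreducible components on which $f$ acts by a partial map; after a suitable power $f^N$ this becomes the identity on components. Since entropy splits additively across a finite disjoint union, we may assume $X$ is irreducible of dimension $d$ with generic point $\eta$ fixed by $f$.

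We now argue by induction on $d$, the case $d=0$ being immediate. Let $\cP=\{P_0,P_1,\ldots,P_m\}$ with $\eta\in P_0$, and let $Z:=X\setminus P_0$, whose Zariski closure $\bar Z$ is proper of dimension $<d$. Each non-empty cell of $\cP^{(n)}$ is either the unique \emph{generic cell} $V_\eta:=\bigcap_{i<n}f^{-i}(P_0)$, or has a well-defined \emph{first exit time} $k\in\{0,\ldots,n-1\}$ (the first index at which the itinerary leaves $P_0$), in which case it lies in $\overline{f^{-k}Z}$. Since $X$ is irreducible and $f\colon X\dashrightarrow X$ is dominant of the same dimension, $f$ is generically finite, whence $\overline{f^{-k}Z}$ is a proper closed subset of $X$ of dimension $<d$ for every $k$. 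The cells of $\cP^{(n)}$ with first exit time $k$ are in bijection with the cells of $\cP^{(n-k)}$ meeting $\overline{f^{-k}Z}$ nontrivially (via translation by $k$ in the itinerary), so their number is bounded by the number of length-$(n-k)$ itineraries of points in $\overline{f^{-k}Z}$; by the inductive hypothesis (applied in the form stated below), this is polynomial in $n-k$. Summing over $k$ gives $N(\cP^{(n)})\le 1+\sum_{k=0}^{n-1}\mathrm{poly}(n-k)=\mathrm{poly}(n)$, and the theorem follows.

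The main obstacle is that $f$ need not preserve $\bar Z$, so the inductive hypothesis cannot be directly applied as ``entropy on $\bar Z$ under $f|_{\bar Z}$ vanishes.'' The cleanest resolution is to strengthen the inductive statement to assert, for every Noetherian scheme $X$ with dominant rational self-map $f$, every finite constructible partition $\cP$, and every constructible subset $C\subseteq X$, a polynomial bound on the number of length-$n$ itineraries of points in $C$, with the induction carried out on $\dim C$. The inductive step then exploits the following mechanism: if $C$ is irreducible with generic point $\eta_C$, then $\eta_C$ contributes a single itinerary, and since $\eta_C\in V_n^{\mathrm{gen}}:=\bigcap_{k<n}f^{-k}(P_{j_k^{(\eta_C)}})$ (a constructible set containing a dense open of $C$), the remaining points lie in a proper closed subset $W_n\subsetneq C$ of dimension $<\dim C$, to which the inductive hypothesis applies. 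The extra care needed to make the $W_n$'s contribute a uniform polynomial bound (rather than one depending on $n$) is the technical crux; it boils down to the fact that each ``layer'' $\overline{f^{-k}Z}$ has dimension strictly less than $d$ and the itineraries on each layer are controlled by the induction at the next lower dimension.
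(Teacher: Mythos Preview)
Your strategy differs substantially from the paper's. For $\tau^{\con}$ the paper argues via measure theory: it shows $(X,\tau^{\con},\le)$ is a Noetherian Priestley space, proves every Radon measure on such a space is atomic, extends the partially continuous $f$ to a continuous self-map of $X\sqcup\{o\}$ (with $o$ a new minimal point absorbing $I(f)$), and concludes by the variational principle. For $\tau^{\zar}$ the paper does give a combinatorial recursion (Theorem~\ref{thm:zar-entropy}, stated for continuous maps), but it yields only subexponential growth of $N(\fU_n)$ via a linear recurrence with fixed coefficients; the paper explicitly leaves a polynomial bound as an open question.

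Your induction has a genuine gap, which you flag but do not close. The first-exit-time decomposition at the top level is fine once one corrects the bookkeeping: cells of $\cP^{(n)}$ with first exit time $k$ are indexed by length-$(n-k)$ itineraries of $f^k(x)\in Z$ (not of $x\in\overline{f^{-k}Z}$ as you wrote), so $N_X(n)\le 1+\sum_{k<n}N_Z(n-k)$ with $Z$ \emph{fixed}. The trouble is the inductive bound on $N_Z$. Running the same argument on an irreducible component $C$ of $\bar Z$ with generic point $\eta_C$, the points deviating from $\eta_C$'s itinerary at first time $k$ have $f^k(x)\in D_k:=\overline{f^k(C)}\setminus P_{j_k}$ with $\dim D_k<\dim C$, and now $D_k$ genuinely varies with $k$. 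The inductive hypothesis gives, for each fixed $D_k$, a polynomial $p_{D_k}$ whose degree and coefficients depend on $D_k$; nothing forces these to be uniform in $k$, so $\sum_k p_{D_k}(m-k)$ need not be polynomial or even subexponential. Your concluding sentence (``it boils down to the fact that each layer has dimension $<d$ and is controlled by the induction'') merely restates what must be proved. The paper's Zariski argument escapes this by Noetherian descent on a \emph{fixed} finite chain $H_1\supsetneq H_2\supsetneq\cdots\supsetneq\emptyset$, independent of $n$, which is exactly the uniformity your scheme lacks.
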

We give a (short) combinatorial proof of $\htop(f,\tau^{\zar})=0$  in \S\ref{sec:noether-entropy}. In order to prove $\htop(f,\tau^{\con})=0$, we rely
on the variational principle which holds unconditionnally on a compact space. 

Returning to the proof of Theorem~\ref{thm:entropy good reduction}, we are now facing the problem to understand the entropy of $f$ on the fibers of the reduction map. 
When $V$ is any affine subscheme of $\fX_s$, then $\red^{-1}(V)$ is an affinoid domain by~\cite[Theorem~3.1]{Bosch77}, that is, a closed analytic subset of the closed unit ball $\bar{\D}^N(0,1)$. 
We have:
\begin{restatable}{Theorem}{affinoid}\label{thm:entropy-affinoid}
The topological entropy of any analytic self-map of an affinoid domain is equal to $0$. 
\end{restatable}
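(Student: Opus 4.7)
The plan is to reduce the computation of $\htop(f)$ on the affinoid $V = \cM(A)$ to the scheme-theoretic entropy vanishing provided by Theorem~\ref{thm:entropy zar et con}, transferred through the reduction map of a suitable formal model. Since $f(V) \subset V$, the induced bounded homomorphism $f^\ast \colon A \to A$ is contractive for the spectral seminorm and thus preserves the subring $A^\circ$ of power-bounded elements. My first step would be to invoke Raynaud's theory of admissible formal schemes to produce an admissible formal $k^\circ$-model $\fV$ of $V$ to which $f$ extends as a morphism $\ff \colon \fV \to \fV$; all iterates $f^n$ then automatically extend to $\ff^n$, inducing regular self-maps $\ff_s^n$ of the special fiber $\fV_s$, which is of finite type (hence Noetherian) over the residue field $\tilde{k}$. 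The diagram analogous to~\eqref{eq:reduction} involving the reduction map $\red_{\fV}\colon V \to \fV_s$ commutes for every $n$.

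Second, I would pass from the Berkovich topology on $V$ to the constructible topology on $\fV_s$. Using the fact that, as $\fV$ ranges over all admissible formal models of $V$, the sets $\red_{\fV}^{-1}(Z)$ for $Z \subset \fV_s$ Zariski-closed form a basis for the topology on $V$, I would refine any given open cover $\fU$ of $V$---after possibly passing to an admissible blow-up of $\fV$ still carrying the extension of $f$---by a finite cover of the form $\{\red_{\fV}^{-1}(Z_i)\}_{i=1}^{m}$ with Zariski-closed $Z_i$ covering $\fV_s$.

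Third, the commutativity of reduction gives $f^{-j}\red_{\fV}^{-1}(Z_i) = \red_{\fV}^{-1}(\ff_s^{-j} Z_i)$, so the iterated join $\bigvee_{j=0}^{n-1} f^{-j}\fU$ is the $\red_{\fV}^{-1}$-pullback of the join $\bigvee_{j=0}^{n-1} \ff_s^{-j}\{Z_i\}$, viewed as a cover in the constructible topology $\tau^{\con}$ of $\fV_s$. Its minimal subcover cardinality is therefore bounded above by that of the latter. By Theorem~\ref{thm:entropy zar et con}, $\htop(\ff_s, \tau^{\con}) = 0$; hence the cardinality grows subexponentially in $n$, yielding $\htop(f, \fU) = 0$, and since $\fU$ was arbitrary, $\htop(f) = 0$.

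The main obstacle will be the first step---producing a single formal model $\fV$ of $V$ to which the analytic map $f$ extends, and, given any prescribed open cover of $V$, refining $\fV$ to a larger admissible model that encodes this cover while preserving the extension of $f$. This calls upon the Bosch--L\"utkebohmert--Raynaud machinery of admissible formal blow-ups and the stability of the extension of $f$ under them. A secondary technical point is matching the Berkovich-topology description of open covers of $V$ with the $\tau^{\con}$-description on $\fV_s$ cleanly enough to apply Theorem~\ref{thm:entropy zar et con}.
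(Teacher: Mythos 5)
Your step involving admissible formal blow-ups contains a genuine gap, not merely a "technical point" to be worked out. Given a finite open cover $\fU$ of $V = \cM(A)$, Raynaud's theory does produce an admissible blow-up $\fV' \to \fV$ whose reduction refines $\fU$; but there is no reason for the analytic map $f$ to extend to $\fV'$. Extending $f$ to the blow-up of $\fV = \spf A^\circ$ along an open ideal $\mathcal{I}$ requires $F^*(\mathcal{I}) \subset \mathcal{I}$, and this fails generically. For instance, take $A = k\{T\}$, $F(T) = T + a$ with $1/p < |a| < 1$ over $\mathbb{Q}_p$; then $F$ preserves $A^\circ$, but $F^*(p,T) = (p, T+a) \not\subset (p,T)$, so $f$ does not extend to the blow-up along $(p,T)$. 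The dichotomy is sharp: the canonical model $\spf A^\circ$ always carries $f$ but its reduction is too coarse to detect an arbitrary cover, while models fine enough to detect the cover will not, in general, carry $f$. This is exactly the obstruction the paper is built to circumvent, so declaring it "the main obstacle" without offering a resolution leaves the proof incomplete.

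The paper's device is the $\e$-reduction, which replaces your entire formal-model apparatus. For $0 < \e \le 1$ one forms $A_\e := A^\circ/\{\rho < \e\}$, a finitely generated $k_\e$-algebra, and sends $x \in \cM(A)$ to the ideal $\{a : |a(x)| < \e\}$ in $A_\e$. Crucially, the target is the space $\Id(A_\e)$ of \emph{all} ideals (with its constructible topology), not $\spec$ of a special fiber: the image points are primary but typically not prime. This construction is functorial with respect to \emph{every} bounded endomorphism of $A$ (since $F^*$ is contractive for $\rho$, it automatically descends to $A_\e$), so no $f$-invariance condition ever has to be checked. Lemma~\ref{lem:subcoverAr} shows that any open cover of $\cM(A)$ is refined by $\red_\e^{-1}(\fV)$ for a clopen cover $\fV$ of $\Id(A_\e)$ once $\e$ is small enough, which is the analogue of your refinement step but without changing the target space. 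One then still cannot quote Theorem~\ref{thm:entropy zar et con} directly, because $k_\e$ is not Noetherian; the paper handles this via absolute Noetherian approximation (Theorem~\ref{thmentropyfinitetype}), writing $\Id(A_\e)$ as a projective limit of ideal spaces of Noetherian rings and passing to the limit of entropies via Lemma~\ref{leminverselimitentropy}. So the two ingredients your sketch is missing are precisely the two novelties of this section: the $\e$-reduction in place of admissible blow-ups, and the Noetherian approximation to make the Priestley-space entropy result applicable.
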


 Since the proof of this result contains the key ideas of the proof of Theorem~\ref{thm:entropy good reduction}, we shall 
 describe it in some details. 
 
Recall that an affinoid domain is the Berkovich spectrum\footnote{i.e., the set of multiplicative semi-norms on $A$ that are bounded by $|\cdot|$.}of a Banach $k$-algebra $(A, |\cdot|)$ of a special type, called an affinoid algebra. We denote by $\cM(A)$ this spectrum; this is a compact space.
We shall suppose that $A$ is reduced so that its spectral norm $\rho(a) := \lim_n | a^n|^{1/n}$ is a norm 
equivalent to $|\cdot|$. In this situation it is a classical construction to consider the reduction $\tilde A:= A^\circ/A^{\circ\circ}$
where $A^\circ = \{ \rho \le 1\}$ and $A^{\circ\circ}=  \{ \rho < 1\}$. One can prove that $\tilde A$ is a finitely generated $\tilde k$-algebra,
and we get a reduction map $\red \colon X \to \spec \tilde A$ sending a point $x\in X$ to the projection of the prime ideal $\{ a\in A, |a(x)| < 1\}$ in $\tilde A$. 
This reduction map is anti-continuous when $\spec \tilde A$ is endowed with the Zariski topology, and any analytic map $f\colon \cM(A) \to \cM(A)$ corresponds to a bounded endomorphism of $A$, and gives rise to a similar commutative
diagram as~\eqref{eq:reduction}.

 Pick any $0<\e \le1$. We extend the previous construction, and define  $A_\e:= A^\circ/\{ \rho < \e\}$. When $k$ is algebraically closed, one can prove that $A_\e$ is a 
 finitely generated algebra over the ring $k_\e = k^\circ/\{|z| < \e\}$, and as above
it is natural to associated to a point $x\in \cM(A)$ the ideal $\{ |a(x)|< \e\} \subset A_\e$. It is a fact that this ideal is only a primary ideal in general, and this suggests one to replace
$\spec A_\e$ by the space $\Id(A_\e)$ of \emph{all} ideals in $A_\e$. Again we have a reduction map  $\red_\e \colon X \to \Id(A_\e)$ which is anti-continuous when $ \Id(A_\e)$
 is endowed with the Zariski topology, and this construction is functorial.

The first key observation (Lemma~\ref{lem:subcoverAra}) is that for any open cover $\fU$ of $\cM(A)$, there exists $0<\e\le1$ and a finite cover $\fV$ of $\Id(A_\e)$
such that  $\red_\e^{-1}(\fV)$ refines $\fU$. This cover $\fV$ is not open for the Zariski topology, but it is open for the constructible topology $\tau^{\con}$ on $\Id(A_\e)$. (On the other hand, it is worthy to note that $\red_\e$ is still not a continuous map even if $\Id(A_\e)$ is equipped with the constructible topology.) These observations show that it is sufficient to prove that for any analytic map $f\colon \cM(A) \to \cM(A)$, and for any $0<\e \le1$, the induced map
$f_\e \colon (\Id(A_\e),\tau^{\con}) \to (\Id(A_\e),\tau^{\con})$ has  topological entropy $0$. 

Theorem \ref{thm:entropy zar et con} could be applied to yield the above needed property, had one not a difficulty: $k_\e$ is not a Noetherian ring in general (e.g., when $k$ is algebraically closed). We reduce to that case by considering finitely generated rings $R\subset k_\e$
containing all coefficients determining the morphism $f_\e$, and by building on an argument of absolute Noetherian approximation (see, e.g.,~\cite[\S 32.5]{stackproject}). 
To simplify the discussion we shall thus do as if $A_\e$ was Noetherian. 
In that case, $(\Id(A_\e),\tau^{\con})$ is a compact space which has very peculiar structure. 
Using Hochster's characterization of spectral spaces~\cite{hochster}, we prove that $(\Id(A_\e),\tau^{\con})$ is homeomorphic to $(\spec B,\tau^{\con})$ where $B$
is some Noetherian ring depending on $A$ and $\e$\footnote{Such a space is called a Priestley space, see~\cite{dickmann}.}. 
Extending an argument going back to Gignac~\cite{gignac}, see also~\cite{xie}, we prove that any Radon measure on $(\spec\bar{A},\tau^{\con})$ is atomic, 
which allows us to conclude by the variational principle. 

 \medskip
 
For the proof of Theorem~\ref{thm:entropy good reduction},  we were not able to exploit~\eqref{eq:reduction} and to apply directly Theorem~\ref{thm:entropy-affinoid} (in combination with Theorem~\ref{thm:entropy zar et con}).  However, the same line of arguments can be used. Recall that we have a projective model $\fX$ of $X$ such that the extension of $f$ to this model $\ff \colon \fX \to \fX$ is regular. We cover $\fX_s$ by affine subschemes $\{V_i\}_i$, so that  $\{\red^{-1}(V_i)\}_i$
 forms a finite cover by affinoid domains. Denote by $\cO(V_i)$ the affinoid algebra of analytic functions on $V_i$. Then for any $0<\e\le 1$ we may consider the $\e$-reductions 
 $ \Id(\cO(V_j)_\e)$, each equipped with its canonical reduction map
 $\red_\e \colon \red^{-1}(V_i) \to \Id(\cO(V_j)_\e)$. 
 
 The natural idea would be to patch these pieces $\Id(\cO(V_j)_\e)$ 
 together 
 and apply the previous arguments.
 Unfortunately,  we could not find a natural way to patch them directly, see Remark \ref{rem:cover-prim} for a more precise discussion.
 To get around this problem we observe that for any affinoid algebra $A$ and any $x\in \cM(A)$, 
the ideal $\red_\e(x)$ is primary, and this suggests to consider the closure  $\prim(A_\e)$ of the set of all primary ideals
in $A_\e$. We show that there is a natural way to patch $\prim(\cO(V_i)_\e)$ together (along compact subsets)  to get a compact space $\fX[\e]$ and an $\e$-reduction map $\red_\e \colon X^{\an} \to \fX[\e]$ .
We further show that $\ff$  induces a canonical continuous map $f_\e \colon (\fX[\e],\tau^{\con}) \to (\fX[\e],\tau^{\con})$.

 We conclude by proving that $f_\e$ has topological entropy zero (Proposition~\ref{prop:eps-zero}).
Our argument is based  on the variational principle and as before on a reduction to the Noetherian case
by a delicate absolute Noetherian approximation principle that we develop in details. 

\medskip

The construction of the $\e$-reduction which is natural in our dynamical context uses several objects that are exotic in the landscape of commutative algebra. 
The ring $k_\e$ is a local ring for which its maximal ideal is equal to its nilradical, but this ring is far from being Noetherian. The ideal space $\Id(R)$ was considered in some papers including~\cite{cornulier,fontana,dickmann}. The $\e$-reduction can be extended to $k$-analytic spaces equipped with a formal structure in the sense of Bosch~\cite{Bosch77}. 
It would be interesting to develop further applications of our construction in analytic geometry. 

\smallskip
\begin{center}
$\diamondsuit$
\end{center}
\smallskip

The plan of the paper is as follows. We discuss the topological entropy of continuous maps on compact spaces, and then more generally of partially defined maps
on compact spaces in Section\ref{sec:topo-def}. Section\ref{sec:gromov} contains the proof of Theorem~\ref{thm:main}. 
In Section \ref{sec:noetherian-priestley}, we give a proof of Theorem~\ref{thm:entropy zar et con}, and discuss in some details Priestley spaces and their relationships with spectral spaces.  In particular, we introduce the notion of Noetherian Priestley spaces that plays an important role in the sequel. 
Section\ref{sec:entropyaffinoid} concerns affinoid spaces and contains a proof of Theorem~\ref{thm:entropy-affinoid}. 
We develop our theory of $\e$-reduction associated to a projective model over $k^\circ$, and complete the proof of Theorem~\ref{thm:entropy good reduction}
in Section~\ref{sec:endo-goodred}. 
Base change is a powerful tool in non-Archimedean analysis. We discuss the behaviour of the topological entropy under this operation in the Appendix~\ref{sec:appennd}.

\medskip

\noindent
{\bf Acknowledgements}: we would like to thank Antoine Ducros, S\'ebastien Boucksom and Xinyi Yuan for discussions on the reduction map of a Berkovich analytic space
and on the invariance of the Euler characteristic in a non-Noetherian context.


\section{Topological entropy}\label{sec:topo-def}

In this section, we gather some general facts on the topological entropy of continuous maps, and more generally
for partially defined continuous maps on compact sets. 

\subsection{Complexity of covers}
A finite cover $\fU= \{U_i\}_{i\in I}$ of a non-empty set $X$ is a finite family such that $\cup_i U_i = X$. 
We define its complexity $N(\fU)\in \N^*$ as the minimum number of sets belonging to $\fU$ that 
are needed to cover $X$. 
Given a second cover $\fV=\{V_j\}_{j\in J}$, we define the joint cover by $\fU \vee \fV = \{ U_i\cap V_j\}_{i,j}$. 
Observe that $N(\fU \vee \fV)\le N(\fU)\times N(\fV)$.
A cover $\fV$ is a refinement of $\fU$ if for all $j\in J$ one can find $i = i(j) \in I$ such that $V_j \subset U_{i}$. 
When $\fV$ refines $\fU$, we write $\fV \le \fU$, and it follows that $N(\fV)\ge N(\fU)$.

Let $f\colon X \to X$ be any map. Any cover $\fU$ induces a cover $f^{-1}(\fU)= \{f^{-1}(U_i)\}_{i\in I}$ 
whose complexity satisfies $N(f^{-1}(\fU))\le N(\fU)$ with equality when $f$ is surjective.
For each $n\in \N$, write $\fU_n= \fU \vee \cdots \vee f^{-(n-1)}(\fU)$.
The previous observations yield
\[
N(\fU_{n+m}) \le N(\fU_{n}) \times N(f^{-n}(\fU_{m})) \le N(\fU_{n}) \times N(\fU_{m})\]
so that we can define $h(f,\fU) = \lim_{n\to\infty} \frac1n \log N(\fU_n)= \liminf_{n\to\infty} \frac1n \log N(\fU_n) \in \N^*$.
Sometimes, we write $N(f,\fU,n)$ for $N(\fU_n)$ to emphasis $f.$
\begin{remark}
We shall work both with open covers (when $U_i$'s are all open) and with compact covers (when $U_i$'s are all compact). The latter situation appears naturally in 
Berkovich theory where proper analytic sets admits finite covers by affinoids.
\end{remark}

Let $\Gamma$ be any non-empty family of finite covers of $X$. Then we define
the $\Gamma$-entropy of $f$ by:
\[h(f,\Gamma):= \sup_{\fU \in \Gamma} h(f,\fU)~.\]

For any two non-empty families $\Gamma_1,\Gamma_2$ of finite covers of $X$, we say that $\Gamma_1$ is finer than $\Gamma_2$, if 
any finite cover in $\Gamma_2$ admits a refinement by a finite cover in $\Gamma_1$.

The next lemma follows immediately from the remarks in the previous section.
\begin{lemma}
Let $X$ be a set and $f: X\to X$ be an endomorphism. Let  $\Gamma_1,\Gamma_2$ be two non-empty  families of finite covers of $X$.
If $\Gamma_1$ is finer than $\Gamma_2$, then $h(f,\Gamma_1)\geq h(f, \Gamma_2).$
\end{lemma}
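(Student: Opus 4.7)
The plan is to reduce everything to the single monotonicity statement $h(f,\fV)\ge h(f,\fU)$ whenever $\fV$ refines $\fU$, and then invoke the refinement hypothesis pointwise before taking the supremum.

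First I would verify that refinement is preserved under the dynamical operations that enter the definition of $h(f,\fU)$. If $\fV\le\fU$, then writing $V_j\subset U_{i(j)}$ gives $f^{-1}(V_j)\subset f^{-1}(U_{i(j)})$, so $f^{-1}(\fV)\le f^{-1}(\fU)$. Refinement is clearly compatible with joint covers as well: if $\fV\le\fU$ and $\fV'\le\fU'$ then $\fV\vee\fV'\le\fU\vee\fU'$. Iterating these two facts, one obtains $\fV_n\le\fU_n$ for every $n\in\N$, where $\fV_n=\fV\vee f^{-1}\fV\vee\cdots\vee f^{-(n-1)}\fV$ and similarly for $\fU_n$.

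Next, I would use the general observation, already recorded just before the lemma, that $\fV_n\le\fU_n$ implies $N(\fV_n)\ge N(\fU_n)$, since any subfamily of $\fU_n$ which covers $X$ can be lifted via the refinement map to a subfamily of $\fV_n$ of the same cardinality that still covers $X$. Dividing by $n$ and passing to the limit gives
\[
h(f,\fV)=\lim_{n\to\infty}\tfrac{1}{n}\log N(\fV_n)\ \ge\ \lim_{n\to\infty}\tfrac{1}{n}\log N(\fU_n)=h(f,\fU).
\]

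Finally, I would invoke the hypothesis: for every $\fU\in\Gamma_2$ there exists $\fV=\fV(\fU)\in\Gamma_1$ with $\fV\le\fU$, so
\[
h(f,\Gamma_1)\ =\ \sup_{\fW\in\Gamma_1}h(f,\fW)\ \ge\ h(f,\fV)\ \ge\ h(f,\fU).
\]
Taking the supremum over $\fU\in\Gamma_2$ yields $h(f,\Gamma_1)\ge h(f,\Gamma_2)$, as required. There is no real obstacle here; the only thing to be slightly careful about is that monotonicity of $N$ under refinement goes in the direction that makes finer covers have larger entropy, which is exactly what the statement predicts.
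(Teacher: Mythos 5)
Your proof is correct and follows essentially the same route as the paper, which simply invokes the preceding remarks; you have just spelled out the two monotonicity facts (refinement is preserved under $f^{-1}$ and $\vee$, and $\fV\le\fU$ implies $N(\fV)\ge N(\fU)$) and then taken suprema. No issues.
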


\subsection{Topological entropy and the variational principle}\label{subsectionentropcover}
When $X$ is a topological space, denote by $\Gamma_{{\rm top}}$ the family of finite open covers.
The topological entropy of $f$ is then defined as
\[
\htop(f) :=\htop(f,\Gamma_{\rm top})= \sup_{\fU \in \Gamma_{\rm top}} h(f,\fU)~.
\]

Recall that a quasi-compact  space is a topological space for which any open cover admits a finite subcover, 
and that compact spaces are Hausdorff quasi-compact spaces.  The notion of topological entropy is mainly interesting when $X$ is quasi-compact and $f$ is continuous.
Although $h(f,\fU)$ is finite for all $\fU$, it may happen that $\htop(f)$ is infinite (e.g. when $f$ is the full shift on an infinite set of symbols), so that in general $\htop(f)\in [0,+\infty]$.

\medskip

Suppose now that $X$ is compact. A Radon measure is a positive measure $\mu$ 
on the $\sigma$-algebra of Borelian sets which is inner regular, i.e., for all Borel set $A\subset X$
we have $\mu(A) = \sup \mu(K)$ where the supremum is taken over all compact sets $K\subset A$.

By Riesz' representation theorem, any positive linear functional on the space of continuous functions on $X$
is represented by a unique Radon measure.

Let $\mu$ be any probability measure on $X$. 
The entropy of a finite partition $\xi =\{\xi_i\}_{i\in I}$ of $X$ by Borel sets is defined to be
$H(\xi) = -\sum_{i\in I} \mu(\xi_i) \log \mu(\xi_i)$. One can check that $H(\xi \vee \zeta) \le H(\xi) + H(\zeta)$, 
so that if $f\colon X \to X$ is a measure preserving map we have 
$H(\xi_{n+m})\le H(\xi_{n})+ H(\xi_{m})$ where $\xi_n = \xi \vee \cdots \vee f^{-(n-1)}(\xi)$
so that $h(f,\xi)= \lim \frac1n H(\xi_n)$ is well-defined. 

The metric entropy of $f$ with respect to $\mu$ is then the quantity
$h(f, \mu) = \sup_\xi h(f,\xi)$ where the supremum is taken over all finite partitions of $X$ by Borel subsets. 

The topological entropy and the metric entropy are related by the so-called variational principle which states: 
\begin{theorem}\label{thm:variational}
Let $X$ be a (Hausdorff) compact topological space, and let $f\colon X\to X$ be any continuous map.
Let $\cM(f)$ be the set of all Radon probability measures that are $f$-invariant. 
Then 
\[
\htop(f) = \sup_{\cM(f)} h_\mu(f)~.\]
\end{theorem}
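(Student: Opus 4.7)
The plan is to prove the two inequalities separately, following the classical Goodwyn/Misiurewicz strategy adapted to the compact Hausdorff (possibly non-metrizable) setting.

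For $\sup_{\mu \in \cM(f)} h_\mu(f) \leq \htop(f)$, I fix an $f$-invariant Radon probability measure $\mu$ and a finite Borel partition $\xi = \{\xi_1, \ldots, \xi_r\}$. Using inner regularity of $\mu$, I approximate each $\xi_i$ by a compact set $K_i \subset \xi_i$ with $\mu(\xi_i \setminus K_i) < \delta$; using that $X$ is compact Hausdorff hence normal, I enlarge these to pairwise disjoint open neighbourhoods $U_i \supset K_i$. Setting $K_0 := X \setminus \bigsqcup_i U_i$ and $\fU := \{U_i \cup K_0\}_{i=1}^{r}$, any atom of the joint refinement $\eta_n$ of the partition $\eta := \{K_0, \xi_1 \cap U_1, \ldots, \xi_r \cap U_r\}$ is contained in at most $2^n$ elements of $\fU_n$. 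The elementary estimate $-\sum p_i \log p_i \leq \log(\#\text{atoms})$, together with the standard continuity of partition entropy under small perturbations, yields $H_\mu(\eta_n) \leq \log N(\fU_n) + n \log 2$ and $|H_\mu(\xi) - H_\mu(\eta)| \leq \epsilon(\delta)$. Dividing by $n$, sending $n \to \infty$ then $\delta \to 0$, and eliminating the spurious $\log 2$ by a sharper iteration, I obtain $h_\mu(f,\xi) \leq h(f,\fU) \leq \htop(f)$; taking the supremum over $\xi$ concludes.

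For the reverse inequality $\htop(f) \leq \sup_{\mu \in \cM(f)} h_\mu(f)$, I fix a finite open cover $\fU$ and for each $n$ a minimum subcover of $\fU_n$ together with a representative $x_j^{(n)}$ from each of its elements. I form the empirical measures $\sigma_n := N(\fU_n)^{-1} \sum_{j} \delta_{x_j^{(n)}}$ and their time averages $\mu_n := n^{-1} \sum_{k=0}^{n-1} f^k_* \sigma_n$. By Riesz's representation theorem combined with Banach--Alaoglu, the set of Radon probability measures is weak-$*$ compact, so some subnet of $\{\mu_n\}$ converges to a limit $\mu$, which is $f$-invariant by the standard telescoping computation on $\mu_n(\varphi \circ f) - \mu_n(\varphi)$. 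The core step is the lower bound $h_\mu(f,\eta) \geq h(f,\fU)$ for a finite Borel partition $\eta$ refining $\fU$ and satisfying $\mu(\partial \eta_i) = 0$ for each $i$; such boundary conditions ensure weak-$*$ continuity of $\nu \mapsto H_\nu(\eta_n)$ at $\mu$, and combined with the combinatorial identity $H_{\sigma_n}(\eta_n) = \log N(\fU_n)$ and concavity of Shannon entropy, they transfer the required estimate through the limit.

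The main obstacle is the construction of $\eta$ with $\mu$-negligible topological boundary in the non-metrizable setting; in the metric case one simply shrinks each element of $\fU$ by a small radius, which is unavailable here. I would instead rely on Urysohn's lemma: for each $U \in \fU$ produce a continuous $\varphi_U \colon X \to [0,1]$ equal to $1$ on a compact core $K_U \subset U$ and to $0$ outside $U$, then form $\eta$ from level sets $\{\varphi_U < c_U\}$ at thresholds $c_U$ for which $\mu(\{\varphi_U = c_U\}) = 0$. Such thresholds exist outside a countable set for each $U$, so a common good choice exists for the finitely many $U \in \fU$, and the resulting partition satisfies the boundary condition required above. Alternatively, one may appeal to the abstract variational principle of Denker--Grillenberger--Sigmund valid on compact Hausdorff spaces, verifying that the Radon framework of the paper meets its hypotheses.
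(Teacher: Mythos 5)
Your Goodwyn direction (the inequality $\sup_\mu h_\mu(f) \le \htop(f)$) is the classical argument and does work in the compact Hausdorff setting, but there is a slip in the construction: with $K_0 := X \setminus \bigsqcup_i U_i$ and the $U_i$ pairwise disjoint open, the set $U_i \cup K_0 = X \setminus \bigsqcup_{j\neq i} U_j$ is closed, so $\fU$ is not an open cover; moreover $\{K_0, \xi_1\cap U_1,\dots,\xi_r\cap U_r\}$ is not a partition of $X$. The correct choice is $K_0 := X \setminus \bigsqcup_i K_i$ (complement of the compact cores), with auxiliary partition $\{K_0, K_1, \ldots, K_r\}$; then each $U_i \cup K_0$ is open and the standard estimate goes through. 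With this repair, your use of inner regularity plus normality cleanly replaces the metric, and the $\log 2$ is removed by iterating, exactly as you say. (Note the paper itself does not prove the theorem: it cites Goodwyn and Goodman, so you are supplying an argument the paper delegates to the literature.)

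The Misiurewicz direction contains a genuine gap. The claimed combinatorial identity $H_{\sigma_n}(\eta_n) = \log N(\fU_n)$ is false: take $X=[0,1]$, $f=\id$, $\fU=\{[0,0.6),\,(0.2,0.8),\,(0.4,1]\}$. A minimal subcover of $\fU_n = \fU$ is $\{[0,0.6),(0.4,1]\}$ with distinguishing representatives $0.3$ and $0.7$, yet $\eta=\{[0,0.25),\,[0.25,0.75],\,(0.75,1]\}$ refines $\fU$ and places both representatives in a single atom, giving $H_{\sigma_n}(\eta_n)=0\neq\log 2$. Nothing you impose on $\eta$ (refining $\fU$, $\mu$-null boundaries) controls which atoms the subcover representatives fall into. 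The remedy is to use the machinery already set up in \S\ref{sec:uniform}: fix a symmetric entourage $\cE=\bigcup_i U_i\times U_i$, let $E_n$ be a maximal $(n,\cE)$-separated set, take $\sigma_n$ uniform on $E_n$, and arrange your Urysohn partition so that $\eta_i\subset U_i$, whence $\eta_i\times\eta_i\subset\cE$. Then any two points in the same atom of $\eta_{n+1}$ satisfy $(f^k x,f^k y)\in\cE$ for all $k\le n$, hence are not $(n,\cE)$-separated, so each such atom meets $E_n$ in at most one point and $H_{\sigma_n}(\eta_{n+1})=\log|E_n|=\log S(n,\cE)$. The rest of your argument (weak-$*$ limit of the time averages, $\mu$-null boundaries for weak-$*$ continuity, concavity of Shannon entropy) then closes, and the Proposition of \S\ref{sec:uniform} converts $\sup_\cE\limsup_n\frac1n\log S(n,\cE)$ back into $\htop(f)$. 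Your construction of the null-boundary partition via Urysohn functions and avoidance of a countable bad set of thresholds is the right non-metrizable surrogate for shrinking by a small radius; your alternative of invoking Denker--Grillenberger--Sigmund is also acceptable and closest in spirit to what the paper actually does.
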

The original proof by Goodwyn~\cite{goodwyn} and Goodman~\cite{goodman} is valid in this degree of generality. 
Note that the Hausdorff assumption plays a crucial role in their proofs. It is believed that the variational principle holds for normal quasi-compact spaces, see~\cite{hood} 
and the
references therein. 

\subsection{Uniform structures and entropy of continuous maps}\label{sec:uniform}
Our basic reference is~\cite[\S 6]{kelley}.
A uniform structure on a set $X$ is a non-empty collection $\Phi$ of sets $\cE\subset X\times X$ (called entourage)
containing the diagonal  $\Delta= \{ (x,x) , \,  x \in  X\}$, and satisfying the conditions:
\begin{itemize}
\item
$\cE,\cE'\in \Phi$  implies $\cE\cap\cE'\in \Phi$;
\item
$\cE\in \Phi$ and $\cE\subset \cE'$ implies $\cE'\in \Phi$;
\item
$\cE^{-1} := \{(y,x) , \,  (x,y)\in \cE\}\in \Phi$ whenever $\cE\in \Phi$; 
\item
for any $\cE\in \Phi$, there exists $\cE'\in \Phi$ such that $\cE \supset \cE' \circ \cE' :=  \{(x,y) , \,  (x,z), (z,y) \in \cE\text{ for some } z\}$.
\end{itemize}
An entourage $\cE$ satisfying $\cE^{-1} = \cE$ is called symmetric.

\smallskip

Any uniform structure $\Phi$ defines a topology $\Phi^{\top}$ for which a set $U$ is open iff for every point $x\in U$ one can find an entourage $\cE$ such that 
$B_{\cE}(x) = \{y \in X , \,  (x,y)\in \cE\}\subset U$. 
A topological space with collection of open sets $\cT$ 
is uniformazible if one can find a uniform structure $\Phi$  such that $\Phi^{\top} = \cT$. 
One can prove that a topological space is uniformizable iff it is completely regular, see~\cite[p.188]{kelley}.

Any compact space $X$ is uniformizable; more precisely there exists a unique uniform structure $\Phi$ on $X$ defining its topology, see~\cite[p.198]{kelley}.
The uniform structure is given by the set of all neighborhoods of the diagonal. Note that any entourage $\cE$ contains a symmetric one of the form 
$\cup_i U_i \times U_i$ where $\{U_i\}_{i\in I}$ is a finite open cover of $X$.

Any metric space $(X,d)$ is also uniformizable: take $\Phi$ to be the neighborhoods of $\Delta$ included in 
$\{ (x,y), d(x,y) <\epsilon\}$ for some $\epsilon>0$.

\medskip
Let $X$ be any compact space equipped with its canonical uniform structure. Pick 
any continuous map $f\colon X \to X$. Observe that $f$ is uniformly continuous, i.e.
the preimage of any entourage by $f$ remains an entourage. We aim at characterizing the topological entropy of $f$
in terms of entourages,  following~\cite[\S 2]{hood}.

Let $\cE$ be any symmetric entourage of $X$, and pick any integer $n\in\N$. 
  \begin{itemize}
\item
  Two points $x,y$ are said to be $(n,\cE)$-separated iff there exists an integer
 $k\le n$ such that $(f^k(x), f^k(y)) \notin \cE$.
\item  
  A family of points  $F \subset X$ is called an $(n,\cE)$-covering set,
  iff for any point $x\in X$, there exists a point $y\in
  F$ such that $(f^k(x),f^k(y)) \in \cE$ for all $k\le n$.
\end{itemize}
  For any symmetric  entourage $\cE$, and for integer $n$, we write
  $R(n,\cE)$ for the minimum of the cardinality of an 
  $(n,\cE)$-covering set,  and $S(n,\cE)$ for the maximum of the cardinality of 
 set of points that are pairwise $(n,\cE)$-separated. 

Because $X$ is assumed to be compact, these two functions have finite values.
They are also decreasing on $\cE$ with respect to the inclusion. 
Let us prove that 
\begin{equation}\label{eq:entropy-basic}
R(n,\cE) \le S(n,\cE) \le R(n,\cE')
\end{equation}
if $\cE' \circ \cE' \subset \cE$.  Any maximal 
$(n,\cE)$-separating set is also an $(n,\cE)$-covering set hence $S(n,\cE) \ge
R(n,\cE)$.  If $E$ is an $(n,\cE)$-separating set, and $F$ an $(n,\cE')$-covering set with
 $\cE' \circ \cE' \subset \cE$, we choose
for all $x \in E$ a point $\phi(x) \in F$ such that
$(x,\phi(x))$ lies in $\cE'$. It is then clear that $\phi$ is then
injective. 
\begin{proposition}
For any continuous map $f\colon X\to X$ on a compact space $X$, we have 
\[\htop(f) = \sup_\cE \limsup_n \frac1n \log R(n,\cE) = \sup_\cE
  \limsup_n \frac1n \log S(n,\cE)~.\]
\end{proposition}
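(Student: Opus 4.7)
The plan is to prove the two inequalities $\htop(f) \le \sup_\cE \limsup_n \frac{1}{n}\log R(n,\cE)$ and $\htop(f) \ge \sup_\cE \limsup_n \frac{1}{n}\log S(n,\cE)$; the remaining equality $\sup_\cE \limsup_n \frac{1}{n}\log R(n,\cE) = \sup_\cE \limsup_n \frac{1}{n}\log S(n,\cE)$ is immediate from~\eqref{eq:entropy-basic}, since given any symmetric $\cE$ one can find a symmetric $\cE'$ with $\cE' \circ \cE' \subset \cE$, and the chain $R(n,\cE) \le S(n,\cE) \le R(n,\cE')$ forces the two suprema over symmetric entourages to coincide.

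For the upper bound, I would fix a finite open cover $\fU = \{U_i\}_{i \in I}$ and construct a symmetric entourage $\cE$ with the Lebesgue-type property that for every $x \in X$ the ball $B_\cE(x)$ is contained in some $U_{i(x)}$. Such a $\cE$ exists because $X$ is compact: the set $\bigcup_i (U_i \times U_i)$ is an open neighborhood of the diagonal $\Delta$, and by compactness one can shrink it to a symmetric entourage with the stated property. Given any $(n,\cE)$-covering set $F$, for each $y \in F$ pick indices $i_0(y),\dots,i_{n-1}(y)$ such that $B_\cE(f^k y) \subset U_{i_k(y)}$. Then for every $x \in X$ there is $y \in F$ with $f^k x \in B_\cE(f^k y) \subset U_{i_k(y)}$ for $k \le n-1$, so the family $\bigl\{\bigcap_{k=0}^{n-1} f^{-k}(U_{i_k(y)})\bigr\}_{y \in F}$ is a subcover of $\fU_n$. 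Thus $N(\fU_n) \le |F|$, hence $N(\fU_n) \le R(n,\cE)$, and taking $\frac{1}{n}\log$ and $\limsup$ gives $h(f,\fU) \le \limsup_n \frac{1}{n}\log R(n,\cE)$; supping over $\fU$ yields the desired bound.

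For the lower bound, I would fix a symmetric entourage $\cE$ and pick a symmetric $\cE'$ with $\cE' \circ \cE' \subset \cE$. Using the observation recalled in the excerpt, $\cE'$ contains an entourage of the form $\bigcup_i U_i \times U_i$ for some finite open cover $\fU = \{U_i\}$, so $U_i \times U_i \subset \cE'$ for each $i$. Any element $V = \bigcap_{k=0}^{n-1} f^{-k}(U_{i_k}) \in \fU_n$ then satisfies $(f^k x, f^k y) \in \cE' \subset \cE$ for all $x,y \in V$ and all $k \le n-1$. Hence any $(n,\cE)$-separated set $E$ meets $V$ in at most one point, so $|E| \le N(\fU_n)$, giving $S(n,\cE) \le N(\fU_n)$ and therefore $\limsup_n \frac{1}{n}\log S(n,\cE) \le h(f,\fU) \le \htop(f)$. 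The main obstacle is the Lebesgue-number argument in the upper bound step: this is where compactness enters in an essential way, since such a statement fails for general uniform spaces; the remaining manipulations are a routine transcription of Bowen's classical argument from the metric setting to the language of entourages.
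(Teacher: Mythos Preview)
Your approach is essentially identical to the paper's: both directions are proved exactly as you outline, and the paper isolates the Lebesgue-type statement as a separate lemma (Lemma~\ref{lem:small-nghd}). Two small points are worth flagging. First, your justification of that lemma is not quite right as stated: knowing that $\cE \subset \bigcup_i U_i \times U_i$ only gives $B_\cE(x) \subset \bigcup_{i:\, x\in U_i} U_i$, since the index $i$ witnessing $(x,y)\in U_i\times U_i$ may depend on $y$; one really needs a construction like the paper's (choose for each $x$ a nested pair $V\subset\bar V\subset W\subset U_{i(x)}$, set $\cE(x)=W^2\cup(X\setminus\bar V)^2$, then intersect finitely many half-entourages). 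Second, in your lower bound the indexing is off by one: membership in an element of $\fU_n$ controls $f^0,\dots,f^{n-1}$, so you get $S(n-1,\cE)\le N(\fU_n)$ rather than $S(n,\cE)\le N(\fU_n)$, exactly as in the paper's~\eqref{eq:bddsepless}; this is of course harmless for the $\limsup$. The extra $\cE'$ with $\cE'\circ\cE'\subset\cE$ in that step is unnecessary---you can take $\fU$ with $U_i\times U_i\subset\cE$ directly.
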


When $(X,d)$ is a compact metric space, we recover the standard definition of the topological entropy
given by Bowen~\cite{bowen}.

\begin{proof}
Denote by $h(f)$ the quantity in the right hand side. 
Let us fix  $\cE$ any entourage of $X$. One can then find a finite open cover 
$\fU = \{ U_i\}$ such that $U_i \times U_i
  \subset \cE$ for all  $i$. If $E$ is an $(n-1,\cE)$-separating set of maximal  cardinality, 
then an element of $\bigvee_0^{n-1}
  f^{-i} \fU$  contains at most one element of $E$, hence
  \begin{equation}\label{eq:bddsepless}
  S(n-1,\cE) \le N\left(\bigvee_0^{n-1} f^{-i} \fU\right).
  \end{equation}
 It follows that
$\htop(f) \ge h(f)$.

  For the converse inequality, fix any finite open cover $\fU = \{ U_i \}$. 
\begin{lemma} \label{lem:small-nghd}
There exists an entourage $\cE$  such that for every $x \in X$, the set $B_\cE(x)$ is included in some $U_i$.
\end{lemma}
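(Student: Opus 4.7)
The plan is to prove the Lebesgue number lemma in the uniform setting, following the familiar metric-space argument but phrased in terms of entourages and the defining property $\cE' \circ \cE' \subset \cE$.

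First I would fix, for every point $x \in X$, some index $i(x)$ with $x \in U_{i(x)}$. Since $U_{i(x)}$ is open in the topology $\Phi^{\top}$ induced by the canonical uniform structure of $X$, there exists an entourage $\cE_x$ with $B_{\cE_x}(x) \subset U_{i(x)}$. Using the fourth axiom of a uniform structure together with the fact that any entourage contains a symmetric one, I can refine this to a symmetric entourage $\cE'_x$ satisfying $\cE'_x \circ \cE'_x \subset \cE_x$. The sets $\{B_{\cE'_x}(x)\}_{x \in X}$ then form an open cover of $X$.

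Now I would invoke the compactness of $X$ to extract a finite subcover $\{B_{\cE'_{x_1}}(x_1), \ldots, B_{\cE'_{x_N}}(x_N)\}$, and define
\[
\cE := \bigcap_{j=1}^N \cE'_{x_j},
\]
which is an entourage because finite intersections of entourages remain entourages. Given any $y \in X$, pick $j$ such that $y \in B_{\cE'_{x_j}}(x_j)$, i.e.\ $(x_j,y) \in \cE'_{x_j}$. For any $z \in B_\cE(y)$ we have $(y,z) \in \cE \subset \cE'_{x_j}$, hence $(x_j,z) \in \cE'_{x_j} \circ \cE'_{x_j} \subset \cE_{x_j}$, so $z \in B_{\cE_{x_j}}(x_j) \subset U_{i(x_j)}$. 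This shows $B_\cE(y) \subset U_{i(x_j)}$, completing the argument.

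There is no real obstacle here: the only subtlety is to remember that one must use a symmetric refinement $\cE'_x$ with $\cE'_x \circ \cE'_x \subset \cE_x$ (not just $\cE'_x \subset \cE_x$), since otherwise the triangle-inequality style step $(x_j,y),(y,z)\in \cE'_{x_j} \Rightarrow (x_j,z)\in \cE_{x_j}$ would fail. Once this refinement is in place, compactness of $X$ together with the closure of the collection of entourages under finite intersection delivers the required uniform Lebesgue entourage.
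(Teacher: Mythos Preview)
Your proof is correct and follows essentially the same approach as the paper's: obtain a local entourage $\cE_x$ for each point, refine it to a symmetric $\cE'_x$ with $\cE'_x\circ\cE'_x\subset\cE_x$, extract a finite subcover by compactness, and intersect. The only minor difference is that the paper explicitly constructs $\cE(x)$ as $W^2\cup(X\setminus\bar V)^2$ from nested neighborhoods $V\subset\bar V\subset W\subset U_i$, whereas you invoke the definition of the induced topology $\Phi^{\top}$ directly; this is a presentational difference, not a substantive one.
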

  Fix any such entourage,  
 and let  $B^{n}_\cE(x):= \{ y , \,  (f^i(x),f^i(y))\in \cE \text{ for all }0 \le i \le n\}$. 
  Observe that for all $x\in X$ and for all $0\le i \le n$ we have $f^i(B^n_\cE(x))\subseteq B_\cE(f^i(x))$, so that $f^i(B^n_\cE(x))$ is contained in an element of
  $\fU$. 
  Let $F$ be a $(n-1,\cE)$-covering set with minimal cardinality so that 
   $X\subseteq \cup_{x\in F}B^{n-1}_\cE(x)$.
Since, for every $x\in F$, there exist $j(x)_0,\dots, j(x)_{n-1}$ such that 
 \[B^{n-1}_\cE(x)\subseteq U_{j(x)_0}\cap\dots\cap f^{-(n-1)}(U_{j(x)_{n-1}}),\]
it follows that 
$\{U_{j(x)_0}\cap\dots\cap f^{-(n-1)}(U_{j(x)_{n-1}}), x\in F\}$ forms a cover of $X$,
and we have 
   \begin{equation}\label{eq:bddsepup}
  N\left(\bigvee_0^{n-1}f^{-i} \fU\right) \le R(n-1,\cE).
   \end{equation}
The inequality
  $h(f) \ge \htop(f)$ follows.
\end{proof}

\begin{proof}[Proof of Lemma~\ref{lem:small-nghd}]
Pick any $x\in X$. We claim that there exists an entourage $\cE(x)$ such that 
$B_{\cE(x)}(x) \subset U_{i(x)}$.

Pick any $U_i$ containing $x$, set $i(x):= i$, and choose two open neighborhoods $V, W$ of $x$ such that
$x \in V \subset \bar{V} \subset W \subset U_i$. Set $\cE(x) := W^2\cup (X\setminus \bar{V})^2$. 
By construction $B_{\cE(x)}(x) \subset W \subset U_i$ which proves the claim. 

For each $x$, choose an entourage $\cE'(x)$ such that $\cE'(x) \circ \cE'(x) \subset \cE(x)$, and pick a finite cover
 $X\subset B_{\cE'(x_1)}(x_1) \cup \cdots \cup B_{\cE'(x_n)}(x_n)$.
 Set $\cE := \cap_{j=1}^n \cE'(x_j)$.
 
 Choose $x\in X$. Then we may find $j\in \{1, \cdots, n\}$ such that $x \in B_{\cE'(x_j)}(x_j)$. 
 It follows that 
 \[
 B_\cE(x) \subset B_{\cE'(x_j)}(x)\subset B_{\cE'(x_j)\circ \cE'(x_j)}(x_j)\subset
 B_{\cE(x_j)}(x_j) \subset U_{i(x_j)}~.\]
This concludes the proof.
\end{proof}

\subsection{Partially continuous maps}\label{sec:part-continuous}
Let $X$ be any compact space equipped with its canonical uniform structure. 
A partially continuous self-map of $X$ is a continuous map $f\colon X\setminus I(f)\to X$ where 
$I(f)\subseteq X$ is a closed subset.

\begin{remark}
A typical example of partially continuous self-map is a rational self-map of a complex projective variety. 
Another source of examples arise from rational maps on algebraic varieties endowed with the constructible topology, see~Section \ref{sec:definition-topo-rat} below. 
Note that in the latter case $I(f)$ can have non-empty interior.
\end{remark}

We define an \emph{admissible sequence} of $(X,f)$ to be a  sequence $\xi:=\{\xi_{n}, n\geq 0\}$ of (possibly empty) subsets of $X$ such that for $n\geq 1$, $\xi_n\subseteq X\setminus I(f)$ and $f(\xi_n)\subseteq \xi_{n-1}$. By induction on $n$, one sees that if $\xi_n$ is a non-empty open set for all $n$, then the $n$th iterate $f^n$ defines a partially continuous map
with $I(f^n) := X\setminus \xi_n$.

There is a maximal choice $\xi^{\max}(f)$ of admissible sequence which is defined as follows: $\xi^{\max}(f)_0:=X$, for $n\geq 1$, $\xi^{\max}(f)_n:=f|_{X\setminus I(f)}^{-1}(\xi^{\max}(f)_{n-1}).$  We note that all sets $\xi^{\max}(f)_n$ are open.

Another natural choice of admissible sequence is $\xi^{\self}(f)$ given by $\xi^{\self}_n(f):=\cap_{j\geq 0} \xi^{\max}_j(f)$ (note however that   $\xi^{\self}_n(f)$ may not be open).
When $I(f)=\emptyset$, we have $\xi^{\max}(f)_n=\xi^{\self}(f)_n=X$ for $n\geq 0.$

\begin{remark}
Since $X$ is compact, it is a Baire space, hence $\overline{\xi^{\max}(f)_n} =X$ for all $n$ implies
$\overline{\xi^{\self}(f)_n}=X$.
\end{remark}

Let $\cE$ be any symmetric entourage of $X$. For $n\geq 0$ and every $x\in \xi^{\max}_{n},$
define $B^n_\cE(x) := \{ y \in \xi_n^{\max} , \,   (f^i(x),f^i(y)) \in \cE, i=0,\dots,n\}.$ It is a neighborhood of $x$ in $\xi_n^{\max}$, 
and the map $\cE \mapsto B^n_\cE(x)$ is increasing with respect to inclusion. 
\begin{itemize}
\item If $y\in B^n_\cE(x)$, then $x\in B^n_\cE(y).$
\item For any symmetric entourage $\cE'$ such that $\cE'\circ\cE'\subseteq \cE$, then $x,y\in B^n_{\cE'}(z)$ implies $y\in B^n_{\cE}(x).$ 
\end{itemize}

\begin{lemma}\label{lemclosureball}Let $\cE, \cE'$ be any symmetric entourages of $X$ satisfying $\cE'\circ\cE'\subseteq \cE$. 
Let $Y, Z$ be subsets of $\xi_n^{\max}$. If $Y\subseteq \cup_{z\in Z} B^n_{\cE'}(z),$ then $\overline{Y}\cap \xi_n^{\max}\subseteq \cup_{z\in Z} B^n_{\cE}(z).$
In particular, we have $\overline{Y}\cap \xi_n^{\max}\subseteq \cup_{y\in Y}B^n_{\cE}(y).$
\end{lemma}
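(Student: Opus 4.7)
The plan is to exploit the fact that, in the canonical uniform structure of the compact Hausdorff space $X$, every entourage is a \emph{neighborhood} (and not merely a superset) of the diagonal, so that preimages of entourages under continuous maps are open neighborhoods on which orbits stay close. First, an easy induction on $n$ based on the definition $\xi^{\max}_{k+1}=(f|_{X\setminus I(f)})^{-1}(\xi^{\max}_k)$ shows that $\xi^{\max}_n$ is open in $X$ and that each iterate $f^i\colon \xi^{\max}_n\to X$ is continuous for $0\le i\le n$; this uses only the continuity of $f$ on $\xi^{\max}_1=X\setminus I(f)$ as fixed in Section~\ref{sec:part-continuous}.

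Now fix $x\in \overline{Y}\cap\xi^{\max}_n$. Since $\cE'$ is a neighborhood of $\Delta\subset X\times X$ (Section~\ref{sec:uniform}), for each $i\in\{0,\dots,n\}$ the set $B_{\cE'}(f^i(x))$ is a neighborhood of $f^i(x)$, and hence contains an open neighborhood $U_i$ of $f^i(x)$, so that $\{f^i(x)\}\times U_i\subseteq \cE'$. By continuity of the $f^i$ on $\xi^{\max}_n$, the set
\[
V:=\bigcap_{i=0}^n (f^i)^{-1}(U_i)
\]
is open in $\xi^{\max}_n$, hence open in $X$, and contains $x$. Since $x\in\overline{Y}$, we can pick $y\in V\cap Y$. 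By construction and symmetry of $\cE'$, we have $(f^i(y),f^i(x))\in\cE'$ for every $i\le n$. By hypothesis there is $z\in Z$ with $y\in B^n_{\cE'}(z)$, i.e.\ $(f^i(z),f^i(y))\in\cE'$ for every $i\le n$. Composing, and using $\cE'\circ\cE'\subseteq\cE$, we get $(f^i(z),f^i(x))\in\cE$ for every $i\le n$, that is, $x\in B^n_{\cE}(z)$. This proves the first assertion.

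The ``in particular'' statement follows by applying what we just proved with $Z:=Y$: since $\Delta\subseteq\cE'$, we have $y\in B^n_{\cE'}(y)$ for every $y\in Y\subseteq\xi^{\max}_n$, so the hypothesis $Y\subseteq\bigcup_{y\in Y}B^n_{\cE'}(y)$ holds trivially. The only delicate point in the argument is the extraction of a genuinely open neighborhood $V$ of $x$ on which all $n+1$ iterates remain $\cE'$-close to those of $x$; this relies simultaneously on $\cE'$ being a neighborhood of the diagonal (so that $B_{\cE'}(f^i(x))$ is a neighborhood of $f^i(x)$) and on the continuity of the $f^i$ on the open set $\xi^{\max}_n$, both of which are already in place from the setup of Sections~\ref{sec:uniform} and~\ref{sec:part-continuous}.
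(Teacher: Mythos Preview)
Your proof is correct and follows essentially the same route as the paper: pick $y\in Y$ in a dynamical $\cE'$-ball around $x$, then compose with $y\in B^n_{\cE'}(z)$ to land $x$ in $B^n_{\cE}(z)$. The only difference is that the paper invokes directly the fact (stated just before the lemma) that $B^n_{\cE'}(x)$ is a neighborhood of $x$ in the open set $\xi^{\max}_n$, whereas you unpack this by explicitly constructing the open set $V$ from the continuity of the iterates and the neighborhood property of entourages.
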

\proof For every $x\in \overline{Y}\cap \xi_n^{\max}$, there is $y\in B^n_{\cE'}(x)\cap Y$. 
Pick  $z\in Z$ such that $y\in B^n_{\cE'}(z)$. Then $x\in B^n_{\cE}(z)$ as required.
\endproof

\subsection{Topological entropy of a partially continuous map}\label{sec:entropy-part-continuous}
We fix any partially continuous map $f$ on a compact space $X$ and an admissible sequence $\xi$. 
We shall define the entropy $\htop(f, \xi)$.

Pick any symmetric entourage $\cE$ of $X$, and any integer $n\in\N$. 
  \begin{itemize}
  \item
  Two points $x,y\in \xi_n$ are said to be $(n,\xi,\cE)$-separated iff $x\not\in B^n_{\cE}(y).$
\item  
  A family of points  $F \subset \xi_n$ is called an $(n,\xi,\cE)$-covering set,
  iff $\xi_n\subseteq \cup_{x\in F}B^n_{\cE}(x).$
\end{itemize}
  For any symmetric  entourage $\cE$, and for integer $n$, we write
  $R(n,\xi,\cE)$ for the minimum of the cardinality of an 
  $(n,\xi,\cE)$-covering set,  and $S(n,\xi,\cE)$ for the maximum of the cardinality of 
the set of points that are pairwise $(n,\xi,\cE)$-separated.  
 
 When $\xi=\xi^{\max}$, we often omit $\xi$ in $R(n,\xi,\cE)$ and $S(n,\xi,\cE)$.

\begin{lemma}\label{lemrnxiefinite}For every $n\geq 0$, $R(n,\xi,\cE)$ is finite.
Moreover, we have
\begin{equation}\label{eq:entropy-basic*}
R(n,\xi,\cE) \le S(n,\xi,\cE) \le R(n,\xi,\cE') < \infty
\end{equation}
if $\cE' \circ \cE' \subset \cE$.  
\end{lemma}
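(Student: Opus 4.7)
The plan is to establish the three assertions in order: first the finiteness of $R(n,\xi,\cE)$, then the upper bound $S(n,\xi,\cE)\le R(n,\xi,\cE')$ (which in particular makes $S(n,\xi,\cE)$ finite), and finally the lower bound $R(n,\xi,\cE)\le S(n,\xi,\cE)$.

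For finiteness I would exploit compactness of the ambient space $X$ rather than that of $\xi_n$ (which need not be open or closed). Pick a symmetric entourage $\cE''$ with $\cE''\circ\cE''\subseteq\cE$. By compactness of $X$ and the description of its canonical uniform structure recalled in \S\ref{sec:uniform}, there is a finite open cover $\{U_1,\dots,U_N\}$ of $X$ with $U_j\times U_j\subseteq\cE''$ for every $j$. For each tuple $\mathbf{j}=(j_0,\dots,j_n)\in\{1,\dots,N\}^{n+1}$, select, if possible, a single point $x_{\mathbf j}\in\xi_n$ whose orbit $(x_{\mathbf j},f(x_{\mathbf j}),\dots,f^n(x_{\mathbf j}))$ lies in $U_{j_0}\times\cdots\times U_{j_n}$, and collect these into a set $F$ of cardinality at most $N^{n+1}$. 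For any $y\in\xi_n$, choosing indices with $f^i(y)\in U_{j_i}$ singles out a tuple $\mathbf{j}$ for which $x_{\mathbf j}$ is defined, and then $(f^i(x_{\mathbf j}),f^i(y))\in U_{j_i}\times U_{j_i}\subseteq\cE''\subseteq\cE$ for all $0\le i\le n$, showing $y\in B^n_\cE(x_{\mathbf j})$. Hence $F$ is an $(n,\xi,\cE)$-covering set and $R(n,\xi,\cE)\le N^{n+1}<\infty$.

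For the right-hand inequality, I would mimic the standard injection argument that led to \eqref{eq:entropy-basic}: given an $(n,\xi,\cE)$-separated set $E$ and an $(n,\xi,\cE')$-covering set $F$, choose $\phi(x)\in F$ for each $x\in E$ with $x\in B^n_{\cE'}(\phi(x))$. If $\phi(x)=\phi(y)$, then $x,y\in B^n_{\cE'}(\phi(x))$, and the second bullet of \S\ref{sec:part-continuous}, combined with $\cE'\circ\cE'\subseteq\cE$, gives $y\in B^n_\cE(x)$, contradicting separation. So $\phi$ is injective and $S(n,\xi,\cE)\le R(n,\xi,\cE')$, which is finite by applying the previous step with $\cE'$ in place of $\cE$. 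For the remaining inequality, since $S(n,\xi,\cE)$ is now known to be finite, an $(n,\xi,\cE)$-separated set $E^\star$ of maximal cardinality exists; for every $x\in\xi_n$, maximality forces the existence of $y\in E^\star$ with $x\in B^n_\cE(y)$ (otherwise $E^\star\cup\{x\}$ would be strictly larger and still separated), so $E^\star$ is itself an $(n,\xi,\cE)$-covering set and $R(n,\xi,\cE)\le|E^\star|=S(n,\xi,\cE)$.

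The only non-routine step is the finiteness argument: although $\xi_n$ itself need not be compact, the defining condition of $B^n_\cE(x)$ sees $x$ only through its length-$(n{+}1)$ orbit in the compact product $X^{n+1}$, so a finite cover of orbits, pulled back from a product of finite open covers of $X$ matched to $\cE$, suffices to produce the required finite $(n,\xi,\cE)$-covering set of $\xi_n$.
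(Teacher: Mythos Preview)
Your proof is correct. The inequalities $R\le S\le R'$ are handled exactly as in the paper (and as in the classical case \eqref{eq:entropy-basic}); only the finiteness argument differs.

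For finiteness, the paper proceeds by induction on $n$: it first shows the auxiliary claim that any subset $U\subset X$ is covered by finitely many balls $B_\cE(x)$ with centers $x\in U$ (by covering $X$, then moving the centers into $U$ at the cost of passing from $\cE'$ to $\cE$), and then uses this claim to lift an $(n,\xi,\cE')$-cover of $\xi_n$ to an $(n+1,\xi,\cE)$-cover of $\xi_{n+1}$ via $f^{-1}$. Your approach is instead direct: you fix a single finite open cover $\{U_j\}$ of $X$ subordinate to the entourage, track the orbit of each point through the resulting partition of $X^{n+1}$ into at most $N^{n+1}$ cells, and pick one representative per inhabited cell. This is shorter and yields the explicit bound $R(n,\xi,\cE)\le N^{n+1}$, while the paper's inductive argument isolates a reusable ``center-moving'' claim and makes the role of the condition $\cE'\circ\cE'\subset\cE$ transparent. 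One cosmetic point: in your finiteness step the intermediate entourage $\cE''$ is unnecessary, since $U_{j_i}\times U_{j_i}\subseteq\cE$ already gives $(f^i(x_{\mathbf j}),f^i(y))\in\cE$ directly.
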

\proof 
We begin with proving that for any subset $U \subset X$ there exists a finite subset $F_U \subset U$ such that $U \subset \cup_{x\in F_U}B_{\cE'}(x).$
Indeed since $X$ is compact, there is a finite subset $F'\subseteq X$ such that $U\subseteq X=\cup_{x\in F'}B_{\cE'}(x).$
Set $F'':=\{x\in F', B_{\cE'}(x)\cap U\neq\emptyset\}.$
For every $x\in F''$, pick $y_x\in B_{\cE'}(x)\cap U.$ Set $F_U = \{y_x , \,  x \in F''\}$. Then for every $y\in U$, there is $x\in F''$ such that 
$y\in B_{\cE'}(x)$, hence $y\in B_{\cE}(y_x)$ which concludes the proof of the claim. 

\medskip

Let us now prove that $R(n,\xi,\cE)$ is finite by induction on $n.$ 
Pick any symmetric entourage $\cE'$ such that $\cE'\circ\cE'\subseteq \cE$.

The fact that $R(0,\xi,\cE)$ is finite follows from the claim applied to $U= \xi_0$. 
Next assume that $R(n,\xi,\cE)<+\infty$. We will show that $R(n+1,\xi,\cE)<+\infty.$
The induction hypothesis implies that there is a finite set $G\subseteq \xi_{n}$ such that $\xi_{n}\subseteq \cup_{x\in G}B^n_{\cE'}(x).$
By the previous claim for each $x\in G$, there exists a finite set $F_{x,n}\subset f^{-1}( B^n_{\cE'}(x))\cap \xi_{n+1}$ such that 
$f^{-1}( B^n_{\cE'}(x))\cap \xi_{n+1} \subset \cup_{x\in F_{x,n}}B_{\cE'}(x).$

Pick any point $y\in\xi_{n+1}$. Then there exists a point 
$x\in G$ such that $f(y)\in B^n_{\cE'}(x)$, and then a point 
$x'\in F_{x,n}$ such that $y\in B_{\cE'}(x')$. Since $F_{x,n}\subset f^{-1}( B^n_{\cE'}(x))\cap \xi_{n+1}$, we have
$f(x') \in  B^n_{\cE'}(x)$ hence $y \in B^{n+1}_{\cE}(x')$, and $R(n+1,\xi,\cE)$ is less than the cardinality of
$\cup_{x\in G} F_{x,n}$. 

The proof of~\eqref{eq:entropy-basic*} is identical to~\eqref{eq:entropy-basic}. 
\endproof

Note that both $S(n,\xi,\cE)$ and $R(n,\xi,\cE)$ are increasing on $n$ and decreasing on $\cE$ with respect to the inclusion, and by~\eqref{eq:entropy-basic}, 
we have \[\sup_\cE \limsup_n \frac1n \log R(n,\xi,\cE)= \sup_\cE
  \limsup_n \frac1n \log S(n,\xi,\cE)~.\]
  \begin{definition}
  Pick any partially continuous self-map $f$ on a compact space $X$. Suppose $\xi$ is  an admissible sequence for which $\xi_n \neq \emptyset$ for all $n$. 
Then  we define the topological entropy of $(X,f,\xi)$ by
  \[
  \htop(f, \xi):=
  \sup_\cE \limsup_n \frac1n \log R(n,\xi,\cE)= \sup_\cE
  \limsup_n \frac1n \log S(n,\xi,\cE) \in [0, +\infty]~.\]
We also write $\htop(f):=\htop(f,\xi^{\max}),$ and call it the entropy of $f$. 
\end{definition}

  \begin{remark}
Note that when $\xi_n=\emptyset$ for some $n$, then $R(n,\xi,\cE)=0$.
In that case, it is convenient to set $\htop(f,\xi) = - \infty$.
  \end{remark}
  
 We now aim at comparing the entropy when the admissible sequence is modified.

\begin{lemma}\label{lemchangeseq}Let $\xi,\xi'$ be two admissible sequences for $(X,f).$ 
Let $\cE',\cE''$ be symmetric  entourages satisfying $\cE'\circ\cE'\subseteq \cE$ and $\cE''\circ\cE''\subseteq \cE'$.
Pick any integer $n\geq 0$.
\begin{itemize}
\item[(i)] If $\xi_n\subseteq \xi_n'$, then $S(n,\xi,\cE)\leq S(n,\xi',\cE)$. 
\item[(ii)] If $\overline{\xi_{n}}\subseteq\overline{\xi'_{n}}$,
 then $R(n,\xi,\cE)\leq R(n,\xi',\cE'')$.
\end{itemize}
 In particular, if $\overline{\xi_{n}}\subseteq\overline{\xi'_{n}}$ for all sufficiently large $n$, then we have $\htop(f,\xi)\leq \htop(f,\xi').$
\end{lemma}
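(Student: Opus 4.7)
The plan is to prove (i) and (ii) separately by direct manipulation of the definitions of separated and covering sets, exploiting the fact that the ball $B^n_\cE(y)$ is defined intrinsically inside $\xi_n^{\max}$: it depends only on $f$, $n$, $\cE$, and on the point $y$, but not on the admissible sequence under consideration. The last implication of the lemma will then follow by taking $\sup_{\cE}$ on both sides of~(ii).

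For part (i), I would take any $(n,\xi,\cE)$-separated set $F\subseteq \xi_n$ of maximal cardinality $S(n,\xi,\cE)$. By hypothesis $F\subseteq \xi_n'$, and the separation relation ``$x\notin B^n_\cE(y)$'' involves neither $\xi$ nor $\xi'$, since $B^n_\cE(y)\subseteq \xi_n^{\max}$. Hence the very same $F$ is an $(n,\xi',\cE)$-separated subset of $\xi_n'$, which yields $S(n,\xi,\cE)\leq S(n,\xi',\cE)$.

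For part (ii), which is where the work lies, I would start with an $(n,\xi',\cE'')$-covering set $F'\subseteq \xi_n'$ of minimal cardinality $R(n,\xi',\cE'')$, so that $\xi_n'\subseteq \bigcup_{x\in F'}B^n_{\cE''}(x)$. Applying Lemma~\ref{lemclosureball} with $(\cE',\cE)$ replaced by $(\cE'',\cE')$ gives
\[
\overline{\xi_n'}\cap \xi_n^{\max}\subseteq \bigcup_{x\in F'}B^n_{\cE'}(x).
\]
Since $\xi_n\subseteq \overline{\xi_n}\subseteq \overline{\xi_n'}$ and $\xi_n\subseteq \xi_n^{\max}$, this produces a cover of $\xi_n$ by the balls $B^n_{\cE'}(x)$, $x\in F'$. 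The centers $x$ however lie in $\xi_n'$, so to manufacture an honest $(n,\xi,\cE)$-covering set I would, for each $x\in F'$ with $B^n_{\cE'}(x)\cap \xi_n\neq \emptyset$, pick a point $y_x\in B^n_{\cE'}(x)\cap \xi_n$ and discard the other indices. Using the symmetry of $\cE'$ and the inclusion $\cE'\circ \cE'\subseteq \cE$, any $z\in B^n_{\cE'}(x)\cap \xi_n$ satisfies $(f^i(z),f^i(y_x))\in \cE'\circ\cE'\subseteq \cE$ for $0\leq i\leq n$, hence $z\in B^n_\cE(y_x)$. The set $F:=\{y_x\}$ is therefore an $(n,\xi,\cE)$-covering set with $|F|\leq |F'|$, proving $R(n,\xi,\cE)\leq R(n,\xi',\cE'')$.

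Finally, assuming $\overline{\xi_n}\subseteq \overline{\xi_n'}$ for all $n$ large enough, the inequality in~(ii) together with the definition of the entropy via $\sup_\cE \limsup_n \tfrac{1}{n}\log R(n,\cdot,\cE)$ immediately yields $\htop(f,\xi)\leq \htop(f,\xi')$: for each $\cE$ choose $\cE',\cE''$ as in the statement, and bound $\limsup_n \tfrac{1}{n}\log R(n,\xi,\cE)$ above by $\limsup_n \tfrac{1}{n}\log R(n,\xi',\cE'')\leq \htop(f,\xi')$, then take the sup over $\cE$. I do not expect a genuine obstacle here: the content is purely bookkeeping about admissible sequences and the standard ``closure $\Rightarrow$ enlarged entourage'' trick already encapsulated in Lemma~\ref{lemclosureball}. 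The only point requiring care is the replacement of centers $x\in \xi_n'$ by points $y_x\in \xi_n$, which is where the double telescoping $\cE''\circ\cE''\subseteq \cE'$ and $\cE'\circ\cE'\subseteq \cE$ gets consumed.
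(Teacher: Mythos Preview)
Your proof is correct and follows essentially the same approach as the paper: both part~(i) and part~(ii) are argued identically, invoking Lemma~\ref{lemclosureball} to pass from $\cE''$ to $\cE'$ on the closure, then replacing centers in $\xi_n'$ by nearby points in $\xi_n$ at the cost of enlarging $\cE'$ to $\cE$. Your exposition is slightly more detailed about why the double telescoping $\cE''\circ\cE''\subseteq\cE'\circ\cE'\subseteq\cE$ is needed, but the argument is the same.
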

\proof
Let us prove (i). Any 
$(n,\xi,\cE)$-separating set is also an $(n,\xi',\cE)$-separating set, hence we have $S(n,\xi,\cE)\leq S(n,\xi',\cE)$.

Now we prove (ii). Let $F$ be a $(n,\xi',\cE'')$-covering set of minimal cardinality. By Lemma \ref{lemclosureball}, we have
\[\xi_n\subseteq \overline{\xi'_n}\subseteq \cup_{x\in F}B^n_{\cE'}(x).\]
Set $F':=\{x\in F , \,  B^n_{\cE'}(x)\cap \xi_n\neq \emptyset\}.$ For every $x\in F'$, there is a point $y_x\in B^n_{\cE'}(x)\cap \xi_n.$
Note that
$\xi_n\subseteq  \cup_{x\in F}B^n_{\cE'}(x)\subseteq \cup_{x\in F}B^n_{\cE}(y_x)$
hence $R(n,\xi,\cE)\leq R(n,\xi',\cE'')$. This completes the proof.
\endproof

We now list direct consequences of the previous lemma. 
Recall that any compact space is a Baire space.
\begin{corollary}\label{corselfmax}
 If for every $n\geq 0,$ $\xi^{\max}_n$ is dense in $X$, then $\htop(f,\xi^{\self})=\htop(f,\xi^{\max}).$
\end{corollary}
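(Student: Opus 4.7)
The plan is to derive the corollary directly from Lemma~\ref{lemchangeseq}(ii), using the Baire category theorem as the one external ingredient. Recall that Lemma~\ref{lemchangeseq} states that whenever $\overline{\xi_n}\subseteq\overline{\xi'_n}$ for all sufficiently large $n$, one has $\htop(f,\xi)\le \htop(f,\xi')$. Our task is to verify this closure containment in both directions between $\xi^{\self}$ and $\xi^{\max}$.

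First I would establish the easy inequality $\htop(f,\xi^{\self})\le \htop(f,\xi^{\max})$. Since by definition $\xi^{\self}_n=\bigcap_{j\ge 0}\xi^{\max}_j\subseteq \xi^{\max}_n$ for every $n$, we have the trivial closure containment $\overline{\xi^{\self}_n}\subseteq\overline{\xi^{\max}_n}$, so Lemma~\ref{lemchangeseq}(ii) applies with $\xi=\xi^{\self}$, $\xi'=\xi^{\max}$.

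For the reverse inequality I would invoke the hypothesis together with the Baire property. By the definition of $\xi^{\max}$ recalled in the excerpt, each set $\xi^{\max}_n$ is open; by hypothesis each is also dense in $X$. Since the compact Hausdorff space $X$ is a Baire space, the countable intersection $\xi^{\self}_n=\bigcap_{j\ge 0}\xi^{\max}_j$ is itself dense in $X$, i.e.\ $\overline{\xi^{\self}_n}=X$. Consequently $\overline{\xi^{\max}_n}\subseteq X=\overline{\xi^{\self}_n}$, and applying Lemma~\ref{lemchangeseq}(ii) with $\xi=\xi^{\max}$, $\xi'=\xi^{\self}$ yields $\htop(f,\xi^{\max})\le \htop(f,\xi^{\self})$.

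Combining the two inequalities gives the equality asserted in the corollary. There is no real obstacle here: the entire content has been packaged into Lemma~\ref{lemchangeseq}, and the only substantive point beyond unwinding definitions is the Baire-category observation that an intersection of open dense sets in a compact space remains dense, which is exactly the remark made immediately before the corollary statement.
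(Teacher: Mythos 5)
Your proof is correct and is precisely the argument the paper intends: the remark preceding the corollary points out the Baire-category fact that $\overline{\xi^{\max}_n}=X$ for all $n$ forces $\overline{\xi^{\self}_n}=X$, and the corollary is then a direct application of Lemma~\ref{lemchangeseq}(ii) in both directions, exactly as you have written.
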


The next result shows that $\htop(f, \xi)$ is insensitive to the size of $I(f).$
  \begin{corollary}\label{lemchangeif}Let $f,g$ be two partially continuous self-maps of a compact space $X$
that coincide outside $I(f)\cup I(g)$.
If $\xi$ is a sequence of subsets of $X$ which is both admissible for $f$ and $g$, 
then $\htop(f,\xi)=\htop(g,\xi).$
\end{corollary}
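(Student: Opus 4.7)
The plan is to show something slightly stronger: for every $n\ge 0$ and every symmetric entourage $\cE$, the quantities $R(n,\xi,\cE)$ and $S(n,\xi,\cE)$ are literally the same when computed with respect to $f$ or to $g$. Taking the supremum over $\cE$ and the limsup in $n$ then gives the corollary. The case where $\xi_n=\emptyset$ for some $n$ is trivial since both entropies equal $-\infty$ by convention, so we assume $\xi_n\ne\emptyset$ throughout.

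The key observation is that every point $x\in\xi_n$ has the same $f$-orbit and $g$-orbit up to time $n$. Precisely, I would prove by induction on $i\in\{0,\dots,n\}$ that $f^i(x)=g^i(x)$. The admissibility of $\xi$ for both maps forces
\[
\xi_j\subseteq (X\setminus I(f))\cap(X\setminus I(g))=X\setminus(I(f)\cup I(g))\quad\text{for every }j\ge 1,
\]
and on this set $f$ and $g$ agree by hypothesis. The base case $i=0$ is trivial. If $f^i(x)=g^i(x)$ and $i<n$, then by admissibility this common value lies in $\xi_{n-i}$ with $n-i\ge 1$, so it avoids $I(f)\cup I(g)$; applying $f=g$ at that point yields $f^{i+1}(x)=g^{i+1}(x)$.

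Having identified the first $n+1$ iterates on $\xi_n$, the dynamical ball
\[
B^n_\cE(x)=\{y\in\xi_n : (f^i(x),f^i(y))\in\cE,\ 0\le i\le n\}
\]
is the same set whether defined using $f$ or using $g$. Therefore the definitions of $(n,\xi,\cE)$-separated set and $(n,\xi,\cE)$-covering set coincide verbatim, and hence $R(n,\xi,\cE)$ and $S(n,\xi,\cE)$ coincide. From Lemma~\ref{lemrnxiefinite} these are finite, and passing to $\sup_\cE\limsup_n \tfrac1n\log(\cdot)$ gives $\htop(f,\xi)=\htop(g,\xi)$.

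There is no real obstacle; the only subtle point is making sure that at each intermediate step $0\le i\le n-1$ the point $f^i(x)$ genuinely lies in the common domain $X\setminus(I(f)\cup I(g))$, which is exactly what dual admissibility of $\xi$ for the two maps guarantees (because $n-i\ge 1$). Everything else is bookkeeping.
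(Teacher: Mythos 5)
Your proof is correct and is essentially the argument the paper implicitly has in mind. The paper does not write out a proof of this corollary (it merely lists it among consequences of the preceding lemmas), but the natural verification is exactly what you give: for any $x\in\xi_n$ the admissibility of $\xi$ for both $f$ and $g$ forces $f^i(x)=g^i(x)\in\xi_{n-i}$ for $0\le i\le n$ (an induction that hinges on $\xi_{n-i}\subseteq X\setminus(I(f)\cup I(g))$ while $n-i\ge 1$), so the sets of $(n,\xi,\cE)$-separated points and $(n,\xi,\cE)$-covering families in $\xi_n$ coincide for the two maps, and hence $R(n,\xi,\cE)$ and $S(n,\xi,\cE)$ are literally equal; passing to $\sup_\cE\limsup_n\tfrac1n\log(\cdot)$ gives the claim. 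You also correctly handle the degenerate case $\xi_n=\emptyset$ and correctly note that although the dynamical balls $B^n_\cE(x)$ are nominally defined using $\xi^{\max}_n$, which differs for $f$ and $g$, the quantities $R$ and $S$ only see their intersection with $\xi_n\subseteq\xi^{\max}_n(f)\cap\xi^{\max}_n(g)$.
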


This corollary implies the following properties. 
\begin{corollary}\label{corentropyenlargeif} For any closed subset $I'\supset I(f)$, consider the partially continuous map $f':=f|_{X\setminus I'}\colon X\setminus I'\to X.$ 
Then $\htop(f')\leq \htop(f)$. If moreover, we have $\overline{\xi_n^{\max}(f)}=\overline{\xi_n^{\max}(f')}$ for every $n\geq 0$, then $\htop(f')= \htop(f)$.
\end{corollary}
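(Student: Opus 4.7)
The plan is to reduce the statement directly to Lemma \ref{lemchangeseq} and Corollary \ref{lemchangeif} applied to the admissible sequences $\xi^{\max}(f)$ and $\xi^{\max}(f')$. The bulk of the work is essentially bookkeeping of the comparison between these two sequences.

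First I would check that, because $I' \supset I(f)$, the inclusion $X \setminus I' \subset X\setminus I(f)$ holds, and $f$ and $f'$ coincide on their common domain $X \setminus I'$. A straightforward induction using the definition $\xi_n^{\max}(f') = f'|_{X\setminus I'}^{-1}(\xi_{n-1}^{\max}(f'))$ then shows $\xi_n^{\max}(f') \subseteq \xi_n^{\max}(f)$ for every $n\ge 0$. Moreover the sequence $\xi^{\max}(f')$ is admissible for \emph{both} $f$ and $f'$: for $f$, one has $\xi_n^{\max}(f')\subseteq X\setminus I'\subseteq X\setminus I(f)$, and on this set $f$ agrees with $f'$, so $f(\xi_n^{\max}(f'))=f'(\xi_n^{\max}(f'))\subseteq \xi_{n-1}^{\max}(f')$.

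Next, since $f$ and $f'$ coincide outside $I(f)\cup I(f')=I'$, Corollary~\ref{lemchangeif} gives
\[
\htop(f,\xi^{\max}(f')) \;=\; \htop(f',\xi^{\max}(f')) \;=\; \htop(f')~.
\]
On the other hand the inclusion $\xi_n^{\max}(f')\subseteq \xi_n^{\max}(f)$ combined with Lemma~\ref{lemchangeseq}(i) yields $S(n,\xi^{\max}(f'),\cE)\le S(n,\xi^{\max}(f),\cE)$ for every entourage $\cE$ and every $n$, hence
\[
\htop(f,\xi^{\max}(f'))\;\le\;\htop(f,\xi^{\max}(f))\;=\;\htop(f)~.
\]
Combining these two displays gives the first inequality $\htop(f')\le \htop(f)$.

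For the converse under the assumption $\overline{\xi_n^{\max}(f)}=\overline{\xi_n^{\max}(f')}$ for all $n\ge0$, I would apply Lemma~\ref{lemchangeseq}(ii) in the direction opposite to the inclusion of sequences: with the roles $\xi=\xi^{\max}(f)$ and $\xi'=\xi^{\max}(f')$, the hypothesis $\overline{\xi_n}\subseteq \overline{\xi_n'}$ is exactly the assumed equality, so we obtain $R(n,\xi^{\max}(f),\cE)\le R(n,\xi^{\max}(f'),\cE'')$ for suitable $\cE''$, and taking limits gives $\htop(f)=\htop(f,\xi^{\max}(f))\le \htop(f,\xi^{\max}(f'))=\htop(f')$. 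This closes the loop.

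No step should pose a serious obstacle; the only mildly subtle point is to make sure that $\xi^{\max}(f')$ really is admissible for $f$ (so that Corollary~\ref{lemchangeif} applies), which follows from the agreement of $f$ and $f'$ on $X\setminus I'$. Everything else is a mechanical application of the monotonicity properties established in Lemma~\ref{lemchangeseq}.
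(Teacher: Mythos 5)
Your proof is correct and follows exactly the route the paper intends: since the statement is presented immediately after Corollary~\ref{lemchangeif} under the lead-in ``This corollary implies the following properties,'' the intended argument is indeed to apply Corollary~\ref{lemchangeif} to the sequence $\xi^{\max}(f')$ (which you rightly verify is admissible for both $f$ and $f'$) and then invoke the monotonicity in Lemma~\ref{lemchangeseq} on the inclusion $\xi^{\max}_n(f')\subseteq\xi^{\max}_n(f)$ and, in the converse direction, on the assumed equality of closures.
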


\begin{corollary}\label{corsubsysentropy}Let $Y$ be a closed subset of $X$ such that $f(Y\setminus I(f))\subseteq Y.$ 
Set $f|_Y:=f|_{Y\setminus I(f|_Y)}\colon  Y\setminus I(f|_Y)\to Y$ where $I(f|_Y):=I(f) \cap Y.$ Then $\htop(f|_Y)\leq\htop(f).$
\end{corollary}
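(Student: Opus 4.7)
The plan is to reduce this to a direct comparison of separated sets, exploiting the fact that the canonical uniform structure on $Y$ is inherited from that of $X$. First, since $Y$ is closed in the compact space $X$, it is itself compact, and its unique uniform structure consists precisely of the traces $\cE\cap(Y\times Y)$ of entourages $\cE$ of $X$ (see~\cite[p.198]{kelley}). So the supremum defining $\htop(f|_Y)$ may be taken over such restricted entourages.

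Next, I would verify by induction on $n$ that the maximal admissible sequences are compatible with restriction, namely
\[
\xi^{\max}(f|_Y)_n \;=\; \xi^{\max}(f)_n \cap Y.
\]
The base case $n=0$ is immediate since both sets equal $Y$. For the inductive step, note that $y \in \xi^{\max}(f|_Y)_{n+1}$ iff $y \in Y \setminus I(f|_Y)$ and $f|_Y(y) \in \xi^{\max}(f|_Y)_n$, which (using $I(f|_Y) = I(f) \cap Y$, the invariance $f(Y\setminus I(f)) \subseteq Y$, and the inductive hypothesis) is equivalent to $y \in Y$, $y \notin I(f)$, and $f(y) \in \xi^{\max}(f)_n$, i.e.\ $y \in \xi^{\max}(f)_{n+1} \cap Y$.

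The key observation is then that dynamical balls transfer cleanly: for any $y \in \xi^{\max}(f|_Y)_n$ and any entourage $\cE$ of $X$, the definition of $B^n_{(\cdot)}$ gives
\[
B^n_{\cE \cap (Y\times Y)}(y) \;=\; B^n_\cE(y) \cap Y,
\]
where the left dynamical ball is computed in $(Y, f|_Y)$ and the right one in $(X, f)$. Consequently, if $F \subseteq \xi^{\max}(f|_Y)_n$ is an $(n, \xi^{\max}(f|_Y), \cE\cap(Y\times Y))$-separated set, then $F \subseteq \xi^{\max}(f)_n$ and $F$ is an $(n, \xi^{\max}(f), \cE)$-separated set in $X$. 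This yields the comparison
\[
S(n, \xi^{\max}(f|_Y), \cE\cap(Y\times Y)) \;\le\; S(n, \xi^{\max}(f), \cE).
\]

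Finally, taking $\limsup_{n\to\infty}\tfrac1n\log$ of both sides and then the supremum over entourages $\cE$ of $X$ (which, restricted to $Y\times Y$, exhaust the entourages of $Y$) gives $\htop(f|_Y)\le \htop(f)$. I do not anticipate a genuine obstacle here; the only mild point of care is to ensure the induction on $\xi^{\max}$ goes through, which is where the hypothesis $f(Y\setminus I(f))\subseteq Y$ is used in an essential way (otherwise the restriction $f|_Y$ would not even be well-defined as a self-map of $Y$).
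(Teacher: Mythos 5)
Your proof is correct and follows essentially the route the paper's framework anticipates: identify $\xi^{\max}(f|_Y)_n=\xi^{\max}(f)_n\cap Y$, check that the dynamical balls and the canonical uniform structure on the compact set $Y$ are the restrictions of those on $X$, and compare separated sets directly (this is the content of Lemma~\ref{lemchangeseq}(i) applied after the identification). The paper leaves this proof implicit, but the intended argument is the one you have written out.
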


\begin{corollary}\label{corsujsysentropy}Let $\pi\colon X\to Y$ be a continuous surjection. Let $f\colon X\setminus I(f)\to X$ and $g\colon  Y\setminus I(g)\to Y$ be partially continuous self-maps
such that $I(f)\subseteq \pi^{-1}(I(g))$ and $\pi\circ f|_{X\setminus \pi^{-1}(I(f))}=g\circ\pi|_{X\setminus \pi^{-1}(I(f))}.$ Then $\htop(f)\geq \htop(g).$
\end{corollary}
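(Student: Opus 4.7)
The plan is to lift a separated set from $Y$ to $X$ using the continuous surjection $\pi$, and then compare the separating quantities $S(n,\xi^{\max},\cdot)$ directly. The whole argument rests on two facts: $\pi$ carries entourages to entourages under preimage, and the admissibility hypothesis $I(f)\subseteq \pi^{-1}(I(g))$ forces orbits downstairs to admit well-defined orbit lifts upstairs.

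First I would record the key \emph{lifting lemma}: for every $y\in \xi^{\max}_n(g)$ and every $x\in \pi^{-1}(y)$, one has $x\in \xi^{\max}_n(f)$, and moreover $\pi(f^k(x))=g^k(y)$ for all $0\le k\le n$. This follows by induction on $n$: the hypothesis $I(f)\subseteq \pi^{-1}(I(g))$ gives $X\setminus \pi^{-1}(I(g))\subseteq X\setminus I(f)$, so $y\notin I(g)$ implies $x\notin I(f)$; the semi-conjugacy $\pi\circ f=g\circ\pi$ on $X\setminus\pi^{-1}(I(g))$ then yields $\pi(f(x))=g(y)$, so one can iterate.

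Next, since $X$ and $Y$ are compact, both carry canonical uniform structures given by the neighborhoods of their diagonals. The map $\pi\times\pi\colon X\times X\to Y\times Y$ is continuous, so for any entourage $\cE$ of $Y$ the preimage $\cE_X:=(\pi\times\pi)^{-1}(\cE)$ is a neighborhood of $\Delta_X$, hence an entourage of $X$; if $\cE$ is symmetric, so is $\cE_X$.

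Now fix any symmetric entourage $\cE$ of $Y$, and let $F\subseteq \xi^{\max}_n(g)$ be an $(n,\xi^{\max}(g),\cE)$-separated set of maximal cardinality $S(n,\xi^{\max}(g),\cE)$. Using surjectivity of $\pi$, for each $y\in F$ pick a preimage $\tilde y\in \pi^{-1}(y)$; by the lifting lemma $\tilde y\in \xi^{\max}_n(f)$. Let $\tilde F=\{\tilde y : y\in F\}$. If $y_1,y_2\in F$ are distinct, then by hypothesis there is some $k\le n$ with $(g^k(y_1),g^k(y_2))\notin \cE$; applying $\pi(f^k(\tilde y_i))=g^k(y_i)$ we conclude $(f^k(\tilde y_1),f^k(\tilde y_2))\notin \cE_X$, so $\tilde F$ is $(n,\xi^{\max}(f),\cE_X)$-separated and has cardinality $|F|$. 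Therefore
\[
S(n,\xi^{\max}(g),\cE)\;\le\;S(n,\xi^{\max}(f),\cE_X).
\]
Taking $\limsup_{n\to\infty}\tfrac1n\log$ and then $\sup_\cE$ on the left, and noting that the entourages of $X$ of the form $\cE_X$ form a subfamily of all entourages of $X$, yields $\htop(g)\le \htop(f)$.

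There is essentially no obstacle here: the only subtle point is verifying the lifting lemma, which is a clean induction once one unwinds that $I(f)\subseteq\pi^{-1}(I(g))$ is precisely what is needed to pull back the non-indeterminacy condition for $g$ to one for $f$. All other ingredients (preimage of entourages under continuous maps between compacta, and the supremum characterization of entropy via $S(n,\xi,\cE)$ established above) are already available.
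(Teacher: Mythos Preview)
Your proof is correct and follows essentially the same approach as the paper: lift an $(n,\cE)$-separated set in $\xi^{\max}_n(g)$ to an $(n,(\pi\times\pi)^{-1}\cE)$-separated set in $\xi^{\max}_n(f)$ via surjectivity of $\pi$, using that $\pi^{-1}(\xi^{\max}_n(g))\subseteq \xi^{\max}_n(f)$ and the semi-conjugacy relation. The paper phrases the inclusion $\pi^{-1}(\xi^{\max}_n(g))\subseteq \xi^{\max}_n(f)$ as an auxiliary admissible sequence $\xi$ and passes through $\htop(f,\xi)$, whereas you unpack it as an explicit induction (your ``lifting lemma''), but the substance is identical.
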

\proof
Observe that our assumptions imply $\xi^{\max}_n(f) \supset \pi^{-1}(\xi^{\max}_n(g)).$ Then $\xi:=\pi^{-1}(\xi^{\max}_n(g))$ is an admissible sequence of subsets of $X$.
Let $\cE$ be a symmetric entourage of $Y.$ Then $\cE':=\pi^{-1}(\cE)$ is a symmetric entourage of $X$.
Since $\pi$ is surjective, 
any $(n,\cE)$-separating set $E\subset\xi^{\max}_n(g)$ can be lifted to a subset $E' \subset \xi_n(f)$ which is $(n,\cE')$-separating. It follows that 
$S(n,\xi^{\max}(g),\cE) \le S(n,\xi,\cE')$. 
Taking limits $n\to\infty$ and supremum over all entourages yield 
$\htop(g)\leq \htop(f,\xi^{\max}(f))\leq \htop(f).$
\endproof

\section{Gromov's upper bound on topological entropy for rational maps}\label{sec:gromov}

In this section, we prove the upper bound on the topological entropy (Theorem~\ref{thm:main}).
We first explain how to define the topological entropy of a dominant rational self-map of any projective variety $X$
defined over a complete metrized field $(k,|\cdot|)$. Then we explain in the non-Archimedean case how to build special 
models over $k^\circ$ adapted to a fix open cover of $X$. Finally we deduce from this construction our main theorem.

\subsection{Dynamical degrees of rational maps}
\subsubsection*{N\'eron-Severi space of an algebraic variety}
Let us fix some notation. For any complete algebraic variety $X$ of pure dimension $d$, denote by $\NS(X)$ its real Neron-Severi space, that is
the space of real Cartier divisors on $X$ modulo numerical equivalence, see e.g., \cite{lazarsfeld}. It is a finite dimensional space equipped with $d$-multilinear
symmetric intersection form $\NS(X)^d \to \R$
which we denote by $(\om_1, \om_2,  \ldots, \om_d) \mapsto (\om_1 \cdot \om_2 \cdot \ldots \cdot \om_d)\in \R$.
There is a canonical morphism from the Picard group to the N\'eron-Severi space given by the first Chern class
so that we may attach to any line bundle $L\to X$ an element 
$c_1(L)\in\NS(X)$.

\subsubsection*{Nef and psef classes}
A class $\om\in \NS(X)$ is said to be nef if $\om\cdot [C] \ge 0$ for all irreducible curve $C$ on $X$. 
It is pseudo-effective (psef for short), if there exists a sequence of effective divisors $D_n$ such that $[D_n] \to \om$ in $\NS(X)$. 
We write $\om\ge0$ when the class is psef. 

Given any family $\om_1, \om_2,  \ldots, \om_j\in \NS(X)$ we interpret $\om_1\cdot \om_2 \cdot \ldots \cdot \om_j$
as a symmetric $(d-j)$-multilinear form on $\NS(X)$ and we write $\om_1 \cdot \ldots \cdot \om_j\ge0$ whenever
$(\om_1 \cdot \ldots \cdot \om_j \cdot \alpha_1 \cdot \ldots \cdot \alpha_{d-j}) \ge 0$ for all nef classes $\alpha_i \in \NS(X)$. 

\subsubsection*{Siu's inequalities}
These inequalities, following from~\cite[Theorem~2.2.15]{lazarsfeld}, state that for any pair of nef classes $\alpha , \beta \in \NS(X)$ such that $(\beta^d) >0$ then we have
\[
\alpha \le d \frac{(\alpha\cdot\beta^{d-1})}{(\beta^d)} \beta~.
\]
By arguing by induction on the dimension this inequality may be generalized to the following statement, see~\cite{bac} (or~\cite{jiang-li} for better constants). 
There exists a constant $S_d$ depending only on the dimension such that
for any family of nef classes $\alpha_1, \cdots, \alpha_j$, and for any nef $\beta$ class such that $(\beta^d)>0$, then we have
\begin{equation}\label{eq:Siu}
\alpha_1\cdot \ldots \cdot \alpha_j \le S_d \frac{(\alpha_1\cdot \ldots \cdot \alpha_j \cdot \beta^{d-j})}{(\beta^d)}\, \beta^j~.
\end{equation}

\subsubsection*{Rational maps} 
Let $f\colon X \dashrightarrow X$ be any dominant rational map on a 
projective variety $X$ of dimension $d$. 
By definition $f$ is given by its graph $\Gamma_f$ which is an irreducible subvariety of $X \times X$ such that 
the first projection $p_{1,f}\colon \Gamma_f \to X$ is birational. The indeterminacy locus $I(f)$ of $f$ is by definition
the set of points $q\in X$ that do not admit a Zariski open neighborhood $U$ over which $p_{1,f}$ is an isomorphism. 
Note that $f = p_{2,f} \circ p_{1,f}^{-1}$ on $X \setminus I(f)$ where $p_{2,f}\colon \Gamma_f \to X$ denotes the projection onto the second factor.

\subsubsection*{Dynamical degrees} 
We keep the same notations as in the previous paragraph. 
Fix any ample line bundle $L\to X$.
For each $k\in\{0, \cdots, d\}$, we set \[\deg_{k,L}(f^n) = p_{2,f^n}^*(c_1(L)^k) \cdot p_{1,f^n}^*(c_1(L)^{d-k})\]
where $p_{1,f^n}, p_{2,f^n} \colon \Gamma_{f^n} \to X$ are the natural projections
onto the first and second factor respectively. 
By~\cite{TTT,bac}, there exists a constant $C>1$ such that for each $k$ the sequence $\{C\deg_{k,L}(f^n)\}_n$
is sub-multiplicative. By Fekete's lemma, we may thus define $\la_k(f) := \lim_n\deg_{k,L}(f^n)^{1/n}$.

Note that the sub-multiplicativity implies the existence of a constant $C(f)>0$ such that for all $k\in\{0, \cdots, d\}$ and for 
$n\in \N$ we have 
\begin{equation}\label{eq:basic-sub-mult}
\deg_{k,L}(f^n)\le C(f) (\Lambda + \epsilon)^n~,
\end{equation}
with $\Lambda = \max_i\{\la_i(f)\}$.

Note that when $k=d$, the quantity $\deg_{k,L}(f)$ is equal to the product of $(c_1(L)^d)$ by the degree of the finite field extension $[k(X):f^*k(X)]$. 
Since $[k(X):f^{n*}k(X)]= [k(X):f^*k(X)]^n$, we obtain that 
$\la_d(f) = [k(X):f^*k(X)]$. When $f$ is separable and $k$ is uncountable, 
then $\la_d(f)$ can be interpreted as the number of preimages of a general point in $X$.

\subsection{Analytification of projective $k$-schemes}\label{sec:anal-kspace}

Our main reference is~\cite[\S 3.4]{berkovich}.

\subsubsection*{Metrized fields}
Let $k$ be any complete metrized non-Archimedean field. We let $k^\circ=\{|z|\le 1\}$ be its ring of integers
with maximal ideal $k^{\circ\circ} = \{|z|<1\}$ and denote by $\tilde{k} = k^\circ/k^{\circ\circ}$ its
residue field. We also let the value group be $|k^*|$, which is a subgroup of $(\R^*_+,\times)$. 

When $k$ is trivially valued (i.e.  $|k^*| =\{1\}$), we have $k^\circ = k$, $k^{\circ\circ} = (0)$, and $\tilde{k} =k$.
When $k$ is not trivially valued and algebraically closed, then $|k^*|$ is a divisible group (hence dense in $\R^*_+$), and $\tilde{k}$ is also algebraically closed.

\subsubsection*{Berkovich analytification}
Berkovich constructed an analytification functor over any complete non-Archimedean field
that generalizes the classical construction over $\C$. We present it briefly referring to~\cite[\S 3.4]{berkovich} for details. 

Suppose $X= \spec A$ is an affine variety over $k$. The Berkovich analytification $X^{\an}$ is an analytic space
whose underlying topological space $|X^{\an}|$
consists of the set of multiplicative semi-norms on $A$ whose restriction to $k$ is  the given field norm $|.|_k$, and
endowed with the topology of the pointwise convergence. Embedding $X$ into an affine space
$\mathbb{A}^N_k$ we may write $X^{\an} = \bigcup_{r \ge 1} X \cap  \bar{\mathbb{D}}^N(0,r)$ and thus cover
$X^{\an}$ by a family of affinoid domains. In this way, we may define a natural structure sheaf on $X^{\an}$ which turns
this space into an analytic space. For any $x\in X^{\an}$, we denote by $\cH(x)$ the completed residue field, 
which in this case is the completion of the quotient ring $A/\{ |f(x)| =0\}$ with respect to the norm induced by $x$. It is a complete metrized field extension
of $(k,|\cdot|)$.

When $X$ is a general $k$-scheme of finite type, then we cover it by affine charts $X = \cup U_i$ and define $X^{\an}$ as the union of 
the Berkovich spaces $U^{\an}_i$ patched together using the analytification of the patching maps between the $U_i$'s.
The space $X^{\an}$ can also be identified with the set of pairs $x=(\xi_x,v_x)$ where $\xi_x$ is a scheme-theoretic point in $X$, and
$v_x$ is a real valued valuation on the residue field $\kappa(x)$ of $X$ at $x$ such that $e^{-v_x}$ extends the field norm on $k$.
Note that $\cH(x)$ is a field extension of $\kappa(x)$.

By~\cite[Proposition~3.4.7]{berkovich}, $X^{\an}$ is compact whenever $X$ is a proper $k$-scheme (e.g., when $X$ is projective).

\subsubsection*{Models}
Suppose $X$ is a projective variety of dimension $d$ defined over $k$. 
A (projective) model of $X$ over $k^\circ$ is a flat projective scheme $\fX \to \spec k^\circ$ whose generic fiber is equipped with an isomorphism
with $X$. 

Note that $\fX$ is covered by affine charts of the form $\spec A$ where $A$ is finitely generated over $k^\circ$. 
Since $A$ is flat over $k^\circ$, it has torsion-free, and a theorem of Nagata, see~\cite[Theorem~3.2.1]{antieau} implies  
$A$ to be finitely presented (i.e., is the quotient of $k^\circ[x_1, \cdots, x_n]$
by some finitely generated ideal), so that any model is locally of finite presentation over $k^\circ$.

The projective space $\p^d = \proj k[z_0, \cdots, z_d]$ admits a canonical model \[\p^d_{k^\circ} =  \proj k^\circ[z_0, \cdots, z_d].\] 
For any embedding $X \subset \p^d$, 
 the Zariski closure of $X$ inside $\p^d_{k^\circ}$ defines a model of $X$\footnote{This follows from the fact that $X$ is reduced and 
 a module over a valuation ring is flat iff it is torsion-free.}. 
A model $\fX_1$ dominates another one $\fX_2$  when the identity map on $X$ extends to a regular morphism
$\fX_1 \to \fX_2$. When $\fX_1$ dominates $\fX_2$, we write $\fX_1\ge \fX_2$. Given any two models $\fX_1, \fX_2$ of $X$ there always exists
a third one satisfying $\fX_3\ge \fX_1$, and $\fX_3\ge \fX_2$ (take the fibered product $\fX_1\times_{\spec k^\circ} \fX_2$). 

We denote by $\fX_s$ the special fiber of a model $\fX$. By definition it is a (possibly reducible) projective scheme defined over $\tilde{k}$.

Note that we do not assume $k$ to be discretely valued so that $\fX$ may not be a Noetherian scheme in general. 
Also we shall sometimes use non-normal models.

\begin{proposition}\label{lem:generic-to-special}
Let $\fX$ be any model of $X$ over $k^\circ$, and let $\fL_1, \cdots, \fL_d$ be $d$ line bundles over $\fX$. 
For each $1 \le i \le d$, denote by $L_i$ the line bundle induced by $\fL_i$ on the generic fiber $X$. 
Then we have
\[
(\fL_1\cdot \ldots \cdot\fL_d\cdot \fX_s) = (c_1(L_1) \cdot \ldots \cdot c_1(L_d))~.
\]
\end{proposition}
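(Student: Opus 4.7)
The plan is to reduce the statement to the classical fact that intersection numbers of line bundles are locally constant in a flat projective family over a Noetherian base, after first descending the data $(\fX,\fL_1,\ldots,\fL_d)$ to a Noetherian subring of $k^\circ$ via Noetherian approximation.

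The first ingredient is Snapper's theorem: for any proper scheme $Y$ of dimension at most $d$ over a field $F$ and line bundles $M_1,\ldots,M_d$ on $Y$, the function $(n_1,\ldots,n_d)\mapsto \chi(Y,M_1^{\otimes n_1}\otimes\cdots\otimes M_d^{\otimes n_d})$ is a polynomial of total degree at most $d$, whose coefficient of $n_1\cdots n_d$ equals $(M_1\cdot\ldots\cdot M_d)$; in particular this intersection number is invariant under extension of the base field (by flat base change). The second ingredient is the classical fact that, if $\pi\colon Y\to S$ is a flat projective morphism over a Noetherian base $S$ and $\mathcal{F}$ is a coherent sheaf on $Y$ flat over $S$, then $s\mapsto \chi(Y_s,\mathcal{F}|_{Y_s})$ is locally constant on $S$.

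To circumvent the fact that $k^\circ$ need not be Noetherian, I would use Noetherian approximation. Since $\fX\to\spec k^\circ$ and each $\fL_i$ are flat and finitely presented (by Nagata's theorem, as cited in the excerpt), and $k^\circ=\varinjlim_\alpha R_\alpha$ is the filtered union of its finitely generated, hence Noetherian, subrings, the results of EGA IV.8 and IV.11.2.6 provide, for $\alpha$ large enough, a flat projective scheme $\fX_\alpha\to \spec R_\alpha$ together with line bundles $\fL_{1,\alpha},\ldots,\fL_{d,\alpha}$ whose base change along $R_\alpha\to k^\circ$ recovers $(\fX,\fL_1,\ldots,\fL_d)$. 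The structure map $\spec k^\circ\to\spec R_\alpha$ respects specialization, so the images $\eta_\alpha,s_\alpha$ of the generic and closed points of $\spec k^\circ$ satisfy $s_\alpha\in\overline{\{\eta_\alpha\}}$; in particular they lie in a common irreducible, hence connected, component of $\spec R_\alpha$.

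Applying the local-constancy result to $\fX_\alpha\to\spec R_\alpha$ with $\mathcal{F}=\bigotimes \fL_{i,\alpha}^{\otimes n_i}$, the full multivariate Hilbert polynomial is constant on each connected component, so its $n_1\cdots n_d$ coefficient agrees at $\eta_\alpha$ and at $s_\alpha$. Base-changing these fibers along $\kappa(\eta_\alpha)\hookrightarrow k$ and $\kappa(s_\alpha)\hookrightarrow \tilde{k}$ respectively, and using base-field invariance of intersection numbers, identifies the common value with $(c_1(L_1)\cdot\ldots\cdot c_1(L_d))$ on the generic side and with $(\fL_1\cdot\ldots\cdot \fL_d\cdot\fX_s)$ on the special side. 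The main technical obstacle is precisely the Noetherian approximation step: one must invoke the EGA machinery carefully to descend a flat finitely presented projective family together with all the line bundles while preserving flatness, possibly enlarging $\alpha$ several times; the rest is formal consequence of Snapper's theorem and the Noetherian constancy of Euler characteristics.
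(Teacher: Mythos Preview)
Your proposal is correct but takes a genuinely different route from the paper. The paper works directly over the (possibly non-Noetherian) valuation ring $k^\circ$: after reducing by multilinearity to a single line bundle $\fL$, it invokes results of Antieau--Datta to see that $R\Phi_*\fL$ is a perfect complex of $k^\circ$-modules, then uses flat base change for the derived pushforward to identify $\chi(X,L)$ and $\chi(\fX_s,\fL_s)$ as the Euler characteristics of the same bounded complex of finite free $k^\circ$-modules tensored with $k$ and with $\tilde{k}$ respectively. Your approach instead descends the whole family to a Noetherian subring of $k^\circ$ via EGA~IV approximation and then applies the classical local constancy of Euler characteristics (Hartshorne~III.9.9). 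Your argument is more elementary in spirit, reducing everything to textbook Noetherian results, and is in fact closer to the absolute Noetherian approximation techniques the paper itself deploys later in Sections~5--6; the paper's direct approach trades the EGA descent machinery for the specific coherence and perfectness properties available over valuation rings, yielding a shorter argument once those are granted.
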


\begin{proof}
By multi-linearity, we only need to justify this formula in the case $\fL:= \fL_1= \cdots = \fL_d$. We set $L:= \fL|_X$. 
We rely on the definition of the intersection product in terms of the Euler characteristic as done by Snapper-Kleiman, see~\cite[Example~18.3.6]{fulton}
so that
$\chi (\fX_s,\fL_s^{\otimes n})= (\fL_s^d\cdot \fX_s) \frac{n^d}{d!} + o(n^d)$ and $ \chi (X,L^{\otimes n})= (c_1(L)^d)\frac{n^d}{d!} + o(n^d)$.
The lemma thus follows from the next result.
\end{proof}
 
\begin{lemma}\label{proflatcharcon}
For every line bundle $\fL$ on $\fX$, $\chi (X, L)=\chi(\fX_s,\fL_s).$
\end{lemma}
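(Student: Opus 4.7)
The plan is to reduce to the classical constancy of Euler characteristic in a flat proper family over a Noetherian base, and then connect the relevant fibers to $X$ and $\fX_s$ by flat base change of the ground field. The main obstacle is that $k^\circ$ is not Noetherian when $k$ is not discretely valued, so the standard semicontinuity theorems of EGA III do not apply directly to $\fX \to \spec k^\circ$.

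First, I would invoke absolute Noetherian approximation (see, e.g., Stacks Project, tag~\texttt{01ZM}, or EGA~IV~\S8). By the discussion following Nagata's theorem quoted in the text, $\fX$ is locally of finite presentation and flat over $k^\circ$, and projective, while $\fL$ is finitely presented. Therefore there exist a finitely generated (hence Noetherian) $\Z$-subalgebra $R \subset k^\circ$, a flat projective scheme $\fX_R \to \spec R$, and a line bundle $\fL_R$ on $\fX_R$ whose base changes along $\spec k^\circ \to \spec R$ recover $\fX$ and $\fL$.

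Next, I would identify inside $\spec R$ the two distinguished points coming from the generic and closed points of $\spec k^\circ$. The map $R \hookrightarrow k^\circ \hookrightarrow k$ factors through $\kappa(\fp_\eta)$ for some prime $\fp_\eta \subset R$, while $R \hookrightarrow k^\circ \twoheadrightarrow \tilde{k}$ factors through $\kappa(\fp_s)$ for some prime $\fp_s \supset \fp_\eta$. Since $\fp_s$ is a specialization of $\fp_\eta$, the points $\fp_\eta$ and $\fp_s$ lie in the same connected component of $\spec R$, so the constancy theorem for $\chi$ in a flat projective family over a connected Noetherian base (EGA~III, 7.9.5) yields
\[
\chi(\fX_{R,\fp_\eta}, \fL_{R,\fp_\eta}) \;=\; \chi(\fX_{R,\fp_s}, \fL_{R,\fp_s}).
\]

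Finally, since $X = \fX_{R,\fp_\eta} \times_{\kappa(\fp_\eta)} k$ and $\fX_s = \fX_{R,\fp_s} \times_{\kappa(\fp_s)} \tilde{k}$, flat base change along the field extensions $\kappa(\fp_\eta)\hookrightarrow k$ and $\kappa(\fp_s)\hookrightarrow \tilde{k}$ gives $H^i(X,L) = H^i(\fX_{R,\fp_\eta},\fL_{R,\fp_\eta})\otimes_{\kappa(\fp_\eta)} k$, and similarly for the special fiber; comparing dimensions yields $\chi(X,L) = \chi(\fX_{R,\fp_\eta}, \fL_{R,\fp_\eta})$ and $\chi(\fX_s, \fL_s) = \chi(\fX_{R,\fp_s}, \fL_{R,\fp_s})$, which combined with the previous display closes the argument. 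The delicate step is the Noetherian approximation in the first paragraph, i.e., descending both the scheme and the flatness to $\spec R$; the rest is a packaging of standard cohomology-and-base-change inputs.
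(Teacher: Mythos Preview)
Your argument is correct and takes a genuinely different route from the paper. The paper works directly over the valuation ring $k^\circ$: using results of Antieau--Datta it shows that $R\Phi_*\fL$ is a perfect complex on $\spec k^\circ$ (i.e., quasi-isomorphic to a bounded complex $N^\bullet$ of finite free $k^\circ$-modules), and then obtains both fiber cohomologies by tensoring $N^\bullet$ with $k$ and with $\tilde{k}$; equality of Euler characteristics follows because a bounded complex of free modules has the same alternating rank after any base change. Your approach instead descends the whole setup to a Noetherian base via absolute approximation and invokes the classical EGA~III constancy theorem there, then pulls back along the two field extensions. What your approach buys is that it avoids any appeal to the non-Noetherian coherence and perfectness results over valuation rings, relying only on standard limit techniques and EGA; the price is the extra bookkeeping of identifying the primes $\fp_\eta\subseteq\fp_s$ in $\spec R$ and checking the base-change identifications of the fibers. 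The paper's approach is more intrinsic to the valuation-ring setting and yields the stronger intermediate statement that the full derived pushforward is perfect, which could be reused elsewhere. Both are valid proofs of the lemma.
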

Note that when $k$ is a discrete valuation ring, then $\spec k^\circ$ is Noetherian, and the result follows directly from~\cite[Theorem~III.9.9]{hartshorne}. 
The point here is to see how to get around the Noetherian assumption. 

\proof[Proof of Lemma~\ref{proflatcharcon}]
Since $\fX$ is locally of finite presentation over $k^\circ$, the structure morphism $\Phi\colon\fX \to \spec k^\circ$ is finitely presented, and the structure sheaf $\mathcal{O}_\fX$
is coherent (see~\cite[Corollary~3.2.2]{antieau}). Since $\fL$ is an invertible sheaf on $\fX$, it is also coherent over $\fX$.

Now let us consider the derived direct image $R\Phi_* \fL$. 
Recall that if $0 \to \fL \to I^0 \to I^1 \cdots$ is any injective resolution of $\fL$ (of $k^\circ$-modules), then 
 $R\Phi_* \fL$ is determined by the complex $0 \to \Phi_* I^0 \to \Phi_* I^1 \cdots$. 
Since $\fL$ is coherent and $\Phi$ is flat and proper, it follows from~\cite[Corollary~3.2.2]{antieau} again that $R\Phi_* \fL$ is perfect, which means that it is quasi-isomorphic to a bounded complex $N^\bullet$ 
of finitely generated free $k^\circ$-modules. 

Since $\Phi$ is flat, by~\cite[Theorem~2.1.2]{antieau}, the derived direct image and the derived pull-back commute.
More precisely,
if $\eta\colon \spec k\to \spec k^\circ$ is the generic point, and $\phi\colon X \to \spec k$ is the structure morphism, then 
$R\phi_*L$ is represented by the complex $N^\bullet \otimes_{k^\circ} k$. In a similar way, let $s\colon\spec \tilde{k}\to \spec k^\circ$ denote the special point, and let $\tilde{\phi} \colon \fX_s \to \spec \tilde{k}$ be the structure morphism.  Again, $R\tilde{\phi}_*L_s$ is represented by $N^\bullet \otimes_{k^\circ}\tilde{k}$.

This implies
$H^{i}(X,L) = H^i(N^\bullet \otimes_{k^\circ} k)$ and 
$H^i(\fX_s, \fL_s)=H^i(N^\bullet \otimes_{k^\circ}\tilde{k})$ for all $i$. it follows that $\chi (X, L)=\chi(\fX_s,\fL_s)$. 

The reader may also consult~\cite[Appendix~A.4]{boucksom-eriksson} as an alternative reference to~\cite{antieau}.
\endproof

\subsubsection*{Reduction map}
Any model $\fX$ gives rise to a reduction map $\red_\fX \colon X^{\an} \to \fX_s$ which is defined as follows. 
We can send any point $x = (\xi_x, v_x) \in X^{\an}$ to the closure of $\xi_x$ in $\fX$. 
This gives a map $\spec \cH(x) \to \fX$ making the following  diagram commutative:
\[\xymatrix{
\spec \cH(x)  \ar[d] \ar[r] & \fX \ar[d]\\
\spec \cH(x)^\circ  \ar[r] & \spec k^\circ}
\]
By the valuative criterion of properness there exists a unique lift $\spec \cH (x)^\circ \to \fX$, and 
we let $\red_\fX(x)$ to be the image of the special point of $\spec \cH (x)^\circ$ under this morphism.
By construction $\red_\fX$ is anti-continuous in the sense that the preimage of an open (resp. closed) set is closed (resp. open). 
~\cite[Corollary 2.4.2]{berkovich}.

In Sections~\ref{sec:entropyaffinoid} and~\ref{sec:endo-goodred}, we will generalize the construction of reduction to $\e$-reduction for $\e\in (0,1].$
The usual reduction can be viewed as the case $\e=1.$

\subsubsection*{Model functions}
Fix any model $\fX$ of $X$, and let $Z$ be any Cartier divisors supported on the special fiber $\fX_s$. 
Then we may define a function $\varphi_Z\colon X^{\an} \to \R$ as follows. 
Pick an affine chart $U_i = \spec A_i$, where $A_i$ is a finitely generated $k^\circ$-algebra, 
choose an equation of $Z$ on $U_i$, say $f_i \in A_i$. 
Given any point $x\in U_i^{\an}$, we set $\varphi_Z(x) := \log |f_i(x)|$.
Since $Z$ is vertical, $f_i$ is a unit in the $k$-algebra $A_i \otimes k$, and $\varphi_Z$
induces a continuous function on $X^{\an}$. 
When $qZ$ is Cartier for some $q\in \Z$, then we set $\varphi_Z := \frac1q \varphi_Z$.

A model function is a continuous function $\varphi \colon X^{\an} \to \R$ which is equal to 
$\varphi = \varphi_Z$ for some $\Q$-Cartier divisor $Z$ in $\fX$ supported on $\fX_s$. 

Model functions are stable by $\max$, by multiplication by any rational numbers and by sums. It contains all constant functions, and
it is proved in~\cite[Theorem~7.12]{gubler} that they separate points.
It follows from Stone-Weierstrass' theorem that the space of model functions is dense in the space of continuous functions on $X^{\an}$ for the supremum norm.

\subsubsection*{Special covers on Berkovich spaces}

Let $X$ be any 
projective variety over $k$.
For any model $\fX$, we let $\fU(\fX)$ be the finite open cover $\red_\fX^{-1}(Z)$ where $Z$ ranges over all irreducible components
of $\fX_s$. 

\begin{proposition}\label{prop:special covers}
For any finite open cover $\fU$ of $X^{\an}$, there exists a projective model $\fX$ of $X$ such that 
$\fU(\fX)$ is a refinement of $\fU$.
\end{proposition}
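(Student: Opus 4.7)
The plan is to prove the proposition in two steps: first, refine $\fU$ using strict inequalities of model functions together with compactness; second, construct a dominating projective model $\fX$ via an admissible blow-up whose special fiber's irreducible components encode these inequalities under the reduction map.

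For the first step, since $X^{\an}$ is compact Hausdorff and hence normal, for every $x \in X^{\an}$ lying in some $U_i \in \fU$ Urysohn's lemma will produce a continuous function $g_x \ge 0$ with $g_x(x) > 0$ and supported in $U_i$. By Stone--Weierstrass combined with the density of model functions recalled just before the proposition (cf.~\cite[Theorem~7.12]{gubler}), I approximate $g_x$ by a model function $\varphi_x$ defined on some model $\fX_x$ within $g_x(x)/2$ in sup norm, so that $x \in \{\varphi_x > g_x(x)/2\} \subset U_i$. Compactness then extracts a finite family $\{W_j = \{\varphi_j > c_j\}\}_{j=1}^N$ refining $\fU$. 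Passing to a common dominating model $\fX'$, each $\varphi_j$ becomes a $\Q$-Cartier model function on $\fX'$, locally of the form $\log|f_j|$ for regular functions $f_j$; after a small perturbation of $c_j$ within the value group of $k$ (strict inequalities being stable), I take $c_j = \log|a_j|$ for some $a_j \in k^\circ$.

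For the second step, I blow up $\fX'$ along the coherent ideal sheaf locally generated by the pairs $(f_j, a_j)$ for all $j$. By the formalism of admissible blow-ups (cf.~\cite{Bosch77} and Raynaud's theory, which applies without Noetherian hypotheses on $k^\circ$ provided ideals are finitely generated and contain an element of $k^{\circ\circ}$), the resulting projective model $\fX \to \fX'$ has a special fiber $\fX_s$ whose irreducible components correspond, on each affine chart, to a prescribed choice of generator among $f_j$ or $a_j$. Under the reduction map, every irreducible component $Z$ of $\fX_s$ pulls back to the locus where each inequality $\varphi_j \gtrless c_j$ holds in a prescribed way, so $\red_\fX^{-1}(Z)$ is contained in some $W_j$, hence in some $U_i$. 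Therefore $\fU(\fX)$ refines $\fU$. The main obstacle is precisely this second step --- carrying out the admissible blow-up so that the irreducible components of the special fiber encode the correct sign pattern of the inequalities --- and handling the non-Noetherian situation where $k^\circ$ is not a discrete valuation ring requires technical bookkeeping in the style of Raynaud's formal-model theory, but presents no new conceptual difficulty.
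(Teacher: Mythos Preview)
Your first step is essentially identical to the paper's: Urysohn, density of model functions, and compactness to produce a finite refinement of $\fU$ by sets of the form $\{\varphi_j > c_j\}$. The difference lies entirely in the second step.

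The paper avoids your explicit blow-up by a normalization trick. Instead of keeping arbitrary thresholds $c_j$, the paper replaces each approximating model function $\psi'_x$ by $\varphi_x := \max\{\psi'_x - \epsilon, 0\}$, which is again a model function (model functions being stable under $\max$ and addition of constants). This $\varphi_x$ is nonnegative, vanishes outside $U_x$, and is positive at $x$. Once a common model $\fX$ is chosen on which every $\varphi_{x_i} = \varphi_{Z_i}$ for some vertical $\Q$-Cartier divisor $Z_i$, the refinement property follows immediately: for any irreducible component $W$ of $\fX_s$, the divisorial point $x_W$ lies in some $V_{x_i} = \{\varphi_{x_i} > 0\}$, which forces the local equation of $-Z_i$ to vanish along $W$, and hence $\red_\fX^{-1}(W) \subset V_{x_i} \subset U_{x_i}$. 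No blow-up is needed.

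Your route via admissible blow-ups is valid in principle, but the proposal is under-specified at the crucial point. Blowing up the single ideal $(f_1,a_1,\ldots,f_N,a_N)$ does \emph{not} produce components encoding a full sign pattern of the inequalities $|f_j| \gtrless |a_j|$; each chart of that blow-up only records which single generator dominates all others. To get what you want you would need either a sequence of blow-ups along each $(f_j,a_j)$, or the blow-up of the product ideal $\prod_j (f_j,a_j)$, and you should also be explicit about how the perturbation of $c_j$ converts the strict inequality into a non-strict one compatible with the blow-up charts. These are fixable, but the paper's $\max\{\cdot,0\}$ normalization sidesteps all of this bookkeeping in one line.
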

\begin{proof}
Pick any point $x\in X^{\an}$. Choose $U_x\in \fU$ containing $x$. By Urysohn's lemma, 
we may find  a continuous function $\psi_x$ such that $\psi_x \equiv 0$ on $X^{\an} \setminus U_x$ and $\psi_x (x) =1$.
By the density of model functions, for any rational $\epsilon>0$ there exists a model function $\psi'_x$ such that 
$\sup |\psi'_x - \psi_x| \le \epsilon$. Set $\varphi_x = \max \{ \psi'_x- \epsilon , 0 \}$.
This is a model function that is equal to $0$ outside $U_x$ and takes a positive value at $x$.
Let $V_x := \{ \varphi_x >0\}$. By compactness of $X^{\an}$ we may extract a finite subcover $V_{x_i}$.
Pick any model $\fX$ such that for all index $i$, we have $\varphi_{x_i} = \varphi_{Z_i}$ for some vertical divisor $Z_i \subset \fX_s$. 
Then $\fU(\fX)$ is a refinement of $\fU$.
\end{proof}

\subsection{Constructible and Zariski topologies on schemes} \label{sec:two-topos}
We discuss now the notion of constructible topology on an arbitrary scheme. This brief section is not logically necessary for the proof of Theorem~\ref{thm:main}, but the constructible topology appears at various stages in the proof of Theorem~\ref{thm:entropy good reduction}.

Let $R$ be any commutative ring with unit.  We may view $\spec R$ as a subset of $\{0,1\}^R$ by attaching to any prime ideal $\fA$ 
its characteristic function $\chi_\fA \colon R\to \{0,1\}$
defined by $\chi_\fA(f) =1$ iff $f\in \fA$. The product topology on $\{0,1\}^R$ restricts to the so-called constructible topology $\tau^{\con}$
on $\spec R$. Since the set of proper prime ideals is closed in $\{0,1\}^R$ for the product topology, 
$(\spec R, \tau^{\con})$ is compact and totally disconnected. 

The collection of sets of the form $f^*(\spec B)$ for some ring morphism $f\colon R \to B$ satisfies the axioms for closed sets, and this topology
coincides with the constructible topology, see~\cite[Appendix~2]{knight} or~\cite[exercices~27,28, Chapter~3]{atiyah-macdonald}.

\begin{example}
Let $k$ be any algebraically closed field, and $R= k[T]$. Denote by $\eta$ the generic point so that
$\spec R= k \cup \{\eta\}$. Then \emph{all points} (including $\eta$) are closed for $\tau^{\con}$; 
the restriction of the constructible topology to $k$ is the discrete topology;
and a basis of neighborhood of $\eta$ is given
by complements of finite sets in $k$. In other words, $(\spec R, \tau^{\con})$ is the one point compactification of 
the discrete set $k$.
\end{example}

Recall that the constructible sets is the smallest collection of subsets of $\spec R$ that contains
retrocompact\footnote{that is open sets for which the intersection with any quasi-compact open sets remains quasi-compact.} open sets, and is stable under Boolean operations (that is by taking finite unions, finite intersections and complementation). 
Retrocompact open sets in $\spec R$ are complements of sets of the form $\spec (R/I)$ where $I$ is a finitely generated ideal. 
It follows that constructible sets form
a basis of clopen sets for the constructible topology. 

In fact, a set is constructible iff it is clopen for the constructible topology. 
Indeed, any clopen for $\tau^{\con}$ is a finite union of constructible sets hence is constructible.

When $R$ is Noetherian, any (Zariski) open (or closed) set is constructible.

\smallskip

The definition of the constructible topology on an arbitrary scheme $X$ is as follows. 
As before constructible sets is the smallest collection of subsets of $X$ that contains
retrocompact open sets, and is stable under Boolean operations; and $\tau^{\con}$ is the topology
generated by constructible sets.

Any open affine subscheme $U$ of $X$ is constructible (and clopen for $\tau^{\con}$), and the restriction of the 
constructible topology to $U$ is the one defined above for spectra of rings.

Note that $(X,\tau^{\con})$ is always Hausdorff, and a quasi-compact scheme is compact for $\tau^{\con}$.

\smallskip

Finally suppose that $X$ is a $k$-variety for some field $k$. Endow $k$ with its trivial norm, and consider $X^{\an}$ 
its Berkovich analytification. Recall that we have a canonical injection $\imath\colon X\to X^{\an}$ such that
on any affine chart $U = \spec A$, a prime ideal $x$ is mapped to the norm on $A$ induced by the trivial norm on the quotient $A/x$. 
In this case, the constructible topology on $X$ can be identified
to the restriction of the Berkovich topology on $\imath(X) \subset X^{\an}$. Indeed a basis for the Berkovich topology on $U^{\an}$
is given by $V(f,I)=\{|f(x)| \in I\}$ for some $f\in A$ and $I$ a segment in $\R_+$. And
$\imath^{-1} (V(f,I)) = \{ x,\, f\in x\}$, $\{ x,\, f\notin x\}$, $\spec A$, and $\emptyset$
when $I\cap \{0,1\}= \{0\}, \{1\}, \{0,1\}$ and $\emptyset$ respectively, so that $\imath^{-1} (V(f,I))$
also forms a basis for the constructible topology on $\spec A$.

\subsection{Topological entropy of rational self-maps} \label{sec:definition-topo-rat}
Let $f\colon X \dashrightarrow X$ be any dominant rational selfmap of a projective variety $X$ defined over a complete metrized field $k$.
We let $\Gamma_f \subset X \times X$ be its graph with projection maps $p_{1,f}, p_{2,f}\colon \Gamma_f \to X$, so that
$p_{1,f}$ is a birational morphism. The set of points above which $p_{1,f}$ is not a biregular morphism is the indeterminacy locus $\Ind(f)$ of $f$ which is a
subvariety of codimension at least $2$.

Since $f$ is not a continuous map, but we may view is as a partially continuous map $f\colon X^{\an}\setminus I(f)\to X^{\an}$ where $I(f):=\Ind(f)^{\an}$.
Its topological entropy is the topological entropy as a partially continuous map.

For each $n\in \N$, let $U_n(f)\subset X$ be the Zariski open set of points $x\in X$ for which
$x, f(x), \cdots, f^{n-1}(x)$ do not belong to the indeterminacy locus of $f$. Then $\xi^{\max}_n=U_n(f)^{\an}$, and
$\xi^{\max}_n$ is dense for all $n$. 

\begin{remark}
The topological entropy that is considered in~\cite{DS05} and \cite{Guedj2005} is $\htop(f, \xi^{\self})$. 
By Corollary \ref{corselfmax}, we have $\htop(f)=\htop(f, \xi^{\self})$, so that our notion coincides
with the existing one in the literature.
\end{remark}

Let us record the following result concerning base change. We refer to Appendix~\ref{sec:appennd} for the proof and a more detailed discussion. 

\begin{proposition}\label{probasechangeentropy}
Let $K/k$ be any complete metrized field extension, and let $f_K\colon X_K \dashrightarrow X_K$ be the rational self-map induced by base change. 
Then $\htop(f_K) \ge \htop(f)$.
\end{proposition}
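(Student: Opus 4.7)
The plan is to realise the identity on $X$ as a continuous surjection $\pi \colon X_K^{\an} \twoheadrightarrow X^{\an}$ that intertwines $f_K$ and $f$, and then invoke Corollary~\ref{corsujsysentropy} directly.

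First, I would construct $\pi$ via the functoriality of Berkovich analytification under base change. On an affine chart $\spec A \subset X$, a point of $(\spec A \otimes_k K)^{\an}$ is a multiplicative semi-norm on $A \otimes_k K$ whose restriction to $K$ is $|\cdot|_K$; restricting such a semi-norm to $A$ defines a multiplicative semi-norm extending $|\cdot|_k$, i.e.\ a point of $(\spec A)^{\an}$. These local restriction maps are compatible with the patching data, so they glue to a continuous map $\pi \colon X_K^{\an} \to X^{\an}$, continuity being immediate from the pointwise convergence topology on both spaces. Surjectivity of $\pi$ is the crucial non-formal input: it follows from the classical fact that every bounded multiplicative semi-norm on a $k$-affinoid algebra extends multiplicatively along the complete non-Archimedean extension $K/k$, which uses the completeness of $K$ in an essential way (see~\cite{berkovich}).

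Second, I would check that the dynamics is compatible with $\pi$. Base change commutes with the graph construction: from $\Gamma_f \subset X \times_k X$ one obtains $\Gamma_{f_K} = \Gamma_f \times_{\spec k} \spec K$ as a subscheme of $(X \times_k X) \times_{\spec k} \spec K = X_K \times_K X_K$, with first projection still birational, and likewise $\Ind(f_K) = \Ind(f) \times_{\spec k} \spec K$. After analytification this yields the set-theoretic identity $I(f_K) = \pi^{-1}(I(f))$, together with the intertwining relation $\pi \circ f_K = f \circ \pi$ on $X_K^{\an} \setminus I(f_K)$, since both sides extend the same rational map $\Gamma_f \times_{\spec k} \spec K \to X$.

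Third, with $\pi$ a continuous surjection satisfying $I(f_K) \subseteq \pi^{-1}(I(f))$ and the commutation above, Corollary~\ref{corsujsysentropy} applies with $(X, f) \rightsquigarrow (X_K^{\an}, f_K)$ and $(Y, g) \rightsquigarrow (X^{\an}, f)$ and gives $\htop(f_K) \ge \htop(f)$. The main obstacle is the surjectivity of $\pi$, which is the only genuinely analytic ingredient; the rest is a formal consequence of the functorial behaviour of graphs and indeterminacy loci under base change together with the abstract entropy comparison already established in Section~\ref{sec:entropy-part-continuous}.
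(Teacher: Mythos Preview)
Your proof is correct and follows essentially the same route as the paper: construct the continuous surjection $\pi_{K/k}\colon X_K^{\an}\to X^{\an}$ by restriction of semi-norms, note that $\Ind(f_K)=\Ind(f)_K$ so that $I(f_K)=\pi_{K/k}^{-1}(I(f))$ and the obvious intertwining holds, and conclude via Corollary~\ref{corsujsysentropy}. The only minor wrinkle is that on an affine chart $\spec A$ the algebra $A$ is a finitely generated $k$-algebra rather than a $k$-affinoid algebra, so the surjectivity of $\pi$ is more cleanly phrased (as the paper does) via the surjectivity of the scheme map $X_K\to X$ together with the fact that any norm on a field extends to any field extension; alternatively one may invoke Lemma~\ref{lemsurjectivebasechange} on affinoid neighbourhoods.
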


\subsection{Proof of  Theorem~\ref{thm:main}}

When $k=\C$, Theorem~\ref{thm:main} was proved by Dinh and Sibony \cite{DS05}. 

We may and shall thus assume that $k$ is non-Archimedean.
By Proposition~\ref{probasechangeentropy}, after a suitable base change, we may assume that $k$ is algebraically closed and is not trivial valued. In this case, $X(k)\subseteq X^{\an}$ is dense.  By Corollary~\ref{corsujsysentropy}, after replacing $X$ by its normalization, we may assume that $X$ is normal.

Given an entourage $\cE$, we shall give upper bounds for maximal $(n,\cE)$-separating sets. 
By Proposition~\ref{prop:special covers}, we may suppose $\cE = \cup_i |U_i| \times |U_i|$
where $ \{U_i\} = \fU(\fX)$ is a special cover induced by a projective model $\fX$.
Consider the admissible system  $\xi$ with $\xi_n=U_n(f)(k), n\geq 0.$ Since $\overline{\xi_n}=X^{\an}$ for all $n\geq 0$,
Lemma \ref{lemchangeseq} yields $\htop(f,\xi)=\htop(f).$
We now relate $S(n,\xi,\cE)$ to a geometric invariant that is amenable to geometric estimations.

Denote by $\fX^{\langle n \rangle}$ the $n$-fold fibered product of $\fX$ over $\spec k^\circ$. 
This is a model of $X^n$ over $k^\circ$. Look at the $n$th graph
$\Gamma_n \subset X^n$ of $f$, i.e. the Zariski closure of the image of the morphism 
of the set of points
$(x, f(x), \ldots, f^{n-1}(x))$ with $x \in U_n(f)(k)$.
We let  $\mathfrak{G}^{\langle n \rangle}$ be its closure in $\fX^{\langle n \rangle}$.
It is a projective model for $\Gamma_n$. For each $k=0, \cdots, n-1$, we let $\pi_k\colon \mathfrak{G}^{\langle n \rangle}\to \fX$
be the projection onto the $(k+1)$th factor. 

\begin{proposition}\label{prop:bound-lelong}
The cardinality of a maximal $(n,\cE)$-separating set is bounded by the number of irreducible components of 
the special fiber of $\mathfrak{G}^{\langle n \rangle}$.
\end{proposition}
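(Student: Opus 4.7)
The plan is to bound the size of a maximal $(n,\cE)$-separating set $E\subset \xi_n=U_n(f)(k)$ by associating to each $x\in E$ a nonempty collection $\Phi(x)$ of irreducible components of $\mathfrak{G}^{\langle n\rangle}_s$, in such a way that $\Phi(x)\cap\Phi(y)=\emptyset$ whenever $x,y\in E$ are $(n,\cE)$-separated. Since any family of pairwise disjoint nonempty subsets of a set of cardinality $Q_n$ has at most $Q_n$ members, this immediately yields $|E|\le Q_n$.

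To define $\Phi$, attach to each $x\in U_n(f)(k)$ the orbit tuple $\vec{x}:=(x,f(x),\ldots,f^{n-1}(x))\in \Gamma_n(k)$, viewed as a $k$-point of the generic fiber of $\mathfrak{G}^{\langle n\rangle}$. Reduce it via $\red_{\mathfrak{G}^{\langle n\rangle}}\colon \Gamma_n^{\an}\to \mathfrak{G}^{\langle n\rangle}_s$ and let $\Phi(x)$ be the set of irreducible components of $\mathfrak{G}^{\langle n\rangle}_s$ that contain $\red_{\mathfrak{G}^{\langle n\rangle}}(\vec{x})$. Since every point of the special fiber lies in at least one component, $\Phi(x)\ne\emptyset$.

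The heart of the argument is the disjointness claim. Suppose $\ell\in \Phi(x)\cap\Phi(y)$. The closed immersion $\iota\colon \mathfrak{G}^{\langle n\rangle}\hookrightarrow \fX^{\langle n\rangle}$ is a morphism of $k^\circ$-schemes, hence compatible with reduction and with the projections $\pi_k\colon \fX^{\langle n\rangle}\to \fX$ onto the $(k+1)$th factor; thus $\pi_k(\red(\vec{x}))=\red_\fX(f^k(x))$, and likewise for $y$. The image of the irreducible component $\ell$ under $\pi_k\circ\iota_s$ is an irreducible subset of $\fX_s$, hence contained in some irreducible component $Z_{i_k}$ of $\fX_s$. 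Therefore both $\red_\fX(f^k(x))$ and $\red_\fX(f^k(y))$ lie in $Z_{i_k}$, i.e.\ both $f^k(x)$ and $f^k(y)$ lie in $U_{i_k}=\red_\fX^{-1}(Z_{i_k})$, forcing $(f^k(x),f^k(y))\in U_{i_k}\times U_{i_k}\subset \cE$ for every relevant $k$. This contradicts the $(n,\cE)$-separation of $x$ and $y$.

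The principal subtlety I anticipate is the multi-valued nature of the map sending a point of the special fiber to an irreducible component containing it; this is exactly why we work with set-valued $\Phi$ and conclude by a trivial pigeonhole argument on disjoint nonempty subsets rather than trying to pick a single component coherently. The remaining verifications (that the reduction map restricted to $k$-points lands on $\tilde k$-points of the special fiber, and that reduction commutes with the closed immersion $\iota$ and with the projections $\pi_k$) are routine functoriality properties; it is precisely here that the definition of $\mathfrak{G}^{\langle n\rangle}$ as the closure of $\Gamma_n$ inside $\fX^{\langle n\rangle}$ enters.
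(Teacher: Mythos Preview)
Your proof is correct and follows essentially the same approach as the paper: form the orbit tuple, reduce it to the special fiber of $\mathfrak{G}^{\langle n\rangle}$, and use that the projection of any irreducible component of $\mathfrak{G}^{\langle n\rangle}_s$ to $\fX_s$ is irreducible, hence contained in a single $Z_i$, to contradict separation. The only cosmetic difference is that the paper picks one component $W_l$ per point and shows $l\mapsto W_l$ is injective, whereas you phrase this via the set-valued $\Phi$ and disjointness; the underlying argument is identical.
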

\begin{proof}
Let $\{Z_i, i\in I\}$ denote the set of irreducible components of $\fX_s$. Pick $x_1, \ldots, x_r$ an
$(n,\xi,\cE)$-separating set of  maximal  cardinality, and define
$y_l= (x_l, \cdots, f^{n-1}(x_l)) \in \Gamma_n(k)$.
Let $W_l$ be an irreducible component of $\mathfrak{G}^{\langle n \rangle}_s$ containing the point $r_{\fX^{\langle n \rangle}}(y_l)$. 
Since the set is $(n,\xi,\cE)$-separating, 
for each $l \neq l'$ there exists at least one $0\le k\le n-1$ such that 
 $r_\fX (f^k(x_l)),  r_\fX (f^k(x_{l'}))$ do not belong to the same  $Z_{i}$ for any $i$.
Suppose by contradiction that $W:= W_l = W_{l'}$. Then $\pi_k(W)$ is an irreducible subvariety of $\fX_s$
that contains both points $r_\fX (f^k(x_l))$ and  $r_\fX (f^k(x_{l'}))$.
Since $\pi_k(W) = \cup_i (\pi_k(W)\cap Z_i)$ it follows that $\pi_k(W) \subset Z_i$ for at least one index $i$, 
which is impossible. 
\end{proof}

\begin{remark}
Proposition~\ref{prop:bound-lelong} is the non-Archimedean analog  of the fact that in the complex case, maximal $(n,\cE)$-separating sets
control the euclidean volume of $\Gamma_n$ thanks to a theorem by Lelong, see~\cite{gromov}.
\end{remark}

We now conclude by estimating the number say $Q_n$ of irreducible components of $\mathfrak{G}^{\langle n \rangle}_s$.
Pick any ample line bundle $\fL \to \fX$, and look at $\fL_n= \sum_{i=0}^{n-1}\pi_i^* \fL$ on $\fX^{\langle n \rangle}$ where $\pi_i:  \fX^{\langle n \rangle}\to \fX$ denotes the projection onto the $i$th factor. Then 
 $\fL_n$ is an ample line bundle. Since $X$ is a normal algebraic variety, it has pure dimension $d$, and since $\fX\to \spec k^\circ$ is flat,
 $\fX_s$ (hence $\mathfrak{G}^{\langle n \rangle}_s$) has also pure dimension $d$.
 
 The key observation is that the intersection product 
$c_1(\fL_n)^{\wedge d} \cdot \mathfrak{G}^{\langle n \rangle}_s$ is greater than the sum of $c_1(\fL_n)^{\wedge d}|_Z$ over all irreducible components $Z$ of $\mathfrak{G}^{\langle n \rangle}_s$ so that
$$
Q_n \le c_1(\fL_n)^{\wedge d} \cdot \mathfrak{G}^{\langle n \rangle}_s~.
$$
Now since $\mathfrak{G}^{\langle n \rangle} \to \spec k^\circ$ is flat, we may compute the right hand side 
over $k$ and by Proposition~\ref{lem:generic-to-special} we get 
$$
Q_n \le c_1(L_n)^{\wedge d}~,
$$
where $L_n\to \Gamma_n$ is the line bundle obtained by restricting $\fL_n$ to the generic fiber of  $\mathfrak{G}^{\langle n \rangle}$.
We may now follow the arguments of Gromov~\cite{gromov} and Dinh-Sibony~\cite{DS05} and prove
\begin{keylemma}
For any $\e>0$, there exists a constant $C = C(\e)>0$ such that 
\[ 
c_1(L_n)^{\wedge d} \le C\, \max_{0\le i \le d}  (\lambda_i(f) + \e)^n
\]
for all $n$.
\end{keylemma}
Taking the $n$th root of both sides, letting $n\to\infty$ and then $\e\to0$ yields the required bound 
$\limsup_n \frac1n \log c_1(L_n)^{\wedge d} \le \max_{0\le i \le d}  \lambda_i(f)$.

\begin{proof}[Proof of the key lemma]
The proof of this lemma in characteristic zero by Dinh and Sibony~\cite[Lemme~2]{DS05} uses the regularization of  positive closed currents.
It is likely that one can use the methods  based on Chow's moving lemma and devised by the second author in~\cite{TTT}
to adapt Dinh and Sibony's arguments to the case of positive characteristic. 
We propose below an algebraic argument relying on Siu's inequalities in the spirit of~\cite{bac} which is technically less demanding.
This proof works in arbitrary characteristic.

\medskip

Recall that $\Gamma_n$ is the Zariski closure in $X^n$ of the image of the morphism 
of the set of points
$(x, f(x), \ldots, f^{n-1}(x))$ with $x \in U_n(f)(k)$.
Denote by $\pi_k\colon \Gamma_n \to X$ the projection onto the $k$th factor.
Set $\om := c_1(L)$ and $\om_k := \pi_k^* \om\in \NS(\Gamma_n)$
Since $L_n := \sum_{k=0}^{n-1} \pi_k^* L$, we have 
\[c_1(L_n)^{\wedge d} = 
\sum_{I\in\{0, \cdots, n-1\}^d }
\om_I \text{ with }\om_I:=
\left(\om_{i_1}\cdot  \om_{i_2}\cdot \ldots \cdot  \om_{i_d}\right).
\]
Given a set $I = \{i_1, \cdots, i_d\} \in \{0, \cdots, n-1\}^d$, we may re-order its elements so that $i_1 \le i_2 \le \cdots \le i_d$, 
and set $n_1= \min\{i_j\}$, $r_1= \#\{j, i_j = n_1\}$, and then define recursively 
$n_k = \min\{i_j, j> r_{k-1}\}$ and $r_k =  \#\{j, i_j = n_k\}$. 
In this way, we get a natural bijection between sets $I\in\{1, \cdots, n\}^d$ and
the set $T$ of pairs
$(\underline{n}, \underline{r})$  where $\underline{n}=(n_1,\dots, n_s)$  with $0\leq n_1<\dots<n_s\le n-1$, $\underline{r}=(r_1,\dots, r_s)$  with $r_i\geq 1, i=1,\dots, s$ and $\sum_{i=1}^sr_i=d$.
For each $(\underline{n}, \underline{r})\in T$, we set 
\[
\Omega(\underline{n}, \underline{r}) := \left(\om_{n_1}^{r_1} \cdot \om_{n_2}^{r_2}\cdot \ldots \cdot\om_{n_s}^{r_s}\right)
\]
so that 
\[c_1(L_n)^{\wedge d} = \sum_{(\underline{n}, \underline{r})\in T} \Omega(\underline{n}, \underline{r}) ~.\]
Pick any $\epsilon>0$, and let $\Lambda= \max_i \lambda_i(f)$.
We claim the existence of a positive constant $C>0$ such that 
\begin{equation}\label{eq:estim0}
\Omega(\underline{n}, \underline{r}) \le C (\Lambda+\epsilon)^{n_s}
\end{equation}
where $n_s = \max n_i$.

Note that this claim implies $c_1(L_n)^{\wedge d}\leq n^dC (\Lambda + \epsilon)^{n-1}$
and concludes the proof.

\medskip 

It thus remains to prove the claim. 
Pick $0\le l \le n-1$, and consider the natural map $\phi \colon  \Gamma_n \to \Gamma_{f^l}$ which sends the point
$(x, \cdots, f^{n-1}(x))$ to $(x, f^l(x))$ when $x\in U_n(f)(k)$. 
Recall that $p_{1,f^l}, p_{2,f^l}\colon \Gamma_{f^l} \to X$ denotes the canonical projections of the graph of $f^l$ to $X$. 
Since $\phi$ is birational and
$\phi^* p_{1,f^l}^*(c_1(L))= \om_0$,  $\phi^* p_{2,f^l}^*(c_1(L))= \om_l$ we conclude that 
\[
\deg_{k,L}(f^l)
=
(\om_{0}^{d-k}\cdot \omega_{l}^k)
~.\]
Recall also that $\deg_{k,L}(f^n) \le C(f) (\Lambda + \epsilon)^n$ by~\eqref{eq:basic-sub-mult}.

The proof of~\eqref{eq:estim0} now goes as follows. We prove by induction on $s$ that there exists a constant $C(s)>0$ such that for each $n$, for any pair
$\underline{n}=(n_1,\dots, n_s)$  with $0\leq n_1<\dots<n_s\le n-1$, and $\underline{r}=(r_1,\dots, r_s)$ with $|r|:=\sum_{i=1}^sr_i\le d$
we have  
\begin{equation}\label{eq:estim00}
\Omega(\underline{n}, \underline{r}):=  \left(\om_{n_1}^{r_1} \cdot \om_{n_2}^{r_2}\cdot \ldots \cdot\om_{n_s}^{r_s}\right)
 \le C(s) (\Lambda+\epsilon)^{n_s} \om_0^{|r|}~.
\end{equation}
Note that~\eqref{eq:estim0} is the subcase of~\eqref{eq:estim00} when $|r|=d$.

When $s=1$, we have
 $\underline{n}=(n_1)$ with $0\leq n_1\le n-1$, and $r_1 \le d$, and~\eqref{eq:Siu} implies:
\begin{align*}
\Omega(\underline{n}, \underline{r}) 
&= 
\om_{n_1}^{r_1} 
\le 
S_d  \frac{(\om_{n_1}^{r_1}\cdot \om_0^{d-r_1})}{(\om_0^d)} \om_0^{r_1}
\\
&=
\frac{S_d}{(\om_0^d)}  \deg_{r_1,L}(f^{n_1}) \om_0^{r_1}
\le
\frac{S_dC(f)}{(\om^d)}(\Lambda+\epsilon)^{n_1} \om_0^{r_1}~.
\end{align*}
which proves~\eqref{eq:estim00} for $s=1$, and $C(1):= \frac{S_dC(f)}{(\om^d)}$.

Now assume~\eqref{eq:estim00} has been proved for $s$, and pick any pair $\underline{n}=(n_1,\dots, n_s, n_{s+1})$  with $0\leq n_1<\dots<n_s< n_{s+1}\le n-1$, 
and $\underline{r}=(r_1,\dots, r_s, r_{s+1})$ with $\sum_{i=1}^{s+1}r_i \le d$. 
 We consider the map $F_{n_1} \colon \Gamma_n \to \Gamma_{n-n_1}$
 which sends the point $(x, \cdots, f^{n-1}(x))$ to $(f^{n_1}(x), \cdots, f^{n-1}(x))$ when $x\in U_n(f)(k)$.
 We have
 \begin{align*}
 \Omega(\underline{n}, \underline{r}) 
 &=
  \left(\om_{n_1}^{r_1} \cdot \om_{n_2}^{r_2}\cdot \ldots \cdot \om_{n_{s+1}}^{r_{s+1}}\right)
 \\
 &=
 F_{n_1}^* \left(\varpi_{0}^{r_1} \cdot \varpi_{n_2-n_1}^{r_2}\cdot \ldots \cdot \varpi_{n_{s+1}-n_1}^{r_{s+1}}\right)
 \end{align*}
 where $\varpi_j$ is the pull-back by the $j$th projection of $c_1(L)$ from $X$ to $\Gamma_{n-n_1}$.
 
 We now apply the induction hypothesis to the pair
 $\underline{n}'=(n_2-n_1,\dots, n_s-n_1, n_{s+1}-n_1)$, 
and $\underline{r}'=(r_2,\dots, r_s, r_{s+1})$. This yields the bound
\[
(\varpi_{n_2-n_1}^{r_2}\cdot \ldots \cdot \varpi_{n_{s+1}-n_1}^{r_{s+1}}
)
 \le C(s) (\Lambda+\epsilon)^{n_{s+1}-n_1} \varpi_0^{r_2+ \cdots + r_{s+1}}
 ~,
 \]
so that 
\begin{align*}
\Omega(\underline{n}, \underline{r}) 
& \le 
C(s) (\Lambda+\epsilon)^{n_{s+1}-n_1} F_{n_1}^*\varpi_0^{r_1+r_2+ \cdots + r_{s+1}}
\\ 
&\le 
\frac{S_d}{(\om_0^d)} C(s) (\Lambda+\epsilon)^{n_{s+1}-n_1} (F_{n_1}^*\varpi_0^{|r|} \cdot \om_0^{d-|r|}) \om_0^{|r|}
\\ 
&= 
\frac{S_d}{(\om_0^d)} C(s) (\Lambda+\epsilon)^{n_{s+1}-n_1} \deg_{|r|,L}(f^{n_1}) \om_0^{|r|} 
\\ 
&\le 
C(s+1) (\Lambda+\epsilon)^{n_{s+1}} \om_0^{|r|}
\end{align*}
with $C(s+1)=\frac{S_d C(f)}{(\om_0^d)} C(s)$
by~\eqref{eq:Siu}. This concludes the proof.
\end{proof}


\section{Zero entropy maps on Noetherian and Priestley topological spaces}\label{sec:noetherian-priestley}

In this section, we prove two results. First we show that the topological entropy of any continuous self-map $f\colon X \to X$ on a quasi-compact Noetherian topological space is $0$ (Theorem~\ref{thm:zar-entropy}). 
We actually estimate directly the growth of the complexity of covers of the form  $\fU \vee \cdots \vee f^{-n} \fU$. 

Then we discuss the notion of Priestley topological spaces which are compact totally disconnected spaces homeomorphic the
spectrum of a commutative ring endowed with its constructible topology. 
We introduce the notion of Noetherian Priestley space, and prove the vanishing of the entropy of any 
partially continuous self-map of such a space (Theorem~\ref{thm:priestley-entropypartial}). This in turn implies Theorem~\ref{thm:entropy zar et con}.

In Section~\ref{sec:ideal-space-commutative}, we discuss an example of Noetherian Priestley spaces that will play a prominent role in the next section: 
the set of all (not necessarily prime) ideals of a Noetherian ring.

\subsection{Entropy on Noetherian spaces}\label{sec:noether-entropy}

A topological space $X$ is said to be Noetherian if any decreasing sequence of closed subsets is eventually stationnary.
Equivalently $X$ is Noetherian if and only if every nonempty collection of closed subsets has an element which is minimal under inclusion. 
A closed set $F\subset X$ is said to be irreducible if it cannot be written as a union $F = F_1 \cup F_2$ of two strict closed subsets $F_1, F_2 \subsetneq F$. 
It is a fact that any closed subset $F\subset X$ can be written as a finite union $F= F_1\cup \cdots \cup F_r$ of irreducible closed subsets. 
A decomposition with $F_i\not\subset F_j$ for $i\neq j$ is unique up to permutation, and in this case the $F_i$'s are called the irreducible components of $F$, see~\cite[Proposition~1.5]{hartshorne}.

One also defines the dimension of a Noetherian topological space $X$ as the supremum over all integers $d$ such that there exists a chain $\emptyset\subsetneq F_0\subset F_1\subset \cdots \subset F_d$ of distinct irreducible closed subsets of length $d+1$.

\begin{theorem}\label{thm:zar-entropy}
Let $X$ be any quasi-compact Noetherian space, and let $f\colon X \to X$ be any continuous map. Then for any open cover $\fU$ of $X$, and for any $\epsilon>0$, one has
\begin{equation}\label{eq:sub-expo}
N(\fU_n) \le C (1+\epsilon)^n
\end{equation}
for some $C>0$. In particular, we have $\htop(f)=0$.
\end{theorem}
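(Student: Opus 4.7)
Plan.

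By quasi-compactness of $X$ the cover $\fU$ may be taken finite: write $\fU=\{U_1,\dots,U_r\}$ and, for $\iota\in\{1,\dots,r\}^n$, set $V_\iota:=\bigcap_{k=0}^{n-1}f^{-k}(U_{\iota_k})$, so the elements of $\fU_n$ are precisely the $V_\iota$. My plan is to establish the stronger polynomial bound $N(\fU_n)=O(n^d)$ with $d=\dim X$, which clearly implies the claimed sub-exponential estimate $N(\fU_n)\le C(1+\epsilon)^n$ for every $\epsilon>0$.

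The proof proceeds by induction on $d$. Let $W_1,\dots, W_m$ denote the finitely many irreducible components of $X$. For each $i$, since $\{f^{-k}(U_j)\}_j$ is a finite open cover of $X$ (hence of $W_i$), I select an index $\iota^{(i)}_k\in\{1,\dots,r\}$ with $W_i\cap f^{-k}(U_{\iota^{(i)}_k})\neq \emptyset$ for every $k\in\{0,\dots,n-1\}$. Irreducibility of $W_i$ then guarantees that the finite intersection $V_{\iota^{(i)}}\cap W_i=\bigcap_k\bigl(W_i\cap f^{-k}(U_{\iota^{(i)}_k})\bigr)$ is a dense open subset of $W_i$. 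Consequently, the residual $R:=X\setminus\bigcup_{i=1}^m V_{\iota^{(i)}}$ is a closed subset of $X$ which meets each $W_i$ in a proper closed subset, so $\dim R<d$.

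I then recurse on $R$, treating it as a closed subset of the ambient space $X$: no dynamics on $R$ is required, as $\fU_n$ is still the cover coming from $f\colon X\to X$ and the reduction step above works verbatim with $R$ in place of $X$. The recursion terminates after at most $d+1$ applications, giving
\[
N(\fU_n)\le m_0+m_1+\cdots+m_d,
\]
where $m_\ell$ counts the irreducible components of the residual $R_\ell$ at step $\ell$. The base case $d=0$ is immediate, because a dimension-zero irreducible Noetherian space is indiscrete, so the only non-empty open subset is the whole space, and one element of $\fU_n$ covers each component.

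The main obstacle I anticipate is bounding $m_\ell$ by a polynomial in $n$. Each $R_{\ell+1}$ decomposes as a finite union of closed pieces $W\cap f^{-k}(X\setminus U_j)$ with $W$ an irreducible component of $R_\ell$ and $k<n$, and the number of irreducible components of such a preimage can in principle grow rapidly with $k$ (exponentially, for instance for the squaring map on $\mathbb{A}^1$); a naive ``one element of $\fU_n$ per irreducible component'' recursion then gives only an exponential bound. The remedy is to observe that a single $V_\iota$ covers dense opens of many irreducible components of $R$ simultaneously, so one should count \emph{types} of points rather than components: the locally closed cells cut out by the arrangement of the $rn$ open sets $\{f^{-k}(U_i)\}_{i\le r,\,k<n}$ are polynomially many, of degree at most $d$ in $rn$, by arrangement-theoretic counting in a Noetherian space of dimension $d$; since each cell is refined by a single $V_\iota$, this bounds $N(\fU_n)$ by $O((rn)^d)=O(n^d)$, yielding $\htop(f)=0$.
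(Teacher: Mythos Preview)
Your framework (pick for each irreducible component a single element of $\fU_n$ meeting it densely, then recurse on the lower-dimensional residual) is natural and close in spirit to the paper's argument, and you correctly identify the obstacle: the residual $R_\ell$ can have exponentially many irreducible components. The problem is your proposed remedy. You assert that ``the locally closed cells cut out by the arrangement of the $rn$ open sets $\{f^{-k}(U_i)\}$ are polynomially many, of degree at most $d$ in $rn$, by arrangement-theoretic counting in a Noetherian space of dimension $d$.'' As stated, this is about an arbitrary family of $N$ open sets in a $d$-dimensional Noetherian space, and that statement is \emph{false}. Take $X=\mathbb{A}^1_k$ (dimension $1$) with $k$ infinite, choose $2^N$ distinct closed points $\{p_S : S\subseteq\{1,\dots,N\}\}$, and set $F_i=\{p_S : i\in S\}$, $U_i=X\setminus F_i$. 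Then $p_S\in U_i$ iff $i\notin S$, so the membership vector of $p_S$ is the indicator of $S^c$; all $2^N$ patterns occur, hence $2^N$ nonempty cells. Nothing in your argument uses that the $N=rn$ opens have the special dynamical form $f^{-k}(U_i)$, so the appeal to ``arrangement-theoretic counting'' does not go through. In fact the paper explicitly records the polynomial bound $N(\fU_n)\lesssim n^{\dim X}$ as an open question (known only for $\dim X\le 1$), so a one-line reduction cannot be expected to settle it.

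The paper's proof avoids this difficulty by abandoning the polynomial target and aiming directly at $(1+\epsilon)^n$. The mechanism is a linear recurrence with a built-in \emph{gap}: for an irreducible closed $Z$ one picks indices $j(Z),j(f(Z)),\dots,j(f^{N-1}(Z))$ and sets $H(Z,N)=Z\setminus\bigcap_{i=0}^{N-1}f^{-i}(U_{j(f^i(Z))})$, obtaining
\[
N_Z(n,\fU)\le N(n-N,\fU)+N_{H(Z,N)}(n,\fU).
\]
Iterating on the proper closed subsets $H_1\supsetneq H_2\supsetneq\cdots$ (this is a Noetherian descent, not a dimension drop) one reaches $\emptyset$ in finitely many steps, producing
\[
N(n,\fU)\le N(n-1,\fU)+\sum_{i=1}^k M_i\,N(n-N_i,\fU),
\]
where the $M_i$ count components at each stage. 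The crucial freedom is that each $N_i$ may be chosen as large as one likes \emph{after} $M_i$ is known, so one arranges $M_i(1+\epsilon)^{1-N_i}\le \epsilon/10^i$; then every root of the characteristic polynomial has modulus at most $1+\epsilon$, giving the claimed sub-exponential bound. Your approach fixes $N=n$ (all coordinates at once), which forces you to control the combinatorics of $H(Z,n)$ uniformly in $n$; the paper instead fixes $N$ and lets the recurrence do the work.
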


\begin{remark}
In particular for any quasi-compact Noetherian scheme $X$, and any regular map $f\colon X \to X$ we have $\htop(f,\tau^{\zar})=0$ which yields a special case of Theorem~\ref{thm:entropy zar et con}.
\end{remark}

\begin{remark}
It was proved by Gignac that any signed Borel measure of finite mass on a Noetherian space for which every non-empty irreducible closed subset has a generic point 
(a.k.a. a Zariski space) is necessarily atomic, see~\cite[Theorem~1.2]{gignac}.
It is however not clear whether the variational principle applies on arbitrary non Hausdorff quasi-compact spaces so that Gignac's theorem 
does not immediately imply $\htop(f)=0$.
\end{remark}

\begin{proof}We need the following lemma.
\begin{lemma}\label{lem:subexpbdd}
Pick any integer $\epsilon>0$. Then one can find positive numbers $M_1,\cdots, M_k$ and positive integers $N_1 < N_2 < \cdots < N_k$ such that
$M_i (1+\epsilon)^{(1-N_i)} \le \frac{\epsilon}{10^i}$, and
\[
N(n,\fU)\leq N(n-1,\fU) + M_1 \, N(n-N_1,\fU)+ \cdots + M_k \, N(n-N_k,\fU)~,\]
for all $n\geq N_k$.
\end{lemma}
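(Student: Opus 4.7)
The plan is to exploit the descending chain condition on closed subsets of $X$ to show that, at each step, the minimum cover by $\fU_n$ grows by only a controlled amount beyond a minimum cover by $\fU_{n-1}$, plus bounded corrections that reach back a fixed number of steps. By quasi-compactness, replace $\fU$ with a finite open subcover $\{U_1,\dots,U_r\}$ without loss of generality, and let $F_j = X\setminus U_j$. Each element of $\fU_n$ has the form $V_\sigma=\bigcap_{k=0}^{n-1}f^{-k}U_{\sigma_k}$ for some index sequence $\sigma\in\{1,\dots,r\}^n$; for any infinite sequence, the partial intersections form a decreasing chain of open sets, which must stabilize by the Noetherian property.

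First I would take a minimum cover $\fS_{n-1}\subset\fU_{n-1}$ of cardinality $N(n-1,\fU)$ and attempt to extend each $V_\sigma\in\fS_{n-1}$ to $\fU_n$ by appending a single index. This extension succeeds whenever $f^{n-1}(V_\sigma)$ is contained in a single $U_j$, and it yields one element of $\fU_n$ per element of $\fS_{n-1}$; this contributes the $N(n-1,\fU)$ term of the recursion. The obstruction to a full cover is the residual closed subset $R_n\subset X$ consisting of points not covered by these single-index extensions. The task then is to bound the number of additional elements of $\fU_n$ needed to cover $R_n$.

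Second, the crucial structural input is that $R_n$ can be decomposed, using the finitely many irreducible components of $X$ and the first few $f$-pullbacks of the $F_j$, into a bounded number of closed pieces each of which is controlled by a cover of $\fU_{n-N_i}$ shifted appropriately by $f^{N_i-1}$. Concretely, one chooses a collection of integers $N_1<N_2<\cdots<N_k$ so that any closed subset arising as an irreducible component of a finite union $\bigcup_{j\le N_i}f^{-j}F_{\alpha_j}$ either lies inside some $U_\ell$ (and is absorbed into the first term) or can be handled by at most $M_i$ pieces, each covered by a word of length $n-N_i$. The constants $M_i$ count the admissible combinatorial patterns at depth $N_i$, and they depend only on $\fU$ and the Noetherian structure of $X$, not on $n$.

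Finally, the numerical constraint $M_i(1+\epsilon)^{1-N_i}\le\epsilon/10^i$ is imposed so that, when the resulting recursion $a_n\le a_{n-1}+\sum_iM_i\,a_{n-N_i}$ is compared with the ansatz $a_n=C(1+\epsilon)^n$, one has
\[
\frac{1}{1+\epsilon}+\sum_iM_i(1+\epsilon)^{-N_i}\le\frac{1}{1+\epsilon}\Bigl(1+\sum_i\frac{\epsilon}{10^i}\Bigr)<1,
\]
so the recursion propagates the desired exponential bound. Since for each $i$ the depth $N_i$ can be chosen as large as necessary (at the cost of $M_i$ growing only with the combinatorial complexity of depth-$N_i$ patterns, not with $n$), the inequality can be arranged one index at a time. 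The main obstacle is step three: making the decomposition of $R_n$ precise and proving that $M_i$ is independent of $n$. This is where the Noetherian hypothesis is used most heavily, through a Noetherian induction on closed subsets of $X$ that tracks how irreducible components behave under iterated $f$-pullback for a bounded number of steps.
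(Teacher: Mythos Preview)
Your proposal has a genuine gap, and you correctly identify it yourself: the residual set $R_n$ depends on $n$ and on the particular minimum cover $\fS_{n-1}$ you chose, and there is no mechanism in your sketch to show that $R_n$ can be covered by a number of $\fU_n$-elements bounded by $\sum_i M_i\,N(n-N_i,\fU)$ with $M_i$ independent of $n$. Appending an index at position $n-1$ forces you to understand $f^{n-1}(V_\sigma)$, which depends on the entire length-$(n-1)$ word $\sigma$; the combinatorics of which $V_\sigma$ fail to extend is not governed by any fixed closed subset of $X$, so Noetherian induction has nothing to bite on.

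The paper's argument avoids this by working from the \emph{front} of the word rather than the back. For an irreducible closed subset $Z$, one chooses indices $j(Z),j(f(Z)),\dots,j(f^{N-1}(Z))$ so that each $U_{j(f^i(Z))}$ meets $f^i(Z)$; then the single element $U_{j(Z)}\cap f^{-1}U_{j(f(Z))}\cap\cdots\cap f^{-(N-1)}U_{j(f^{N-1}(Z))}$ of $\fU_N$ is dense in $Z$, and its complement $H(Z,N)$ in $Z$ is a \emph{fixed} proper closed subset independent of $n$. This yields $N_Z(n,\fU)\le N(n-N,\fU)+N_{H(Z,N)}(n,\fU)$. Starting with $Z=X$ and $N=1$ gives the $N(n-1,\fU)$ term and a proper closed $H_1$; applying the same inequality to each of the $M_1$ irreducible components of $H_1$ with a large $N_1$ gives the next term and a strictly smaller closed set $H_2\subsetneq H_1$; Noetherian descent terminates the process. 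The key point you are missing is that prepending lets the ``bad'' set depend only on $Z$ and $N$, not on $n$, which is exactly what makes the induction go through.
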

This lemma implies that the growth of the sequence $N(n,\fU)$ is bounded from above by the growth of the sequence $\{U_n\}$ defined by induction
$U_n =  U_{n-1} + M_1 \, U_{n-N_1} + \cdots + M_k \, U_{n-N_k}$, $U_i = N (i,\fU)$ for all $0\le i\le N_k$. 
The characteristic polynomial of the linear recurrence equation is equal to 
$P(T)= T^{N_k} - T^{N_k-1} - M_1 T^{N_k-N_1} - \cdots - M_k$. 
Note that any complex root $\lambda$ of $P$ satisfies
\[
|\lambda| 
= \left|1 + \frac{M_1}{\lambda^{N_1-1}} + \cdots + \frac{M_k}{\lambda^{N_k-1}}\right|
\]
so that if $|\lambda| \ge 1+\epsilon$, then we obtain
$|\lambda| \le 1 + \sum_{i=1}^\infty \frac{\epsilon}{10^i} < 1+\epsilon$, which is a contradiction. 
It follows that all complex roots of $P$ have modulus $\le 1+\epsilon$, and the bound~\eqref{eq:sub-expo} follows.
\end{proof}

\begin{remark}
The argument does not give a polynomial bound for the growth of $N(n,\fU)$. It would be interesting to explore whether $N(n,\fU)\lesssim n^{\dim(X)}$. 
This bound holds when $\dim(X) \le 1$.
\end{remark}

 \begin{proof}[Proof of Lemma~\ref{lem:subexpbdd}]
 For any closed subset $Z\subset X$, denote by $N_Z(n,\fU)$
 the minimal number of elements in $\fU_n$ to cover $Z.$
 Note that $N_{Z\cup Z'}(n,\fU) \le N_Z(n,\fU)+N_{Z'}(n,\fU)$, hence we may (and shall) assume that $X$ is irreducible.
Write $\fU=\{U_1,\dots,U_m\}.$

For every irreducible closed subset $Z$ of $X$, there is $j(Z)\in \{1,\dots,m\}$ such that $Z\cap U_{j(Z)}\neq \emptyset.$
Fix an integer $N\geq 1.$ Set $H(Z,N):=Z\setminus(\cap_{i=0}^Nf^{-i}(U_{j(f^i(Z))})),$ which is a (possibly reducible) proper closed subset of $Z.$

For $n\geq N,$ if $\fV$ is a subset of $\fU_{n-N}$ that covers $f^N(Z)$ and $\fV'$ is a subset of $\fU_{n}$ which covers $H(Z,N)$, the union of 
\[\{U_{j(Z)}\cap \dots\cap f^{-N}U_{j(f^{N-1}(Z))}\}\vee f^{-N}(\fV) \text{ and }\fV'\]
is a subset of $\fU_n$ which covers $Z.$
We thus get
\begin{equation}\label{eq:partial}
N_Z(n,\fU)\leq N_{f^N(Z)}(n-N,\fU)+ N_{H(Z,N)}(n,\fU) \leq N(n-N,\fU) +N_{H(Z,N)}(n,\fU)~,
\end{equation}
Apply this argument first to $Z=X$ and $N=1$ so that 
$$N(n,\fU) \leq N(n-1,\fU) +N_{H_1}(n,\fU), n\geq 1$$ for some proper closed subset $H_1$ of $X$.
Denote by $M_1$ the number of irreducible components of $H_1$, and pick $N_1\geq 1$ large enough such that 
$M_1 (1+\epsilon)^{(1-N_1)} \le \frac{\epsilon}{10}$.
Now apply~\eqref{eq:partial} to $N=N_1$ and to each irreducible component of $H_1$. We obtain
$$N_{H_1}(n,\fU)\le M_1 N(n-N_1,\fU) +M_1 N_{H_2}(n,\fU), n\geq N_1$$ where $H_2$ is a proper closed subset of $H_1$.
We may thus proceed by induction, obtaining a sequence of integers $M_i$ and closed subsets $H_{i+1}\subsetneq H_i$
such that
\[
N(n,\fU) \leq N(n-1,\fU) + M_1 \, N(n-N_1,\fU)+ \cdots + M_i \, N(n-N_i,\fU) + M_{i}\, N_{H_{i+1}}(n,\fU), n\geq N_i
.\]
Since $X$ is Noetherian, we have $H_i=\emptyset$ for $i$ large enough, which completes the proof.
 \end{proof}

\subsection{Priestley spaces}

We refer to~\cite{dickmann} for generalities on this notion.

Recall that in a poset $(X,\le)$ a down-set (resp. an up-set) $U$ is a set such that $x\in U$ and $y\le x$ (resp. $x\le y$) implies $y\in U$.
Note that $U$ is a down-set if and only if $X\setminus U$ is an up-set.
A topological poset $(X,\tau,\le)$ is called a Priestley space if it is quasi-compact and for any $y\not\le x$
there exists a clopen down-set $U$ such that $x\in U$ and $y\notin U$. Any Priestley space is Hausdorff, zero-dimensional, and in fact 
its set of clopen down-sets and up-sets forms a basis for its topology.

Pick any Priestley space $(X,\tau,\le)$. The set $\tau^u$ of all open up-sets of $X$ defines a topology, and $(X,\tau^u)$ is
a spectral space\footnote{In other words, $X$ is quasi-compact, satisfies the $T_0$-separation axiom, the set of its quasi-compact open subsets
is closed under finite intersections and forms a basis for its topology, and any non-empty irreducible closed subset of $X$ has a generic point.}.
By a theorem of Hochster~\cite{hochster}, a topological space is spectral iff it is homeomorphic to the spectrum of a commutative ring (hence the name). 

Conversely, let $(X,\tau)$ be any spectral space. Define the order relation $x\le y$ iff $x$ belongs to the closure of $\{y\}$. 
The constructible (or patch) topology $\tau^\#$ on $X$ is determined by a basis of open sets given by quasi-compact open subsets of $X$ and their complements.
Then $(X,\tau^\#,\le)$ is a Priestley space.

A continuous map $f\colon X \to Y$ between spectral spaces is called spectral if the preimage of any quasi-compact open subset remains quasi-compact open.
The induced map $f^\#\colon X^\# \to Y^\#$ on the corresponding Priestley spaces is continuous and order-preserving.  Any continuous and order-preserving map $f\colon X \to Y$ between Priestley spaces is called a Priestley map.

The above correspondence induces an equivalence of categories between Priestley spaces and spectral spaces~\cite[1.5.15]{dickmann}. Unlike Noetherian spaces, there are Priestley selfmaps with positive entropy, see e.g. Remark \ref{RemarkPositiveEntropyPriestley}. In the next subsection, we present an important subclass of Priestley spaces (to be used later in the paper) for which selfmaps always have zero entropy.

\subsection{Noetherian Priestley spaces and entropy}
Let $\tau^u$ be the collection of all open up-sets.
\begin{definition}
We say that a Priestley space $(X,\tau,\le)$ is Noetherian when its associated spectral space $(X,\tau^u)$ is a Noetherian topological space.
\end{definition}

\begin{remark}\label{remrenoetherian}
Equivalently $(X,\tau,\le)$ is Noetherian if any decreasing sequence of closed down-sets is eventually stationnary. If $(X,\tau)$ is a Noetherian topological space, then 
$(X,\tau,\le)$ is a Noetherian Priestley space. However a Noetherian Priestley space is not necessarily a Noetherian topological space (see \S\ref{sec:commutative} below for examples). To complicate things further, there exist Noetherian spectral topological spaces that are not homeomorphic to the spectrum of a Noetherian ring, see~\cite[12.4.10]{dickmann}.
\end{remark}

The following theorem generalizes \cite[Theorem 1.12]{xie}; see \cite[Theorem~1.2]{gignac} for another related result. 
\begin{theorem}\label{thm:priestley-entropy}
Any Radon measure $\mu$  of finite mass on a Noetherian Priestley space $(X,\tau,\le)$ is atomic: there exists a countable set $F$ such that 
$\mu = \sum_{x\in F} \mu (x) \delta_x$ where  $\sum_F \mu(x)< \infty$.
\end{theorem}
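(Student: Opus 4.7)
The plan is to decompose $\mu$ into its atomic and diffuse parts and show the diffuse part vanishes. Any finite Radon measure on a compact Hausdorff space decomposes uniquely as $\mu = \mu_a + \mu_d$ with $\mu_a$ a countable sum of Dirac masses (countable because $\mu(X) < \infty$) and $\mu_d$ atomless; the task thus reduces to showing that any diffuse Radon measure $\nu$ of finite mass on $(X,\tau,\le)$ is identically zero.

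I would run a Noetherian induction on closed down-sets of $X$. The first observation is that closed down-sets of $(X,\tau)$ coincide with closed subsets of the associated spectral topology $\tau^u$, since complementing a closed down-set yields an open up-set; hence the Noetherian hypothesis on $(X,\tau^u)$ is exactly the descending chain condition for closed down-sets of $\tau$. The claim to prove by Noetherian induction is: for every closed down-set $F \subseteq X$, one has $\nu(F) = 0$. The base case $F = \emptyset$ is trivial, and if $F$ is reducible in $\tau^u$ we may write $F = F_1 \cup F_2$ with each $F_i \subsetneq F$ a proper closed down-set and conclude by the inductive hypothesis.

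The heart of the argument is the case where $F$ is irreducible in $\tau^u$, so that $F = \downarrow\! y_0$ for a unique maximal element $y_0$ (the generic point of $F$ in the spectral sense). Since $\nu$ is diffuse, $\nu(\{y_0\}) = 0$, and by inner regularity it suffices to show $\nu(K) = 0$ for every compact $K \subseteq F \setminus \{y_0\}$. The key geometric lemma I would prove is that $\downarrow\! K = \{z \in X : z \le k \text{ for some } k \in K\}$ is closed in $\tau$. Given $z \notin \downarrow\! K$, we have $z \not\le k$ for every $k \in K$, so the Priestley separation axiom (in its clopen up-set formulation, obtained by complementing the down-set version stated earlier) furnishes, for each $k \in K$, a clopen up-set $V_k$ containing $z$ but not $k$; applying compactness of $K$ to the cover by the clopen down-sets $X \setminus V_k$ extracts a finite subcover, and the finite intersection $V = \bigcap_{i=1}^n V_{k_i}$ is a clopen up-set containing $z$ and disjoint from $K$. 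Because $V$ is an up-set, $V \cap K = \emptyset$ automatically upgrades to $V \cap \downarrow\! K = \emptyset$, furnishing the required neighborhood. Moreover, $y_0 \notin \downarrow\! K$ since $y_0 \le k$ with $k \in F$ would force $k = y_0$ by maximality of $y_0$, contradicting $K \subseteq F \setminus \{y_0\}$. Thus $\downarrow\! K$ is a proper closed down-set of $F$, the induction hypothesis gives $\nu(\downarrow\! K) = 0$, and $\nu(K) = 0$ follows, closing the induction.

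The main technical obstacle I anticipate is the closedness of $\downarrow\! K$: it is crucial to separate $z$ from $\downarrow\! K$ (not just from $K$), which is precisely why one must invoke the clopen \emph{up-set} form of Priestley's axiom rather than the clopen down-set form directly. Once this lemma is secured, the rest is a clean Noetherian induction, and the resulting vanishing $\nu(X) = 0$ yields the atomicity of $\mu$ together with the countability of the atom set $F$, forced by $\sum_{x \in F} \mu(\{x\}) \le \mu(X) < \infty$.
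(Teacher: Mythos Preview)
Your proof is correct. Both your argument and the paper's reduce to showing that a diffuse finite Radon measure $\nu$ on $(X,\tau)$ vanishes, using Noetherianity, inner regularity, and Priestley separation; the difference lies in how this core step is organized.

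The paper argues by infinite descent on \emph{clopen} down-sets: starting from any clopen down-set $K$ with $\nu(K)>0$, it produces a strictly smaller clopen down-set $K'\subsetneq K$ with $\nu(K')>0$ (splitting into the cases where $K$ has a maximal element or not), and the resulting infinite strictly descending chain contradicts Noetherianity. You instead run Noetherian induction on \emph{closed} down-sets, using the spectral correspondence to equip an irreducible $F$ with its generic point $y_0$, and your key technical lemma is the $\tau$-closedness of $\downarrow\!K$ for compact $K$. Your organization avoids the paper's case split and is conceptually tidy; on the other hand it relies on sobriety of $(X,\tau^u)$ (existence of generic points), which the paper's hands-on descent does not need explicitly. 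Both routes are short and essentially equivalent in strength.
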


\begin{proof}[Proof of Theorem~\ref{thm:priestley-entropy}]
Any Radon measure can be decomposed as a sum of an atomic measure and a measure without atom.
It is thus sufficient to prove that for any probability Radon measure $\mu$ on $X$, there exists $x\in X$ such that $\mu(x)>0$.

We proceed by contradiction and assume $\mu(x)=0$ for all $x\in X$.
We claim that for every clopen down-set $K$ of $X$ with $\mu(K)>0$, there is a clopen down-set $K'\subsetneq K$ with $\mu(K')>0.$
Note that $X$ is a clopen down-set, applying this claim, we get a sequence of clopen down-sets $K_n, n\geq 0$ satisfying $K_0=X$, for every $n\geq 0,$ $\mu(K_n)>0$ and
$K_{n+1}\subsetneq K_n.$ This contradicts the Noetherianity assumption.

Now we prove the claim.  First assume that there is a maximal element $o$ of $K.$ Then $U:=K\setminus \{o\}$ is open and $\mu(U)>0.$
Since $\mu$ is a Radon measure, there is a compact subset $E\subseteq U$ such that $\mu(E)>0.$
For every $x\in E$, since $x\not\geq o$, there is a clopen $K_x$ of $X$ such that $x\in K_x$ but $o\not\in K_x$. After replacing $K_x$ by $K_x\cap K$, we may assume that $K_x\subseteq K.$ Then $K_x\subsetneq K.$
Since $E$ is compact, there is a finite subset $F\subseteq E$ such that $E\subseteq \cup_{x\in F}K_x.$
So there is $x\in F$, such that $\mu(K_x)>0.$ 

Now we may assume that for every $x\in K$, there is $y_x\in K$ such that $y_x> x.$
Then there is a clopen $K_x$ of $X$ such that $x\in K_x$ but $y_x\not\in K_x$. After replacing $K_x$ by $K_x\cap K$, we may assume that $K_x\subseteq K.$ Then $K_x\subsetneq K.$ Since $K$ is compact, there is a finite subset $F\subseteq K$ such that $K= \cup_{x\in F}K_x.$
So there is $x\in K$, such that $\mu(K_x)>0.$
\end{proof}

This result yields the following estimates on entropy.

\begin{theorem}\label{thm:priestley-entropypartial}
Let $f\colon X\setminus I(f) \to X$ be any partially continuous self-map of a Noetherian Priestley space such that $I(f)$ is clopen. Then we have $\htop(f)=0$.

In particular, the topological entropy of any continuous self-map of a Noetherian Priestley space is $0$.
\end{theorem}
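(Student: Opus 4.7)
The plan is to reduce to the case of a continuous map via Corollary~\ref{corentropyenlargeif}, then apply the variational principle together with the atomicity statement of Theorem~\ref{thm:priestley-entropy}.

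Since $I(f)$ is clopen, I would first extend $f$ to a continuous self-map $\tilde f\colon X\to X$ by setting $\tilde f|_{I(f)}\equiv x_0$ for some fixed basepoint $x_0\in X$: on the two clopen pieces $I(f)$ and $X\setminus I(f)$ the map $\tilde f$ is continuous, and the clopenness of the partition forces global continuity. Applying Corollary~\ref{corentropyenlargeif} to $\tilde f$ with $I':=I(f)\supset I(\tilde f)=\emptyset$, and noting that $\tilde f|_{X\setminus I(f)}=f$, yields $\htop(f)\le \htop(\tilde f)$, so it suffices to show $\htop(\tilde f)=0$.

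Since $X$ is compact Hausdorff, the variational principle (Theorem~\ref{thm:variational}) gives $\htop(\tilde f)=\sup_{\mu\in\cM(\tilde f)}h_\mu(\tilde f)$. By Theorem~\ref{thm:priestley-entropy}, any such $\tilde f$-invariant Radon probability measure $\mu$ is atomic, say $\mu=\sum_i c_i\delta_{x_i}$ with $c_i>0$ and $\sum_i c_i=1$. Next, I would show $\mu$ is supported on periodic orbits: invariance gives $\mu(\{\tilde f^n(x)\})\ge \mu(\{x\})$ for every atom $x$ and every $n\ge 0$, so the forward orbit of any atom is finite and $x$ is pre-periodic; summing the identity $\mu(\{z\})=\mu(\tilde f^{-1}(\{z\}))$ over a periodic cycle $C$ forces non-cycle preimages of cycle points to carry zero mass, and induction on the distance to the cycle shows every strictly pre-periodic atom has zero mass. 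Hence $\mu$ is supported on a countable disjoint union of finite $\tilde f$-invariant cycles.

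Finally, I would verify $h_\mu(\tilde f)=0$ by a truncation argument. Given a finite Borel partition $\xi$ with $|\xi|=d$ and $\e>0$, choose finitely many of the cycles supporting $\mu$, say $O_1,\dots,O_N$, so that $\mu(\bigcup_i O_i)>1-\e$, and let $\zeta:=\{O_1,\dots,O_N,\,X\setminus\bigcup_i O_i\}$. Since each $O_i$ is $\tilde f$-invariant and finite, the trace of $\xi_n$ on $O_i$ has at most $|O_i|$ non-empty elements. The bound $H_\mu(\xi_n)\le H_\mu(\zeta)+H_\mu(\xi_n\mid\zeta)$ then gives
\[
H_\mu(\xi_n)\le \log(N+1)+\sum_{i=1}^N \mu(O_i)\log|O_i|+\e\, n\log d,
\]
so dividing by $n$, letting $n\to\infty$ and then $\e\to 0$ yields $h_\mu(\tilde f,\xi)=0$. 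Taking the supremum over finite Borel partitions concludes $h_\mu(\tilde f)=0$, hence $\htop(\tilde f)=0$.

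The main obstacle is bridging atomicity to vanishing of metric entropy: an atomic invariant measure may a priori be supported on infinitely many periodic orbits of unbounded period, so one cannot directly invoke ``finitely many atoms implies zero entropy''. The truncation step above is precisely what is needed to circumvent this difficulty without any appeal to ergodic decomposition or metrizability of $X$.
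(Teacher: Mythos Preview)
Your proof is correct and follows the same overall strategy as the paper (reduce to a continuous map, then apply the variational principle and Theorem~\ref{thm:priestley-entropy}), but the execution differs in two places worth noting.

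For the reduction step, the paper does not extend $f$ on $X$ itself. Instead it adjoins a new minimal point $o$ to form a Noetherian Priestley space $X_+$, sends $I(f)\cup\{o\}$ to $o$, and compares entourages directly. Your extension $\tilde f$ on $X$ via a fixed basepoint is simpler and works just as well once $I(f)$ is clopen; invoking Corollary~\ref{corentropyenlargeif} with $I'=I(f)$ and $I(\tilde f)=\emptyset$ is a clean way to get $\htop(f)\le\htop(\tilde f)$.

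For the measure-theoretic step, the paper simply asserts that any ergodic invariant measure is supported on a single periodic cycle and concludes. Your route is more explicit: you show directly that every atomic invariant measure is supported on periodic cycles, and then run a truncation argument to get $h_\mu(\tilde f,\xi)=0$ for an arbitrary finite partition $\xi$. This avoids any appeal to ergodic decomposition, which is a genuine advantage here since $X$ need not be metrizable and the usual disintegration theorems are not available. The paper's one-line argument implicitly relies on reducing the supremum in the variational principle to ergodic measures; your computation sidesteps this entirely and is self-contained.
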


Let us insist on the fact that we did not assume $f$ to be a Priestley map in the previous theorem.
\proof 
We first prove the statement when $f$ is continuous. Since $X$ is compact, the variational principle applies (Theorem~\ref{thm:variational}). As any ergodic measure $\mu$ is supported on a periodic cycle by Theorem~\ref{thm:priestley-entropy}, we get $\htop(f)=0$ as required.

Suppose now that $f$ is a partially continuous self-map. To reduce to the previous case we proceed as follows. 
Consider the poset $X_+$ obtained by adding a singleton $\{o\}$ to $X$ so that the inclusion $X \subset X_+$ preserves the order and $o < x$ for all $x\in X$. 
The topology on $X_+$ is the coarsest one such that the inclusion $X\subset X_+$ is continuous and $\{o\}$ is clopen. 
Then $X_+$ is still a Noetherian Priestley space, and $o$ is a minimal element. 

Consider the map $f_+ \colon X_+ \to X_+$ defined by $f_+(x) = f(x)$ if $x\in X\setminus I(f)$ and $f_+(x) =o$ if $x\in I(f) \cup \{o\}$. 
Since  both $\{o\}$ and $I(f)$ are clopen, the map $f_+$ is continuous so that $\htop(f_+)=0$ by what precedes.

Pick any entourage $\cE$ of $X$. Then $\cE_+ = \cE \cup (\{o\} \times \{o\})$ is an entourage of $X_+$ and 
a simple observation shows $R(n,\xi^{\max}(f),\cE)\le R(n,\xi^{\max}(f_+),\cE_+)$. 
It follows that $\htop(f) \le \htop(f_+)=0$.
\endproof

\subsection{Examples of Priestley spaces}
\subsubsection{Totally disconnected compact sets}
Every totally disconnected compact space (i.e. any Stone space) endowed with the trivial order relation ($x\le y$ iff $x=y$) is a Priestley space. 
Any  totally disconnected compact subset $C$ of $[0,1]$ endowed with the usual total order $\le$ of the real line is also a Priestley space. 
One can build more general Priestley spaces by patching together  totally disconnected compact subset of the unit segment 
following a given tree pattern. 

\begin{remark}
There exists continuous maps on a Cantor set with positive entropy (e.g. the shift map on $\{0,1\}^\Z$).
Hence there exist Priestley maps of positive entropy.
\label{RemarkPositiveEntropyPriestley}\end{remark}

\subsubsection{Partially ordered spaces}\label{subsubsectionprdering}
Let $(X,\leq)$ be a partially ordered set.
For every $x\in X$, define $K_x:=\{y\in X, y\leq x\}.$
Let $\tau_{\leq}$ be the topology generated by the sets $K_x, X\setminus K_x$ for all $x\in X.$
For every down-set $K\subseteq X$, we have $K=\cup_{x\in K}K_x$.
So every down-set of $X$ is open with respect to $\tau_{\leq}.$

\begin{remark}\label{remordcomp}
For $x,y\in X$, if $y\not\leq x$, then $x\in K_x$ and $y\not\in K_x$. So $(X,\tau_{\leq},\leq)$ is a Priestley space if and only if $(X,\tau_{\leq})$ is compact.
\end{remark}

\medskip

We say that $(X,\leq)$ satisfies the descending chain condition if for every decreasing sequence 
$x_1\geq\ldots \ge x_n \ge x_{n+1}$ in $X$, 
 there is $N\geq 1$ such that $x_n=x_N$ for every $n\geq N.$

The next proposition is not essential to our discussion. We include it however for the convenience of the reader, as it clarifies the
problem on characterizing posets for which $(X, \tau_{\leq})$ is a Priestley space. 
\begin{proposition}\label{procompactorder}
The following two conditions are equivalent:
\begin{itemize}
\item[(i)] $(X,\tau_{\leq})$ is compact;
\item[(ii)] 
\begin{itemize}
\item
$(X,\leq)$ satisfies the descending chain condition;
\item
there is a finite subset $F\subseteq X$ such that $X=\cup_{x\in F}K_x$; 
\item
 for any $x,y\in X$ there exists a finite set $F(x,y) \subset X$ such that $K_x \cap K_y= \cup_{z\in F(x,y)} K_z$.
\end{itemize}
\end{itemize}
\end{proposition}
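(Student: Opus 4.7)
The plan is to exploit the canonical embedding $\iota \colon X \to \{0,1\}^X$ defined by $\iota(x)(y) = 1$ if $x \le y$ and $0$ otherwise. The preimages under $\iota$ of the subbasic clopen sets $\{\omega : \omega(y)=0\}$ and $\{\omega : \omega(y)=1\}$ of the product topology are exactly $X \setminus K_y$ and $K_y$, and antisymmetry of $\le$ makes $\iota$ injective, so $\iota$ is a homeomorphism onto its image. As $\{0,1\}^X$ is compact Hausdorff, $(X,\tau_\le)$ is compact if and only if $\iota(X)$ is closed in $\{0,1\}^X$.

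For the direction (i) $\Rightarrow$ (ii), I would extract the three conditions from compactness one by one. The finite cover condition follows from applying compactness to the subbasic cover $X = \bigcup_{x \in X} K_x$. The set $K_x \cap K_y$ is clopen, hence compact, and is covered by $\{K_z : z \in K_x \cap K_y\}$ (since $z \in K_z \subseteq K_x \cap K_y$ whenever $z \in K_x \cap K_y$); a finite subcover yields the desired $F(x,y)$. For the descending chain condition, assume for contradiction that $x_1 > x_2 > \cdots$ is a strictly decreasing sequence. Compactness provides a subnet $(\chi_{x_{n_\alpha}})$ converging in $\iota(X)$ to some $\chi_{x_\infty}$. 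Evaluating at the coordinate $x_m$: since $\chi_{x_n}(x_m) = 1$ whenever $n \ge m$ and $n_\alpha$ is eventually $\ge m$, one has $\chi_{x_\infty}(x_m) = 1$, i.e.\ $x_\infty \le x_m$ for every $m$; evaluating at the coordinate $x_\infty$: since $\chi_{x_\infty}(x_\infty) = 1$, one gets $x_{n_\alpha} \le x_\infty$ eventually. Antisymmetry then forces $x_{n_\alpha} = x_\infty$ for arbitrarily large $n_\alpha$, contradicting the strict decrease.

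For the direction (ii) $\Rightarrow$ (i), I would show $\iota(X)$ is closed. Let $\omega$ lie in its closure and set $A = \omega^{-1}(1)$, $B = \omega^{-1}(0)$. The closure condition reads: for every finite $A' \subseteq A$ and $B' \subseteq B$ there exists $x \in X$ with $x \le y$ for all $y \in A'$ and $x \not\le y'$ for all $y' \in B'$. Two easy applications show that $A$ is a nonempty up-set (the up-set property by taking $A' = \{y\}$ and $B' = \{y'\}$ with $y \le y'$; nonemptiness by taking $A' = \emptyset$ and $B' = F$ using the finite-cover condition). It remains to show $A$ has a minimum. The descending chain condition yields a minimal element of $A$ below every $a \in A$. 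For uniqueness, given two minimal $a_1, a_2 \in A$, I would apply the finite-meets condition to obtain $F(a_1, a_2)$, then apply the closure condition with $A' = \{a_1, a_2\}$ and $B' = F(a_1, a_2) \setminus A$. The resulting $x$ lies in $K_{a_1} \cap K_{a_2} = \bigcup_{z \in F(a_1, a_2)} K_z$ but avoids every $z \notin A$, so $x \le z_0$ for some $z_0 \in F(a_1, a_2) \cap A$; minimality of $a_1$ and $a_2$ then forces $z_0 = a_1 = a_2$.

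The main obstacle is precisely this uniqueness of the minimal element, which is where the three conditions of (ii) truly interact: the finite-meets condition restricts the candidate common lower bounds of $a_1$ and $a_2$ to a finite set, the closure condition selects one lying inside $A$, and the descending chain condition both produces the minimal elements and closes the loop via minimality.
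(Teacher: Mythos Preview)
Your proof is correct but follows a genuinely different route from the paper's.

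The paper argues both directions by working directly with open covers. For (i)$\Rightarrow$(ii) the DCC is obtained not via nets but by observing that $K_\infty:=\bigcap_n K_{x_n}$ is a down-set, hence clopen, and then extracting a finite subcover of its complement from the increasing open sets $X\setminus K_{x_n}$. For (ii)$\Rightarrow$(i), the paper shows each $K_x$ (for $x$ in the finite set $F$) is compact by contradiction: given an open cover by basic sets $K_{S_i}\setminus K^{T_i}$ with no finite subcover, it picks $x\in K_{S_{i_0}}\setminus K^{T_{i_0}}$, finds $x_1\in T_{i_0}$ with $K_{x_1}$ still not finitely coverable, uses the finite-meets condition to arrange $x_1<x$, and iterates to produce an infinite strictly decreasing chain.

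Your approach instead embeds $X$ into $\{0,1\}^X$ and reduces compactness to closedness of the image via Tychonoff; the three conditions in (ii) then become exactly what is needed to show that any $\omega$ in the closure has $\omega^{-1}(1)$ a principal up-set. This is more structural and makes transparent \emph{why} the three conditions are the right ones: DCC produces minimal elements of $A=\omega^{-1}(1)$, the finite cover forces $A\neq\emptyset$, and the finite-meets condition is precisely what pins down uniqueness of the minimal element. The paper's argument is more elementary (no Tychonoff, no nets) and stays closer to the concrete combinatorics of the basic open sets. Both proofs use the finite-meets hypothesis at the same conceptual point, namely to descend below two given elements while staying in a controlled set, but yours isolates this more cleanly.
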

\proof 

Assume (i). Let  $x_n \ge x_{n+1}$ be any decreasing sequence of points in $X$. Then $K_\infty:= \cap K_{x_n}$ is a non-empty compact set. It is also a down-set hence it is open. 
The family $X\setminus K_{x_i}$ forms an open cover of the compact set $X\setminus K_{\infty}$. We conclude to the existence of an integer $N\geq 1$ such that $X\setminus K_{\infty}=\cup_{i=1}^{N}(X\setminus K_{x_i}),$
which implies $x_i = x_N$ for all $i\ge N$. This proves $X$ satisfies the descending chain condition.
Since $\{K_x\}_{x\in X}$ forms an open cover of $X$ and $X$ is compact, we have $X=\cup_{x\in F}K_x$ for some finite set $F$.
Finally $\{K_z\}_{z\in K_x \cap K_y}$ is an open cover of $K_x \cap K_y$, and again $K_x \cap K_y= \cup_{z\in F(x,y)} K_z$
for some finite set $F(x,y)$. This completes the proof $(i) \Rightarrow (ii)$.

Assume that (ii) holds. We only need to show that for every $x\in F$, $K_x$ is compact. 
Let $\{U_i\}_{i\in I}$ be any open cover of $K_x$. We proceed by contradiction and assume that it does not admit any finite subfamily covering $K_x$.

For a finite subset $S$ of $X$, set $K_S:=\cap_{s\in S}K_s$ and $K^S:=\cup_{s\in S}K_s$. 
A basis of the topology of $(X,\tau_{\leq})$ is given by clopen of the form
$K_S\setminus K^T$, where $S,T$ are finite subsets of $X$. 
We may thus assume that for each index $i$ there exist finite sets $S_i$ and $T_i$ such that $U_i=K_{S_i}\setminus K^{T_i}$.

Pick any index $i_0\in I$, such that $x\in U_{i_0}=K_{S_{i_0}}\setminus K^{T_{i_0}}$. Observe that we may take $S_{i_0}=\{x\}$ and that $x\not\in T_{i_0}.$ 
Choose any point $x_1\in T_{i_0}$ such that $K_{x_1}$ is not covered by any finite subset of $\{U_i\cap K_{x_1}, i\in I\}$.
By our standing assumption, $K_{x_1} \cap K_x$ is a finite union of sets of the form $K_y$ so that $x_1$ by one of the element $y$
we may suppose that $x_1<x$.

Proceeding by induction, we get an infinite sequence of points $x>x_1>\ldots> x_n$ such that for $n\geq 0$, $K_{x_n}$ is not covered by  any finite subset of $\{U_i\cap K_{x_n}, i\in I\}.$ This contradicts the descending chain condition.
\endproof

\begin{proposition}\label{proordnoetherian}
Assume that 
$(X,\tau_{\leq})$ is compact.
Then $(X,\tau_{\leq},\leq)$ is a Noetherian Priestley space. 
\end{proposition}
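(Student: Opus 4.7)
The plan is to combine two ingredients: a finite description of the closed down-sets of $(X,\tau_{\leq})$, and the descending chain condition on $X$ provided by Proposition~\ref{procompactorder}. The Priestley property itself is immediate from Remark~\ref{remordcomp} given compactness, so it remains to show that the associated spectral topology $\tau^{u}$ is Noetherian; since closed sets in $\tau^{u}$ are precisely the closed down-sets of $\tau_{\leq}$, this amounts to showing that every decreasing sequence of closed down-sets stabilizes.

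First I would establish that every closed down-set $F$ in $(X,\tau_{\leq})$ has the form $F = K^{S} := \bigcup_{s \in S} K_{s}$ for some finite antichain $S$. Indeed, $F$ is compact as a closed subset of a compact space, and being a down-set it satisfies $F = \bigcup_{x \in F} K_{x}$; this is an open cover of $F$ by basic open sets, so by compactness a finite subcover $F = K^{S}$ exists, and replacing $S$ by its set of maximal elements I obtain a finite antichain representation.

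Now suppose for contradiction that there is a strictly decreasing chain $K^{S_{1}} \supsetneq K^{S_{2}} \supsetneq \cdots$ with each $S_{n}$ a finite antichain. From the elementary criterion $K^{A} \subseteq K^{B}$ iff every $a \in A$ lies below some $b \in B$, strict inclusion forces $S_{n} \not\subseteq K^{S_{n+1}}$, so I may choose $x_{n} \in S_{n}$ with $x_{n} \not\leq t$ for every $t \in S_{n+1}$. Since $x_{n} \in K^{S_{n}} \subseteq K^{S_{k}}$ for every $k \leq n$, I can inductively construct a chain $x_{n} = x_{n,n} \leq x_{n,n-1} \leq \cdots \leq x_{n,1}$ with $x_{n,k} \in S_{k}$ by choosing $x_{n,k-1}$ to be some element of $S_{k-1}$ dominating $x_{n,k}$.

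The finiteness of each $S_{k}$ then allows a standard diagonal pigeonhole argument: extract nested infinite index sets $M_{1} \supseteq M_{2} \supseteq \cdots$ of positive integers and elements $z_{k} \in S_{k}$ with $z_{k+1} \leq z_{k}$ such that $x_{n,k} = z_{k}$ for all $n \in M_{k}$. The descending chain condition now forces $z_{k} = z_{N}$ for all $k \geq N$, so $z_{N} \in S_{k}$ for every $k \geq N$. Taking a diagonal sequence $n_{i} \in M_{i}$ with $n_{i} \geq \max(i,N)$ yields $x_{n_{i}} = x_{n_{i},n_{i}} \leq x_{n_{i},N} = z_{N} \in S_{n_{i}+1}$, hence $x_{n_{i}} \in K^{S_{n_{i}+1}}$, contradicting the choice of $x_{n_{i}}$. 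The main subtlety I expect is in selecting the witness $x_{n}$ from the top layer $S_{n}$ rather than somewhere deeper in $K^{S_{n}} \setminus K^{S_{n+1}}$: this top-layer choice is what guarantees that the witness is dominated by each of its ancestors $x_{n,k}$ and hence caught by the stabilized value $z_{N}$ along the diagonal.
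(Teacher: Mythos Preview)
Your proof is correct, but it takes a considerably more laborious route than the paper's. The paper's argument is a three-line compactness trick that exploits a fact you implicitly use but then abandon: in $(X,\tau_{\leq})$, \emph{every} down-set is open (since $D = \bigcup_{x\in D} K_x$). Hence any closed down-set is automatically clopen. Given a decreasing sequence $K_1 \supseteq K_2 \supseteq \cdots$ of closed down-sets, the intersection $K_\infty = \bigcap_i K_i$ is again a closed down-set, hence clopen; then $X \setminus K_\infty$ is compact and covered by the open sets $X \setminus K_i$, so finitely many suffice, forcing $K_N = K_\infty$ for some $N$. No appeal to Proposition~\ref{procompactorder} or any combinatorics is needed.

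Your approach instead reduces to the descending chain condition on \emph{elements} (via Proposition~\ref{procompactorder}) and then runs a K\"onig-lemma-style diagonalization through the finite antichains $S_n$. This is valid and has the merit of making the role of the DCC explicit, but it is substantially heavier: you introduce the finite-antichain representation, build the auxiliary chains $x_{n,k}$, and carry out a nested pigeonhole extraction, all of which the paper avoids by observing that the limit set $K_\infty$ is itself open.
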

\proof
Because $(X,\tau_{\leq})$ is compact, Remark \ref{remordcomp} implies $(X,\tau_{\leq},\leq)$ to be a Priestley space.
Let \[K_1\supseteq K_2\supseteq\dots\] be any decreasing sequence of non-empty closed down-sets in $X$. Then $K_{\infty}:=\cap_{i=1}^{\infty}K_i$ is a closed down-set, hence is clopen.
Note that $X\setminus K_{\infty}=\cup_{i=1}^{\infty}(X\setminus K_i)$.
Since $X\setminus K_{\infty}$ is compact and $X\setminus K_i$ are open, there is $N\geq 1$ such that $X\setminus K_{\infty}=\cup_{i=1}^{N}(X\setminus K_i),$
and the proof is complete. 
\endproof

\subsubsection{Spectra of commutative rings with unity}\label{sec:commutative}
Let $R$ be any commutative ring with unity. Then $\spec R$ endowed with the Zariski topology is a spectral space. 
Recall the definition of the constructible topology from Section~\ref{sec:two-topos}: 
it is the topology $\tau^{\con}$ induced by the product topology when $\spec R$ is viewed as a subset of $\{0,1\}^R$ 
by attaching to any prime ideal $\fA$ its characteristic function $\chi_\fA \colon R\to \{0,1\}$.
The space $(\spec R,\tau^{\con})$ is compact and totally disconnected. 
The triple $(\spec R,\tau^{\con}, \le)$ is in fact a Priestley space where $\le$ is as follows: for $\fA,\fB\in \spec R$, $\fA\leq \fB$ if and only if $\fB\subseteq \fA.$ 

\medskip

Assume that $R$ is Noetherian, then $\tau^{\con}$ equals to $\tau_{\le}$ (as in Section \ref{subsubsectionprdering}). By Proposition \ref{proordnoetherian}, 
 the triple $(\spec R,\tau^{\con}, \le)$ forms a  Noetherian Priestley space. But in general (e.g., when $R=k[T]$ with $k$ an infinite field), $(\spec R,\tau^{\con})$ is not a Noetherian topological space.
 
\subsubsection{The ideal space of a commutative ring with unity}\label{sec:ideal-space-commutative}

The next construction  has been considered in a couple of papers including~\cite{cornulier,fontana}, see also~\cite[2.5.13]{dickmann}.

Let $R$ be any commutative ring with unity. By the discussion in the previous section, the set $\Id^+(R)$ of all ideals including $R$, and the set $\Id(R)$ of all proper ideals of $R$ are closed subsets of  $\{0,1\}^R$ with the product topology. We call this topology the constructible topology and denote it by $\tau^{\con}$.
Note that 
the constructible  topology 
makes $\Id^+(R), \Id(R)$ into compact totally disconnected sets. Define the partial ordering $\le$ on $\Id(R)$ (resp. $\Id^+(R)$) as follows: for $\fA,\fB\in \Id(R)$ (resp. $\Id^+(R)$), $\fA\leq \fB$ if and only if $\fB\subseteq \fA.$ 
Then  $(\Id(R),\tau^{\con},\le)$ and
$(\Id^+(R),\tau^{\con},\le)$ are Priestley spaces. 
\begin{remark}Note that $\spec R$ is a closed subset of $(\Id(R),\tau^{\con})$. Its ordering and its constructible topology $\tau^{\con}$ are induced from the ordering and the topology $\tau^{\con}$ of $\Id(R).$
\end{remark}
We can also endow $\Id(R)$ with the topology generated by the subsets of the form $\{\fA\in \Id(R),\, f\not\in \fA\}$ for some $f\in R.$
We call it the Zariski topology and denote it by $\tau^{\zar}$.
\begin{lemma}
The space $(\Id(R),\tau^{\zar})$ is quasi-compact and  $\tau^{\con}$ is finer than $\tau^{\zar}$.
\end{lemma}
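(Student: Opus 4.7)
The plan is to prove the two assertions in sequence, deducing the first from the second.

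For the comparison of topologies, I would start by recalling that a subbasis for $\tau^{\zar}$ on $\Id(R)$ is given by the sets $D(f) := \{\fA \in \Id(R) : f \notin \fA\}$ as $f$ ranges over $R$. Under the embedding $\Id(R) \hookrightarrow \{0,1\}^R$ that sends $\fA$ to its characteristic function $\chi_\fA$, the set $D(f)$ is exactly $\{\chi_\fA : \chi_\fA(f) = 0\}$, which is the preimage of the singleton $\{0\}$ under the $f$-th coordinate projection. Since $\{0\}$ is clopen in the discrete two-point space, $D(f)$ is clopen in the product topology on $\{0,1\}^R$, and therefore clopen in the induced topology $\tau^{\con}$ on $\Id(R)$. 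As every subbasic open set for $\tau^{\zar}$ is open in $\tau^{\con}$, it follows that $\tau^{\con}$ is finer than $\tau^{\zar}$.

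For quasi-compactness, the key observation is the general topological fact that if $\tau_1$ is finer than $\tau_2$ on the same set $X$ and $(X,\tau_1)$ is quasi-compact, then so is $(X,\tau_2)$: any $\tau_2$-open cover is automatically a $\tau_1$-open cover, hence admits a finite subcover. Since $(\Id(R),\tau^{\con})$ is compact (being a closed subset of the compact Hausdorff space $\{0,1\}^R$, as the conditions ``contains $0$'', ``closed under addition'' and ``closed under $R$-multiplication'' together with ``does not contain $1$'' are all closed conditions on characteristic functions), quasi-compactness of $(\Id(R),\tau^{\zar})$ follows immediately from the preceding paragraph.

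I do not anticipate any genuine obstacle here; the statement is essentially formal once one notes that $\tau^{\con}$ is, by construction, the restriction of the product topology on $\{0,1\}^R$ to $\Id(R)$. The only small point to verify with a bit of care is that $\Id(R)$ really is closed in $\{0,1\}^R$, which amounts to checking that each of the defining conditions of an ideal (closure under addition, absorption by $R$, containing $0$, and properness) cuts out a closed subset of the product.
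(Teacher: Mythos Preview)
Your proof is correct, but for quasi-compactness you take a different route from the paper. The paper argues directly via the finite intersection property on subbasic closed sets: given a family $F_i = \{\fA \in \Id(R) : G_i \subset \fA\}$ with $G_i$ finite and $\bigcap_I F_i = \emptyset$, the ideal generated by $\bigcup_I G_i$ must be all of $R$, so $1$ lies in the ideal generated by finitely many $G_i$, giving a finite subfamily with empty intersection. You instead deduce quasi-compactness from the already-established compactness of $(\Id(R),\tau^{\con})$ via the general fact that any topology coarser than a quasi-compact one is quasi-compact. Your argument is shorter and more purely topological, and is entirely legitimate since the paper has established compactness of $\tau^{\con}$ just before the lemma; the paper's argument, on the other hand, is self-contained and makes the underlying algebraic reason explicit (quasi-compactness of $\tau^{\zar}$ is precisely the statement that $1$ is ``compact'' in the ideal lattice).
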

It follows from~~\cite[Proposition~2.1]{fontana} that  $(\Id(R),\tau^{\zar})$ is in fact a spectral space. We provide a proof of the lemma for the reader's convenience.
\begin{proof}
The fact that $\tau^{\con}$ is finer than $\tau^{\zar}$ is clear. 
To prove quasi-compactness, we take a collection of closed sets
$F_i$ of the form $F_i =  \{ \fA\in \Id(R),  G_i \subset \fA\}$ for
some finite set $G_i \subset R$, and suppose that $\bigcap_I F_i = \emptyset$. 
The ideal generated by $\bigcup_I G_i$ is necessarily equal to $R$ since $\bigcap_I F_i = \emptyset$,
hence there exists a finite subset $J\subset I$
for which the ideal generated by $\bigcup_J G_i$ contains $1$. 
It follows that $\bigcap_J F_i = \emptyset$ as required. 
\end{proof}
\begin{remark}\label{remzaridr}
The restriction of the Zariski topology of $\Id(R)$ on $\spec R$ is the usual Zariski topology on $\spec R.$
\end{remark}

\begin{remark}
When $R$ does not contain a unit, then $\Id(R)$ may not be quasi-compact, e.g., when $R=k^{\circ\circ}$
and $k$ is a non-discrete non-Archimedean metrized field.
\end{remark}

Observe that $\Id^+(R)$ is the union of $\Id(R)$ and an isolated and minimal point corresponding to $R$.

\begin{lemma}\label{lemnoeidnoepri}
Suppose $R$ is Noetherian commutative ring with unit. Then both spaces $(\Id^+(R),\tau^{\con},\le)$ and $(\Id(R),\tau^{\con},\le)$ are Noetherian Priestley spaces. 
\end{lemma}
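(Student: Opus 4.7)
The plan is to reduce the statement to Proposition~\ref{proordnoetherian}, which asserts that $(X,\tau_\le,\le)$ is a Noetherian Priestley space as soon as $(X,\tau_\le)$ is compact, where $\tau_\le$ is the order topology of Section~\ref{subsubsectionprdering}. Since $(\Id(R),\tau^{\con})$ and $(\Id^+(R),\tau^{\con})$ are already known to be compact (they are closed subsets of $\{0,1\}^R$ with the product topology), the whole task boils down to checking that when $R$ is Noetherian, the constructible topology $\tau^{\con}$ coincides with $\tau_\le$ on each of these spaces.

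Let me work this out on $\Id(R)$; the case of $\Id^+(R)$ will be entirely analogous. For $\fA\in \Id(R)$, the principal down-set is
\[
K_\fA = \{\fB\in\Id(R) : \fB\le \fA\} = \{\fB : \fA\subseteq \fB\},
\]
by the definition of the ordering. In particular, the subbasic open sets generating $\tau^{\con}$, namely $\{\fB : f\in \fB\}$ and $\{\fB : f\notin \fB\}$ for $f\in R$, are respectively $K_{(f)}$ and its complement. Since $\tau_\le$ is generated by $\{K_\fA,\Id(R)\setminus K_\fA\}_{\fA\in\Id(R)}$, we immediately obtain $\tau^{\con}\subseteq \tau_\le$ without any Noetherian hypothesis.

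The Noetherian assumption will be used only for the reverse inclusion: any $\fA\in \Id(R)$ admits a finite generating set $f_1,\dots,f_n$, so
\[
K_\fA = \bigcap_{i=1}^n K_{(f_i)}\qquad\text{and}\qquad \Id(R)\setminus K_\fA = \bigcup_{i=1}^n \bigl(\Id(R)\setminus K_{(f_i)}\bigr),
\]
both of which lie in $\tau^{\con}$. Hence $\tau_\le\subseteq \tau^{\con}$, the two topologies agree, and $(\Id(R),\tau_\le)$ inherits compactness from $(\Id(R),\tau^{\con})$. Proposition~\ref{proordnoetherian} then yields the conclusion for $\Id(R)$.

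For $\Id^+(R)$ I would run the same argument verbatim, observing in addition that the unit ideal satisfies $\{R\} = K_{(1)}$ and hence is a clopen isolated minimum in both topologies; alternatively one may decompose $\Id^+(R) = \Id(R)\sqcup \{R\}$ and note that adjoining an isolated clopen minimum preserves the Noetherian Priestley property (every nonempty closed down-set of $\Id^+(R)$ contains $R$ and intersects $\Id(R)$ in a closed down-set, so a descending chain of closed down-sets stabilizes). The only non-formal step is the finite-generation argument that promotes the trivially-checked inclusion $\tau^{\con}\subseteq\tau_\le$ to an equality, so I do not anticipate a serious obstacle.
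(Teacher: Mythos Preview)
Your proposal is correct and follows essentially the same approach as the paper: you identify $\tau^{\con}$ with $\tau_\le$ via the finite generation of ideals in a Noetherian ring, transfer compactness, and invoke Proposition~\ref{proordnoetherian}. Your treatment is in fact slightly more complete, since you spell out both inclusions $\tau^{\con}\subseteq\tau_\le$ and $\tau_\le\subseteq\tau^{\con}$ separately and address $\Id^+(R)$ explicitly, whereas the paper only treats $\Id(R)$ and leaves $\Id^+(R)$ implicit.
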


\proof
We only prove this lemma for $\Id(R)$. Let us first check that $\tau^{\con}=\tau_{\le}$.
On the one hand $\tau^{\con}$ is the topology generated by the subsets of the form: 
\begin{equation}\label{eq:basis-topo-id}
\{\fA\in \Id(R) , \,  f_1,\dots, f_m\in \fA, g_1,\dots, g_n\not\in \fA\}~.
\end{equation}
On the other hand, $\tau_{\le}$ is generated by subsets of the form: 
\[\{\fA\in \Id(R) , \,  \fA \supset \fA_1\} \text{ or } \{\fA\in \Id(R) , \,  \fA \not\supset \fA_1\}~.\]
Since $R$ is Noetherian, any ideal is finitely generated and we see that 
$\tau^{\con}=\tau_{\le}$.

Now since $R$ is Noetherian, $\Id(R)$ satisfies the descending chain condition. 
Moreover $(\Id(R), \tau^{\con})$ is compact hence  $(\Id(R), \tau_{\le})$ too and we conclude using
Proposition \ref{proordnoetherian}.
\endproof

\begin{example} Pick any discrete valued  non-Archimedean metrized field $(k,|\cdot|)$, and set $R=\spec k^\circ$. 
The map $\fA\mapsto 1-\max\{|f|, f\in \fA\}$ defines an order-increasing isomorphism from $\Id(R)$ onto its image $|R|$ in $[0,1]$
endowed with the canonical order relation, and $\Id(R)$ is a Noetherian Priestley space.

When the norm is non discrete, then $\Id(R)$ is not a Noetherian topological space. 
\end{example}

\subsection{Proof of Theorem~\ref{thm:entropy zar et con}}
Let $X$ be any  Noetherian scheme (in particular $X$ is quasi-compact). The constructible topology  $\tau^{\con}$  on $X$ can be characterized as follows. 
Introduce the ordering $x\leq y$ iff $x$ is contained in the Zariski closure of $\{y\}.$  Then  $\tau^{\con}$ coincides with the topology 
$\tau_{\leq}$ induced by the ordering $\le$ as in Section~\ref{subsubsectionprdering}. 
By Proposition \ref{proordnoetherian}, $(X,\tau^{\con},\leq)$ is a Noetherian Priestley space. 

Let now $f\colon X \dashrightarrow X$ be any dominant rational map. It induces a partially continuous self-map $f\colon X\setminus I(f)\to X$ with $I(f)=\Ind(f).$
Since $I(f)$ is a clopen in $(X,\tau^{\con})$, we get that $\htop(f,\tau^{\con})=0$ by Theorem~\ref{thm:priestley-entropypartial}.
\hfill
$\square$


\section{Entropy of endomorphisms of affinoid spaces}\label{sec:entropyaffinoid}

Denote by $k$ a complete valued field endowed with a non-Archimedean norm $|\cdot |.$ 
Let $A$ be any $k$-affinoid algebra and denote by $\cM(A)$ its Berkovich spectrum (see below for details).
This is a compact space.  A morphism $f\colon \cM(A) \to \cM(A)$ is by definition one induced by a bounded
$k$-algebra morphism from $A$ to itself.

The aim of this section is to prove Theorem~\ref{thm:entropy-affinoid} that we recall for the convience of the reader.

\affinoid*

Note that this theorem is strictly speaking not a preliminary step 
of Theorem~\ref{thm:entropy good reduction}. However very similar ideas appear in both proofs and the affinoid case 
presents much less technicalities. 
Section~\ref{sec:entropyaffinoid} should thus be thought of as a warm-up for the proof of Theorem~\ref{thm:entropy good reduction} given in the next section.

\subsection{Affinoid spaces}\label{sec:def-affinoid}
Our basic references are~\cite{BGR,berkovich}.

As in Section~\ref{sec:anal-kspace},  we write $k^{\circ}:=\{|z|\leq 1\}$, $k^{\circ\circ}:=\{|z|< 1\}$, and $\tilde{k}:=k^{\circ}/k^{\circ\circ}$. 
For every $\e\in (0,1]$, we let $\fm_\e := \{|z|<\e\}$. It is an ideal of $k^{\circ}$ which is prime only if $\e=1$ when $k$ is non trivially valued.

For any $r= (r_1, \cdots, r_d) \in (\R^*_+)^d$, consider the Tate algebra \[k\{r^{-1}T\}= \left\{\sum_{|I|=d} a_I T^I ,\, |a_I| r^{I}\to 0\right\}.\] 
The multiplicative norm $|\sum a_I T^I| := \sup_I |a_I| r^{I}$ turns it into a Banach $k$-algebra. 
An affinoid algebra $A$ is a Banach $k$-algebra for which there exists an epimorphism
$\alpha\colon k\{r^{-1}T\} \to A$ such that the norm on $A$ is equivalent to the residue norm on $k\{r^{-1}T\} /\ker (\alpha)$. When such an epimorphism exists with $r=1$, then we say that $A$ is strictly affinoid. 

The Berkovich spectrum $\cM(A)$ of an affinoid algebra is the set of bounded multiplicative semi-norms on $A$ endowed with the product topology. 
It is a compact set. It can be equipped with a canonical structure sheaf

The spectral norm on an affinoid algebra is defined by $\rho(a) := \lim_n \| a^n \|^{1/n}$, for $a\in A$. It is a power-multiplicative norm
which is equivalent to the original norm when $A$ is reduced. When $A$ is strictly affinoid, then 
$\rho(a)= \sup_{x\in \cM(A)} |a(x)|$, and we write in this case $\rho(a)= \| a\|_{\sup}$. 

Define $A^\circ = \{ a\in A , \,  \rho(a) \le 1\}$ and 
$A^{\circ\circ} = \{ a\in A , \,  \rho(a) <1\}$. Then $\tilde A := A^\circ/A^{\circ\circ}$ is a finitely generated $\tilde k$-algebra, 
and we have a canonical reduction map $\red_A\colon \cM(A) \to \spec \tilde A$ sending a multiplicative semi-norm $x$
to the prime ideal $\red_A(x) :=\{a \in A^\circ, \,   |a(x)| <1\}$. By definition, this reduction map is anti-continuous. For example, if $\tilde{f}\in \tilde{A}$ for some $f\in A$, then $\red_A^{-1}(\tilde{f})=\{x\in \cM(A):~|f(x)|<1\}$ (see e.g.  \cite[Lemma 2.4.1]{berkovich}). 

Affinoid algebras form a category whose morphisms are given by bounded $k$-algebra morphisms. 
Given any two affinoid algebras $A$ and $B$,  a map $f \colon \cM(A) \to \cM(B)$ is said to be analytic when it is induced by a bounded morphism
 $f^* \colon B \to A$.  Any analytic map is continuous, and contracts the spectral norm $\rho(f^* b) \le \rho(b)$  so that we have a canonical commutative diagram 
\[\xymatrix{
\cM(A)  \ar[d]^{\red_A} \ar[r]^f & \cM(B) \ar[d]^{\red_B}\\
\spec \tilde A  \ar[r]^{\tilde f} & \spec \tilde B}
\]

Finally, let $K/k$ be any complete metrized field extension. Then for any affinoid $k$-algebra $A$, we may consider the complete tensor product
$A_K:= A \hat{\otimes}_k K$. This is an affinoid $K$-algebra and the following lemma holds (see e.g. \cite{Jonsson2016}):
\begin{lemma}\label{lemsurjectivebasechange}The morphism $\pi_K\colon\cM(A_K)\to \cM(A)$ is surjective.
\end{lemma}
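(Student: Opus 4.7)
My plan is to prove surjectivity by exhibiting, for each $x \in \cM(A)$, a bounded multiplicative seminorm on $A_K$ whose restriction to $A$ recovers $x$. The natural receptacle is the completed residue field $\cH(x)$, which by construction is a complete metrized field extension of $k$ into which $A$ maps continuously with $|a(x)| = |a|_{\cH(x)}$ for all $a \in A$.

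First I would form the Banach $K$-algebra $B := \cH(x) \hotimes_k K$ (completed tensor product with respect to the standard projective cross-norm). The $k$-algebra morphism $A \to \cH(x)$ combined with $K \to K$ gives a bounded $K$-algebra morphism $A_K = A \hotimes_k K \to B$. Any bounded multiplicative seminorm $y'$ on $B$ then pulls back to a bounded multiplicative seminorm $y \in \cM(A_K)$, and by construction $\pi_K(y)(a) = y'(a \otimes 1) = |a|_{\cH(x)} = |a(x)|$ for every $a\in A$, so $\pi_K(y) = x$. Thus the problem reduces to showing that $\cM(B)$ is non-empty.

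By Berkovich's spectral non-vanishing theorem (\cite[Theorem 1.2.1]{berkovich}), every non-zero commutative Banach algebra has a non-empty Berkovich spectrum, so it suffices to verify that $B \neq 0$. This amounts to showing that the projective tensor seminorm on $\cH(x) \otimes_k K$ is non-degenerate on $1 \otimes 1$, equivalently that $\|1 \otimes 1\| = 1$; this follows from the general fact that the completed tensor product of two complete normed extensions of a non-Archimedean field is a non-zero Banach algebra containing both factors isometrically (a standard consequence of the existence of an orthogonal basis of $K/k$ in the sense of Gruson, or more concretely of the fact that $\cH(x)$ is a Banach $k$-module admitting a Schauder basis compatible with its norm).

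I anticipate that the only delicate point is the non-vanishing of $\cH(x) \hotimes_k K$: while intuitively obvious, it does require a genuine result about non-Archimedean functional analysis, and the cleanest route is to cite Berkovich's own framework (where this non-vanishing is built in to the base change construction) rather than reconstructing it from scratch. Once this is granted, the remainder of the argument is a routine verification that the pulled-back seminorm is multiplicative, bounded by the norm of $A_K$, and lies above $x$.
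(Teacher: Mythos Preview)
Your proof is correct and follows essentially the same route as the paper: both reduce to showing $\cM(\cH(x)\hotimes_k K)\neq\emptyset$, invoke \cite[Theorem~1.2.1]{berkovich} to reduce this to $\cH(x)\hotimes_k K\neq 0$, and appeal to Gruson's result for that non-vanishing. The paper phrases the first step as identifying the fiber $\pi_K^{-1}(x)$ with $\cM(\cH(x)\hotimes_k K)$, while you spell out the pullback along $A_K\to\cH(x)\hotimes_k K$ explicitly, but these are the same argument.
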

\proof
For any point $x\in \cM(A)$,  the fiber  $\pi_K^{-1}(x)$ in $\cM(A_K)$ can be canonically identified to $\cM(\cH(x)\hotimes_{k} K).$
By \cite[Section 3, Theorem 1]{Gruson1966}, the natural morphism $\cH(x)\otimes_kK\to \cH(x)\hotimes K$ is injective, so that $\cH(x)\hotimes_{k} K\neq \{0\}.$
It follows from \cite[Theorem 1.2.1]{berkovich} that $\pi_K^{-1}(x)=\cM(\cH(x)\hotimes_{k} K)\neq\emptyset$, which concludes the proof.
\endproof


\subsection{Zero entropy beyond Noetherian spaces}

The aim of this section is to partially extend results from Section~\ref{sec:noetherian-priestley} to situations where the underlying space is not supposed to be Noetherian. 
This will be used in the proof of Theorem~\ref{thm:entropy-affinoid}.
More precisely we shall prove:
\begin{theorem}\label{thmentropyfinitetype}
Let $R$ be a commutative ring with unit, and $A$ be any finitely generated $R$-algebra. 
Pick any endomorphism of $R$-algebra  $F\colon A\to A$.
Then \[\htop(f)=0\]
where $f$ denotes  the map induced by $F$ either on $\Id(A)$ or on $\spec A$.
\end{theorem}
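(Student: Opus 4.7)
The plan is to run an absolute Noetherian approximation: we descend the data $(A,F,\fU)$ to a \emph{Noetherian} approximation $(A_0,F_0,\fU_0)$ and then invoke Theorem~\ref{thm:priestley-entropypartial} (via Lemma~\ref{lemnoeidnoepri}) for the constructible topology, respectively Theorem~\ref{thm:entropy zar et con} for the Zariski topology. Since $\tau^{\zar}$ is coarser than $\tau^{\con}$ on both $\spec A$ and $\Id(A)$, it suffices to treat the constructible case, which I now sketch for $\Id(A)$; the argument for $\spec A$ is identical (using that contracting a prime of $A$ along an inclusion of rings yields a prime of the subring).

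First I would fix a finite open cover $\fU$ of $(\Id(A),\tau^{\con})$. The basic clopens~\eqref{eq:basis-topo-id} form a basis, so by compactness I may refine $\fU$ to a finite cover $\fV$ made of basic clopens, and collect the finitely many elements of $A$ appearing in $\fV$ into a finite set $S\subset A$. Next, pick a presentation $A=R[x_1,\dots,x_n]/I$, pick lifts $\tilde F(x_i)\in R[x_1,\dots,x_n]$ of $F(x_i)$ together with lifts of every element of $S$, and let $R_0\subset R$ be the subring generated by the finitely many coefficients of these polynomials. Then $R_0$ is a finitely generated $\Z$-algebra, hence Noetherian by Hilbert's basis theorem. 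Setting $A_0:=R_0[x_1,\dots,x_n]/I_0$ with $I_0:=I\cap R_0[x_1,\dots,x_n]$ gives a Noetherian ring with an injection $A_0\hookrightarrow A$; by construction $\tilde F$ preserves $R_0[x_1,\dots,x_n]$ and sends $I_0$ into $I_0$, so it descends to an endomorphism $F_0\colon A_0\to A_0$ compatible with $F$, and $S\subset A_0$.

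The contraction map $\pi\colon\Id(A)\to\Id(A_0)$, $\fA\mapsto\fA\cap A_0$, is continuous for $\tau^{\con}$ (basic clopens pull back to basic clopens) and satisfies $\pi\circ f=f_0\circ\pi$. Because $S\subset A_0$, each set of $\fV$ is the preimage under $\pi$ of a basic clopen in $\Id(A_0)$, so $\fV=\pi^{-1}\fV_0$ for some finite cover $\fV_0$ of $\Id(A_0)$. A direct computation gives
\[
\bigvee_{k=0}^{n-1} f^{-k}\fV \;=\; \pi^{-1}\!\Bigl(\bigvee_{k=0}^{n-1} f_0^{-k}\fV_0\Bigr),
\]
and pulling back a minimal subcover downstairs yields a cover upstairs, so $N(f,\fV,n)\le N(f_0,\fV_0,n)$ for every $n$. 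Since $A_0$ is Noetherian, Lemma~\ref{lemnoeidnoepri} makes $(\Id(A_0),\tau^{\con},\le)$ a Noetherian Priestley space, and Theorem~\ref{thm:priestley-entropypartial} (applied to the continuous map $f_0$) yields $h(f_0,\fV_0)=0$. Hence $h(f,\fU)\le h(f,\fV)\le h(f_0,\fV_0)=0$, and taking the supremum over $\fU$ completes the proof.

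The main obstacle is the construction of the Noetherian model $A_0$. A priori $R$ need not be Noetherian and $I$ need not be finitely generated, so one cannot literally descend the presentation of $A$; the trick is that we only ever need the \emph{coefficients} appearing in a chosen lift of $F$ and in the lifts of finitely many elements of $A$. Once $R_0$ is fixed as the subring they generate, $R_0[x_1,\dots,x_n]$ is automatically Noetherian by Hilbert's basis theorem, so $I\cap R_0[x_1,\dots,x_n]$ is finitely generated and $A_0$ is Noetherian, even though we never had an explicit finite generating set for $I$ itself. The rest of the argument is then a clean soft transfer of entropy across the pullback along $\pi$.
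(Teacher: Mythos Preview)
Your approach is essentially the paper's: both descend to a Noetherian subring of $A$ generated over a finitely generated subring $R_0\subset R$ by a fixed set of $R$-algebra generators, and then invoke Lemma~\ref{lemnoeidnoepri} and Theorem~\ref{thm:priestley-entropypartial}. The paper packages the reduction through the full inductive system of such subrings via Lemmas~\ref{lemdirectlimring} and~\ref{leminverselimitentropy}, whereas you pick a single $A_0$ adapted to the given cover~$\fU$; your $A_0$ is exactly the paper's $A(R_0)$ once one identifies the image of $R_0[x_1,\dots,x_n]$ in $A$ with the subring generated by $R_0$ and the~$x_i$.

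There is one gap worth flagging. You assert that $\fV_0$ is a cover of all of $\Id(A_0)$, but the contraction $\pi\colon\Id(A)\to\Id(A_0)$ need not be surjective, so a priori $\fV_0$ only covers the image $\pi(\Id(A))$. For instance with $R=\Q$, $A=\Q[x]$, $R_0=\Z$, $A_0=\Z[x]$, the one-element cover $\fV=\{\{\fA:2\notin\fA\}\}$ equals all of $\Id(\Q[x])$, yet $\fV_0=\{\{\fA_0:2\notin\fA_0\}\}$ misses $(2)\in\Id(\Z[x])$. The repair is immediate and does not alter the idea: either work on the compact $f_0$-invariant closed set $\pi(\Id(A))\subset\Id(A_0)$ (then $\htop(f_0|_{\pi(\Id(A))})\le\htop(f_0)=0$ by Corollary~\ref{corsubsysentropy}), or adjoin to $\fV_0$ the open set $\Id(A_0)\setminus\pi(\Id(A))$, whose $\pi$-preimage is empty so that $\pi^{-1}\fV_0$ still refines~$\fU$.
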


\begin{example}
Let $k$ be any field.
Let $A$ be the $k$-algebra of locally constant functions on $X:=\{0,1\}^\N$ taking values in $k.$ 
Any ideal of $A$ is of the form $\mathcal{I}(K)= \{ g|_K \equiv 0\}$ where $K$ is a compact set of $X$ endowed with the product topology\footnote{This follows for instance from Lemma~\ref{lemdirectlimring} below and the fact that for any finite set $Y$, any ideals of the ring $k^Y$ is given by the set of functions vanishing on some subset $Y'$ of $Y$.}. 
It follows that the map $\mathcal{I}\colon X \to \spec A$ sending $x$ to $\mathcal{I}(x)$ is a bijection. Since the topology on $\spec A$ is generated by
sets of the form $\{I \in \spec A, g \notin I\}$ for some $g\in A$, the map $\mathcal{I}$ is also continuous, and since $X$ is compact $\mathcal{I}$ induces a homeomorphism. 

Consider the shift map  $\sigma(a_0,a_1,\cdots):= (a_1,\cdots)$. It is a continuous self-map of $X$ such that 
$F(g) :=  g \circ \sigma$ is an endomorphism of $A$. The map $f$ induced by $F$ on $\spec A$ is conjugated to 
$\sigma$ by $\mathcal{I}$ since
$F^{-1}(\mathcal{I}(x)) = \mathcal{I}(\sigma(x))$ so that the topological entropy of $f$ is positive. 
This proves that the assumption on $A$ to be finitely generated is necessary in the previous theorem. 
\end{example}

Before we give a proof of this result, we need to recall some facts on inductive limits of sets and rings. 

Let $\pi_{ij}:X_i\to X_j$, $i\ge j$, be an inverse system of compact topological spaces indexed by an inductive set $I$. Then the projective limit $X:=\projlim_{i\in I} X_i$
endowed with the product topology is compact. The next lemma is standard in the literature. 

\begin{lemma}\label{leminverselimitentropy}
Suppose we are given for each $i\in I$ a continuous map $f_i\colon X_i\to X_i$ such that $\pi_{ij} \circ f_i = f_j \circ \pi_{ij}$ for all $i\geq j$.
Denote by $f\colon X \to X$ be the induced continuous map on the projective limit.
Then we have $\htop(f)\leq \limsup_{i\in I}\htop(f_i),$ and the equality holds if $\pi_{ij}$ is surjective for every $i\geq j$.
\end{lemma}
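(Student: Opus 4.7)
The plan is to prove separately the upper bound $\htop(f) \le \limsup_{i\in I} \htop(f_i)$, which holds unconditionally, and the reverse inequality $\htop(f) \ge \htop(f_i)$ for each fixed $i$, which requires surjectivity of the transition maps.

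For the upper bound, I would exploit the standard description of the topology of $X = \varprojlim X_i$, for which a basis is given by sets of the form $\pi_i^{-1}(U)$ with $U$ open in some $X_i$ (finite intersections of such basic opens can be rewritten in the same form by passing to an index above the finitely many indices involved). Given any finite open cover $\fU$ of $X$, compactness of $X$ lets us refine $\fU$ by a finite family $\pi_{i_1}^{-1}(U_1), \ldots, \pi_{i_m}^{-1}(U_m)$ of such basic opens. Choosing $i \in I$ above all the $i_j$, we rewrite $\pi_{i_j}^{-1}(U_j) = \pi_i^{-1}(V_j)$ with $V_j := \pi_{ii_j}^{-1}(U_j)$; then $\fV := \{V_1, \ldots, V_m, X_i \setminus \pi_i(X)\}$ is a genuine finite open cover of $X_i$ whose pullback $\pi_i^{-1}(\fV)$ coincides with $\{\pi_i^{-1}(V_j)\}_j$ and therefore refines $\fU$.

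The functorial identity $\pi_i^{-1}(\fV_n) = (\pi_i^{-1}(\fV))_n$, together with the trivial bound $N(\pi_i^{-1}(\cdot), X) \le N(\cdot, X_i)$, yields $h(f, \pi_i^{-1}(\fV)) \le h(f_i, \fV) \le \htop(f_i)$, and hence $h(f, \fU) \le \htop(f_i)$ by monotonicity of entropy under refinement. Since the same argument applies for every $j \ge i$ via the cover $\pi_{ji}^{-1}(\fV)$ of $X_j$, one in fact gets $h(f, \fU) \le \inf_{j \ge i} \htop(f_j) \le \limsup_{i \in I} \htop(f_i)$. Taking the supremum over all $\fU$ yields $\htop(f) \le \limsup_i \htop(f_i)$.

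For the equality when all $\pi_{ij}$ are surjective, I would first invoke the classical fact that the canonical projections $\pi_i : X \to X_i$ are then themselves surjective (a consequence of Tychonoff and the finite intersection property applied to the fibers). Since $\pi_i \circ f = f_i \circ \pi_i$ and both $f, f_i$ are everywhere defined, Corollary~\ref{corsujsysentropy} applies trivially (with empty indeterminacy loci) and gives $\htop(f) \ge \htop(f_i)$ for every $i$. Taking limsup and combining with the upper bound concludes the proof. The main delicate point is the reduction of an arbitrary finite open cover of $X$ to a pullback of a cover of some $X_i$; the auxiliary piece $X_i \setminus \pi_i(X)$ (open because $\pi_i(X)$ is compact hence closed) is exactly what is needed to turn the pullback family into a genuine cover of $X_i$ without changing its pullback to $X$. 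Everything else is direct functoriality of the complexity of covers as developed in Section~\ref{sec:topo-def}.
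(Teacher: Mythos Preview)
Your proof is correct and follows essentially the same route as the paper's: reduce an arbitrary finite open cover of $X$ to the pullback of a finite open cover of some $X_i$, use the compatibility $\pi_i\circ f=f_i\circ\pi_i$ to compare complexities, and for the reverse inequality invoke surjectivity of the $\pi_i$. Your handling is in fact slightly more explicit than the paper's on two points: you spell out the trick of adjoining $X_i\setminus\pi_i(X)$ to obtain a genuine cover of $X_i$, and you bound $h(f,\fU)$ by $\inf_{j\ge i}\htop(f_j)$ before passing to the $\limsup$, whereas the paper fixes $\epsilon>0$ and a threshold index $i_0$ with $\htop(f_j)\le\limsup+\epsilon$ for $j\ge i_0$ at the outset; these are cosmetic variants of the same argument.
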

Two remarks are in order.
\begin{remark}
Suppose $X_i$ is a Noetherian Priestley space for each $i$. Then the previous lemma implies $\htop(f)=0$
whereas in general $X$ is not Noetherian. 
\end{remark}

\begin{remark}
The fact that $f$ is the limit of self-maps of $X_i$ is a very strong assumption.  
The topological entropy of a continuous self-map of $X$ might be positive. 
Take for instance $I=\N$, $X_i = \Z/(p^i)$ so that $X= \Z_p$. 
The map defined in the $p$-adic expansion by the formula $f(\sum_{i\geq 0}a_ip^i):=\sum_{i\geq 1}a_ip^{i-1}$ has entropy $\log p>0$. 
Indeed, this map is topologically conjugated to the one-sided shift on $p$ symbols, whose entropy is calculated in, e.g.,~\cite[p.178]{walters}.
\end{remark}

\proof
For every $i\in I$, denote by $\pi_i\colon X\to X_i$ the natural map.
Pick $\epsilon>0$ and an index $i_0$ large such that 
$\htop(f_i)\leq \limsup_{i\in I}\htop(f_i) + \epsilon$ for all $i \ge i_0$.

Let $\fU$ be a finite open cover of $X$. For every $x\in X$, there is an index $i(x)\in I$ and an open subset $V_{x}\subseteq X_{i(x)}$ such that $x\in \pi_{i(x)}^{-1}(V_x)$ and $\pi_{i(x)}^{-1}(V_x)$ is contained in some element of $\fU.$
Since $X$ is compact, one can find an index $i_1 \ge i_0$ and a finite open cover $\fV$ of $X_{i_1}$ such that $\pi_{i_0}^{-1}(\fV)$ refines $\fU.$
We obtain the series of inequalities:
\[\htop(f,\fU)\leq \htop(f,\pi_{i_1}^{-1}(\fV))\leq \htop(f_{i_1},\fV)\leq \htop(f_{i_1})\leq \limsup_{i\in I}\htop(f_i) + \epsilon,\]
Letting $\epsilon \to0$, we get $\htop(f)\leq \limsup_{i\in I}\htop(f_i).$

If $\pi_{ij}$ is surjective for every $i\geq j,$ then $\pi_i$ is surjective for every $i\in I.$ It follows that 
$\htop(f)\geq \htop(f_i)$ for every $i\in I$, and $\htop(f)= \limsup_{i\in I}\htop(f_i).$
\endproof

For the sake of the reader, we record the following simple observation. 
Recall  the constructible  and Zariski topologies on $\Id(R)$ were defined in Section~\ref{sec:ideal-space-commutative}.
\begin{lemma}\label{lemdirectlimring}
Let $\sigma_{ij}\colon A_i\to A_j$, $i\le j$, be a directed system of rings (with unit) indexed by a set $I$, and 
set $A:=\injlim_{i\in I}A_i.$ 

Then  we have canonical homeomorphisms  \[\Id(A)\simeq\projlim_{i\in I}\Id(A_i) \text{ and } \spec A\simeq\projlim_{i\in I}\spec A_i~,\]
where the ideal space is endowed either with the constructible or the Zariski topology, and the spectrum is endowed either with the constructible or the Zariski topology.
\end{lemma}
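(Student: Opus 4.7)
The plan is to write down an explicit natural map in each case and verify it is a continuous bijection of compact spaces. Denote by $\sigma_i \colon A_i \to A$ the structural morphisms into the direct limit. For each $i$, the pullback $\mathfrak{A} \mapsto \sigma_i^{-1}(\mathfrak{A})$ sends ideals (resp. prime ideals) of $A$ to ideals (resp. prime ideals) of $A_i$, and compatibility with the $\sigma_{ij}$ assembles these into canonical maps $\Phi\colon \Id(A)\to \projlim_i \Id(A_i)$ and $\spec A\to \projlim_i \spec A_i$.

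The first step is bijectivity of $\Phi$. Any $a\in A$ lifts to some $b\in A_i$, so membership $a\in \mathfrak{A}$ is entirely determined by the pullbacks $\sigma_i^{-1}(\mathfrak{A})$; this gives injectivity. For surjectivity, given a compatible family $(\mathfrak{A}_i)$, I would set $\mathfrak{A}:=\bigcup_i \sigma_i(\mathfrak{A}_i)\subset A$ and check it is an ideal using the standard direct-limit formalism (any finite collection of elements can be brought into a common $A_k$, and $\mathfrak{A}_k$ is an ideal there). The nontrivial verification is $\sigma_j^{-1}(\mathfrak{A})=\mathfrak{A}_j$: the inclusion $\supset$ is automatic, while for $\subset$, if $\sigma_j(b)=\sigma_i(c)$ with $c\in \mathfrak{A}_i$, the universal property of the direct limit produces $k\ge i,j$ with $\sigma_{jk}(b)=\sigma_{ik}(c)$; by coherence $\sigma_{ik}(c)\in \mathfrak{A}_k$, and then $\sigma_{jk}^{-1}(\mathfrak{A}_k)=\mathfrak{A}_j$ forces $b\in \mathfrak{A}_j$. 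In the prime-ideal case one additionally checks that $\mathfrak{A}$ is prime by lifting any product $ab\in \mathfrak{A}$ to some $A_i$ and using primality of $\mathfrak{A}_i$; properness of $\mathfrak{A}$ corresponds to $1\notin\mathfrak{A}_i$ for each $i$.

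For continuity, note that the constructible topology on $\Id(A)$ has a subbasis consisting of the sets $\{a\in \mathfrak{A}\}$ and $\{a\notin \mathfrak{A}\}$ for $a\in A$. Writing $a=\sigma_i(b)$, each such set equals the $\Phi$-preimage of the corresponding subbasic constructible open of $\Id(A_i)$ pulled back to $\projlim_j \Id(A_j)$. This shows simultaneously that $\Phi$ is continuous and open, hence a homeomorphism onto its image (which is all of the inverse limit by surjectivity). The Zariski case uses only the subbasis $\{a\notin \mathfrak{A}\}$ and is formally identical. All statements transfer verbatim to $\spec A$, since pullbacks of prime ideals are prime and the subspace topologies coincide with the restrictions of the topologies on $\Id(A_i)$.

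I do not expect any serious obstacle: the whole argument is a bookkeeping exercise on direct limits. The one point that genuinely uses the direct-limit universal property, and is thus the technical heart of the proof, is the identification $\sigma_j^{-1}\bigl(\bigcup_i\sigma_i(\mathfrak{A}_i)\bigr)=\mathfrak{A}_j$ described above; once this is established, continuity in both topologies follows from the fact that $A$ is the set-theoretic union of the images $\sigma_i(A_i)$.
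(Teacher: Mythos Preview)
Your proposal is correct and follows essentially the same route as the paper: define the canonical map $\Phi$ via pullback of ideals, construct the inverse by $\mathfrak{A}=\bigcup_i\sigma_i(\mathfrak{A}_i)$, verify the key identity $\sigma_j^{-1}(\mathfrak{A})=\mathfrak{A}_j$ using the direct-limit universal property, and then check continuity (and openness) by matching subbasic sets on both sides. The only cosmetic difference is that the paper phrases the homeomorphism by showing both $\phi$ and its inverse $\psi$ are continuous, whereas you package this as ``$\Phi$ is continuous and open''; the content is identical.
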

\proof 
We only do the proof for $\Id(A)$ with the constructible topologys. The proofs in all other cases are similar.

Denote by $\sigma_i \colon A_i\to A$ the natural morphism. 
Pick any proper ideal $\fA \in \Id(A)$. Then $\fA_i:= \sigma_i^{-1}(\fA)$ is a proper  ideal in $A_i$
and $\sigma_{ij}^{-1}(\fA_j) = \fA_i$ for all $i \le j$ since $\sigma_i= \sigma_j \circ \sigma_{ij}$.
We thus get a canonical map $\phi\colon \Id(A)\to \projlim_{i\in I}\Id(A_i)$ which is continuous.
Indeed for any finitely many elements $f_1, \ldots, f_n$, $g_1, \ldots, g_m \in A_i$, the
set of proper  ideals $\fA \in \Id(A)$ such that $f_k \in \sigma_i^{-1}(\fA)$ and $g_l \notin \sigma_i^{-1}(\fA)$
is equal to the set $\fA \in \Id(A)$ such that $\sigma_i(f_k) \in \fA$ and $\sigma_i(g_l) \notin\fA$. 

Conversely, pick any family of proper  ideals $\fA_i\in \Id(A_i)$ such that $\sigma_{ij}^{-1}(\fA_j) = \fA_i$ for all $i \le j$. 
Then define $\fA$ as the set of elements $\sigma_i(f), f\in \fA_i, i\in I.$ Easy to see that $\fA$ is a proper ideal of $A$.
We claim that: $\fA_i=\sigma_i^{-1}(\fA).$
We now prove the claim.
Let $f'\in \fA_i$ and $f:=\sigma_i(f')$.
If $f'\in \fA_i$, it is clear that $f\in \fA$. On the other hand, if $f\in \fA$, then there is $j\in I$ such that $f=\sigma_{j}(f'')$ and $f''\in \fA_j.$
There is $l\in I$ with $l\geq i,j$ such that $\sigma_{il}(f')=\sigma_{jl}(f'').$ Then $\sigma_{jl}(f'')\in \sigma_{jl}(\fA_j)\subseteq \fA_l$, hence 
$f'\in \sigma_{il}^{-1}(\fA_l)=\fA_i,$ which concludes the claim.
Define $\psi:\projlim_{i\in I}\Id(A_i)\to \Id(A)$ sending $(\fA_i)_{i\in I}$ to $\fA.$

The claim implies that $\phi\circ \psi=\id.$ For $\fA \in \Id(A)$, set $\phi(\fA):=(\fA_i)_{i\in I}.$
It is clear that $\cup_{i\in I}\sigma_i(\fA_i)=\fA,$ hence $\psi\circ \phi=\id.$
For any finitely many elements $f_1, \ldots, f_n$, $g_1, \ldots, g_m \in A$, there is $i\in I$ such that 
there are $f_1', \ldots, f_n'$, $g_1', \ldots, g_m' \in A_i$ such that $f_j=\sigma_i(f_j'), j=1,\dots,n$ and $g_j=\sigma_i(g_j'), j=1,\dots,m.$
The claim implies that 
\begin{align*}
\psi^{-1}\left\{ \fA\in \Id(A),\,\, \right.&\left.f_1, \ldots, f_n\in \fA, g_1, \ldots, g_m \not\in \fA \right\}
= \\
&\left\{(\fA_j)_{j\in I}\in \projlim_{i\in I}\Id(A_i) ,\,\,  f_1', \ldots, f_n'\in \fA_i, g_1', \ldots, g_m' \not\in \fA_i\right\},
\end{align*}
So $\psi$ is continuous. This concludes the proof.
\endproof

\begin{proof}[Proof of Theorem~\ref{thmentropyfinitetype}]
Since $(\spec A,\tau^{\zar})$ is coarser than  $(\spec A,\tau^{\con})$
and the latter  is a closed subset of $\Id(A)$, we are reduced to prove
$\htop(f)=0$ for the map $f\colon \Id(A) \to \Id(A)$ induced by the endomorphism $F$ of the $R$-algebra $A$. 

Assume that $A$ is generated by $t_1,\dots,t_r.$
Write $F(t_i)=\sum_{I\in \N^r}a_{i,I}t^I, i=1,\dots, r$ as polynomials in $t_1,\dots,t_r.$ (hence, most of the coefficients $a_{i,I}$'s are 0). 
The subring $R_\star$ of $R$ which is generated by the finite set $\{a_{i,J}\}_{i\in \{1, \ldots, r\}, J\in \N^r}$ is then Noetherian.

Consider the set $\cN$ of all finitely generated (hence Noetherian) subrings of $R$ containing $R_\star$ ordered by inclusion. 
For any $S \in \cN$, let $A(S)$ be the (Noetherian) subring of $A$ generated by $S$ and $t_1,\dots,t_r.$
Observe that the direct limit $\injlim_{S\in \cN} A(S)$ is canonically isomorphic to $A$. 
By Lemma~\ref{lemdirectlimring}, $\Id(A)$ can be identified with $\projlim_{S\in \cN} \Id(A(S))$. 

But $F(A(S)) \subset A(S)$ since $A(S) \supset R_\star[t_1, \ldots , t_r]$, hence for any $S\in \cN$
we get a continuous map $f_S\colon \Id(A(S)) \to \Id(A(S))$, 
and for any two subrings $S' \subset S$ the following diagram is commutative:
\[\xymatrix{
\Id(A(S)) \ar[d] \ar[r]^{f_S} &\Id(A(S))\ar[d]\\
\Id(A(S')) \ar[r]^{f_{S'}} &\Id(A(S'))}
\]
where the vertical arrows are defined by sending an ideal in $A(S)$ to its intersection with $A(S')$. 
We may thus apply Lemma~\ref{leminverselimitentropy}. By Theorem~\ref{thm:priestley-entropypartial}, 
we have $\htop(f_S) =0$ for all $S\in \cN$, hence the topological entropy of the map induced by  $F$
on $\projlim_{S\in \cN} \Id(A(S))$ is zero. This proves $\htop(f)=0$. 
\end{proof}


\subsection{Proof of Theorem \ref{thm:entropy-affinoid}}
Recall that $A$ is a  $k$-affinoid algebra, $F \colon A \to A$ is a bounded endomorphism of $k$-algebras, and
$f \colon \cM(A) \to \cM(A)$ is the induced map on the Berkovich spectrum of $A$.
We want to prove that $\htop(f)=0$.

We first observe that replacing $A$ by the quotient by its nilradical does not change its spectrum so that we may assume $A$ is reduced. 
Pick any algebraically closed complete field extension $K/k$, such that $|K^*|$ is dense in $(0,+\infty)$ and $A\hotimes K$ is strict (see~\cite[p.22]{berkovich} for the existence of such a field).
Since by Lemma~\ref{lemsurjectivebasechange} the canonical $\cM(A_K) \to \cM(A)$ is surjective, we have
$\htop(f)\leq \htop (f_K)$. It is thus sufficient to prove $\htop(f_K)=0$. 
We may (and shall) thus assume that $k$ is algebraically closed, $A$ is a strict reduced $k$-affinoid algebra and $|k^*|$ is dense. 

By~\cite[Theorem 6.4.3/1]{BGR}, we may find
a distinguished epimorphism $k\{T\}\to A$, with $T= (t_1, \cdots , t_d)$. Denote by $\|\cdot\|$ the Gauss norm on $k\{T\}$, i.e. $||\sum _ja_jT^j||=\max _j|a_j|$.  
Since  $F\colon A \to A$ is a bounded endomorphism, we have $\| F(h)\|_{\sup} \le \| h \|_{\sup}$, hence $F$
 can be lifted to a bounded
endomorphism $G \colon k\{T\} \to k\{T\}$ such that $\| G(f) \| \le \| f \|$. Since 
we have a closed embedding $\cM(A)\hookrightarrow \bar{\D}^d (0,1) := \cM(k\{T\})$, 
we are reduced to proving $\htop(g) =0$ where $g \colon  \bar{\D}^d (0,1) \to  \bar{\D}^d (0,1)$ is the map induced by $G$.

For any $\e \in (0,1]$, consider the ideal $\fm_\e := \{ |z| < \e \} \subset k^\circ$, and the ring
$k_\e:=k^{\circ}/\fm_\e$. Observe that  (see Lemma \ref{lem:fg}) $A_\e:=k^{\circ}\{T\}/\fm_\e\{T\}=k_\e[T]$ is a finitely generated $k_\e$-algebra. 
We have a canonical reduction map $\red_\e \colon \bar{\D}^d (0,1) \to \Id(A_\e)$ which sends a multiplicative 
semi-norm $x \in \cM(k\{T\})$ to the image of the ideal $\{f\in k^\circ\{T\} , \,  |f(x)| < \e\}$ in $A_\e$ (note that $\{f\in k^\circ\{T\} , \,  |f(x)| < \e\}$ contains $\fm_\e\{T\}$).

Note that given any $f \in k^\circ\{T\}$, we have (for the case $\e =1$ see \cite[Lemma 2.4.1]{berkovich})
\begin{align*}
\red_\e^{-1}\{ \fA \in \Id(A_\e) , \,  \tilde{f} \in \fA\} & = \{ |f| < \e \}
\\
\red_\e^{-1}\{ \fA \in \Id(A_\e) , \,  \tilde{f} \notin \fA\} & = \{ |f| \ge \e \}
\end{align*}where $\tilde{f}$ denotes the image of $f$ in $A_\e$.
It follows that $\red_\e$ is not continuous. 
However, as the usual reduction map $\red_1$, $\red_\e$ is anti-continuous with respect to  the Zariski topology on $\Id(A_r)$ defined in Section~\ref{sec:ideal-space-commutative}.

The map $G$ is a bounded $k$-algebra homomorphism which maps $k^\circ \{T\}$ to itself so that
$G  (\fm_r\{T\}) \subset \fm_r\{T\}$, hence $G$ descends to a continuous map $g_\e\colon \Id(A_\e)\to \Id(A_\e)$
satisfying $\red_\e \circ g = g_\e \circ \red_\e$.

\begin{lemma}\label{lem:subcoverAr}
Let $\fU$ be a finite open cover of $\bar{\D}^d (0,1)$. Then for every sufficiently small $\e\in |k^*|$,
there is a finite constructible (hence clopen) cover $\fV$ of $\Id(A_\e)$ such that 
$\{\red_\e^{-1}(U) , \,  U\in \fV\}$ refines $\fU$.
\end{lemma}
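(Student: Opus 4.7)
The strategy relies on the identities
\[
\red_\e^{-1}\{\fA : \tilde h \in \fA\} = \{|h| < \e\}, \qquad \red_\e^{-1}\{\fA : \tilde h \notin \fA\} = \{|h| \geq \e\},
\]
for $h \in k^\circ\{T\}$, which realize ``half-strict'' basic sets of $\bar{\D}^d(0,1)$ as $\red_\e$-preimages of basic constructible subsets of $\Id(A_\e)$. The plan is to refine $\fU$ by such sets, using density of $|k^*|$ both to adjust Berkovich subbasic endpoints into $|k^*|$ and to rescale functions so that they land in $k^\circ\{T\}$.

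For each $x \in \bar{\D}^d(0,1)$, choose $U(x) \in \fU$ with $x \in U(x)$. Using the Berkovich subbasis $\{|f| < b\}, \{|f| > a\}$ together with density of $|k^*|$, find $f_1, \dots, f_m, g_1, \dots, g_n \in k\{T\}$ and $b_i, a_j, a_j' \in |k^*|$ with $a_j < a_j' < |g_j(x)|$ and $|f_i(x)| < b_i$, such that
\[
\bigcap_i \{|f_i| < b_i\} \cap \bigcap_j \{|g_j| > a_j\} \subset U(x).
\]
Define
\[
W_x^\circ := \bigcap_i \{|f_i| < b_i\} \cap \bigcap_j \{|g_j| > a_j'\}, \quad W_x^\sharp := \bigcap_i \{|f_i| < b_i\} \cap \bigcap_j \{|g_j| \geq a_j'\}.
\]
Then $W_x^\circ$ is a Berkovich-open neighborhood of $x$, and $W_x^\circ \subset W_x^\sharp \subset U(x)$ (the last inclusion because $\{|g_j| \geq a_j'\} \subset \{|g_j| > a_j\}$, since $a_j' > a_j$). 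By compactness, extract a finite subcover $W_{x_1}^\circ, \dots, W_{x_N}^\circ$ of $\bar{\D}^d(0,1)$; then the $W_{x_k}^\sharp$ still cover, each lying in $U(x_k)$.

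Now fix $\e \in |k^*|$ with $\e \leq 1$ and $\e$ smaller than $b_i/\|f_i\|$ and $a_j'/\|g_j\|$ for every function occurring in the finitely many $W_{x_k}^\sharp$. For each subbasic constraint, pick $\alpha_i, \beta_j \in k^*$ with $|\alpha_i| = \e/b_i$ and $|\beta_j| = \e/a_j'$ — these exist because $|k^*|$ is a multiplicative group — and then $\alpha_i f_i, \beta_j g_j \in k^\circ\{T\}$ by the choice of $\e$, which yields $\{|f_i| < b_i\} = \{|\alpha_i f_i| < \e\}$ and $\{|g_j| \geq a_j'\} = \{|\beta_j g_j| \geq \e\}$. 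Consequently $W_{x_k}^\sharp = \red_\e^{-1}(V_k)$ for the basic constructible set
\[
V_k := \bigcap_i \{\fA : \widetilde{\alpha_i f_i} \in \fA\} \cap \bigcap_j \{\fA : \widetilde{\beta_j g_j} \notin \fA\} \subset \Id(A_\e).
\]
Finally set $\fV := \{V_0, V_1, \dots, V_N\}$ with $V_0 := \Id(A_\e) \setminus \bigcup_{k=1}^N V_k$, which is constructible since the constructible sets form a Boolean algebra. Then $\fV$ is a finite constructible cover of $\Id(A_\e)$, $\red_\e^{-1}(V_k) = W_{x_k}^\sharp \subset U(x_k) \in \fU$, and $\red_\e^{-1}(V_0) = \bar{\D}^d(0,1) \setminus \bigcup_k W_{x_k}^\sharp = \emptyset$ is vacuously contained in any element of $\fU$. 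The main technical obstacle is the rescaling step: writing a strict Berkovich subbasic open $\{|f| < b\}$ as a $\red_\e$-preimage forces $\alpha f \in k^\circ\{T\}$ with $|\alpha| = \e/b$, and hence $\e \leq b/\|f\|$, which is precisely why the statement requires $\e$ sufficiently small. Density of $|k^*|$ plays a double role: to replace a strict Berkovich bound ``$> a_j$'' by a non-strict ``$\geq a_j'$'' (needed because only $\{|h| \geq \e\}$, not $\{|h| > \e\}$, arises as a $\red_\e$-preimage), and to guarantee that the rescaling constants $\alpha_i, \beta_j$ can actually be chosen in $k^*$.
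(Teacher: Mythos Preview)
Your proof is correct and follows essentially the same approach as the paper: refine $\fU$ by half-strict basic sets $\bigcap_i\{|f_i|<b_i\}\cap\bigcap_j\{|g_j|\ge a_j'\}$, rescale the functions so that the threshold becomes $\e$, and realize these sets as $\red_\e$-preimages of basic constructible subsets of $\Id(A_\e)$. Your explicit addition of $V_0=\Id(A_\e)\setminus\bigcup_k V_k$ to make $\fV$ a genuine cover of $\Id(A_\e)$ (with $\red_\e^{-1}(V_0)=\emptyset$) is a detail the paper's proof glosses over.
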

Assume Lemma~\ref{lem:subcoverAr}. Pick any finite open cover $\fU$ and choose an open cover $\fV$ as in the previous lemma. Then we have
$h(g,\fU) \le h(g_\e, \fV)$. By Corollary \ref{thmentropyfinitetype} we have $\htop(g_\e)=0$, which concludes the proof of Theorem \ref{thm:entropy-affinoid}.
\endproof

\begin{proof}[Proof of Lemma~\ref{lem:subcoverAr}]
To any choice of elements $f_i, g_j \in k^\circ\{T\}$, and $a_i,b_j \in k^*$
we attach the set:
\begin{equation}\label{eq:superform}
U(f<a , \,  g\ge b):=
\bigcap_{i=1}^m \left\{ |f_i |< |a_i|\right\}
\bigcap_{j=1}^n \left\{ |g_j | \ge |b_j|\right\}
~.\end{equation}
Choose any $\e = |c| \in |k^*|$ smaller than
$\min \{|a_i|, |b_j|\}$, and observe that $f'_i := \frac{c}{a_i}f_i$, $g'_j := \frac{c}{b_j}g_j$ all belong to $k^\circ \{T\}$.
It follows that we have $U(f<a , \,  g\ge b) = \red_\e^{-1}(V)$
where
$V$ is the open set of $\Id(A_\e)$ given by 
\[V:=
 \bigcap_{i=1}^m  \left\{\fA \in \Id(A_\e) , \,  \widetilde{f'_i} \in \fA\right\}
\bigcap_{j=1}^n \left\{\fA \in \Id(A_\e) , \,  \widetilde{g'_j} \notin \fA\right\}
~.\]
Now pick any finite open cover $\fU$ of $\bar{\D}^d (0,1)$. For each $x$, choose a neighborhood $U_x$
of the form~\eqref{eq:superform} which is contained in an element of $\fU$ (the existence of such a $U_x$ follows from \cite[Remarks 2.2.2]{berkovich}). 
By compactness of $\bar{\D}^d (0,1)$ we may extract a finite subcover $U_{x_1}, \ldots, U_{x_n}$ which refines $\fU$. 

By our previous argument, we may find $\e>0$ so small so that for each $i$ we have $U_{x_i}=  \red_\e^{-1}(V_i)$ for some open subset $V_i$ of $\Id(A_\e)$. 
This completes the proof.
\end{proof}


\section{Endomorphisms having good reduction}\label{sec:endo-goodred}
Our aim is to globalize the arguments of the previous section when a self-map of an affinoid domain is replaced by an endomorphism of 
a $k^\circ$-model of a projective variety. This will prove Theorem~\ref{thm:entropy good reduction}. 

To that end, we develop in \S\ref{sec:eps-red-aff} the theory of $\e$-reduction of any strictly affinoid domain defined
over an algebraically closed complete metrized field $(k,|\cdot|)$. We then extend this $\e$-reduction functor
to projective models $\fX$ over $k^\circ$ in \S\ref{sec:formal-aff}--\ref{sec:func}. 
The proof of Theorem~\ref{thm:entropy good reduction} is given in \S\ref{sec:proofB}. It is based on an argument of absolute Noetherian approximation, that we develop with some details.  

\subsection{$\e$-reduction of strictly affinoid domains} \label{sec:eps-red-aff}
Let $A$ be any strictly affinoid $k$-algebra. Recall the definition of the spectral norm $\rho(a) := \lim_n \|a^n \|^{1/n}$, and
that we set $A^\circ := \{\rho \le 1\}$. 
\begin{lemma}\label{lem:fg}
For any strictly affinoid $k$-algebra $A$ and for any $1\ge\e>0$, 
then \[A_\e := A^\circ/\{ \rho < \e \}\] is a finitely generated $k_\e$-algebra with unit. 
\end{lemma}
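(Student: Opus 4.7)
The plan is to reduce the problem to the case of a Tate algebra via a distinguished presentation, and then to appeal to the Grauert--Remmert finiteness theorem. Since every nilpotent element of $A$ has vanishing spectral radius, the nilradical is contained in $\{\rho<\e\}$, so $A_\e$ does not change when we pass to the reduction. I shall thus assume $A$ is reduced.

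Next I would choose a distinguished epimorphism $\phi\colon k\{T_1,\ldots,T_n\}\twoheadrightarrow A$ (see~\cite{BGR}). The Grauert--Remmert reduction theorem then states that $\phi$ sends $k^\circ\{T\}$ into $A^\circ$ and that $A^\circ$ is module-finite over $\phi(k^\circ\{T\})$: write $A^\circ=\sum_{i=1}^m\phi(k^\circ\{T\})\,e_i$ for suitable $e_i\in A^\circ$.

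The second key step is the identification
\[
k^\circ\{T\}\big/\fm_\e\, k^\circ\{T\}\;\cong\; k_\e[T_1,\ldots,T_n].
\]
For surjectivity, one splits $f=\sum_I a_I T^I\in k^\circ\{T\}$ into the finitely many ``large'' terms (those with $|a_I|\ge\e$, which form a polynomial in $k^\circ[T]$) and a tail $g$ whose coefficients all satisfy $|a_I|<\e$. Because $|a_I|\to 0$, the supremum $M:=\sup_I|a_I|$ in $g$ is attained and hence $M<\e$. Using that $|k^*|$ is dense in $\R^*_+$ (which holds as soon as $k$ is algebraically closed and non-trivially valued), one picks $c\in k^*$ with $M<|c|<\e$ and factorizes $g=c\cdot(\sum_I(a_I/c)T^I)$ with $a_I/c\in k^\circ$; hence $g\in\fm_\e\, k^\circ\{T\}$. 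Injectivity is straightforward, since a polynomial lying in $\fm_\e\,k^\circ\{T\}$ must have all its coefficients in $\fm_\e$.

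To conclude, I assemble the pieces. Boundedness of $\phi$ gives $\rho(\phi(f))\le\|f\|_{\mathrm{Gauss}}$, so $\phi(\fm_\e\, k^\circ\{T\})\subseteq\{\rho<\e\}$, which means the composition $k^\circ\{T\}\to A^\circ\twoheadrightarrow A_\e$ factors through $k_\e[T_1,\ldots,T_n]$. The image of $\phi(k^\circ\{T\})$ in $A_\e$ is therefore a quotient of $k_\e[T_1,\ldots,T_n]$, hence a finitely generated $k_\e$-algebra; combined with module-finiteness of $A^\circ$ over $\phi(k^\circ\{T\})$, this shows $A_\e$ is generated as a $k_\e$-algebra by the classes $\bar t_1,\ldots,\bar t_n,\bar e_1,\ldots,\bar e_m$. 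The main obstacle I expect is the identification of $k^\circ\{T\}/\fm_\e\, k^\circ\{T\}$ with $k_\e[T]$ when $\e\notin|k^*|$: the ideal $\fm_\e$ is then not finitely generated, and the density of the value group is precisely what makes this step succeed.
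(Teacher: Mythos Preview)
Your argument is correct and follows the same three-step outline as the paper (reduce to $A$ reduced; compute the Tate case $k^\circ\{T\}/\{\|\cdot\|<\e\}\cong k_\e[T]$; transport through a presentation). The difference is in how the presentation step is exploited. You select a distinguished epimorphism $\phi$ but then never actually use the distinguished property: instead you appeal to Grauert--Remmert/BGR~6.3.5 to get that $A^\circ$ is module-finite over $\phi(k^\circ\{T\})$, and conclude that $A_\e$ is module-finite over (a quotient of) $k_\e[T]$. The paper's route is shorter: once $\phi$ is distinguished, the residual norm equals $\rho$, and since for stable $k$ the residue norm from a Tate algebra is attained, one has $\phi(k^\circ\{T\})=A^\circ$ outright. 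Hence $k_\e[T]\to A_\e$ is \emph{surjective} and $A_\e$ is literally a quotient of a polynomial ring over $k_\e$---no auxiliary module generators $e_i$ are needed. What your approach buys is that it would go through for any bounded epimorphism (not only a distinguished one), at the price of invoking the heavier finiteness theorem; what the paper's approach buys is a one-line conclusion directly from the meaning of ``distinguished''. Your treatment of the identification $k^\circ\{T\}/\fm_\e\, k^\circ\{T\}\cong k_\e[T]$ via density of $|k^*|$ is exactly the right way to handle the non-principal ideal $\fm_\e$, and makes explicit what the paper leaves implicit.
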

\begin{proof}
When $ A= k\{T\}$ is the Tate algebra, then $A_\e =k^{\circ}\{T\}/\fm_\e\{T\}= k_\e[T]$ so that the result is clear. 
When $A$ is reduced, since $k$ is algebraically closed, then it follows from~\cite[\S 6.4.3]{BGR} that there exists an epimorphism
$\alpha\colon k\{T\} \to A$ such that $\rho$ is equal to the residual norm. 
We get a surjective morphism $k_\e[T] \to A_\e$ and $A_\e$ is finitely generated over $k_\e$.
Finally, when $A$ is not reduced, we conclude by observing that $A_\e = B_\e$ where 
$B$ is the quotient of $A$ by its nilradical. 
\end{proof}
\begin{remark}
The previous proof works for any stable fields in the sense of~\cite[\S 3.6]{BGR}. 
When $\e=1$, then it is known that $\tilde A := A_1$ is finitely generated over $\tilde k$
without any assumption on $k$. It remains open whether $A_\e$  is finitely generated over $k_\e$
when $k$ is not necessarily stable. 
\end{remark}
Pick any multiplicative 
semi-norm $x \in \cM(A)$, and set $\fA_\e(x):= \{a\in A^\circ  , \,  |a(x)| < \e\}$ so that $\fA_\e(x) \cap k^\circ = \fm_\e$. 
Since $\fA_\e(x) \supset \{\rho < \e\}$, the ideal $\fA_\e(x)$ projects onto an ideal  $\tilde \fA_\e(x)$ of $A_\e$ 
which satisfies  $\tilde \fA_\e(x) \cap k_\e = (0)$. 
We get a canonical reduction map $\red_\e \colon \cM(A) \to \Id(A_\e)$ sending $x$
to  $\tilde \fA_\e(x)$.

Recall from \S\ref{sec:ideal-space-commutative} that the Zariski topology on  $\Id(A_\e)$ is generated by open sets of the form:
\[U:= \{\fA\in \Id(A_\e) , \,  a\not\in \fA\}~,\]
so that  $\red_\e^{-1}(U) = \{ x\in \cM(A) , \,  |a(x)| \ge \e\}$ is closed. 
It follows that $\red_\e$ is anti-continuous from the affinoid space $\cM(A)$ to  $(\Id(A_\e), \tau^{\zar})$.

\begin{remark}
When $\e=1$, the image of the reduction map $\red_1(A)$ is precisely the spectrum of $\tilde A$, see~\cite[7.1.5/4]{BGR},~\cite[\S 2.4]{berkovich}.
The description of $\red_\e (A)$ is unclear when $\e <1$. In general $\red_\e(A)$ is not closed in $\Id(A_\e)$
endowed either with the Zariski or the constructible topology. 
\end{remark}

\begin{remark}
It is instructive to compute the $\e$-reduction of $\bar\D^1(0,1)=\cM(k\{T\})$. Given any $a\in k^\circ$ and any $r\in[0,1]$, 
denote by $\zeta(a,r)$ the point in  $\bar\D^1(0,1)$ defined by $|P(\zeta(a,r))|= \sup_{\bar B(a,r)} |P|$. 

Direct computations show that $\red_\e(\zeta(a,r))$  is the prime ideal  in $k_\e[T]$ generated by $(T-a)$ if $r<\e$; whereas
for any $r\ge \e$, $\red_\e(\zeta(a,r))$ is not a prime ideal. Indeed, one has 
 $\red_\e(\zeta(a,r))= \{ \sum_i a_i (T-a)^i, \, a_0=0, |a_1| r < \e\}$.
Also, we have $\red_\e(\zeta(a,r))= \red_\e(\zeta(\alpha,\rho))$ iff  either $\zeta(a,r)=\zeta(\alpha,\rho)$, or $r, \rho < \e$ and $|a-\alpha| < \e$.
 
Observe that these computations imply that $\red_\e(\cM(k\{T\}))$ is \emph{not} a closed subset of $\Id(k_\e[T])$ (at least when $k$ is spherically complete).
\end{remark}

The $\e$-reduction map is functorial. For any bounded homomorphism of $k$-algebras $f^*\colon A \to B$ inducing an analytic map $f\colon \cM(B) \to \cM(A)$, 
we have $\rho(f(a))\le \rho(a)$ hence we get a canonical map $f_\e \colon \Id(B_\e) \to \Id(A_\e)$ making the following diagram commutative:
\begin{equation}\label{eq:comm-red-eps}
\xymatrix{
\cM(B)  \ar[r]^{f} \ar[d]_{\red_\e} & \cM(A)  \ar[d]^{\red_\e}
  \\
 \Id(B_\e)  \ar[r]^{f_\e} & \Id(A_\e) 
}
\end{equation}

The map $f_\e$ is continuous for both the Zariski and constructible topologies.
\smallskip

Let us finish this section by the following generalization of Lemma \ref{lem:subcoverAr}.
\begin{lemma}\label{lem:subcoverAra}
Let $\fU$ be a finite open cover of $\cM(A)$. Then for every sufficiently small $\e\in |k^*|$, there is
 a finite constructible cover $\fV$ of $(\Id(A_\e),\tau^{\con})$ such that 
$\{\red_\e^{-1}(U) , \,  U\in \fV\}$ refines $\fU$.
\end{lemma}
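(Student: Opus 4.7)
The plan is to mimic the proof of Lemma~\ref{lem:subcoverAr} with the Tate algebra $k\{T\}$ replaced by a general strictly $k$-affinoid algebra $A$. The essential input is that the Berkovich topology on $\cM(A)$ admits a basis of Laurent domains of the form
\[U(f < a,\, g \ge b) := \bigcap_{i=1}^m \{|f_i| < |a_i|\} \cap \bigcap_{j=1}^n \{|g_j| \ge |b_j|\},\]
with $f_i, g_j \in A$ and $a_i, b_j \in k^*$ (see \cite[Remarks~2.2.2]{berkovich}). Observe that up to a preliminary rescaling --- replacing $f_i$ by $f_i/\alpha_i$ and $a_i$ by $a_i/\alpha_i$ for some $\alpha_i \in k^*$ with $|\alpha_i| \ge \rho(f_i)$, and similarly for $g_j$ --- one may always assume that the defining functions lie in $A^\circ$.

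Given the finite open cover $\fU$, I first select for each $x \in \cM(A)$ a Laurent neighborhood $U_x$ of $x$ described as above by functions in $A^\circ$ and scalars in $k^*$, and contained in some element of $\fU$. Compactness of $\cM(A)$ then yields a finite subcover $U_{x_1}, \ldots, U_{x_n}$; let $S \subset k^*$ be the finite set of scalars appearing in the descriptions of these $U_{x_l}$. I next choose $\e = |c| \in |k^*|$ no larger than every element of $\{|s| : s \in S\}$. For any $h \in A^\circ$ and any $s \in S$ associated to $h$, the rescaled function $h' := (c/s) h$ then satisfies $\rho(h') = (|c|/|s|)\,\rho(h) \le 1$, so $h' \in A^\circ$, and one has the identities
\begin{align*}
\{|h| < |s|\} &= \{|h'| < \e\} = \red_\e^{-1}\bigl(\{\fA \in \Id(A_\e) : \widetilde{h'} \in \fA\}\bigr), \\
\{|h| \ge |s|\} &= \{|h'| \ge \e\} = \red_\e^{-1}\bigl(\{\fA \in \Id(A_\e) : \widetilde{h'} \notin \fA\}\bigr).
\end{align*}
Combining these with the description of the $U_{x_l}$ gives, for every $l$, an identity $U_{x_l} = \red_\e^{-1}(V_l)$ for an explicit, constructible, $\tau^{\con}$-clopen set $V_l \subseteq \Id(A_\e)$.

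Finally, I set $V_0 := \Id(A_\e) \setminus \bigcup_{l=1}^n V_l$, which remains constructible and $\tau^{\con}$-clopen as a finite Boolean combination of basic constructible sets. Then $\fV := \{V_0, V_1, \ldots, V_n\}$ is a finite constructible cover of $(\Id(A_\e), \tau^{\con})$: the preimage $\red_\e^{-1}(V_0) \subseteq \cM(A) \setminus \bigcup_l U_{x_l}$ is empty, hence trivially contained in any element of $\fU$, while each $\red_\e^{-1}(V_l) = U_{x_l}$ lies in some element of $\fU$ by construction. Thus $\{\red_\e^{-1}(V) : V \in \fV\}$ refines $\fU$, as required. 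The only mild subtlety, beyond the Tate-algebra case, is that arbitrary defining functions of Laurent domains need not lie in $A^\circ$ a priori; this is handled by the preliminary rescaling, and no real obstacle is anticipated.
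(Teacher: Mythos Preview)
Your argument is correct and essentially reproduces the proof of Lemma~\ref{lem:subcoverAr} in the general strictly affinoid setting, with two extra ingredients you handle cleanly: the preliminary rescaling to force $f_i,g_j\in A^\circ$, and the addition of the complementary set $V_0$ to complete $\fV$ to an actual cover of $\Id(A_\e)$ (a point that is left implicit in the paper's treatment of Lemma~\ref{lem:subcoverAr}).

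The paper takes a different route. Rather than working intrinsically with $A$, it chooses a distinguished epimorphism $\alpha^*\colon k\{T\}\to A$ so that the spectral norm on $A$ is the residue norm, obtains a closed embedding $\alpha\colon\cM(A)\hookrightarrow\cM(k\{T\})$ and a surjection $k_\e[T]\to A_\e$, extends $\fU$ to a cover $\fU'$ of the ambient polydisc, and then invokes the already-proved Lemma~\ref{lem:subcoverAr} to obtain a constructible cover $\fV'$ of $\Id(k_\e[T])$; the desired cover is $\fV=\alpha_\e^{-1}(\fV')$. This is more economical (it reuses the Tate-algebra case verbatim) and automatically produces a cover of $\Id(A_\e)$ without needing a complementary $V_0$, but it requires the structural input that a reduced strictly affinoid algebra over an algebraically closed field admits a distinguished presentation. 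Your direct approach avoids that input and is entirely self-contained; either argument is perfectly adequate here.
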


\begin{proof}
As in the proof of Lemma~\ref{lem:fg}, we may suppose that $A$ is reduced and choose an epimorphism $\alpha^*\colon k\{T\} \to A$ 
such that the spectral norm on $A$ is equal to the residual norm. This induces a closed embedding $\alpha\colon \cM(A)\to \cM(k\{T\})$,
and an epimorphism $k_\e[T] \to A_\e\to0$ of $k_\e$-algebras. 
For each $U\in \fU$ choose an open set $U'$ of $\cM(k\{T\})$ such that 
$U'\cap \cM(A) = U$, and consider the open cover $\fU':=\{U'\}\cup \{\cM(k\{T\})\setminus \cM(A)\}$
of $\cM(k\{T\})$.
By Lemma \ref{lem:subcoverAr}, for  sufficiently small $\e\in |k^*|$,  there exists a finite constructible cover $\fV'$ of 
$(\Id(k_\e[T]),\tau^{\con})$ such that 
$\{\red_\e^{-1}(U) , \,  U\in \fV'\}$ refines $\fU'$.
Since $\alpha^{-1}(\fU') = \fU$ by construction, it follows from~\eqref{eq:comm-red-eps} that 
the cover $\{\red_\e^{-1}(\alpha_\e^{-1}(U)) , \,  U\in \fV'\}$ refines $\fU$. We conclude the proof
by setting $\fV:= \alpha_\e^{-1}(\fV')$.
\end{proof}

\subsection{Primary ideals} \label{sec:red-to-primary}
There is a technical difficulty in globalizing the $\e$-reduction using the sets $\Id(A_\e)$ as above.
We refer to Remark~\ref{rem:technical} below for a discussion on this problem.
In order to properly globalize the $\e$-reduction and to develop smoothly our descent argument,
we will work instead with the set of primary ideals in $A_\e$.
In this section, we discuss briefly some general properties of this set. 

Recall that an ideal $I$ is primary if $fg\in I$ and $f\notin I$ implies $g^n\in I$ for some $n\ge1$. 
Any prime ideal is primary, and the radical of a primary ideal is prime. 

\begin{definition}
Let $R$ be any commutative ring with unit. Then we let ${\rm P}(R)$ be the set of all primary ideals
in $\Id(R)$ and  $\prim(R)$ be its closure (for the constructible topology) 
in $\Id(R)$. 
\end{definition}

The set of primary ideals is a countable intersection of clopen sets. As the examples below show, it is neither closed, nor open, nor dense in general. 
\begin{example}
Let $k$ be an algebraically closed field, and set $R= k[x,y]$.

Then $I =(xy)$ is not a primary ideal, but
the sequence of primary ideals $(x^n,y^n,xy)$ converges to $I$
in $\Id(R)$. This shows that the set of primary ideals is not closed in general. 

Take $I' = (x)$. Then a basis for open neighbourhoods of $I'$ are those of the form $U=\{ J\in \Id(R), f_i \in J, g_j \notin J\}$ (where $f_i$ divides $x$). Each such $U$
contains two maximal ideals $J_1$ and $J_2$ for which the product $J_1\cdot J_2$ is not primary
and belongs to $U$. This proves the set of primary ideals is not open in general.

Consider now $I'' =(xy(x+y-1))$. Then the set $\{J \in \Id(R), xy \notin J, y(x+y-1)\notin J, x(x+y-1)\notin J, xy(x+y-1)\in J\}$
is an open neighborhood of $I''$ but contains no primary ideals. Indeed if $J$ would be a primary ideal lying in this set then we could find
an integer $n>0$ such that $x^n, y^n$ and $(x+y-1)^n$ belong to $J$, a contradiction. 
This shows that the set of primary ideals is not dense in general. 
\end{example}

Let $f$ be any non-nilpotent element in $R$.
Recall that $R[f^{-1}] = R[T]/(1-Tf)$ is an overring of $R$ so that we have two natural maps
$\res \colon \Id(R[f^{-1}]) \to  \Id(R)$ and $\ext \colon \Id(R) \to  \Id^+(R[f^{-1}])$
defined by $\res(I) = I \cap R$ and $\ext(I) = I\cdot R[f^{-1}]$ respectively.  
Beware that if $f^n\in I$ for some $I\in \Id(R)$ then  $\ext(I)=R$.

\begin{lemma} \label{lem:res-prim}
The map $\res\colon \Id(R[f^{-1}]) \to  \Id(R)$
is injective and its image consists of those
ideals $I\in \Id(R)$
such that if $gf^n\in I$ for some $g\in R$ and some $n\ge0$, then $g\in I$. 
\end{lemma}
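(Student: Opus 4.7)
The plan is to identify $R[f^{-1}]$ with the localization $S^{-1}R$ for the multiplicative set $S=\{1,f,f^2,\ldots\}$, and then recover the assertion from the standard bijection between ideals of a localization and saturated ideals of the base ring. Note $S$ contains no zero-divisor only if $f$ is not nilpotent; when $f$ is nilpotent $R[f^{-1}]=0$ and $\Id(R[f^{-1}])=\emptyset$, so the statement is vacuous. So assume $f$ is non-nilpotent throughout.

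For injectivity of $\res$, I would observe that any element of $R[f^{-1}]$ can be written as $g/f^n$ for some $g\in R$, $n\ge0$. If $J\in\Id(R[f^{-1}])$ and $g/f^n\in J$, then multiplying by the unit $f^n\in R[f^{-1}]$ gives $g\in J$, hence $g\in J\cap R=\res(J)$. Conversely any $g\in\res(J)$ produces $g/f^n\in J$. Thus $J=\res(J)\cdot R[f^{-1}]$ is fully determined by $\res(J)$, which proves injectivity.

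For the image, fix $J\in\Id(R[f^{-1}])$ and set $I:=\res(J)$. If $gf^n\in I$, then $gf^n\in J$, and multiplying by $f^{-n}\in R[f^{-1}]$ yields $g\in J\cap R=I$; so $I$ satisfies the stated saturation property. Conversely, given $I\in\Id(R)$ with this property, let $J:=I\cdot R[f^{-1}]$, the extension of $I$ to $R[f^{-1}]$; concretely $J=\{g/f^n:g\in I,\ n\ge0\}$. I would first check that $J$ is proper: if $1\in J$, then $1=g/f^n$ for some $g\in I$, so $f^n=g\in I$, and the saturation property applied to $1\cdot f^n\in I$ forces $1\in I$, contradicting $I\in\Id(R)$. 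Hence $J\in\Id(R[f^{-1}])$. Finally I would verify $\res(J)=I$: the inclusion $I\subseteq J\cap R$ is immediate, while for $h\in J\cap R$ we can write $h=g/f^n$ with $g\in I$, so $hf^n=g\in I$, and saturation gives $h\in I$. This shows $I$ lies in the image of $\res$, completing the proof.

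There is no real obstacle here; the lemma is a direct translation of the localization–saturation correspondence (the subtle point, and the reason we assume $f$ non-nilpotent, is only to guarantee that $R[f^{-1}]\neq 0$ so that $\Id(R[f^{-1}])$ is non-empty and that the passage back and forth preserves properness).
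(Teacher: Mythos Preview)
Your proof is correct and follows essentially the same route as the paper's: both establish the identity $\ext\circ\res=\id$ on $\Id(R[f^{-1}])$ to get injectivity, and both verify the image description by checking that an ideal $I$ with the saturation property satisfies $\res(\ext(I))=I$ (with $\ext(I)$ proper). The only cosmetic difference is the order of presentation---you prove injectivity first, the paper proves the image characterization first---and your explicit check that $\ext(I)$ is proper is a nice touch; note that in both arguments the step ``$h=g/f^n$ with $g\in I$ gives $hf^n\in I$'' tacitly uses that equality in the localization only determines $hf^{n+m}=gf^m$ for some $m\ge0$, but the saturation hypothesis absorbs this extra power of $f$ without difficulty.
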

\begin{proof}
We first prove the second statement. 
Suppose $I = J\cap R$ for some ideal $J$ of $R[f^{-1}]$.
If $gf^n\in I$ for some $g\in R$ and some $n\ge0$, then $g\in J$
since $f$ is invertible in $R[f^{-1}]$. It follows that $g\in I$. 

Conversely suppose $I$ is an ideal in $R$ such that 
$gf^n\in I$ for some $g\in R$ and some $n\ge0$ implies $g\in R$.
We shall prove that $\ext(I) \cap R = I$.
It is clear that $\ext(I) \cap R \supset I$. So pick  $h\in \ext(I) \cap R$ and write $h=g/f^n$ with $g\in I$. 
Then $hf^n \in I$ hence $h\in I$. 

The former arguments imply $\ext \circ \res(J) = J$ for any $J\in R[f^{-1}]$. This implies $\res$ to be injective.
\end{proof}

Observe that $\res(I)$ is a prime (resp. primary) ideal whenever $I$ is prime (resp. primary).
\begin{lemma}\label{lem:car-prim}
A primary ideal $I\subset R$ belongs to 
$\res(\Id(R[f^{-1}]))$ if and only if
$f^n\notin I$ 
for all $n\ge0$.
\end{lemma}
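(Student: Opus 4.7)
The plan is to combine the characterization of $\res(\Id(R[f^{-1}]))$ supplied by Lemma~\ref{lem:res-prim} with the defining property of a primary ideal; both directions reduce to short logical manipulations.

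For the forward implication, I would argue by contradiction: if $I = J \cap R$ for some proper ideal $J \in \Id(R[f^{-1}])$, and if $f^n \in I$ for some $n \ge 0$, then $f^n \in J$; but $f$ is a unit in $R[f^{-1}]$, so multiplying by $f^{-n}$ shows $1 \in J$, contradicting properness. Hence $f^n \notin I$ for every $n \ge 0$.

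For the reverse implication, suppose $I$ is a primary ideal such that $f^n \notin I$ for all $n \ge 0$. By Lemma~\ref{lem:res-prim}, it suffices to verify the implication ``$gf^n \in I \Rightarrow g \in I$'' for every $g \in R$ and every $n \ge 0$. So assume $gf^n \in I$. If $g \notin I$, then the primary hypothesis applied to the product $g \cdot f^n \in I$ forces $(f^n)^m = f^{nm} \in I$ for some $m \ge 1$, which contradicts our standing hypothesis. Therefore $g \in I$, as required, and $I$ lies in $\res(\Id(R[f^{-1}]))$.

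There is no genuine obstacle here; the only subtlety is that Lemma~\ref{lem:res-prim} reduces membership in $\res(\Id(R[f^{-1}]))$ to a saturation condition with respect to the multiplicative system $\{f^n\}$, and the primary property of $I$ is precisely what is needed to upgrade ``$gf^n \in I$'' to ``$g \in I$'' under the assumption that $f$ itself is not nilpotent modulo $I$.
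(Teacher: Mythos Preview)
Your proof is correct and follows essentially the same route as the paper's: both directions invoke Lemma~\ref{lem:res-prim} together with the primary condition, and your forward implication (arguing directly that $f$ is a unit in $R[f^{-1}]$) is a minor variant of the paper's (which applies the saturation characterization with $g=1$).
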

\begin{proof}
Suppose $I = J \cap R$ for some $J\in \Id(R[f^{-1}])$, and $f^n$ belongs to $I$ for some $n>0$.
Then the previous lemma implies $1\in I$, a contradiction. 

Conversely, suppose $I$ is primary
and $f^n\notin I$ 
for all $n\ge0$. Suppose that 
 $gf^n\in I$ for some $g\in R$ and some $n\ge0$.
 If $g\notin I$, then $f^{nm}\in I$ for some $m$
 since $I$ is primary, a contradiction. Hence $g\in I$, and
 the previous lemma implies 
$I = J \cap R$ for some $J\in \Id(R[f^{-1}])$.
\end{proof}

\begin{proposition}\label{prop:cover-ideal}
For any commutative ring $R$ with unit, and any non-nilpotent element $f\in R$, 
$\res\colon\prim(R[f^{-1}]) \to \prim(R)$ induces a homeomorphism onto its image
which is compact in $\prim(R)$.

Moreover, for any finite family of non-nilpotent elements 
$f_1, \cdots, f_m \in R$ generating the unit ideal, 
the set $\{ \res (\prim(R[f_i^{-1}])\}$ forms a finite compact cover of $\prim(R)$.
\end{proposition}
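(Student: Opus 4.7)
The plan splits into two steps mirroring the two sentences of the statement.

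First, I handle the homeomorphism onto image. The space $\prim(R[f^{-1}])$ is closed in $(\Id(R[f^{-1}]), \tau^{\con})$, which is compact (being closed in $\{0,1\}^{R[f^{-1}]}$ with the product topology), hence $\prim(R[f^{-1}])$ is compact. The map $\res$ is continuous in the constructible topology: identifying $\Id(R)$ and $\Id(R[f^{-1}])$ with subsets of $\{0,1\}^R$ and $\{0,1\}^{R[f^{-1}]}$ via characteristic functions, $\fA \mapsto \fA \cap R$ is simply the projection onto the factors indexed by $R \hookrightarrow R[f^{-1}]$, which is continuous. Injectivity of $\res$ on all of $\Id(R[f^{-1}])$ is given by Lemma~\ref{lem:res-prim}, so a fortiori on $\prim(R[f^{-1}])$. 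Since the contraction of a primary ideal along a ring morphism is primary, $\res(\mathrm{P}(R[f^{-1}])) \subset \mathrm{P}(R)$. A continuous map from a compact space to a Hausdorff space is closed and satisfies $\varphi(\overline{A}) = \overline{\varphi(A)}$; applying this to $\res$ on $\overline{\mathrm{P}(R[f^{-1}])}=\prim(R[f^{-1}])$ yields $\res(\prim(R[f^{-1}])) = \overline{\res(\mathrm{P}(R[f^{-1}]))} \subset \overline{\mathrm{P}(R)} = \prim(R)$, and the image is compact. A continuous injection from a compact space to a Hausdorff space is automatically a homeomorphism onto its image, which completes this first step.

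For the covering property, set $C_i := \res(\prim(R[f_i^{-1}]))$; each $C_i$ is compact, hence closed in $\prim(R)$. I first show $\mathrm{P}(R) \subset \bigcup_i C_i$. Let $I$ be a primary ideal of $R$; if for every $i$ some power $f_i^{n_i}$ belonged to $I$, then writing a B\'ezout relation $1 = \sum a_i f_i$ and raising it to a sufficiently high power $N$ (e.g.\ $N \ge m \max_i n_i$) expresses $1$ as a sum of terms each divisible by some $f_j^{n_j}$, forcing $1 \in I$ and contradicting the properness of $I$. Hence there is an index $i$ with $f_i^n \notin I$ for all $n$; by Lemma~\ref{lem:car-prim} we have $I = \res(J)$ for some $J \in \Id(R[f_i^{-1}])$, and by the proof of Lemma~\ref{lem:res-prim} one can take $J = \ext(I) = I \cdot R[f_i^{-1}]$. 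Since $f_i \notin \sqrt{I}$, the standard behaviour of primary ideals under localization away from their associated prime shows that $J$ is itself primary in $R[f_i^{-1}]$, so $I \in C_i$. Since $\bigcup_i C_i$ is a finite union of closed subsets of $\prim(R)$ and contains the dense subset $\mathrm{P}(R)$, we conclude $\bigcup_i C_i = \prim(R)$.

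The main technical obstacle is handling the non-primary elements of $\prim(R)$: for such an $I$ there is generally no index $i$ for which $I$ is saturated with respect to $f_i$ (cf.\ the example $I=(xy)$, $f=y$ discussed after the definition of $\prim$), so we cannot directly lift $I$ to a primary ideal of $R[f_i^{-1}]$ in the same way as for primary ideals. The plan above circumvents this by first proving the pointwise inclusion $\mathrm{P}(R) \subset \bigcup_i C_i$ and then invoking the closedness of $\bigcup_i C_i$ together with the density of $\mathrm{P}(R)$ in $\prim(R)$. The second key ingredient, the closed-map property of continuous maps into Hausdorff spaces, is what ultimately guarantees both that $\res(\prim(R[f^{-1}]))$ lands inside $\prim(R)$ and that $\res$ is a topological embedding.
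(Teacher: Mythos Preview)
Your proof is correct and follows essentially the same approach as the paper's: injectivity and continuity of $\res$ plus compactness of $\prim(R[f^{-1}])$ for the first part, and the B\'ezout argument on powers of the $f_i$ for the second. You are in fact more careful than the paper on two points it leaves implicit: you spell out why $\res(\prim(R[f^{-1}]))$ lands in $\prim(R)$ via the closed-map property, and you explicitly extend the covering from $\mathrm{P}(R)$ to its closure $\prim(R)$ using density and closedness of $\bigcup_i C_i$, whereas the paper only treats primary ideals and tacitly invokes the same closure step.
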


\begin{proof}
By Lemma~\ref{lem:res-prim}, the map $\res$ is injective. Since it is also continuous (for the constructible topology), 
the restriction of $\res$ to any compact subset of $\Id(R[f^{-1}])$ induces a homeomorphism onto its image. 
By construction, $\prim(R[f^{-1}])$ is compact in  $\Id(R[f^{-1}])$, and the first statement follows.

We now prove the second statement. Suppose
$f_1, \cdots, f_m$ generate the unit ideal, and pick any primary ideal $I$. 
Suppose that $I$ does not belong to the union of $\{ \res (\prim(R[f_i^{-1}])\}$. Then, by the previous lemma,  for each index $i$, we have $f_i^{n_i} \in I$ for some integer $n_i$.
Set $n = \max n_i$. Then $f^n_1, \cdots, f^n_m\in I$, but it is  also easy to show that they generate the unit ideal.  
This is a contradiction. 
\end{proof}

\begin{remark}\label{rem:cover-prim}
The previous result is not true when $\prim(R)$ is replaced by $\Id(R)$. Indeed, in general 
$\{ \Id(R[f_i^{-1}])\}$ does not form a cover of $\Id(R)$ if $(f_1, \cdots, f_n)$ generates the unit ideal
(take $R=\Z$, $f_1= 2$ and $f_2=3$ for instance).
\end{remark}

\begin{remark}
To simplify notation, in the sequel we shall always identify $\prim (R[f^{-1}])$ with its image inside $\prim(R)$.
\end{remark}

We conclude this section with the following observations. 

\begin{lemma}\label{lem:primary}
For any strictly affinoid algebra $A$ and any $\e>0$, 
we have $\red_\e(\cM(A)) \subset {\rm P}(A_\e)$.
\end{lemma}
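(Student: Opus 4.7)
The strategy is to unwind the definitions and exploit the multiplicativity of $x$ together with the basic bound $|a(x)| \le \rho(a)$ for every $a \in A$. Concretely, given $x \in \cM(A)$, I have to show that $\tilde{\fA}_\e(x) \subset A_\e$ is a primary (in particular proper) ideal. Since $|a(x)| \le \rho(a) < \e$ whenever $\rho(a) < \e$, the inclusion $\{\rho < \e\} \subset \fA_\e(x)$ holds, so $\tilde{\fA}_\e(x)$ is well defined and coincides with the image of $\fA_\e(x)$ under the projection $A^\circ \to A_\e$. Properness is immediate: as $|1(x)| = 1 \ge \e$, one has $1 \notin \fA_\e(x)$.

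For the primary condition, suppose $\bar f, \bar g \in A_\e$ satisfy $\bar f \bar g \in \tilde{\fA}_\e(x)$ and $\bar f \notin \tilde{\fA}_\e(x)$. Choose lifts $f, g \in A^\circ$. Because $\fA_\e(x) \supset \{\rho < \e\}$, the first condition pulls back to $fg \in \fA_\e(x)$, i.e., $|(fg)(x)| < \e$, while the second gives $f \notin \fA_\e(x)$, i.e., $|f(x)| \ge \e$. Since $f \in A^\circ$ we also have $|f(x)| \le \rho(f) \le 1$, so $\e \le |f(x)| \le 1$. Multiplicativity of the seminorm $x$ then yields
\[
|g(x)| \;=\; \frac{|(fg)(x)|}{|f(x)|} \;<\; \frac{\e}{|f(x)|} \;\le\; 1.
\]

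Now the fact that $|g(x)| < 1$ forces $|g^n(x)| = |g(x)|^n \to 0$, so for $n$ large enough $|g^n(x)| < \e$, i.e., $g^n \in \fA_\e(x)$. Passing to the quotient gives $\bar g^n \in \tilde{\fA}_\e(x)$, which is precisely the primary property. I do not anticipate any serious obstacle: the argument is really just the observation that in a non-Archimedean multiplicative seminorm, an element whose product with a ``large'' element (norm $\ge \e$) has ``small'' value ($<\e$) must itself have norm strictly less than $1$, and any such element is eventually pushed below $\e$ by iteration.
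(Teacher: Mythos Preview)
Your proof is correct and follows essentially the same approach as the paper's own proof: both reduce to the observation that $|fg(x)|<\e$ and $|f(x)|\ge\e$ force $|g(x)|<1$ by multiplicativity, and then iterate to get $|g^n(x)|<\e$. Your version is slightly more detailed (you explicitly verify properness and handle the passage between $A^\circ$ and $A_\e$ via lifts), but the argument is identical in substance.
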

\begin{proof}
Pick any $x\in \cM(A)$. Then $\red_\e(x) = \{ f\in A_\e, |f(x)|<\e \}$ is a primary ideal. 
Indeed suppose $fg \in \red_\e(x)$ and $f \notin \red_\e(x)$. Then 
$ |fg(x)|<\e$ and $ |f(x)|\ge \e$, hence $ |g(x)|<1$. Hence we can find $n>0$ such that 
$ |g^n(x)|<\e$, and $g^n\in  \red_\e(x)$.
\end{proof}

To simplify notation, write 
$\bar{R}_\e(A)$ for the closure (in $\prim(A_\e)$) of
the set $\red_\e(\cM(A))$. 

\begin{proposition}\label{prop:clos-primary}
Let $A$ be any strictly affinoid algebra $A$. 
For any $\e\in |k^*|$, and any $f\in A^\circ$, 
we have 
$\bar{R}_\e(A\langle f^{-1}\rangle)=
\bar{R}_\e(A)\cap \prim(A_\e[f^{-1}])$, and this set is a clopen subset of 
$\bar{R}_\e(A)$.
\end{proposition}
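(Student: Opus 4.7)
Write $B := A\langle f^{-1}\rangle$.

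The first step is to identify $B_\e$ with the algebraic localization $A_\e[f^{-1}]$. The bounded morphism $A\to B$ induces a $k_\e$-algebra map $\varphi : A_\e \to B_\e$ that sends $f$ to an invertible element (with inverse the image of $f^{-1}\in B^\circ$); by the universal property $\varphi$ factors through $A_\e[f^{-1}]\xrightarrow{\psi}B_\e$. Surjectivity of $\psi$ follows by expanding any $g\in B^\circ$ as a convergent Laurent series $g=\sum_{i\ge 0}a_i f^{-i}$ with $\rho_A(a_i)\to 0$: only finitely many $a_i$ satisfy $\rho_A(a_i)\ge\e$, so modulo $\{\rho_B<\e\}$, $g$ is represented by a Laurent polynomial in $A_\e[f^{-1}]$. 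Injectivity is a spectral-norm comparison: for $a\in A$ with $\rho_B(a)<\e$, one checks that $\rho_A(f^m a)<\e$ for $m$ large, since on $\{|f|=1\}$ one has $|f^m a|=|a|<\e$, and on $\{|f|<1\}$ the factor $|f(x)|^m$ eventually drives $|f^m a|$ below $\e$. Combined with the $\res$-identification of Proposition~\ref{prop:cover-ideal}, $\psi$ identifies $\prim(B_\e)$ with the compact subset $\prim(A_\e[f^{-1}])\subseteq\prim(A_\e)$.

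Next, the pointwise compatibility: for any $x\in\cM(A)$, the radical $\sqrt{\red_\e^A(x)}$ equals the prime ideal $\{a\in A_\e:|a(x)|<1\}$, so the primary ideal $\red_\e^A(x)$ (see Lemma~\ref{lem:primary}) lies in $\prim(A_\e[f^{-1}])$ iff $f\notin\sqrt{\red_\e^A(x)}$, iff $|f(x)|=1$ (using $f\in A^\circ$), iff $x\in\cM(B)$; and in that case $\red_\e^B(x)$ corresponds to $\red_\e^A(x)$ under the above identification. Hence
\[\red_\e^A(\cM(A))\cap\prim(A_\e[f^{-1}])=\red_\e^B(\cM(B)),\]
and taking closures, using that $\prim(A_\e[f^{-1}])$ is compact and hence closed in $\prim(A_\e)$ by Proposition~\ref{prop:cover-ideal}, gives the inclusion $\bar{R}_\e(B)\subseteq\bar{R}_\e(A)\cap\prim(A_\e[f^{-1}])$ immediately.

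The reverse inclusion is the main technical difficulty. Given $I$ in the right-hand side, pick a net $\red_\e^A(x_\alpha)\to I$; for each $n\ge 1$, the condition $f^n\notin I$ forces $|f(x_\alpha)|\ge\e^{1/n}$ eventually. I would use density of rigid points in $\cM(A)$ (valid under the standing hypotheses of \S\ref{sec:eps-red-aff} that $k$ is algebraically closed with $|k^*|$ dense), noting that for any rigid point $x$, $\red_\e^A(x)\in\prim(A_\e[f^{-1}])$ forces $|f(x)|=1$ exactly, so $x\in\cM(B)$. For a basic clopen neighborhood $U$ of $I$ cut out by finitely many conditions $|g_i|<\e$, $|h_j|\ge\e$, I would work inside the strictly affinoid Laurent domain $V_N:=\{x\in\cM(A):|f(x)|\ge\e^{1/N}\}$ (strict since $\e\in|k^*|$), which contains $x_\alpha$ for large $\alpha$; by the maximum modulus principle over the algebraically closed strictly affinoid $V_N$, the supremum of $|f|$ (which is $1$) is attained at a rigid point on the boundary $\{|f|=1\}=\cM(B)$, and a careful perturbation preserves the $g_i,h_j$-conditions. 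This produces a rigid $y\in\cM(B)$ with $\red_\e^A(y)\in U$, exhibiting $I$ as a limit of reductions of points in $\cM(B)$.

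For the clopenness statement: closedness of $\bar{R}_\e(B)$ inside $\bar{R}_\e(A)$ is immediate from that of $\prim(A_\e[f^{-1}])$. Openness is handled symmetrically: the complement $\bar{R}_\e(A)\setminus\bar{R}_\e(B)$ equals $\{I\in\bar{R}_\e(A):\exists n,\, f^n\in I\}$, which by applying the equality just proved to the Weierstrass subdomains $\cM(A\langle f^n/c\rangle)=\{|f^n|\le\e\}$ (with $c\in k^*$, $|c|=\e$) coincides with the closure in $\bar{R}_\e(A)$ of the union of the $\red_\e^A$-images of these Weierstrass subdomains, hence is closed. The principal obstacle is the rigid-point perturbation step above: it requires moving rigid points in $V_N$ onto the boundary $\{|f|=1\}$ while preserving the prescribed finite list of $\e$-reduction constraints, and exploits in an essential way the strict affinoid structure of $V_N$ together with the maximum modulus principle.
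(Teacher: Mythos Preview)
Your identification $B_\e\cong A_\e[f^{-1}]$ and the pointwise equality $\red_\e(\cM(A))\cap\prim(A_\e[f^{-1}])=\red_\e(\cM(B))$ are correct and parallel the paper. The genuine gap is in the reverse inclusion $\bar{R}_\e(A)\cap\prim(A_\e[f^{-1}])\subseteq\bar{R}_\e(B)$. Your ``careful perturbation'' step---moving a point $x_\alpha$ with $|f(x_\alpha)|\ge\e^{1/N}$ to a rigid point $y$ with $|f(y)|=1$ while preserving finitely many constraints $|g_i|<\e$, $|h_j|\ge\e$---is asserted rather than proved. The maximum modulus principle only tells you the supremum of $|f|$ on $V_N$ is attained somewhere on $\{|f|=1\}$; it gives no control over the values of the $g_i,h_j$ at that point. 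Your openness argument has a parallel problem: the claimed identity $\bar{R}_\e(A)\setminus\bar{R}_\e(B)=\{I\in\bar{R}_\e(A):\exists n,\ f^n\in I\}$ is not justified (membership in $\prim(A_\e[f^{-1}])$ implies $f^n\notin I$ for all $n$, but the converse within $\bar{R}_\e(A)$ is unclear), and the proposition you want to apply is about Laurent domains, not Weierstrass domains.

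The paper bypasses all of this with one algebraic observation. Write $\e=|a|$ with $a\in k^*$. For $x\in\cM(A)$ one has $af\notin\red_\e(x)$ iff $|af(x)|\ge\e$ iff $|f(x)|\ge1$ iff $x\in\cM(B)$; hence $\red_\e(\cM(B))=\red_\e(\cM(A))\cap\{I:af\notin I\}$. Since $\{I:af\notin I\}$ is a \emph{single clopen} subset of $\Id(A_\e)$, taking closures yields at once $\bar{R}_\e(B)=\bar{R}_\e(A)\cap\{I:af\notin I\}$, and clopenness in $\bar{R}_\e(A)$ is immediate. It remains only to check $\bar{R}_\e(A)\cap\{af\notin I\}=\bar{R}_\e(A)\cap\prim(A_\e[f^{-1}])$. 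The nontrivial direction uses that every $I\in\bar{R}_\e(A)$ satisfies $a\notin I$ (since $\red_\e(x)\cap k_\e=(0)$ for all $x$, and this is a clopen condition), together with the primary-ideal implication: if $J$ is primary with $a\notin J$ and $f^n\notin J$ for all $n$, then $af\notin J$. Since $\{af\notin I\}$ is closed, this passes to the closure $\prim(A_\e[f^{-1}])\cap\{a\notin I\}$. No nets, rigid points, or perturbations are needed.
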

\begin{proof}
Write $\e= |a|$ for some $a \in k^*$. 
Observe first that \[
\red_\e(\cM(A\langle f^{-1}\rangle))=
\red_\e(\cM(A)) \cap \prim(A_\e[f^{-1}])
=
\red_\e(\cM(A)) \cap \Id(A_\e[f^{-1}])
~.\]
We now prove 
\[ 
\red_\e(\cM(A)) \cap \Id(A_\e[f^{-1}])
=
\{ I\in  \red_\e(\cM(A)), af \notin I
\}
~.\]
Suppose $I= \red_\e(x)$ for some $x \in \cM(A)$. 
Observe that 
$f^n \notin I$ for all $n$, if and only if $|f(x)| =1$, and
the latter is equivalent to $|(af)(x)|= \e$. 
By Lemma~\ref{lem:car-prim}, this proves
$I\in \Id(\cM(A\langle f^{-1}\rangle))$ if and only if
$af \notin I$ as required.

Since the set of ideals $I\in \Id(\cM(A)))$ such that $af \notin I$
is clopen, the closure of $\{ I\in  \red_\e(\cM(A)), af \notin I
\}$ equals $\{ I\in \bar{R}_\e(A), af \notin I
\}$
which implies 
\[
\bar{R}_\e(A\langle f^{-1}\rangle)
=
\{ I\in \bar{R}_\e(A), af \notin I
\}
\subset \bar{R}_\e(A)\cap \prim(A_\e[f^{-1}])
.\]
Pick $I\in \bar{R}_\e(A)\cap \prim(A_\e[f^{-1}])$.  Observe that $a\notin I$. (Indeed, 
for any $x\in \cM(A)$ we have $\red_\e(x)\cap k_\e = (0)$. Hence,
for any ideal $J\in\bar{R}_\e(A)$
we have $J \cap k_\e =(0)$, so that $a \notin J$ since $a\in k_\e$.)

Now the set $\{J\in \Id(A), a \notin J\}$ is both open and closed hence
the closure of the set of primary ideals of $A_\e[f^{-1}]$ not containing $a$
is equal to $\prim(A_\e[f^{-1}]) \cap \{J\in \Id(A), a \notin J\}$.
It follows that
\begin{align*}
\prim(A_\e[f^{-1}]) \cap \{J\in \Id(A), a \notin J\} 
&= 
\overline{\{ J\in {\rm P}(A_\e) , f^n \notin J~\text{for all}~n, a \notin J\}}
\\
&\subseteq
\overline{\{ J\in {\rm P}(A_\e) , af \notin J\}}
\end{align*}
and we conclude that $af \notin I$. This proves
$\bar{R}_\e(A)\cap \prim(A_\e[f^{-1}])=\bar{R}_\e(A\langle f^{-1}\rangle)$
and concludes the proof.
\end{proof}

\subsection{Formal affinoid subdomains} \label{sec:formal-aff}
Let $A$ be any strictly affinoid $k$-algebra. An affinoid subdomain $V$ of $\cM(A)$ is a compact subset
$V \subset \cM(A)$ such that there exists a bounded morphism of affinoid $k$-algebras $\alpha \colon A \to A_V$ satisfying
the following universal property. Given any bounded morphism of affinoid $k$-algebras $\beta \colon A \to B$
such that $\beta^* (\cM(B)) \subset V$, then we may write $\beta =  \theta \circ \alpha$ for some unique bounded morphism 
$\theta\colon A_V \to B$.

\smallskip

For any  elements $a, b \in A$ generating the unit ideal, and for any  positive $r, s \in |k^*|$, the 
Weierstrass domain 
$\{x \in \cM(A), \,  |a(x)| \le r\}$, 
the Laurent domain
$\{x \in \cM(A) , \,  |a(x)| \le r, |b(x)| \ge s\}$, 
and
the rational domain
$\{x \in \cM(A) , \,  |a(x)| \le r |b(x)|\}$, 
 are all affinoid subdomains of $\cM(A)$. In each case, it is possible to describe the affinoid algebra of the affinoid subdomain
explicitely in terms of $A$ and the data $a, b, r, s$. For instance, we have
\[
\{x \in \cM(A) , \,  |b(x)| \ge 1\}
= 
\cM\left( A\langle b^{-1}\rangle\right)
\]
where $A\langle b^{-1}\rangle= A\{T\}/(Tb-1)$ is endowed with the residual norm, and the ring
$ A\{T\}:=\{\sum_n a_n T^n , \,  a_n\in A,  |a_n| \to 0\}$  with the Gauss norm.

\smallskip

Following~\cite[\S 4.3]{berkovich}, we say that an affinoid subdomain $V$ of $\cM(A)$ is formal
when the morphism induced at the level of their (standard) reductions $\spec \tilde A_V \to\spec \tilde A$
is an open immersion. 
It follows from~\cite[7.2.6]{BGR} that for any $f\in A^\circ$ such that $\|f\|_{\sup} = 1$, then 
the Laurent domain $V:=\{|f|\ge  1\} =\{|f|=  1\}$ is a formal affinoid subdomain
so that $\tilde A_V = \tilde A[\tilde f^{-1}]$.
Conversely, by~\cite[Satz~1.2]{Bosch77}, any formal affinoid subdomain of $\cM(A)$ is a finite union of domains
of the form $\{|f_i|\ge  1\}$ with $f_i \in A^\circ$.

\begin{lemma}\label{lem:sub-ideal}
For any formal affinoid subdomain $V \subset \cM(A)$ and for any $\e>0$, the canonical 
map $\Id((A_V)_\e) \to \Id(A_\e)$ induces a homeomorphism from $(\prim((A_V)_\e), \tau^{\con})$ 
onto a closed subset of $(\prim(A_\e), \tau^{\con})$.
\end{lemma}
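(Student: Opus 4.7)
The plan is to reduce the statement to the case of a single formal Laurent domain $V_f=\{|f|\ge 1\}$ with $f\in A^\circ$, treat this case by identifying the $\e$-reduction with a localization of $A_\e$, and patch. By~\cite[Satz~1.2]{Bosch77} cited just before the lemma, any formal affinoid subdomain $V$ of $\cM(A)$ can be written as a finite union $V=\bigcup_{i=1}^{m}V_{f_i}$ of such Laurent domains with $f_i\in A^\circ$ and $\|f_i\|_{\sup}=1$.

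First I would handle the Laurent case $V=V_f$. The affinoid algebra is $A_V=A\langle f^{-1}\rangle$, and $f$ becomes a unit in $A_V$ whose inverse lies in $A_V^\circ$ since $|f^{-1}|\le 1$ on $V$. The canonical morphism $A_\e\to (A_V)_\e$ therefore factors as $A_\e\to A_\e[f^{-1}]\xrightarrow{\beta_\e}(A_V)_\e$. The key step is to verify that $\beta_\e$ induces a homeomorphism $\prim(A_\e[f^{-1}])\simeq \prim((A_V)_\e)$ for the constructible topology. For surjectivity at the level of ideals, I would use that any $g\in A_V^\circ$ can be represented by a series $\sum_{n\ge 0}a_n f^{-n}$ in $A\{T\}/(fT-1)$ with $|a_n|\to 0$; since $|f^{-n}|\le 1$ on $V$, the tail $\sum_{n\ge N}a_nf^{-n}$ has $\rho_V<\e$ as soon as $|a_n|<\e$, so modulo $\{\rho_V<\e\}$ the element $g$ is represented by a finite expression coming from $A_\e[f^{-1}]$. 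Injectivity at the level of primary ideals then follows from Lemmas~\ref{lem:res-prim} and~\ref{lem:car-prim}, which characterize the image of $\res\colon\prim(A_\e[f^{-1}])\to\prim(A_\e)$ as the set of primary ideals not containing any power of $f$.

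Once this identification is in place, Proposition~\ref{prop:cover-ideal} applied to $R=A_\e$ says that $\prim(A_\e[f^{-1}])\to\prim(A_\e)$ is a homeomorphism onto its image, and this image is compact and hence closed in the Hausdorff space $(\prim(A_\e),\tau^{\con})$. Composing with the homeomorphism $\prim((A_V)_\e)\simeq \prim(A_\e[f^{-1}])$ settles the Laurent case.

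For the general case $V=\bigcup_i V_{f_i}$, the canonical morphisms $(A_V)_\e\to (A_{V_{f_i}})_\e$ induced by $V_{f_i}\hookrightarrow V$ allow one to see that the image of $\prim((A_V)_\e)$ in $\prim(A_\e)$ coincides with the finite union $\bigcup_i\res(\prim(A_\e[f_i^{-1}]))$: an ideal in $(A_V)_\e$ is primary iff its pullback lies in $\res(\prim(A_\e[f_i^{-1}]))$ for at least one index $i$, by Proposition~\ref{prop:cover-ideal}. Each piece is closed by the Laurent case, so the union is closed, and the homeomorphism property of $\prim((A_V)_\e)\to \prim(A_\e)$ onto this union is verified by restricting to each closed piece. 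The main obstacle is the algebraic identification on $\prim$ in the Laurent case: the potential non-reducedness of $A$ and the subtle comparison between the seminorms $\rho$ on $A^\circ$ and $\rho_V$ on $A_V^\circ$ below the threshold $\e$ require careful handling, although the ultrametric triangle inequality makes the density step of the surjectivity argument work cleanly.
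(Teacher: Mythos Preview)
Your overall strategy matches the paper's: reduce to the Laurent case $V=\{|f|\ge1\}$, identify $(A_V)_\e$ with a localization of $A_\e$, invoke Proposition~\ref{prop:cover-ideal}, and then patch. The density argument you sketch for the surjectivity of $\beta_\e\colon A_\e[f^{-1}]\to (A_V)_\e$ is essentially the paper's surjectivity step.

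The gap is that you never establish that $\beta_\e$ is \emph{injective} at the ring level, yet you use the full homeomorphism $\prim(A_\e[f^{-1}])\simeq\prim((A_V)_\e)$ in the general case. Surjectivity of $\beta_\e$ only yields a closed embedding $\prim((A_V)_\e)\hookrightarrow\prim(A_\e[f^{-1}])$; the image consists of those ideals containing $\ker\beta_\e$, and nothing you wrote forces $\ker\beta_\e=0$. Your appeal to Lemmas~\ref{lem:res-prim} and~\ref{lem:car-prim} does not help here: those lemmas concern the restriction map $\res\colon\prim(A_\e[f^{-1}])\to\prim(A_\e)$, not $\beta_\e$. The paper handles injectivity via the formula $|\sigma(a)|_{\sup,V}=\lim_n|f^na|_{\sup}$ from~\cite[7.2.6/2]{BGR}: if $a\in A_\e$ has $\sigma_\e(a)=0$, a lift $b\in A^\circ$ satisfies $|f^mb|_{\sup}<\e$ for some $m$, hence $f_\e^ma=0$ in $A_\e$ and $a=0$ in $A_\e[f_\e^{-1}]$.

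This missing isomorphism is exactly what breaks your general case. To cover $\prim((A_V)_\e)$ by the pieces coming from the $V_{f_i}$, the paper applies the Laurent isomorphism \emph{inside $V$}, obtaining $\prim((A_{V_{f_i}})_\e)=\prim((A_V)_\e[f_{i,\e}^{-1}])$, and then uses that the $f_i$ generate the unit ideal in $A_V^\circ$ (hence in $(A_V)_\e$) to invoke Proposition~\ref{prop:cover-ideal}. Your formulation instead tries to identify the image in $\prim(A_\e)$ with $\bigcup_i\res(\prim(A_\e[f_i^{-1}]))$; but the $f_i$ need not generate the unit ideal in $A_\e$, and without the Laurent isomorphism you cannot conclude that every point of $\res(\prim(A_\e[f_i^{-1}]))$ actually arises from $\prim((A_V)_\e)$. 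Finally, global injectivity of $\prim((A_V)_\e)\to\prim(A_\e)$ requires a short separate argument (two points mapping to the same image must lie in a common Laurent piece), which the paper carries out and you leave to ``restricting to each closed piece'' without checking that distinct pieces cannot collide.
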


\begin{proof}
Let us first consider the case $V = \{ |f| \ge 1\}$ with $f \in A^\circ$.
By~\cite[7.2.6/2]{BGR} the morphism $\sigma \colon A \to A_V$ satisfies 
$|\sigma(a)|_{\sup} = \lim_{n\to\infty} |f^n a|_{\sup} \le |a|_{\sup}$ so that 
we have a canonical morphism $\sigma_\e \colon A_\e \to (A_V)_\e$. 
Since $f$ is a unit in $A_V^\circ$, we get the following  commutative diagram 
\[\xymatrix{
A_\e  \ar[r]^{\sigma_\e} \ar[dr]_\phi & (A_V)_\e \\
 & A_\e[f_\e ^{-1}] \ar[u]_\imath}
\]
where $f_\e$ denotes the class of $f$ in $A_\e$.
Note that the lemma follows from Proposition~\ref{prop:cover-ideal}
as soon as we can prove that $\imath$ is an isomorphism.

To see that $\imath$ is injective, suppose that 
$\imath  (a f_\e^{-n}) = 0$ with $a\in A_\e$ and $n\ge0$. Then $\imath(a) = 0$ so that $\sigma_\e(a) = 0$. Choose
$b \in A^\circ$ representing $a$. Since $|\sigma(b)|_{\sup} < \e$, we have $|f^m b|_{\sup} < \e$ for some integer $m$, hence
$f_\e^m a= 0$ in $A_\e$, which implies $a= 0$ in $A_\e[f_\e ^{-1}]$, and
$\imath$ is injective. 

Pick any $b\in A^\circ_V$.
Observe that $A[f^{-1}]$ is dense (for the sup-norm) in $A_V = A\langle f^{-1}\rangle$, hence
we can find $a\in A$ and $n\in \N^*$ such that $|f^n b - \sigma(a)|_{\sup} < \e$. But this implies
$\sigma(a) \in A_V^\circ$, and $b = f^{-n} \sigma(a)$ modulo $\{\rho < \e\}$. 
It follows that $\imath$ is surjective, and the claim is proven. 

Suppose now that $V$ is a general formal affinoid subdomain of $\cM(A)$. Then $V$ is a finite union of affinoid subdomains 
$V_i = \{ |f_i| \ge 1\}$ with $f_i\in A^\circ$. 
By what precedes we have $\prim( (A_{V_i})_\e)= \prim( (A_{V})_\e[f_{i,\e}^{-1}])$.

We claim that $\prim( (A_{V_i})_\e)$ covers $\prim((A_V)_\e)$.
To see this, consider the usual reduction map $\red \colon \cM(A_V) \to \spec (\tilde{A}_V)$, and denote by $\tilde{f_i}$ the image of $f_i$ in $\tilde{A}_V$.
Since $V = \cup \{|f_i| \ge1\}$, we obtain 
\[\red^{-1} (\cap \{\tilde{f_i} =0\}) = \cap \{ |f_i | <1 \} = \emptyset\]
so that $\tilde{f_i}$ generates the unit ideal of $\tilde{A}_V$, and therefore $f_i$ generates the unit ideal of $A_V^\circ$, and we conclude using Proposition~\ref{prop:cover-ideal}.

Now the preceding argument shows that the restriction of the natural map \[\sigma_\e^*\colon \prim((A_V)_\e) \to \prim(A_\e)\]
to $\prim( (A_{V_i})_\e)$ is continuous and injective. To conclude the proof it remains to show that $\phi$ is injective on $\prim((A_V)_\e)$.
Pick $x, y \in \prim((A_V)_\e)$ such that $\sigma_\e^*(x) =\sigma_\e^*(y)\in \Id(A_\e)$. Then $x$ and $y$ are two ideals of $(A_V)_\e$ and we may find an index $i$
such that  $f_{i,\e} \notin x$. If $f_{i,\e} \notin x$ and $f_{i,\e} \in y$, then we have $f_{i,\e} \notin \sigma_\e^*(x)$ and $f_{i,\e} \in \sigma_\e^*(y)$ hence $\sigma_\e^*(x) \neq \sigma_\e^*(y)$. 
Hence $f_{i,\e} \notin y$ and $x=y$ since $\phi$ is injective on $\Id( (A_{V_i})_\e)$.
\end{proof}

\subsection{The functor of $\e$-reduction on $k^\circ$-models} \label{sec:func}
Let  $X$ be any projective variety over $k$, and $\fX$ a model of $X$ over $k^\circ$ (in the sense of \S \ref{sec:anal-kspace}).
Recall that there is a reduction map $\red_\fX \colon X^{\an} \to \fX_s$.
In this section, we globalize the construction of  the previous section, and build an $\e$-reduction map associated to $\fX$ for any $0<\e\le 1$. 
 Let us first explain how we construct the space $\fX[\e]$.

 \subsubsection{}
 The first step of the construction is to endow $X^{\an}$ with a canonical formal structure
 in the sense of~\cite{Bosch77} (see also the discussion in~\cite[\S 1]{gubler} or~\cite[\S 6.4]{berkovich}), that is, with a covering by affinoid domains $V_i$ such that
 for each $i,j$, $V_{ij} := V_i \cap V_j$ is a formal affinoid subdomain of $V_i$. 

 To do so we proceed as follows. Since $\fX$ is projective, we may fix once and for all an embedding of $k^\circ$-scheme
 $\imath \colon \fX \to \p^N_{k^\circ}$. In particular, we have an embedding of the special fiber $\imath_s \colon \fX_s \to 
 \p^N_{\tilde{k}}$, and an embedding of the analytifications $\imath_{\an} \colon X^{\an} \to \p^{N,\an}_{k}$. All these morphisms fit into the following commutative diagram:
  \[\xymatrix{
  X^{\an}   \ar[r]^{\imath_{\an}}
 \ar[d]_{\red_\fX}
 & \p^{N,\an}_{k}
  \ar[d]^{\red_{\p^{N}_{k^\circ}}}
\\
 \fX_s
 \ar[r]_{\imath_s}
 &\p^N_{\tilde{k}}
 }
\]
Take homogeneous coordinates $[z_0: \cdots : z_N]$ on $\p^N$, and 
set $U_i :=\{z_i \neq 0\}$. This defines a finite cover $\fU= \{U_i\}$ of $\p^N_{k^\circ}$ by affine subscheme over $k^\circ$, and
we get an induced finite cover $\fV_{s}= \{V_{i,s}\}$ of $\fX_s$ by affine subschemes
given by $V_{i,s}:= \imath_s^{-1} (U_i\cap \p^N_{\tilde{k}})$. 
\begin{lemma}\label{lem:key-formal}
For any $i$, $V_i:=\red_\fX^{-1}(V_{i,s})$ is an affinoid domain in $X^{\an}$.

Moreover, for any principal open affine subscheme $W_s$ of $V_{i,s}$, the 
set $W :=\red_\fX^{-1}(W_s)$ is a formal affinoid subdomain of $V_i$.
\end{lemma}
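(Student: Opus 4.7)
The plan is to reduce both statements to the well-understood case of the projective model $\p^N_{k^\circ}$, using the fixed closed embedding $\imath\colon \fX \hookrightarrow \p^N_{k^\circ}$ and the functoriality of the reduction map, which gives $\imath_s \circ \red_\fX = \red_{\p^N_{k^\circ}} \circ \imath_{\an}$.

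First I would recall that, working in the affine coordinates $S_j = z_j/z_i$ on $U_i$, one has $\red_{\p^N_{k^\circ}}^{-1}(U_i\cap \p^N_{\tilde k}) = \cM(k\{S_1,\dots,S_N\})$ — this is a straightforward application of the valuative criterion and the construction of the reduction map. Using the commutative diagram above, $V_i = \imath_{\an}^{-1}\bigl(\cM(k\{S_1,\dots,S_N\})\bigr)$. Since $\imath$ is a closed immersion, $\imath_{\an}$ is a closed immersion of analytic spaces, and the preimage of an affinoid subdomain under a closed immersion is affinoid: concretely, $V_i = \cM(A_i)$, where $A_i = k\{S_1,\dots,S_N\}/J$ for $J$ the (closed) ideal cutting out $X$ inside the polydisk. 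This proves the first claim.

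For the second claim, I would observe that on the affinoid $V_i = \cM(A_i)$ the restriction of $\red_\fX$ factors through the standard affinoid reduction $\red_{A_i}\colon \cM(A_i)\to \spec \tilde A_i$ followed by the canonical $\tilde k$-scheme morphism $\spec\tilde A_i \to V_{i,s}$ coming from the reduction of the map of $k^\circ$-algebras induced by $\imath$. A principal open affine $W_s\subset V_{i,s}$ has the form $W_s = V_{i,s}\setminus\{f_s = 0\}$ for some $f_s \in \cO(V_{i,s})$; choose any lift $f \in A_i^\circ$ of the image of $f_s$ in $\tilde A_i$. Then a point $x \in V_i$ reduces into $W_s$ if and only if $f$ does not vanish at $\red_{A_i}(x)$, i.e.\ if and only if $|f(x)| = 1$. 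Since $\rho(f) \le 1$ forces $|f(x)| \le 1$ throughout $V_i$, this condition is equivalent to $|f(x)| \ge 1$, so $W = \red_\fX^{-1}(W_s) = \{x \in V_i : |f(x)| \ge 1\}$ is a Laurent subdomain of $V_i$. By the discussion preceding the lemma (and \cite[7.2.6]{BGR}), any such domain is formal.

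The main obstacle is the compatibility statement between $\red_\fX|_{V_i}$ and the intrinsic affinoid reduction $\red_{A_i}$: one needs the morphism $\cO(V_{i,s})\to \tilde A_i$ to be induced functorially by the reduction construction and compatible with lifts. Once this compatibility is in place, the description of $W$ as $\{|f|\ge 1\}$ is automatic from the explicit formula $\red_{A_i}(x) = \{a\in A_i^\circ : |a(x)| < 1\}$. Everything else is either a direct computation in standard affine charts on $\p^N$ or an appeal to the basic fact that closed analytic subsets of affinoids are affinoid.
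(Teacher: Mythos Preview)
Your proposal is correct and follows essentially the same strategy as the paper: both reduce to the standard affine chart $U_i$ of $\p^N_{k^\circ}$, identify $V_i$ with the Berkovich spectrum of $k\{S_1,\dots,S_N\}/J$, and for the second claim lift a defining equation of $W_s$ to obtain $W=\{|f|\ge1\}$.

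The one difference worth noting is in the second claim. The paper works directly with the finitely generated $k^\circ$-algebra $\mathcal{A}=k^\circ[z_1,\dots,z_N]/\mathfrak{I}$ presenting $\fX\cap U_i$, so that $V_{i,s}=\spec(\mathcal{A}\otimes_{k^\circ}\tilde k)$ and a lift of $\tilde f$ is simply any preimage $f\in\mathcal{A}$ modulo $k^{\circ\circ}\mathcal{A}$; the identity $\red_\fX^{-1}(W_s)=\{|f|\ge1\}$ then follows immediately from the description of $\red_\fX^{-1}(V_{i,s})$ as the set of seminorms bounded by $1$ on $\mathcal{A}$. Your route instead passes through the intrinsic affinoid reduction $\spec\tilde A_i$ and the finite map $\spec\tilde A_i\to V_{i,s}$, which is why you need the compatibility statement you flag as the ``main obstacle''. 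That compatibility is true and not hard, but the paper's choice of lifting to $\mathcal{A}$ rather than to $A_i^\circ$ sidesteps it entirely.
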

\begin{proof}
To simplify notation, let $i=0$. In the chart $U_0 =\{z_0\neq0\}$, 
we may write $\fX =\spec \mathcal{A}$ 
where $\mathcal{A}:= k^\circ [z_1, \cdots, z_N]/\mathfrak{I}$
where  $\mathfrak{I}$ is a finitely generated ideal.
Then $\red_\fX^{-1}(V_{0,s})$ is the set of multiplicative semi-norms $x$ on
$\mathcal{A} \otimes_{k^\circ} k$ such that $|f(x)| \le 1$ for all $f \in \mathcal{A}$.

Let $\hat{\mathcal{A}}:= \projlim_{\pi\in k^{\circ\circ}} \mathcal{A}/(\pi)$ 
be the formal completion of $\mathcal{A}$, and
set $\hat{\mathcal{A}}_k := \hat{\mathcal{A}}\otimes_{k^\circ} k$.
One checks that $\hat{\mathcal{A}} = k^\circ\langle z_1, \cdots, z_N \rangle/\mathfrak{I}$, hence 
$\hat{\mathcal{A}}_k = k\langle z_1, \cdots, z_N \rangle/\mathfrak{I}$ is a $k$-algebra which becomes an affinoid algebra
when endowed with the residue norm $|\cdot|_{\mathrm{res}}$ coming from the morphism $ k\langle z_1, \cdots, z_N \rangle \to \hat{\mathcal{A}}_k$. 

Observe that $\mathcal{A} \otimes_{k^\circ} k$ is a dense subring of $\hat{\mathcal{A}}_k$.
Any norm on $\mathcal{A} \otimes_{k^\circ} k$ bounded by $1$ on $\mathcal{A}$
is bounded by  $|\cdot|_{\mathrm{res}}$ hence defines a point  in the Berkovich spectrum of the affinoid algebra $\hat{\mathcal{A}}_k$. Conversely any norm
in this spectrum restricts to a norm on  $\mathcal{A} \otimes_{k^\circ} k$ bounded by $1$ on $\mathcal{A}$.
This proves $\red_\fX^{-1}(V_{0,s})$ is equal to the Berkovich spectrum of $\hat{\mathcal{A}}_k$, and is an affinoid domain. 

Suppose now that $\tilde{f} \in \tilde{k}[V_{i,s}]$ and $W_s = \{\tilde{f} \neq 0\} \subset V_{0,s}$. Let $f$ be a lift of $\tilde{f}$
to $\mathcal{A}$. Then $\red_\fX^{-1}(W_s)= \{ |f|\ge 1\}$ is a formal affinoid subdomain of $V_0$.
\end{proof}

The previous lemma implies that $\fV:= \{V_i\}$ is a finite cover of $X^{\an}$ by affinoid domains such that 
$V_{ij}$ is a formal affinoid subdomain of $V_i$ for all $i$ and $j$, and therefore defines a formal structure on $X^{\an}$. 
Declare that a formal structure $\fV'=\{V'_j\}$ on $X^{\an}$ is equivalent to $\fV$ if
$V'_j \cap V_i$ is a formal affinoid subdomain of $V_i$ for all $i$ and $j$.

We claim that the formal structure defined by $\fU$ on $X^{\an}$
is independent on the choice of an embedding into $\p^N_{k^\circ}$. 
Indeed, choose another embedding  $X^{\an} \subset \p^{N',\an}_{k}$, and consider the corresponding finite covers
$\{V'_{j,s}\}$ of $\fX_s$, and $\{V'_j\}$ of $X^{\an}$ respectively. Note that 
$V'_{j,s} \cap V_{i,s}$ is a affine open subscheme of $V_{i,s}$ for all $i$ and $j$ so that 
again by the previous lemma, $V'_j\cap V_i$ is a formal affinoid subdomain for all $i,j$.

 \subsubsection{}
Let us first recall the construction  of the reduction associated to the formal structure defined by $\fX$ following~\cite{Bosch77,gubler}. 
Recall we are given a finite cover $\fV = \{V_i\}$ by affinoid domains such that for all $i$ and $j$, we have $V_{ij}$ is a formal affinoid
subdomain of $V_i$. 

For each $i$, consider the quotient algebra $A_i:= \mathcal{O}(V_i)^\circ / \mathcal{O}(V_i)^{\circ\circ}$. 
By~\cite[6.3.4/3]{BGR}, this is a reduced $\tilde{k}$-algebra of finite type, hence $\tilde{V}_i := \spec (A_i)$ is an affine variety over $\tilde{k}$. 
Since $V_{ij}$ is a formal affinoid subdomain, its reduction $\tilde{V}_{ij}$ is an open affine subscheme of $\tilde{V}_i$. 
We may thus patch the $\tilde{V}_i$ along their intersections $\tilde{V}_{ij}$ to define a $\tilde{k}$-scheme of finite type $\fX\langle1\rangle$.
The canonical reduction maps on affinoid algebras also patch together, and defines an anti-continuous surjective
map $\red_1\colon X^{\an} \to \fX\langle1\rangle$.

The construction of $\fX\langle1\rangle$ and $\red_1$ do not depend on the choice of covering defining the formal structure induced by $\fX$. 

\begin{remark}
There is a canonical finite surjective morphism $\fX\langle1\rangle \to \fX_s$, which is an isomorphism
when $\fX_s$ is reduced, see~\cite[\S 2.11]{GRW} and~\cite[\S 4.1]{mehmeti} for a more detailed discussion of these facts. 
\end{remark}

 \subsubsection{}
We generalize the previous discussion to the case $\e$ is any positive real number less than $1$. For each $i$, endow $\prim(\cO(V_i)_\e)$ with its constructible topology. By Lemma~\ref{lem:sub-ideal}, 
the restriction map $\cO(V_i) \to \cO(V_{ij})$ induces a continuous and injective map
\[\prim(\cO(V_{ij})_\e)\to\prim(\cO(V_i)_\e).\]  We set $\fX[\e] $ to be the disjoint union of $\prim(\cO(V_i)_\e)$ modulo the relation that 
 two points $x_i \in \prim(\cO(V_i)_\e)$ and $x_j \in \prim(\cO(V_j)_\e)$ are identified whenever there exists
$x_{ij}\in\prim(\cO(V_{ij})_\e)$ whose images in $\prim(\cO(V_i)_\e)$ and  $\prim(\cO(V_j)_\e)$ are $x_i$ and $x_j$ respectively. 

We endow $\fX[\e]$ with the quotient topology which is the finest one making all maps $\prim(\cO(V_j)_\e)\to \fX[\e]$ continuous.
Note that patching finitely many compact sets always yields a compact space (the Hausdorff property can be easily seen using the Tietze-Urysohn's theorem).
It follows that $\fX[\e]$ is a compact topological space.

By Lemma~\ref{lem:primary} the $\e$-reduction maps $\red_\e \colon \cM(\cO(V_i)) \to \Id(\cO(V_j)_\e)$
constructed in \S\ref{sec:eps-red-aff} take its values in $\prim(\cO(V_j)_\e)$
and
patch together, and yield an anti-continuous map \[\red_\e \colon X^{\an} \to \fX[\e].\]

 \subsubsection{}
The $\e$-reduction is functorial in the following sense. If $\fX$ and $\fY$ are two models of $X$ and $Y$ respectively, 
and $\ff \colon \fX \to \fY$ is a morphism of schemes over $k^\circ$, then there is a unique
map $f_\e \colon \fX[\e]  \to \fY[\e]$ so that for any open affine subschemes $\tilde{V}\subset \fX_s$ and 
$\tilde{W}\subset \fY_s$ such that $\tilde{V} \subset \ff^{-1}(\tilde{W}) $, the map 
$f_\e |_{\prim(\cO(V)_\e)}$ is the canonical map $\prim(\cO(V)_\e) \to \prim(\cO(W)_\e)$.
Note that the following diagram is commutative:
\begin{equation}\label{eq:com-red-f}
\xymatrix{
X^{\an} \ar[r]^{f} \ar[d]_{\red_\e} & Y^{\an}  \ar[d]^{\red_\e}
  \\
 \fX[\e]   \ar[r]^{f_\e} & \fY[\e]
}\end{equation}
The map $f_\e$ is continuous for both the Zariski and the constructible topology.

It follows from the functoriality that $\fX[\e]$ does not depend on the choice of affine cover of $\fX_s$.
Note that for each affine subscheme $\tilde{U}$ of $\fX_s$, the natural map $\prim(\cO(U)_\e) \to \fX[\e]$ is a continuous injective map.

\begin{remark}\label{rem:i-schemes}
Pick any  ring $R$.  Then we may introduce the following natural topology $\tau_{z}$ on $\prim(R)$,
generated by sets of the form
\[\prim(R[f^{-1}])=\{I\in \prim(R), \text{ if } gf^n\in I \text{ for some } n\geq 0,  \text{ then } g\in I \},\]
where $f$ ranges over all elements in $R$. 
Note that the restriction of $\tau_z$ on $\spec R\subseteq \prim(R)$ is the Zariski topology. Endowing each set $\prim(\cO(V_j)_\e)$ with this topology induces
a topology $\tau_z$ on $\fX[\e]$.
The topological space $(\fX[\e],\tau_z)$ carries a canonical structure sheaf and naturally fits into the category
of ringed spaces that are locally isomorphic to $(\prim(R), \tau_z, R)$ for a ring $R$. 
We propose to call these objects $\imath$-schemes.
There is a natural equivalence of categories between schemes 
and  $\imath$-schemes. It would be interesting to investigate in further details the properties of this new category. 
\end{remark}

\begin{remark}\label{rem:technical}
We choose to patch the closure of the set of primary ideals $\prim(A_\e)$ in $A_\e$ instead of patching
directly $\Id(A_\e)$ because for the latter set Proposition~\ref{prop:cover-ideal} (hence Lemma~\ref{lem:sub-ideal}) is not valid.
\end{remark}

 \subsubsection{}\label{sec:barRe}
For any $\e>0$, denote by $\bar{R}_\e(\fX)$ the closure of the 
image by $\red_\e$ of $X^{\an}$. This is a compact subspace of $\fX[\e]$.

Suppose that $\e\in |k^*|$, and let $V_i$ be a finite cover by affinoid domains of $X^{\an}$
as before. 
Then \[\bar{R}_\e(\fX) = \cup_i \bar{R}_\e(\mathcal{O}(V_i))\subset  \cup_i \prim(\mathcal{O}(V_i)_\e)=\fX[\e].\] 
It follows from Proposition~\ref{prop:clos-primary}
that each set $\bar{R}_\e(\mathcal{O}(V_i))\cap \bar{R}_\e(\mathcal{O}(V_j))$ is a clopen set in $\bar{R}_\e(\mathcal{O}(V_i))$
so that $\bar{R}_\e(\mathcal{O}(V_i))$ forms a clopen cover of $\bar{R}_\e(\fX)$.

\begin{remark}\label{rem:technical2}
The space $\bar{R}_\e(\mathcal{O}(V_i))$ does not depend only on $A_\e$ which makes the descent argument in the next section more involved.
This explains why we prefered to work with $\fX[\e]$ instead of $\bar{R}_\e(\fX)$.
\end{remark}

\subsection{Proof of Theorem~\ref{thm:entropy good reduction}} \label{sec:proofB}

Recall the setting. Let $X$ be any projective variety over $k$, and $\fX$ a model of $X$ over $k^\circ$. 
Let $f\colon X \to X$ be a (regular) algebraic self-map whose extension $\ff\colon \fX\to \fX$ remains regular. 
We want to prove that the induced map  $f\colon X^{\an}\to X^{\an}$ has topological entropy $0$.

\begin{proposition}\label{prop:fine-cover}
For any finite open cover $\fU$ of $X^{\an}$, there exists $\e\in |k^*|$ and a finite clopen cover
$\fV$ of $\bar{R}_\e(\fX)$ such that $\red_\e^{-1}(\fV)$ refines $\fU$.
\end{proposition}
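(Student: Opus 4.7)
The plan is to localize the problem to each member of the formal affinoid cover of $X^{\an}$ coming from $\fX$, apply Lemma~\ref{lem:subcoverAra} there, and then patch the resulting constructible covers into a clopen cover of $\bar{R}_\e(\fX)$.

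First, I would take the strictly affinoid domains $V_i = \red_\fX^{-1}(V_{i,s})$, where $\{V_{i,s}\}$ is the standard affine cover of $\fX_s$ induced by the chosen embedding of $\fX$ in $\p^N_{k^\circ}$ (as in Lemma~\ref{lem:key-formal}). For each $i$, the sets $\{U \cap V_i , \,  U \in \fU\}$ form a finite open cover of $V_i$, and Lemma~\ref{lem:subcoverAra} produces, for every $\e \in |k^*|$ sufficiently small, a finite constructible cover $\fV_i$ of $(\Id(\cO(V_i)_\e),\tau^{\con})$ whose preimage under the local $\e$-reduction $\red_\e \colon V_i \to \Id(\cO(V_i)_\e)$ refines $\{U \cap V_i , \,  U \in \fU\}$. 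Since the index set is finite, I can choose a single $\e \in |k^*|$ making this work for all $i$ simultaneously. Intersecting each member of $\fV_i$ with $\bar{R}_\e(\cO(V_i)) \subset \prim(\cO(V_i)_\e) \subset \Id(\cO(V_i)_\e)$ yields a finite clopen cover $\fV_i'$ of $\bar{R}_\e(\cO(V_i))$, and by the structure result of \S\ref{sec:barRe} each $\bar{R}_\e(\cO(V_i))$ is clopen in $\bar{R}_\e(\fX)$; hence $\fV := \bigcup_i \fV_i'$ is a finite clopen cover of $\bar{R}_\e(\fX)$.

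To see that $\red_\e^{-1}(\fV)$ refines $\fU$, the crucial point is the containment $\red_\e^{-1}(\bar{R}_\e(\cO(V_i))) \subset V_i$ inside $X^{\an}$. Granting this, any $W \in \fV_i'$ satisfies $\red_\e^{-1}(W) \subset V_i$ and equals the preimage under the affinoid reduction of any $W' \in \fV_i$ extending it, hence is contained in some element of $\{U\cap V_i, U\in\fU\}$, and therefore in some member of $\fU$. To establish the containment, I would pick $x \in V_j \setminus V_i$ and show that $\red_\e(x) \in \prim(\cO(V_j)_\e) \subset \fX[\e]$ cannot lie in the sheet $\prim(\cO(V_i)_\e) \subset \fX[\e]$. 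Since $V_{ij}$ is a formal affinoid subdomain of $V_j$, by~\cite[Satz~1.2]{Bosch77} it is a finite union of Laurent domains $\{|f_\alpha| \ge 1\}$ with $f_\alpha \in \cO(V_j)^\circ$, and the hypothesis $x \notin V_{ij}$ forces $|f_\alpha(x)| < 1$ for every $\alpha$. Hence for each $\alpha$ some power $\tilde f_\alpha^n$ lies in $\red_\e(x)$, which by Lemma~\ref{lem:car-prim} prevents $\red_\e(x)$ from belonging to the image of $\prim(\cO(V_j)_\e[\tilde f_\alpha^{-1}])$, and therefore from $\prim(\cO(V_{ij})_\e)$. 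Since by construction of $\fX[\e]$ the overlap of the two sheets $\prim(\cO(V_i)_\e)$ and $\prim(\cO(V_j)_\e)$ is exactly $\prim(\cO(V_{ij})_\e)$, this rules out $\red_\e(x) \in \prim(\cO(V_i)_\e)$ and a fortiori $\red_\e(x) \notin \bar{R}_\e(\cO(V_i))$.

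The main obstacle is precisely this last containment: once it is in hand, the rest is formal bookkeeping built on Lemma~\ref{lem:subcoverAra}. The formal-affinoid description of $V_{ij} \subset V_j$ combined with the characterization in Lemma~\ref{lem:car-prim} of primary ideals surviving localization is what makes the argument go through, and it is here that the decision to patch closures of sets of primary ideals rather than the full ideal spaces in the construction of $\fX[\e]$ becomes essential (cf.\ Remark~\ref{rem:cover-prim}).
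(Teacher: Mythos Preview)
Your proof is correct and follows essentially the same route as the paper's: localize to the affinoid pieces $V_i$, apply Lemma~\ref{lem:subcoverAra}, and use the clopenness of $\bar{R}_\e(\cO(V_i))$ in $\bar{R}_\e(\fX)$ from \S\ref{sec:barRe} to patch. You supply extra justification for the containment $\red_\e^{-1}(\bar{R}_\e(\cO(V_i))) \subset V_i$ (via Lemma~\ref{lem:car-prim} and the formal-affinoid description of $V_{ij}$), which the paper leaves implicit; this step is indeed needed, and your argument for it is valid.
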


\begin{proof}
Pick any finite affine cover $V_j$ of $\fX_s$. For each $j$, 
$\{U_i \cap \red_\fX^{-1}(V_j)\}$ is a finite open cover of the affinoid domain
 $ \red_\fX^{-1}(V_j)$. By Lemma~\ref{lem:subcoverAra}, there exist $\e\in |k^*|$
 and a finite constructible cover
 $\fV_i$ of $(\Id(\cO(V_j)_\e),\tau^{\con})$ such that $\red_\e^{-1}(\fV_j)$ is a refinement of $\{U_i \cap \red_\fX^{-1}(V_j)\}$.

Since $\bar{R}_\e(\mathcal{O}(V_i))$ is a clopen subset of $\bar{R}_\e(\fX)$ (see \S\ref{sec:barRe}),  
$\fV:= \bigcup_i (\fV_i\cap \bar{R}_\e(\mathcal{O}(V_i)))$ forms a finite clopen cover 
of $\bar{R}_\e(\fX)$ such that  $\red_\fX^{-1}(\fV)$ refines $\fU$ as required.
 This concludes the proof.
\end{proof}
Let $\fU$ be a finite open cover of $X^{\an}$. By Proposition \ref{prop:fine-cover}, there is $\e\in |k^*|$ and a finite clopen cover
$\fV$ of $\bar{R}_\e(\fX)$ such that $\red_\e^{-1}(\fV)$ refines $\fU$.
As in Section \ref{sec:topo-def}, for each $n\in \N$, we write $\fU_n= \fU \vee \cdots \vee f^{-(n-1)}(\fU)$ and $\fV_n= \fV \vee \cdots \vee f_{\e}^{-(n-1)}(\fV)$.
Since every element in $\fV_n$ is open and closed,  a subset of $\fV_n$ covers $\red_{\e}(X^{\an})$ if and only if it covers $\bar{R}_\e(\fX)$.
Hence $$h(f,\fU)\leq h(f_{\e}|_{\bar{R}_\e(\fX)},\fV)\leq \htop(f_{\e}|_{\bar{R}_\e(\fX)}).$$
We conclude the proof by the following proposition.

\begin{proposition}\label{prop:eps-zero}
For any $1\ge\e>0$, the topological entropy of the map $f_\e \colon \fX[\e] \to \fX[\e] $
is $0$. In particular,  the topological entropy of the induced map $f_\e \colon \bar{R}_\e(\fX) \to \bar{R}_\e(\fX)$ is also $0$. 
\end{proposition}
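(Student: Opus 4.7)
The plan is to combine the variational principle with an absolute Noetherian approximation argument, globalising the strategy used in the proofs of Theorems~\ref{thm:entropy-affinoid} and~\ref{thmentropyfinitetype} to the patched space $\fX[\e]$.

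First I would refine the affine cover $\{V_{i,s}\}_{i\in I}$ of $\fX_s$ so that $\ff(V_{i,s})\subseteq V_{\sigma(i),s}$ for some self-map $\sigma\colon I\to I$; this is possible because $\ff$ is a morphism of $k^\circ$-schemes. Writing $A_i:=\cO(V_{i,s})_\e$, which is a finitely generated $k_\e$-algebra by Lemma~\ref{lem:fg}, the local piece $f_\e|_{\prim(A_i)}$ is induced by a $k_\e$-algebra morphism $A_{\sigma(i)}\to A_i$ specified by only finitely many elements of $k_\e$, and the clopen patching data between $\prim(A_i)$ and $\prim(A_j)$ inside $\fX[\e]$ is similarly governed by finitely many restriction morphisms $A_i\to A_j\langle f_{ij}^{-1}\rangle$. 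Let $\cN$ denote the directed set of finitely generated (hence Noetherian) subrings $R\subseteq k_\e$ containing all these structure constants together with a fixed finite generating set of each $A_i$; for $R\in\cN$ let $A_{i,R}\subseteq A_i$ be the corresponding Noetherian finitely generated $R$-subalgebra, so that $A_i=\injlim_{R\in\cN} A_{i,R}$. Patching the spaces $\prim(A_{i,R})$ along their clopen overlaps $\prim(A_{i,R}[f_{ij}^{-1}])$ (using Proposition~\ref{prop:cover-ideal}) by exactly the formulas of~\S\ref{sec:func}, I obtain a compact space $\fX[\e]_R$ together with a continuous self-map $f_{\e,R}$ and a continuous projection $\pi_R\colon \fX[\e]\to \fX[\e]_R$ satisfying $\pi_R\circ f_\e = f_{\e,R}\circ \pi_R$.

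The vanishing $\htop(f_{\e,R})=0$ then follows from the variational principle applied to the compact space $\fX[\e]_R$: any $f_{\e,R}$-invariant Radon probability $\mu$ restricts on each piece $\prim(A_{i,R})$ — which is closed in $(\Id(A_{i,R}),\tau^{\con})$, itself a Noetherian Priestley space by Lemma~\ref{lemnoeidnoepri} — to a Radon measure that must be atomic by Theorem~\ref{thm:priestley-entropy}. Hence $\mu$ is globally atomic, necessarily supported on periodic orbits, and $h_\mu(f_{\e,R})=0$; Theorem~\ref{thm:variational} yields $\htop(f_{\e,R})=0$. To conclude, I would observe that any finite clopen cover $\fV$ of $\fX[\e]$ descends to a finite clopen cover $\fV_R$ of $\fX[\e]_R$ for some $R\in\cN$ large enough, because each basic clopen set inside a piece $\prim(A_i)\subseteq \fX[\e]$ only involves finitely many elements of $A_i=\injlim_R A_{i,R}$; combined with the equivariance $\pi_R\circ f_\e = f_{\e,R}\circ \pi_R$, this gives $h(f_\e,\fV)\le h(f_{\e,R},\fV_R)\le \htop(f_{\e,R})=0$, whence $\htop(f_\e)=0$.

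The hard part will be the bookkeeping underlying the Noetherian approximation on the \emph{patched} space: because $\bar R_\e(\cO(V_i))\subset \fX[\e]$ is not determined by $A_i$ alone (see Remark~\ref{rem:technical2}), the descent cannot be performed before patching, and one must verify directly that the $\cN$-indexed system $\{\fX[\e]_R, f_{\e,R}, \pi_R\}$ is well-defined, that restriction of primary ideals from $A_i$ to $A_{i,R}$ lands in $\prim(A_{i,R})$ and is compatible with the clopen overlaps, and that the descended map $f_{\e,R}$ is continuous across these overlaps. Once this is in place, the remaining ingredients are immediate from Sections~\ref{sec:noetherian-priestley} and~\ref{sec:entropyaffinoid}.
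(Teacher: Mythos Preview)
Your proposal is correct and follows essentially the same strategy as the paper: the paper formalises precisely the bookkeeping you anticipate via an abstract notion of ``Gluing Data'' (\S\ref{secgd}--\S\ref{subsubsecda}), builds a directed system of Noetherian approximations $X(\cG_t)$ over finitely generated subrings of $k_\e$ (Lemma~\ref{lem:descent}), and concludes by the variational principle plus atomicity on each $X(\cG_t)$ exactly as you outline. The only cosmetic difference is that instead of refining the cover so that a single $\sigma\colon I\to I$ governs the dynamics, the paper allows each chart $V_i$ to be further split into principal opens indexed by $S_i$ with a map $\sigma_i\colon S_i\to I$ (items (5)--(7) of the Gluing Data), which avoids refining the formal cover but is otherwise equivalent to your setup.
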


The idea is to reduce the problem to a Noetherian situation where Theorem~\ref{thmentropyfinitetype} can be applied. 
To do so, we develop an argument of absolute Noetherian approximation analog to the one used in the proof of Theorem~\ref{thmentropyfinitetype}.

\subsubsection{Gluing data}\label{secgd}
Let $R$ be a commutative ring with unit.
A Gluing Data over $R$ (or (GD) for short) 
is the following collection of data (1) -- (7) subject to the conditions (a) -- (f) below: 
\begin{itemize}
\item[(1)] a family of $R$-algebras $A_i$ indexed by a finite set $I$;
\item[(2)] for every pair $i,j\in I$, $h_{i,j}\in A_i$ with $h_{i,i}=1$ for $i\in I;$
\item[(3)]for every pair $i,j\in I$,  isomorphisms $\alpha_{i,j}^*\colon A_{j,i}\to A_{i,j}$, with 
$A_{i,j}:=A_i[h_{i,j}^{-1}]$;
\item[(4)] for every $i,j,k\in I$, isomorphisms $\alpha_{i,j,k}^*\colon A_{j,i,k}\to A_{i,j,k}$, with 
 $A_{i,j,k}:=A_i[(h_{i,j}h_{i,k})^{-1}]$;
\item[(5)] for every $i\in I$, a finite subset $S_i$ of $A_i$, and a map $S_i \to A_i$, $s \mapsto h_s$;
\item[(6)] for every $i\in I$, a map $\sigma_i\colon S_i\to I$, and for every $s\in S_i$, a morphism
$\theta_{i,s}^*\colon A_{\sigma_i(s)}\to A_i[s^{-1}],$ $i\in I, s\in S_i$;
\item[(7)] for every $i\in I$, and $s_1,s_2\in S_i,$ a morphism $\theta^*_{i,s_1,s_2}\colon A_{\sigma_i(s_1),\sigma_i(s_2)}\to A_i[(s_1s_2)^{-1}]$.
\end{itemize}
We require these data to satisfy the following conditions:
\begin{itemize}
\item[(a)] $\alpha_{j,i}^*\circ \alpha_{i,j}^*=\id$ for every $i,j\in I;$
\item[(b)] for every $i,j,k\in I$, the following diagram is commutative
\[\xymatrix{
A_{j,i} \ar[r]^{\alpha_{i,j}^*} \ar[d] & A_{i,j}  \ar[d]
  \\
 A_{j,i,k}   \ar[r]^{\alpha_{i,j,k}^*} & A_{i,j,k}
}\]
where the vertical morphisms are the natural ones induced by localizations;
\item[(c)]  for every $i,j,k\in I$, $\alpha_{i,j,k}^*\circ \alpha_{j,k,i}^*=\alpha_{i,k,j}^*$;
\item[(d)]  for every $i\in I$, $\sum_{s\in S_i}sh_s=1\in A_{i}$;
\item[(e)] for every $i\in I$, and every $s_1,s_2\in S_i$, the  following diagram is commutative
\[\xymatrix{
A_{\sigma_i(s_1)} \ar[r]^{\theta_{i,s_1}^*} \ar[d] & A_{i}[s_1^{-1}]  \ar[d]
  \\
 A_{\sigma_i(s_1),\sigma_i(s_2)}   \ar[r]^{\theta_{i,s_1,s_2}^*} & A_{i}[(s_1s_2)^{-1}]
}\]
where the vertical morphisms are the natural ones induced by localizations;
\item[(f)]  for every $i\in I$, and every $s_1,s_2\in S_i$, $\theta^*_{i,s_2,s_1}\circ\alpha^*_{\sigma_i(s_2),\sigma_i(s_1)}=\theta^*_{i,s_1,s_2}$.
\end{itemize}

Assume that $\cG$ is a (GD)  over $R$. We explain how to define a compact topological space $X(\cG)$ , and a continuous self-map $f_\cG \colon X(\cG) \to X(\cG)$.

\medskip

First, we construct $X(\cG)$ as the disjoint union of the sets $\prim(A_i)$ modulo the identification
$x\in \prim(A_i) \sim y \in \prim(A_j)$ iff $h_{i,j} \notin x$, $h_{j,i} \notin y$ and $\alpha_{i,j}^*(x) =y$. Conditions (a), (b), (c) imply
this to be an equivalence relation. 
Note that we have natural maps $\phi_i\colon \prim(A_i) \to X(\cG)$ and that these maps are injective. Write $U_i:= \phi_i(\prim(A_i)) \subset X$. 
The topology on $X(\cG)$ is then defined by declaring that $U$ is open iff for all $i$, the set $\phi_i^{-1}(U\cap U_i)$ is open for the constructible
topology on $\prim(A_i)$. It follows that $U_i$ (which is compact and totally disconnected) forms a finite open cover of $X(\cG)$,  hence $X(\cG)$ is compact and totally disconnected. 

Note that
$\phi_i(\prim(A_{i,j}))=U_i\cap U_j$ and 
$\phi_j^{-1}\circ\phi_i|_{\prim(A_{i,j})}\colon \prim(A_{i,j})\to \prim(A_{j,i})$ is induced by $\alpha_{i,j}^*.$
Also $\phi_j^{-1}\circ\phi_i|_{\prim(A_{i,j,k})}:=\prim(A_{i,j,k})\to \prim(A_{j,i,k})$ is induced by $\alpha_{i,j,k}^*.$

\begin{remark}
The space $X(\cG)$ has a natural structure of $\imath$-scheme in the sense of 
Remark~\ref{rem:i-schemes}. It is also possible to show that $X(\cG)$ is defined by 
a suitable universal property, as in the case of schemes, see~\cite[Lemma~26.14.1]{stackproject}.
\end{remark}

Observe that for each $i\in I$, condition (d) implies that $\{\prim(A_i[s^{-1}])\}_{s\in S_i}$ forms an open cover of $\Id(A_i).$ 
For any $i\in I$, and any $s\in S_i$, set $U_{i,s}:=\phi_{i}(\prim(A_{i}[s^{-1}]))\subset X(\cG)$.
By conditions (e) and (f), there is a unique continuous map $f_{\cG}\colon X(\cG)\to X(\cG)$ such that:
\begin{itemize}
\item
for any $i\in I$ and any $s\in S_i$ we have
$f_{\cG}(U_{i,s})\subseteq U_{\sigma_i(s)}$;
\item
 and the morphism 
\[\phi_{\sigma_i(s)}^{-1}\circ f_{\cG}\circ \phi_i|_{\prim(A_i[s^{-1}])}\colon \prim(A_i[s^{-1}])\to \prim(A_{\sigma_i(s)})\]
is induced by $\theta_{i,s}^*.$
\end{itemize}

\begin{remark}
As above, the map $f_\cG$ is a morphism of $\imath$-schemes, and it can be characterized by a suitable universal property.
\end{remark}

\begin{definition}
We say that a (GD) over $R$  is finitely generated if the $R$-algebras $A_i$ are finitely generated over $R$ for all $i\in I$.
\end{definition}

\begin{remark}If $R'$ is a subring of $R$, then any (GD) over $R$ is a (GD) over $R'$.
\end{remark}

\subsubsection{Gluing data for $(\fX[\e], f_{\e})$}\label{subsubsecgddynpair}
Fix any $0<\e\le1$. Our objective in this section is to construct  a (GD)  $\cG_{\e}$ 
that is finitely generated over $k_{\e}$, and such that 
$X(\cG_\e)$ is homeomorphic to $\fX[\e]$ and $f_{\cG_\e}$ corresponds to $f_{\e}$ under that homeomorphism.

\medskip

We first collect data from the construction of $\fX[\e]$ done in \S \ref{sec:func} . 
Since $\fX_s$ is projective, we may choose any very ample line bundle $L_s\to \fX_s$, pick 
a (finite) basis $\{\sigma_i\}$ of $H^0(\fX_s, L_s)$, and set  $\tilde{V}_i:=\{\sigma_i\neq 0\}$.
We obtain a finite affine open cover $\{\tilde{V}_i\}_{i\in I}$ of $\fX_s$.
Observe that $\tilde{h}_{i,j} := \sigma_j/\sigma_i \in \cO_{\fX_{s}}(\tilde{V}_i)$, and
 $\tilde{V}_i\cap \tilde{V}_j= \{\tilde{h}_{i,j}\neq 0\}$ in $\tilde{V}_i$. Recall that  $V_i:=\red_\fX^{-1}(\tilde{V}_i)$ forms a cover of $X^{\an}$ by affinoid domains
(see Lemma~\ref{lem:key-formal}) so that $\cO(V_i)$ is an affinoid algebra, and we have a canonical finite morphism
$\cO_{\fX_{s}}(\tilde{V}_i) \to \widetilde{\cO(V_i)} $.

\smallskip

For each $i$, we set $A_i := \cO(V_i)_\e$. By construction, $U_i:= \prim(A_i)$ forms an open cover of $\fX[\e]$. 
Note that $A_i$ is a finitely generated $k_{\e}$-algebra by Lemma \ref{lem:fg}.

\smallskip

 Since the reduction map $\cO(V_i)^{\circ}\to \widetilde{\cO(V_i)}$ is surjective,
 we may choose a family of lifts $\hat{h}_{i,j}\in \cO(V_i)^\circ$ such that $\hat{h}_{i,i} =1$ for all $i$, 
 and the image of $\hat{h}_{i,j}$ equals $\tilde{h}_{i,j}$ in  $\cO_{\fX_{s}}(\tilde{V}_i)$.
 Let $h_{i,j}$ be the $\e$-reduction of $\hat{h}_{i,j}$ in $A_i$. Write $A_{i,j} = A_i[h_{i,j}^{-1}]$. 

\smallskip 

 The equality $V_i \cap \{h_{i,j} \neq 0\}= V_j \cap \{h_{j,i} \neq 0\}$ in $\fX_s$ gives
an isomorphism at the level of affinoid algebras $\cO(V_i)\langle \hat{h}_{i,j}^{-1}\rangle \to \cO(V_j)\langle \hat{h}_{j,i}^{-1}\rangle$, and
 induces a ring isomorphism $\alpha_{i,j}^*\colon A_{j,i}\to A_{i,j}$.
 In the same way,  the equality $V_i \cap \{h_{i,j}h_{i,k} \neq 0\}= V_j \cap \{h_{j,i}h_{j,k}  \neq 0\}$ in $\fX_s$ gives
 isomorphisms $\alpha_{i,j,k}^*\colon A_{j,i,k}\to A_{i,j,k}$
 with  $A_{i,j,k}:=A_{i}[(h_{i,j}h_{i,k})^{-1}]$. The first set of conditions (a), (b) and (c) are easily satisfied.

\smallskip

Now we collect the data defining the map $f_\e \colon \fX[\e] \to \fX[\e]$.
Recall that we are given morphisms  $f\colon X^{\an} \to X^{\an}$, and $\ff_s\colon \fX_s \to \fX_s$ such that the diagram~\eqref{eq:com-red-f} commutes.

It follows that for each $i\in I$, there is a finite subset $\tilde{S}_i$ of $\cO_{\fX_s}(\tilde{V}_i)$, and 
a map $\tilde{\sigma}_i\colon \tilde{S}_i\to I$ such that:
\begin{itemize}
\item
the family $\{\tilde{s}\neq0\}_{\tilde{s}\in \tilde{S}_i}$ forms an open cover of $\tilde{V}_i$;
\item
$\ff_s(\tilde{V}_i\setminus\{\tilde{s}=0\})\subseteq \tilde{V}_{\tilde{\sigma}(\tilde{s})}$ for every $\tilde{s}\in \tilde{S}_i.$
\end{itemize}
Now lift each $\tilde{s}\in \tilde{S}_i$ to some $\hat{s}$ in $\cO(V_i)^{\circ},$ and write $\hat{S}_i=\{ \hat{s}\}$. 
Since the ideal generated by $\tilde{S}_i$ in $\cO_{\fX_s}(\tilde{V}_i)$ is the unit ideal, $\hat{S}_i$ also generates the unit ideal in $\cO(V_i)^{\circ}$.
Fix a set $\hat{h}_{\hat{s}}\in\cO(V_i)^{\circ}$ such that $\sum \hat{s}\hat{h}_{\hat{s}}=1$.
Let $s$ (resp. $h_s$) be the image of $\hat{s}$ (resp. of $\hat{h}_{\hat{s}}$) in $A_i$. 
Write $S_i = \{ s\} \subset A_i$, and observe that $\sum_{s\in S_i} s h_s = 1$ so that condition (d) holds. 
The map $\tilde{\sigma}_i$ descends to a map $\sigma_i\colon S_i \to I$. 

Since $f(\red_\fX^{-1}(\tilde{V}_i\setminus\{\tilde{s}=0\}))\subseteq \red_\fX^{-1}(\tilde{V}_{\tilde{\sigma}(\tilde{s})})$
 for every $\tilde{s} \in \tilde{S}_i$, the $\e$-reduction of this morphism induces a morphism $\theta_{i,s}^*\colon A_{\sigma_i(s)}\to A_i[s^{-1}]$, and
 conditions (e) and (f) are easily satisfied.
  
It follows from the very definitions that $\fX[\e]$ is homeomorphic to $X(\cG_\e)$ and that $f_\e$ is conjugate to 
$f_{\cG_\e}$ under that homeomorphism.

\subsubsection{Descent argument}\label{subsubsecda}
Let us fix a (GD) $\cG$ over some ring $R$ as in Section \ref{secgd} (so that it is determined by a finite set $I$, 
a collection of rings $\{A_i\}_{i\in I}$, elements $h_{i,j}\in A_i$, etc.).
We say that $\cG'$ is a sub-(GD) and we write $\cG' \subseteq \cG$ if the following conditions hold:
\begin{itemize}
\item
$\cG'$ is determined by a family of sub-rings $B_i \subset A_i$ indexed by the set $I$;
\item
$B_i$ contains $h_{i,j}$ for all $j$, the set $S_i$, and $h_s$ for all $s\in S_i$;
\item 
the set $S_i(\cG')$ is equal to $S_i$, the map $\sigma_i(\cG')$ is equal to $\sigma_i$, and the maps
$S_i \to B_i$ are induced by the maps $S_i \to A_i$ (i.e. the image of $S_i \to A_i$ belongs to $B_i$);
\item
all morphisms $\alpha_{i,j}^*(\cG')$,  $\alpha_{i,j,k}^*(\cG')$, $\alpha^*_{i,s}(\cG')$, $\alpha^*_{i,s_1,s_2}(\cG')$ are induced
by the corresponding ones in $\cG$, that is, we impose:
\begin{align*}
\alpha_{i,j}^*(B_{j,i})\subseteq B_{i,j}; \,\,
&\alpha_{i,j,k}^*(B_{j,i,k})\subseteq B_{i,j,k}
\\
\alpha_{i,s}^*(B_{\sigma_i(s)})\subseteq B_i[s^{-1}]; \,\,
&
\alpha^*_{i,s_1,s_2}(B_{\sigma_i(s_1),\sigma_i(s_2)})\subseteq B_i[(s_1s_2)^{-1}]~.
\end{align*}
\end{itemize}
When $\cG'$ is a sub-(GD) of $\cG$, the inclusion morphisms $B_i \subset A_i$ induce a canonical continuous map
$\pi^{\cG'/\cG} \colon X(\cG) \to X(\cG')$ such that 
$\pi^{\cG/\cG'}\circ f_{\cG}=f_{\cG'}\circ \pi^{\cG/\cG'}$. 

\begin{lemma}\label{lem:descent}
Suppose $\cG$ is a (GD) that is finitely generated over $R$. 
Then there exists an inductive set $\cN$, a family of Noetherian rings $\{R_t\}_{t\in \cN}$
and a family of sub-(GD) $\cG_t\subset \cG$, each defined over $R_t$ such that:
\begin{enumerate}
\item
$\cG_t \subset \cG_{t'}\subset \cG$ for any $t\le t'$;
\item
$\cG_t$ is finitely generated over $R_t$ for all $t\in \cN$;
\item  
the canonical map $X(\cG)\to \projlim_\cN X(\cG_t) $ is a homeomorphism.
\end{enumerate}
\end{lemma}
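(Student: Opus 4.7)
The plan is to perform an absolute Noetherian approximation of every piece of data defining $\cG$. First I would choose, for each $i \in I$, a finite generating set $G_i \subset A_i$ of $A_i$ as an $R$-algebra, enlarged to contain $h_{i,j}$ for all $j$, every $s \in S_i$, and every $h_s$. Each structural morphism of $\cG$ (the isomorphisms $\alpha_{i,j}^*$, $\alpha_{i,j,k}^*$ and the maps $\theta_{i,s}^*$, $\theta_{i,s_1,s_2}^*$) sends the finitely many generators of its source to elements of a localization of its target at a product of elements already available, hence is determined by finitely many elements of $\bigcup_i A_i$ and finitely many coefficients in $R$. Likewise, each compatibility condition in (a)--(f) and the relation $\sum_{s \in S_i} s h_s = 1$ from (d) is an equality in some localization of an $A_i$ that, after clearing denominators, reduces to a polynomial identity in finitely many elements of $\bigcup_i G_i$ with finitely many coefficients in $R$.

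Next I would enlarge each $G_i$ by the images of all generators under the structural morphisms, and adjoin to a subring $R_0 \subset R$ every coefficient appearing in these expressions and in the above identities. This produces a Noetherian subring $R_0$ (finitely generated over $\Z$, hence Noetherian by Hilbert's basis theorem) and finitely generated $R_0$-subalgebras $B_{i,0} \subset A_i$ that are stable under all the structural morphisms and on which (a)--(f) hold, i.e., a sub-(GD) $\cG_0 \subset \cG$ finitely generated over $R_0$. I then define $\cN$ to be the collection of all sub-(GD) $\cG' \subset \cG$ defined over some finitely generated (hence Noetherian) subring $R' \subset R$, containing $\cG_0$ and finitely generated over $R'$, partially ordered by inclusion. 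The same enlargement procedure, applied to any finite family of elements of $\bigcup_i A_i$ or of $R$, will show that $\cN$ is inductive and that $A_i = \bigcup_{t \in \cN} B_{i,t}$; this establishes (1) and (2).

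For (3), Lemma~\ref{lemdirectlimring} will identify $\prim(A_i)$ with $\projlim_t \prim(B_{i,t})$ compatibly with the gluing morphisms $\alpha_{i,j}^*$. A direct check using the patching description of $X(\cG)$ then shows that the preimage in $X(\cG)$ of each piece $U_i(t) := \prim(B_{i,t}) \subset X(\cG_t)$ is exactly $U_i = \prim(A_i)$, and the analogous statement holds for the transition maps $X(\cG_t) \to X(\cG_{t_0})$ when $t \geq t_0$. From this one concludes that any compatible family $(x_t)_{t \in \cN}$ eventually lies in a single piece $U_i(t)$ and hence lifts uniquely to a point of $U_i \subset X(\cG)$, yielding surjectivity; injectivity follows from the above limit identification. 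Since $X(\cG)$ is compact and $\projlim_\cN X(\cG_t)$ is Hausdorff, the continuous bijection $X(\cG) \to \projlim_\cN X(\cG_t)$ will then be a homeomorphism. The hard part is the bookkeeping needed to simultaneously descend all six conditions (a)--(f) to each $\cG_t$; this will force iterating the enlargement of the $G_i$ and $R_0$ a bounded number of times, until closure under every structural morphism is achieved.
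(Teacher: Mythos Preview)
Your approach is correct and follows the same absolute Noetherian approximation strategy as the paper. The paper's execution is slightly leaner, and it is worth noting the simplification: the paper fixes once and for all a finite generating set $T_i$ of each $A_i$ over $R$ (containing $S_i$, the $h_{i,j}$, and the $h_s$), writes every structural morphism in these generators, and lets $O(\cG)\subset R$ be the finite set of coefficients that appear. The index set $\cN$ is then simply the poset of finitely generated subrings $R'\subset R$ containing $O(\cG)$, and for each such $R'$ one sets $A'_i:=R'[T_i]\subset A_i$. With this choice no iterated enlargement is needed: since $R'\supset O(\cG)$, each $\alpha_{i,j}^*$, $\alpha_{i,j,k}^*$, $\theta_{i,s}^*$, $\theta_{i,s_1,s_2}^*$ automatically maps $R'[T_j]$ (or its localization) into the appropriate localization of $R'[T_i]$, and conditions (a)--(f), being identities already valid in the ambient $A_i$, restrict for free. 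Your version, where you also enlarge the $G_i$ and index $\cN$ by sub-(GD)'s, works too but carries extra bookkeeping that the paper avoids. For item~(3) the paper is terser than you: it simply invokes $A_i=\injlim_{R'} A'_i$ and Lemma~\ref{lemdirectlimring} to get $\Id(A_i)\cong\projlim\Id(A'_i)$, from which the statement for the glued spaces $X(\cG)$ follows; your more explicit compactness argument is a fine way to fill in that last step.
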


\begin{proof}
The set $\cN$ will be defined as the poset of all subrings $R'$ of $R$ ordered by inclusion subject to the conditions that $R'$ is finitely generated (over $\mathbf{Z}$) and contains a finite set $O(\cG)\subset R$ that we define now. 

For each $i$, let us fix $T_i=\{t_{i,\cdot}\}$ a finite set of generators of $A_i$ as an $R$-algebra that contains $S_i$, $h_{i,j}$ for all $j$, and
$h_s$ for all $s\in S_i$. We express all morphisms $\alpha_{i,j}, \alpha_{i,j,k}, \theta_{i,s}$, and $\theta_{i,s_1,s_2}$ in these basis.
For instance, we write
\begin{equation}\label{eq:def-alpha}
\alpha_{i,j}(t_{j,\beta}) = \sum_{n\in \Z} \sum_{\alpha} r_\alpha(n,j,\beta) t_i^\alpha h_{i,j}^n
\end{equation}
where $r(n,j,\beta) \in R$, and the sum is finite. In this way, we collect a finite set of elements 
of $R$, and denote by $O(\cG)$ its union. 

Now take any finitely generated subring $R'$ of $R$ containing $O(\cG)$. We let $\cG'$ be the (GD) over $R'$
determined by the collection of subrings $A'_i$ of $A_i$ generated by $T_i$ as an $R'$ algebra. 

 Define $\alpha_{i,j}'\colon A_j'\to A_i$ as the restriction of  the morphism $\alpha_{i,j}\colon A_j\to A_i$ to $A_j'$.
By~\eqref{eq:def-alpha}, $\alpha_{i,j}'(A_j')$ is contained in $A_i'$, so that we get a morphism 
$\alpha_{i,j}'\colon A_j'\to A_i'$. 
We proceed in the same way to construct $\alpha'_{i,j,k}, \theta'_{i,s}$, and $\theta'_{i,s_1,s_2}$. 

It is clear that $\cG'$ is a sub-(GD) of $\cG$, that $\cG' \subset \cG''$ whenever $R' \subset R''$
and $\cG'$ is finitely generated over $R'$ by construction. 

Finally the injective limit of all finitely subrings $R'$ of $R$ containing  $O(\cG)$ is equal to $R$, 
hence for any $i$,  $\Id(A_i)$ is homeomorphic to $\projlim \Id(A'_i)$ by Lemma~\ref{lemdirectlimring}
which implies (3).
\end{proof}
 
\proof[Proof of Proposition~\ref{prop:eps-zero}]
Let $\cG_{\e}$ be the (GD) associated to $(\fX[\e], f_{\e})$ as defined in Section \ref{subsubsecgddynpair}
so that $X(\cG_\e) = \fX[\e]$ and $f_\e = f_{\cG_\e}$.

Pick any inductive family of sub-(GD) $\cG_t\subseteq \cG_\e$ as in Lemma~\ref{lem:descent}. 
For each $t\in \cN$, $\cG_t$ is finitely generated over a Noetherian ring $R_t$. 
The space $X(\cG_t)$ is a finite union of compact subsets of Noetherian Priestley spaces of the form $\Id(A)$
for some Noetherian ring $A$, hence any Radon measure on  $X(\cG_t)$ is atomic by Theorem~\ref{thm:priestley-entropy}.
Since  $X(\cG_t)$ is compact, the variationnal principle (Theorem~\ref{thm:variational}) implies that $\htop(f_{\cG_t})=0$
for all $t$. 
As $X(\cG)$ is homeomorphic to the projective limit $\projlim_\cN X(\cG_t)$ and the following diagrams commute
\[\xymatrix{
X(\cG_\e) \ar[r]^{f_{\cG_\e}} \ar[d]_{\pi^{\cG_\e/\cG_t}} & X(\cG_\e) \ar[d]^{\pi^{\cG_\e/\cG_t}}
  \\
X(\cG_t)   \ar[r]^{f_{\cG_t}} & X(\cG_t)
}
\]
Lemma~\ref{leminverselimitentropy} shows $\htop(f_{\e})=\htop(f_{\cG_\e}) \le \limsup_t \htop(f_{\cG_t}) = 0$ as required.
\endproof


\appendix
\section{Topological entropy and field extensions} \label{sec:appennd}

Let $k$ be any complete metrized field, and $X$ a projective variety defined over $k$. 
Let $f \colon X \dashrightarrow X$ be any dominant rational map. 
Recall that $\htop(f)$ is defined in \S \ref{sec:definition-topo-rat}, and coincides with the topological entropy of
the action of $f$ on the Berkovich analytification of $X$ when $f$ is a regular map. 

We propose the following problem.

\begin{conjecture}
Let $K/k$ be any complete metrized field extension, and let $f_K\colon X_K \dashrightarrow X_K$ be the rational self-map induced by base change. 
Then we have $\htop(f_K) = \htop(f)$.
\end{conjecture}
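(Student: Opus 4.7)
The direction $\htop(f_K)\ge\htop(f)$ is Proposition~\ref{probasechangeentropy} and is established by lifting a maximal $(n,\cE)$-separating set through the continuous surjection $\pi_K\colon X_K^{\an}\to X^{\an}$ provided by Lemma~\ref{lemsurjectivebasechange}, combined with Corollary~\ref{corsujsysentropy}. The genuine content of the conjecture lies in the reverse inequality $\htop(f_K)\le\htop(f)$.

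As a first step, I would reduce to $k$ and $K$ algebraically closed and non-trivially valued, with $X$ normal. The case of a finite separable extension $K/k$ is already handled by Corollary~\ref{corsujsysentropy}: the fibers of $\pi_K$ are finite Galois orbits on which $f_K$ acts equivariantly, so the two entropies coincide. The real obstruction appears when the residue extension $\tilde K/\tilde k$ is transcendental or the value group properly enlarges $|k^*|$, in which case the fibers of $\pi_K$ become genuine Berkovich spectra $\cM(\cH(x)\hotimes_k K)$.

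The approach I would pursue proceeds through the variational principle (Theorem~\ref{thm:variational}): it suffices to show that every $f_K$-invariant ergodic Radon probability measure $\mu_K$ on $X_K^{\an}$ satisfies $h_{\mu_K}(f_K)=h_{(\pi_K)_*\mu_K}(f)$. The inequality $\ge$ is the standard factor inequality. For $\le$, I would invoke an Abramov--Rokhlin decomposition of the form
\[
h_{\mu_K}(f_K) \;=\; h_{(\pi_K)_*\mu_K}(f) + h_{\mu_K}(f_K \mid \pi_K),
\]
and aim to show that the fiberwise conditional entropy vanishes. Disintegrating $\mu_K$ along the fibers of $\pi_K$, this reduces to controlling the dynamics of $f_K$ along orbits of $f$ acting on the varying Berkovich spectra $\cM(\cH(x)\hotimes_k K)$. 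For periodic orbits of $f$, this is genuinely a self-map of an affinoid-like space, to which Theorem~\ref{thm:entropy-affinoid} (or an appropriate extension) should apply, yielding zero topological entropy. A parallel cover-based approach, using Proposition~\ref{prop:special covers} over $K^\circ$ to obtain a model $\fY$ refining a given open cover of $X_K^{\an}$ and then dominating $\fY$ by a base change $\fX\times_{\spec k^\circ}\spec K^\circ$ of a $k^\circ$-model, would be pursued in tandem but runs into the same fiberwise issue through a spurious $\log C$ term counting the decomposition of irreducible components of $\fX_s$ under $\tilde k \hookrightarrow \tilde K$.

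The main obstacle is twofold. First, one needs a workable disintegration theorem on $X_K^{\an}$, which is typically non-metrizable, together with a rigorous formulation of fiber entropy along non-periodic orbits (where $f_K$ maps $\pi_K^{-1}(x)$ to the distinct fiber $\pi_K^{-1}(f(x))$, so no honest self-map is available and one must deal with sequence-type entropy along varying base points). Second, one must extend Theorem~\ref{thm:entropy-affinoid} to affinoid-like spectra $\cM(\cH(x)\hotimes_k K)$ whose base field $\cH(x)$ varies with $x$ rather than being a fixed complete non-Archimedean field, which likely requires a new round of absolute Noetherian approximation in the spirit of Section~\ref{sec:endo-goodred} that is uniform in $x$.
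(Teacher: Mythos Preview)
The statement you are attempting is a \emph{conjecture}: the paper does not prove it, and explicitly leaves it open. What the paper does establish are three partial results in the appendix: the inequality $\htop(f_K)\ge\htop(f)$ (Proposition~\ref{probasechangeentropy-appendix}), equality when $K=\widehat{k^{\alg}}$ and $f$ is regular (Proposition~\ref{prop:conj-entropy}), and equality when $\dim X=1$ (Proposition~\ref{prop:end1}).

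Your treatment of the easy direction and of finite separable extensions matches the paper exactly. The paper's argument for $K=\widehat{k^{\alg}}$ is essentially your finite-extension step iterated: one writes $X_{\widehat{k^{\alg}}}^{\an}\simeq\projlim_K X_K^{\an}$ over finite Galois extensions $K/k$, applies Lemma~\ref{leminverselimitentropy}, and handles each finite step via Abramov--Rokhlin on finite fibers. So your plan recovers this case.

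For the general transcendental extension, the obstacles you flag are precisely why the statement remains a conjecture. Your Abramov--Rokhlin strategy is natural, but the two difficulties you name (disintegration on non-metrizable spaces, and a fiber-entropy notion along non-periodic orbits with varying base field $\cH(x)$) are not resolved anywhere in the paper, and there is no indication that Theorem~\ref{thm:entropy-affinoid} extends uniformly over such families. The paper's one technique you do not mention is Poineau's canonical section $\sigma_{K/k}\colon X^{\an}\to X_K^{\an}$, which exists for $k$ algebraically closed and is $f$-equivariant; in dimension one the complement of its image is a disjoint union of open balls, so Lemma~\ref{lem:finally} kills the fiberwise contribution directly. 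This bypasses disintegration entirely, but only works because the fibers are so simple in dimension one. In higher dimension no analogous structural description of $X_K^{\an}\setminus\sigma_{K/k}(X^{\an})$ is available, and neither your approach nor the paper's offers a route around this.
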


Let us prove the following easy bound (thereby proving Proposition~\ref{probasechangeentropy}). 

\begin{proposition}\label{probasechangeentropy-appendix}
Let $K/k$ be any complete metrized field extension, and let $f_K\colon X_K \dashrightarrow X_K$ be the rational self-map induced by base change. 
Then $\htop(f_K) \ge \htop(f)$.
\end{proposition}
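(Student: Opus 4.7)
The plan is to apply Corollary~\ref{corsujsysentropy} to the natural base change map $\pi_K\colon X_K^{\an}\to X^{\an}$. This reduces the proposition to three verifications: (a)~$\pi_K$ is continuous and surjective, (b)~the indeterminacy loci are compatible in the sense that $I(f_K)\subseteq \pi_K^{-1}(I(f))$, and (c)~$\pi_K\circ f_K=f\circ \pi_K$ off $\pi_K^{-1}(I(f))$. Granted these, the corollary gives $\htop(f_K)\ge \htop(f)$ immediately.

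First I would construct $\pi_K$ by analytification. Given an affine chart $U=\spec A$ of $X$, set $U_K=\spec(A\otimes_k K)$; restriction of multiplicative semi-norms defines a continuous map $\mathcal{M}(A\hotimes_k K)\cap U_K^{\an}\to U^{\an}$, and these patch to a continuous map $\pi_K\colon X_K^{\an}\to X^{\an}$. Continuity is clear from the construction (it is the analytification of the structure morphism $X_K\to X$). For surjectivity, pick $x\in X^{\an}$ with completed residue field $\cH(x)$. By Gruson's theorem (see~\cite[Section~3, Theorem~1]{Gruson1966}), the natural morphism $\cH(x)\otimes_k K\to \cH(x)\hotimes_k K$ is injective, so $\cH(x)\hotimes_k K\neq 0$. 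Thus $\cM(\cH(x)\hotimes_k K)$ is non-empty by~\cite[Theorem~1.2.1]{berkovich}. Any point of this spectrum yields, via the canonical morphism $\spec \cH(x)\to X$ extending $x$, a point of $X_K^{\an}$ mapping to $x$ under $\pi_K$. This is precisely the argument of Lemma~\ref{lemsurjectivebasechange}, applied locally and then globalized via an affine open cover of $X$.

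Next I would verify (b) and (c). The indeterminacy locus $\mathrm{Ind}(f_K)\subset X_K$ is the base change of $\mathrm{Ind}(f)\subset X$, since taking the graph and its projection commutes with flat base change; analytifying yields $I(f_K)=\pi_K^{-1}(I(f))$, which is stronger than needed. For (c), on the Zariski open locus $X\setminus\mathrm{Ind}(f)$ the map $f$ is a morphism of $k$-schemes, and $f_K$ is its base change, so the square
\[\xymatrix{
X_K\setminus\mathrm{Ind}(f_K)\ar[r]^-{f_K}\ar[d]_{\pi_K}& X_K\ar[d]^{\pi_K}\\
X\setminus \mathrm{Ind}(f)\ar[r]^-{f}& X
}\]
commutes as a diagram of schemes; analytification is a functor, hence the analytified square commutes as well, giving $\pi_K\circ f_K=f\circ \pi_K$ on $X_K^{\an}\setminus\pi_K^{-1}(I(f))$.

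All hypotheses of Corollary~\ref{corsujsysentropy} are then met with $\pi=\pi_K$, and the desired inequality $\htop(f_K)\ge\htop(f)$ follows. The only non-routine ingredient is the surjectivity of $\pi_K$, and as explained this is a mild globalization of Lemma~\ref{lemsurjectivebasechange} using Gruson's non-vanishing of completed tensor products; no further difficulty is expected.
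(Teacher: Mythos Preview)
Your proof is correct and follows essentially the same approach as the paper: establish surjectivity of the base change map $\pi_K\colon X_K^{\an}\to X^{\an}$ (the paper invokes either~\cite[\S VI.6]{zariski} or Lemma~\ref{lemsurjectivebasechange}, just as you do), note that $\Ind(f_K)=\Ind(f)_K$, and conclude via Corollary~\ref{corsujsysentropy}. Your write-up is more detailed on the verification of the commutation hypothesis, but the strategy is identical.
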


\begin{proof}
Base change induces a canonical continuous map $\pi_{K/k}\colon X^{\an}_K \to X^{\an}_k$. 
This map is surjective
since the scheme morphism $X_K \to X_k$ is surjective, and any norm on a field $L$ admits at least one extension 
to any of its field extension $L'/L$, see~\cite[\S VI.6]{zariski}.\footnote{This fact also follows from Lemma~\ref{lemsurjectivebasechange}}.
Observe now that $\Ind(f_K) = \Ind(f)_K$.
We conclude the proof using Corollary \ref{corsujsysentropy}.
\end{proof}

We now prove a partial converse to this proposition.

\begin{proposition}\label{prop:conj-entropy}
Let $\widehat{k^{\alg}}$ be the completion of an algebraic closure of $k$. 
For any regular map $f\colon X \to X$, we have
$\htop(f_{\widehat{k^{\alg}}}) = \htop(f)$.
\end{proposition}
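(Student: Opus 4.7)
By Proposition~\ref{probasechangeentropy-appendix} we already have $\htop(f_K)\ge\htop(f)$, so only the reverse inequality needs to be established. The plan is to identify $X^{\an}_K$ with the projective limit $\projlim_L X^{\an}_L$, where $L$ runs over finite subextensions of $k^{\alg}/k$ directed by inclusion, and then to combine Lemma~\ref{leminverselimitentropy} with the invariance of topological entropy under finite extensions of the base field.

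First, one shows that the natural map $X^{\an}_K\to \projlim_L X^{\an}_L$ induced by the restrictions $\pi_{K/L}$ is a homeomorphism conjugating $f_K$ to the inverse limit of the maps $f_L$. Working affine-locally on $U=\spec A$ with $A$ finitely generated over $k$, a point of $U^{\an}_K$ is a multiplicative seminorm on $A\otimes_k K$ extending $|\cdot|_K$. Any element of $A\otimes_k K$ is a finite sum $\sum a_i\otimes c_i$ with $c_i\in K$, and approximating each $c_i$ by elements of $k^{\alg}$ (which is dense in $K$) produces a sequence converging in any such seminorm, since multiplicativity yields $|a\otimes (c-c')| = |a|\cdot|c-c'|_K$. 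Hence a seminorm on $A\otimes_k K$ extending $|\cdot|_K$ is determined by its restrictions to $A\otimes_k L$ as $L$ varies, giving injectivity; surjectivity follows by extending any compatible system of seminorms from $A\otimes_k k^{\alg}=\bigcup_L A\otimes_k L$ to $A\otimes_k K$ by continuity, with multiplicativity passing to the limit. A continuous bijection between compact Hausdorff spaces is a homeomorphism, and these local identifications glue coherently across an affine cover and commute with the dynamics at each level.

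Second, for each finite extension $L/k$ one has $\htop(f_L)=\htop(f)$. The $\ge$ direction is again Proposition~\ref{probasechangeentropy-appendix}. For $\le$, the map $\pi_{L/k}\colon X^{\an}_L\to X^{\an}_k$ has fibers of cardinality at most $[L:k]$, since the fiber above $x$ is the Berkovich spectrum of the finite-dimensional $\cH(x)$-algebra $\cH(x)\otimes_k L$, a finite product of complete field extensions of $\cH(x)$. Bowen's relative entropy inequality (in the uniform-space formulation of~\cite{hood}) then yields $\htop(f_L)\le \htop(f)$, since uniformly finite fibers force the fiber entropy to vanish. Combining both steps and applying Lemma~\ref{leminverselimitentropy}, whose surjectivity hypothesis is satisfied as each $\pi_{L'/L}$ is surjective, one obtains $\htop(f_K) = \limsup_L \htop(f_L) = \htop(f)$. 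The main technical obstacle lies in the projective-limit identification of the first step, specifically the coherent gluing of compatible systems of seminorms across affine charts and the rigorous verification of continuity of the inverse map.
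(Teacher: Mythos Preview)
Your strategy matches the paper's exactly: write $X^{\an}_{\widehat{k^{\alg}}}$ as a projective limit of the $X^{\an}_L$ over finite subextensions, invoke Lemma~\ref{leminverselimitentropy}, and show that entropy is unchanged under finite base change. The paper carries out the first step by quoting \cite[Proposition~1.3.5]{berkovich}, which gives $X^{\an}_k \simeq X^{\an}_{\widehat{k^{\sep}}}/G$ and hence the desired projective limit over finite Galois extensions (a cofinal family), bypassing the seminorm-by-seminorm argument you sketch; your direct approach is fine in spirit, but the surjectivity (extending a compatible family of seminorms from $A\otimes_k k^{\alg}$ to $A\otimes_k K$ by continuity, and checking multiplicativity and compatibility across charts) would need more care than you give it.

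The one point you should revisit is the finite-extension step. You appeal to Bowen's relative entropy inequality ``in the uniform-space formulation of~\cite{hood}'', but Hood's paper establishes the variational principle in that generality, not Bowen's fiber inequality, and the latter is classically proved using the metric. Since $X^{\an}$ need not be metrizable, this is a genuine concern. The paper avoids it by working measure-theoretically: finite fibers plus the Abramov--Rokhlin formula give $h_\mu(f_L)=h_{\pi_*\mu}(f)$ for every invariant measure, and then the variational principle (valid on compact Hausdorff spaces by \cite{goodwyn,goodman}) converts this to $\htop(f_L)=\htop(f)$. Replacing your Bowen argument with this one makes the proof complete.
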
 

Let $p$ be the characteristic of $k$.
\begin{proof}

Let us consider the separable closure $k^{\sep}\subset k^{\alg}$ which consists of the union of all separable finite extensions of $k$. 
Note that the algebraic extension $k^{\alg}/k^{\sep}$ is purely inseparable (i.e., for any $a\in k^{\alg}$ we have $a^{p^n} \in k^{\sep}$ for some $n$), 
and  $k^{\sep}/k$ is an algebraic Galois extension. 
It follows that the group $G:= \gal(k^{\sep}/k)$ is the projective limit of $\gal(K/k)$ where $K$ ranges over the set $\mathfrak{G}$ of all finite Galois subextensions of $k$ inside $k^{\alg}$, see e.g.~\cite[VI.\S 14]{lang-algebra}.

Recall the norm on $k$ extends in a unique way to any finite extension as a multiplicative norm, 
so that any element of  $G:=\gal(k^{\sep}/k)$ induces an isometry on $k^{\sep}$. In particular
$G$ acts isometrically on its completion $\widehat{k^{\sep}}$.

By~\cite[Proposition~1.3.5]{berkovich} the analytic space  $X_k^{\an}$ is homeomorphic
to the quotient of $X_{\widehat{k^{\sep}}}^{\an}$ by the action of $G$, and 
the base change morphism 
$X_{\widehat{k^{\alg}}}^{\an}\to X_{\widehat{k^{\sep}}}^{\an}$ is a homeomorphism,
so that 
\[X_{\widehat{k^{\alg}}}^{\an} \simeq X_{\widehat{k^{\sep}}}^{\an}\simeq \projlim_{K\in\mathfrak{G}} X_K^{\an}~.\]
Since $f$ lifts to a continuous map to  $X_K^{\an}$ for any complete field extension $K/k$, we conclude that
$\htop(f_{\widehat{k^{\alg}}}) = \sup_{K \in \mathfrak{G}} \htop(f_K)$ by Lemma \ref{leminverselimitentropy}.

When $K$ is a finite extension of $k$, then the map $X^{\an}_K \to X^{\an}_k$ has finite fibers. 
It follows from the  Abramov-Rokhlin's formula (see e.g.,~\cite[Proposition 2.15]{adler-marcus}) that the measure entropy is preserved under finite extension. 
We conclude by the variational principle that $\htop(f) = \htop(f_K)$, and therefore $\htop(f_{\widehat{k^{\alg}}}) =\htop(f)$. 
\end{proof}

We conclude this Appendix with the following solution to the conjecture in the case 
$\dim(X)=1$. 
\begin{proposition}\label{prop:end1}
Let $f\colon X \to X$ be any rational self-map of a smooth projective curve $X$ defined over $k$. 
Then for any complete metrized field extension  $K/k$, we have 
 $\htop(f_K) = \htop(f)$.
\end{proposition}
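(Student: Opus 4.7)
By Proposition~\ref{probasechangeentropy-appendix} we already have $\htop(f_K)\ge \htop(f)$, so only the reverse inequality $\htop(f_K)\le \htop(f)$ needs to be established. As a first reduction, I would apply Proposition~\ref{prop:conj-entropy} successively to $k$ and to $K$: since
\[
\htop(f)=\htop(f_{\widehat{k^{\alg}}}) \and \htop(f_K)=\htop(f_{\widehat{K^{\alg}}}),
\]
and since $\widehat{K^{\alg}}$ is a complete metrized field extension of $\widehat{k^{\alg}}$, we may replace $(k,K)$ by $(\widehat{k^{\alg}},\widehat{K^{\alg}})$ and assume throughout that both $k$ and $K$ are algebraically closed and complete.

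In this reduced setting, I would invoke the variational principle (Theorem~\ref{thm:variational}) over $K$: since $X_K^{\an}$ is compact Hausdorff, $\htop(f_K)=\sup_{\mu\in\cM(f_K)}h_\mu(f_K)$. Given an $f_K$-invariant Radon probability measure $\mu$ on $X_K^{\an}$, its pushforward $\bar\mu:=(\pi_{K/k})_*\mu$ is an $f$-invariant Radon probability measure on $X_k^{\an}$, and the Abramov-Rokhlin formula for the topological factor $\pi_{K/k}\colon X_K^{\an}\to X_k^{\an}$ reads
\[
h_\mu(f_K)=h_{\bar\mu}(f)+h_\mu(f_K\mid\pi_{K/k}).
\]
It would then suffice to prove that the fibered entropy $h_\mu(f_K\mid\pi_{K/k})$ vanishes for every such $\mu$, for then another application of the variational principle --- this time over $k$ --- would give $h_\mu(f_K)=h_{\bar\mu}(f)\le\htop(f)$, yielding the desired bound.

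The vanishing of the fibered entropy is where the one-dimensionality of $X$ plays the decisive role. The fiber of $\pi_{K/k}$ over a point $x\in X_k^{\an}$ is the Berkovich spectrum $\cM(\cH(x)\hotimes_k K)$; when $\dim X=1$, the underlying scheme-theoretic point has transcendence degree at most one over $k$, and these fibers carry a natural tree-like structure with no ``internal'' dynamical complexity. Concretely, I would approximate $\cH(x)\hotimes_k K$ by finitely generated sub-algebras over Noetherian subrings of $K$, in the spirit of the descent argument developed in Section~\ref{sec:proofB}, and thereby reduce the fibered entropy computation to an entropy estimate on a compact subset of a Noetherian Priestley space, which vanishes by Theorem~\ref{thm:priestley-entropypartial}. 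The main obstacle will be this last step: rigorously establishing the vanishing of the fibered entropy by combining the Abramov-Rokhlin decomposition in the non-metrizable Berkovich setting with the Noetherian approximation technique of the paper, uniformly in $x\in X_k^{\an}$. The one-dimensionality of $X$ is crucial here: in higher dimensions the analogous fibered dynamics can be genuinely expansive, and the equality $\htop(f_K)=\htop(f)$ is only conjectural.
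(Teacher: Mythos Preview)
Your reductions are fine, and the Abramov--Rokhlin framework is a reasonable strategy in principle, but the crucial step --- vanishing of the fibered entropy $h_\mu(f_K\mid\pi_{K/k})$ --- is not addressed by the method you sketch. The fibers $\cM(\cH(x)\hotimes_k K)$ are Berkovich analytic spaces, not Priestley spaces, and $f_K$ does not restrict to a self-map of a single fiber (it sends the fiber over $x$ to the fiber over $f(x)$), so there is no self-map to which Theorem~\ref{thm:priestley-entropypartial} could be applied. The descent argument of Section~\ref{sec:proofB} concerns the $\e$-reduction of a $k^\circ$-model, which is a completely different object from a base-change fiber; nothing in that argument produces a Noetherian Priestley approximation of $\cM(\cH(x)\hotimes_k K)$ compatible with the fibered dynamics. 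As you yourself note at the end, an argument along these lines would not use the one-dimensionality of $X$ in any essential way, which is a warning sign: the statement is open in higher dimension.

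The paper's proof is quite different and uses the curve hypothesis concretely. After the same reductions to $k,K$ algebraically closed, it invokes Poineau's canonical continuous section $\sigma_{K/k}\colon X_k^{\an}\to X_K^{\an}$ of $\pi_{K/k}$, which is $f$-equivariant. The key geometric input (specific to dimension one) is that $X_K^{\an}\setminus\sigma_{K/k}(X_k^{\an})$ --- or, in the genus-one case with bad reduction, the complement of the skeleton --- is a disjoint union of open unit balls. One then appeals to the elementary Lemma~\ref{lem:finally}: any analytic self-map of $\D(0,1)$ has zero metric entropy for every invariant probability measure. Combined with the variational principle, this forces all the entropy of $f_K$ to live on the image of $\sigma_{K/k}$, which is homeomorphic to $X_k^{\an}$ with $f_K$ acting as $f$. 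This replaces your fibered-entropy argument entirely and is where the one-dimensionality is genuinely used.
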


We rely on the next lemma  whose proof is left to the reader (see~\cite[Th\'eor\`eme~C]{FR10}).
\begin{lemma}\label{lem:finally}
Suppose $f \colon \D(0,1) \to \D(0,1)$ is an analytic map of the open unit disk. 
Then any $f$-invariant probability measure has zero metric entropy.
\end{lemma}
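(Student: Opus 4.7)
By Proposition~\ref{probasechangeentropy-appendix} we already have $\htop(f_K) \ge \htop(f)$, so the plan is to prove the reverse inequality. Since $X$ is a smooth projective curve, the rational self-map $f$ automatically extends to a morphism $f \colon X \to X$; hence both $f$ and $f_K$ are continuous endomorphisms of the compact spaces $X^{\an}$ and $X_K^{\an}$, and the base-change map $\pi := \pi_{K/k} \colon X_K^{\an} \to X^{\an}$ is a continuous surjection (Lemma~\ref{lemsurjectivebasechange}) intertwining $f_K$ with $f$.

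The approach is through the variational principle (Theorem~\ref{thm:variational}). Fix any ergodic $f_K$-invariant Borel probability measure $\mu_K$ on $X_K^{\an}$ and let $\mu := \pi_*\mu_K$, an ergodic $f$-invariant measure on $X^{\an}$. The Abramov--Rokhlin entropy-addition formula for the topological factor $\pi$ gives
\[
h_{\mu_K}(f_K) \;=\; h_\mu(f) \;+\; h_{\mu_K}\bigl(f_K \,\big|\, \pi^{-1}\mathcal{B}\bigr),
\]
where $\mathcal{B}$ is the Borel $\sigma$-algebra on $X^{\an}$. The first summand is bounded by $\htop(f)$ by the variational principle applied to $(X^{\an},f)$. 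Thus the whole proof is reduced to showing that the relative (fiber) entropy vanishes; once this is done, taking the supremum over $\mu_K$ yields $\htop(f_K) \le \htop(f)$.

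To kill the fiber entropy, disintegrate $\mu_K = \int_{X^{\an}} \mu_{K,x}\,d\mu(x)$ along $\pi$. For $\mu$-a.e.\ $x \in X^{\an}$, the fiber $\pi^{-1}(x)$ is canonically homeomorphic to $\mathcal{M}(\mathcal{H}(x)\hotimes_k K)$; since $X$ is a curve, this fiber admits a finite covering by images of analytic maps from the open unit disk over a suitable complete extension of $K$. The map $f_K$ sends $\pi^{-1}(x)$ to $\pi^{-1}(f(x))$, so on a set of $\mu$-positive measure one may pass to the sequence of Birkhoff return times of $f$ to a small neighborhood of $x$: this produces, for $\mu$-a.e.\ $x$, a sequence of analytic self-maps of the relevant disk preserving the conditional measure $\mu_{K,x}$. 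Lemma~\ref{lem:finally} then forces the metric entropy of each such return dynamics to vanish, and integrating over $x$ via the Rokhlin formula for conditional entropies yields $h_{\mu_K}(f_K\mid \pi^{-1}\mathcal{B}) = 0$.

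The main obstacle is making the fiber-wise disk structure precise and uniform: for type~II, III, and IV points $x$, the fiber $\pi^{-1}(x)$ is not literally a disk, so one must organize (measurably in $x$) a covering by open disks on which the return dynamics acts analytically, and handle the atomic components of $\mu$ (concentrated on periodic orbits of $f$) by direct iteration of $f_K^p$ on a fixed fiber. The tree structure of Berkovich analytifications of curves together with the classification of points into types~I--IV makes this tractable, but verifying that the disintegration is compatible with local analytic coordinates and that Lemma~\ref{lem:finally} is applicable uniformly in $x$ is where the real work lies.
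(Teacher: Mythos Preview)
Your proposal does not address Lemma~\ref{lem:finally} at all. The lemma asserts that any analytic self-map of the open unit disk $\D(0,1)$ has zero metric entropy with respect to every invariant probability measure; what you have written is instead a sketch of Proposition~\ref{prop:end1} (invariance of $\htop$ under base change for curves), and you explicitly \emph{invoke} Lemma~\ref{lem:finally} as a black box in the middle of your argument (``Lemma~\ref{lem:finally} then forces the metric entropy of each such return dynamics to vanish''). So the proposal is circular with respect to the statement you were asked to prove.

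For the record, the paper does not give a proof of Lemma~\ref{lem:finally} either: it is declared to be left to the reader with a pointer to \cite[Th\'eor\`eme~C]{FR10}. The underlying reason the lemma holds is that the Berkovich open unit disk is an $\R$-tree with a single end, and any analytic self-map is non-expanding for the hyperbolic metric on the set of non-rigid points; this forces every invariant measure to be supported on a set with trivial dynamics (essentially a single attracting or indifferent orbit), hence to have zero entropy. None of this appears in your write-up.

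Even viewed as an attempt at Proposition~\ref{prop:end1}, your approach diverges from the paper's and has a real gap. The paper first reduces to $K$ and $k$ algebraically closed via Proposition~\ref{prop:conj-entropy}, then uses Poineau's canonical section $\sigma_{K/k}\colon X^{\an}\to X_K^{\an}$: the key structural fact is that $X_K^{\an}\setminus\sigma_{K/k}(X^{\an})$ (or the complement of the skeleton, in the genus-one case) is a disjoint union of open disks, so Lemma~\ref{lem:finally} kills any entropy living off the section. Your fiberwise Abramov--Rokhlin strategy instead requires understanding the fibers $\cM(\cH(x)\hotimes_k K)$, which are not disks and on which $f_K$ does not act (it only maps the fiber over $x$ to the fiber over $f(x)$); the ``return-time'' device you propose to manufacture a self-map of a fixed fiber is not defined for non-periodic $x$, and you yourself flag this as the unresolved obstacle.
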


\begin{proof}[Proof of Proposition~\ref{prop:end1}]
We may assume that the genus of $X$ is either $0$ or $1$ since otherwise $f$ has finite order. 

By Proposition~\ref{probasechangeentropy-appendix}, it is sufficient to prove  $\htop(f_K) \le \htop(f)$.
By Proposition~\ref{prop:conj-entropy}, we may assume that $K$ and $k$ are both algebraically closed. 
By~\cite[Corollaires 3.7 et 3.14]{poineau13}, there exists a canonical injective continuous map $\sigma_{K/k} \colon  X \to X_K$
such that  $\pi_{K/k}\circ   \sigma_{K/k} = \id_{\p^1_k}$. Since this lift is canonical, we have
$\sigma_{K/k} \circ f = f_K \circ \sigma_{K/k}$ so that $f_K|_{\sigma_{K/k} (X_k)} = \htop(f)$.

Suppose first that $X$ has genus $1$. By the semi-stable reduction theorem~\cite[Theorem~4.3.1]{berkovich}, 
either $X$ admits a smooth model $\fX$ over $k^\circ$, or 
$X$ retracts by deformation on a unique topological circle $\Sk(X) \subset X$.

In the former case, the unique point $x$ corresponding to the special fiber of $\fX$ is totally invariant by 
$f$ and $X\setminus \{x\}$ is a union of open balls. It follows from Lemma~\ref{lem:finally} that
$\htop(f) =0$ By base change, $X_K$ also admits a smooth model over $K^\circ$, hence 
$\htop(f_K)=0$.

In the latter case, the complement $X \setminus \Sk(X)$ is a union of open balls, hence $\htop(f) = \htop(f|_{\Sk(X)})$ by Lemma~\ref{lem:finally}.
Now note that $\sigma_{K/k}(\Sk(X))$ is then a topological circle, hence by uniqueness we must have $\Sk(X_K) = \sigma_{K/k}(\Sk(X))$.
We conclude that 
\[\htop(f_K) = \htop(f_K|_{\sigma_{K/k}(\Sk(X))})=  \htop(f|_{\Sk(X)}) =\htop(f)~.\]
Now suppose that $X= \p^1_k$.  In that case, $\p^1_K \setminus \sigma_{K/k} (\p^1_k)$ is a union of open balls, 
hence again by Lemma~\ref{lem:finally} we have 
 \[\htop(f_K) =  \htop(f_K|_{\sigma_{K/k} (\p^1_k)} )=\htop(f)\] 
as required.
\end{proof}


\end{document}